\numberwithin{equation}{subsection} %%% ajoute une numérotation des équations avec les sections %%%
\newtheorem{thm}{Theorem}[section]
\newtheorem{prop}[thm]{Proposition}
\newtheorem{pdef}[thm]{Proposition-Definition}%%% proposition definition%%%
\newtheorem{lem}[thm]{Lemma}
\newtheorem*{thmsans}{Theorem}%%% thm sans 
\newtheorem{cor}[thm]{Corollary}
\newtheorem*{defsans}{Definition} %%% normal mode %%%%
\newtheoremstyle{bidule}% name
{3pt}% Space above
{3pt}% Space below
{}% Body font
{}% Indent amount
{\scshape}% Theorem head font
{.}% Punctuation after theorem head
{.5em}% Space after theorem head
{}% Theorem head spec (can be left empty, meaning `normal')
\newtheorem{df}[thm]{Definition}
\theoremstyle{definition}
\newtheorem{rmk}[thm]{Remark}
\newtheorem{ex}[thm]{Example}
\newtheorem*{note}{Note}
\newtheorem*{thank}{Acknowledgments}
\newtheorem*{warn}{Warning}
\newtheorem{nota}[thm]{Notation}
\newcommand{\Ev}{\tx{Ev}}
\newcommand{\C}{\mathcal{C}}
\newcommand{\Ub}{\mathcal{U}}%%% Ub comme Oublie%%% forgetful functor%%%%%
\newcommand{\F}{\mathcal{F}}
\renewcommand{\O}{\mathcal{O}}
\newcommand{\Ar}{\text{Arr}}
\newcommand{\D}{\mathcal{D}}
\newcommand{\Ba}{\mathcal{B}}
\newcommand{\Xa}{\mathcal{X}}
\newcommand{\A}{\mathcal{A}}
\newcommand{\Aa}{\mathcal{A}}
\newcommand{\M}{\mathscr{M}}
\newcommand{\Ja}{\mathbf{J}} %%acyclic cofibrations%%%
\newcommand{\J}{\mathcal{J}} %% index category for limit and colimits%%%
\newcommand{\T}{\mathcal{T}}
\newcommand{\Ea}{\mathcal{E}} %%%
\newcommand{\Qa}{\mathcal{Q}} 
\newcommand{\W}{\mathscr{W}}
\newcommand{\I}{\mathbf{I}}
\newcommand{\Un}{\mathbb{I}} %%%% unity in mset
\newcommand{\G}{\mathcal{G}}
\newcommand{\Z}{\mathbb{Z}}
\renewcommand{\L}{\text{L}} %%%% normal L 
\newcommand{\Fb}{\mathbf{F}}%%%%%% Free functor 
\newcommand{\Ya}{\mathcal{Y}}%%%%%% Yoneda functor of point
\newcommand{\Sim}{\mathscr{S}}
\newcommand{\Pa}{\mathcal{P}}
\renewcommand{\to}{\longrightarrow}
\newcommand{\ul}{\underline}
\newcommand{\xrw}{\xrightarrow}
\newcommand{\xlw}{\xleftarrow} 
\newcommand{\hrw}{\hookrightarrow}
\newcommand{\xhrw}{\xhookrightarrow}
\newcommand{\N}{\mathbb{N}}
\newcommand{\Ob}{\text{Ob}}% set of objects  
\newcommand{\0}{\textbf{0}} % 0 of Delta
\newcommand{\tx}{\text}
\newcommand{\tld}{\widetilde}
\renewcommand{\to}{\longrightarrow}
\DeclareMathOperator\Id{Id}
\DeclareMathOperator\Hom{Hom}
\DeclareMathOperator\Cat{\mathbf{Cat}}%% Cat
\DeclareMathOperator\colim{\tx{$colim$}}%% colim
\DeclareMathOperator\cof{\mathbf{cof}}%% cofibration 
\DeclareMathOperator\kb{\mathbf{K}} % % % 
\DeclareMathOperator\fib{\mathbf{fib}} %%% fibration
\DeclareMathOperator\Map{\tx{Map}}  % % % %  function complex
\DeclareMathOperator\Arr{\text{Arr}} % 
\DeclareMathOperator\sset{\mathbf{sSet}} %% simplicial sets
\DeclareMathOperator\oalg{\O\tx{-$Alg$}}%%% 
\DeclareMathOperator\ag{\mathscr{A}}
\DeclareMathOperator\mua{\M_\Ub[\ag]}
\DeclareMathOperator\mdua{(\M \downarrow \Ub)}
\DeclareMathOperator\gcr{[\G]}
\DeclareMathOperator\pig{\pi_{\G}}
\DeclareMathOperator\pif{\pi_{\F}}
\DeclareMathOperator\Piar{\Pi_{\Ar}}
\DeclareMathOperator\Pio{\Pi_{0}}
\DeclareMathOperator\Piun{\Pi_{1}}
\DeclareMathOperator\ogcr{[\G_0,\G_1,\pi_{\G}]}
\DeclareMathOperator\mdu{\M_{\Ub}[\ag]} % 
\DeclareMathOperator\kbi{\kb_{\I}}
\DeclareMathOperator\ali{\alpha_{\I}}
\DeclareMathOperator\Fc{[\F]}
\DeclareMathOperator\Gc{[\G]}
\DeclareMathOperator\Pc{[\Pa]}
\DeclareMathOperator\Qc{[\Qa]}
\DeclareMathOperator\Ec{[\Ea]}
\DeclareMathOperator\arm{\Ar(\M)}
\DeclareMathOperator\armij{\Ar(\M)_{inj}}
\DeclareMathOperator\armpj{\Ar(\M)_{proj}}
\DeclareMathOperator\sg{\sigma}
\DeclareMathOperator\facm{(\mathscr{L}_{\M}, \mathscr{R}_{\M})}
\DeclareMathOperator\ra{\mathscr{R}_{\ag}}
\DeclareMathOperator\faca{(\mathscr{L}_{\ag}, \mathscr{R}_{\ag})}
\DeclareMathOperator\la{\mathscr{L}_{\ag}}
\DeclareMathOperator\lm{\mathscr{L}_{\M}}
\DeclareMathOperator\mr{\mathscr{R}_{\M}}
\DeclareMathOperator\pip{\pi_{\Pa}}
\DeclareMathOperator\piq{\pi_{\Qa}}
\DeclareMathOperator\muaij{\M_{\Ub}[\ag]_{inj}} % 
\DeclareMathOperator\muapj{\M_{\Ub}[\ag]_{proj}} % 
\DeclareMathOperator\Top{\mathbf{Top}} % 
\DeclareMathOperator\Iam{\mathbf{I}_{\M}} % 
\DeclareMathOperator\Jam{\mathbf{J}_{\M}} % 
\DeclareMathOperator\Iag{\mathbf{I}_{\ag}} % 
\DeclareMathOperator\Jag{\mathbf{J}_{\ag}} % 
\DeclareMathOperator\Jcij{\Ja_{\muaij}^+}
\DeclareMathOperator\mum{\M_{\Ub}[\M]} % 
\DeclareMathOperator\qi{\varepsilon} % Sp_{\Ub}(\M) 
\DeclareMathOperator\spu{\tx{Sp}_{\Ub}(\M) } % 
\DeclareMathOperator\spup{\tx{Sp}_{\Ub}^+(\M) } % 
\DeclareMathOperator\zd{\Z_{disc}} %%discrete
\DeclareMathOperator\nd{\N_{disc}} %%discrete
\DeclareMathOperator\od{\mathbf{O}} %%discrete
\DeclareMathOperator\ods{\mathbf{O}_{disc}} %%discrete
\DeclareMathOperator\zmum{\Hom(\zd, \mum)} % $\Z_{disc}$
\title{Quillen-Segal algebras and \\ Stable homotopy theory} 
\author{Hugo Bacard \thanks{\textit{E-mail address}: \href{mailto:hugo.bacard@ac-versailles.fr}{hugo.bacard@ac-versailles.fr}
}}
\date{}
\begin{document}
\maketitle
\begin{abstract}
Let  $\M$  be a monoidal model category that is also combinatorial and left proper. If $\O$ is a monad, operad, properad, or a PROP; following Segal's ideas we develop a theory of Quillen-Segal $\O$-algebras and show that we have a Quillen equivalence between usual $\O$-algebras and Quillen-Segal algebras. We use this theory to get the \emph{stable homotopy category} by a similar method as Hovey. 
\end{abstract}
\setcounter{tocdepth}{1}
\tableofcontents
\newpage

\section{Introduction}
This paper is part of a project that aims to develop a homotopy theory of \emph{ weak algebraic structures} encoded by objects such as operads, properads, PROP's or monads. And we hope that this theory can be useful to understand in some way the algebra of higher categories and its applications.  The notion of symmetric operad was introduced by May \cite{May_geom} as a solution to the \emph{delooping problem} in stable homotopy theory.  Segal  \cite{Seg1}  gave a solution to the same problem that is of simplicial nature with the notion of $\Gamma$-spaces.  Both solutions have inspired different directions in the development of \emph{Higher Category Theory}, and we shall refer the reader to the book of Simpson \cite{Simpson_HTHC} for a detailed account on the subject.\ \\%  Operads and Segal's ideas  as well the various applications of this theory. \ \\

We choose  a general formalism that we hope, will be somehow ``a bridge'' between the operadic and the simplicial method for the delooping problem.  Let $\M$ be a model category and let $\Ar(\M)$ be its category of morphisms (or arrows). For $n \in \N$, define inductively $\Ar^{n+1}(\M)= \Ar(\Ar^{n}(\M))$, the category of \emph{hyper-cubes} in $\M$; with $\Ar^0(\M)= \M$. In order to simplify the treatment of the homotopy theory in  ``Segal situations'' we start with the following definition. 
\begin{defsans}
Let $\C$ be an arbitrary category. 
\begin{enumerate}
\item An \emph{$\M$-valued Quillen-Segal theory $\T$} on $\C$ is a family of functors called \emph{Segal data functors}: $\{\T_i: \C \to \Ar(\M) \}_{i \in I},$
for some small set $I$.
\item Say that an object $c \in \C$ satisfies the \emph{generalized Segal conditions}, if for every $i \in I$ $\T_i(c) \in \Ar(\M)$ is a weak equivalence in $\M$. 
\item Inductively, an  \emph{$n$-fold $\M$-valued Quillen-Segal theory $\T$} on $\C$ is a family of functors, called \emph{Segal $n$-data functors}: $\{\T_i: \C \to \Ar^{n}(\M) \}_{i \in I},$
for some small set $I$.
\end{enumerate}
\end{defsans}

In the original paper of Segal \cite{Seg1}, $\C$ is the category of $\Gamma$-spaces,  $\M=\Top$ and the theory is given by the family functors $\{\T_n: \C \to \Ar(\Top) \}_{n \geq 1}$, where $\T_n(\Aa)$ is the $n$th \emph{Segal map}: $ p_n: \Aa(n) \to  \Aa(1) \times_{\Aa(0)} \cdots \times_{\Aa(0)} \Aa(1).$  The same formula defines the theory for \emph{Segal spaces}  as in Rezk \cite{Rezk_proper}, and classical Segal categories (see \cite{Bergner_mon_seg}, \cite{Pel},\cite{Simpson_HTHC}, \cite{Tam}). These theory have been extended to Segal $n$-categories and Segal enriched $\M$-categories (see \cite{Bacard_TWEC}, \cite{Simpson_HTHC}). One can iterate the process \emph{à la Simpson-Tamsamani} to consider a $\C$-valued theory on a category $\D$: $\{ \T_j':\D \to \Ar(\C)\}_{j\in J}$  to get a $2$-fold $\M$-valued theory $\{\Ar(\T_i) \circ \T_j':\D \to \Ar^2(\M)\}_{(i,j) \in I\times J}$ and so on.\\ 

There are many categories that are equipped with a relevant Quillen-Segal theory, and some of them will be reviewed later. But in this paper we want to extend Segal's formalism to the following situations.
\begin{enumerate}
\item Let $\C= \Omega-Spec$ be the category of $\Omega$-prespectrum with the theory given by the functors $\T_n: \C \to \Arr(\sset_{\ast})$ that takes a prespectrum $X$ to the connecting morphism $X_n \to \Omega (X_{n+1})$ which is adjoint to the map $S^1 \wedge X_n \to X_{n+1}$. Then a fibrant prespectrum satisfying the Segal conditions is simply an $\Omega$-spectrum, thus a generalized cohomology theory. 
\item  Let $\C= \mdua$ be  the comma category associated to a (right Quillen) functor $\Ub: \ag \to \M$. We remind the reader  that an object of $(\M\downarrow\Ub)$ is a triple  $\Fc = [\F_0, \F_{1}, \pi_{\F}: \F_0 \to \Ub(\F_1)] \in \M \times \ag \times \Ar(\M)$. The theory is given by the functor that projects the maps $\pif$:
 $$\Piar: \mdua \to \Arr(\M),\quad \tx{with} \quad \quad \Piar(\Fc) = \pif.$$
\end{enumerate}

If $\O$ is an operad, monad, or a PROP; a \emph{Quillen-Segal $\O$-algebra} is an object $\Fc \in \C=\mdua$, that satisfies the Segal condition for the forgetful functor $\Ub: \oalg(\M) \to \M$.
The first example of Quillen-Segal algebras comes from  \emph{co-Segal algebras} and their generalizations to co-Segal categories \cite{Bacard_TWEC}.\\ \ 

Before discussing further the theory of Quillen-Segal algebras we have the following theorem on Quillen-Segal theories in general (see Theorem \ref{local-inj-theory})
\begin{thmsans}
Let $\M$ be a cofibrantly generated model category and let $\T=(\T_i)$ be a Quillen-Segal theory on $\C$.
Assume that one the following conditions holds.

\begin{enumerate}
\item $\C$ has a model structure $(\W,\cof,\fib)$ which is cellular and left proper such that every $\T_i \in \Hom(\C, \arm_{inj})$ is right Quillen.
\item $\C$ has a model structure $(\W,\cof,\fib)$ which is combinatorial and left proper such that every $\T_i \in \Hom(\C, \arm_{inj})$ is right Quillen.
\end{enumerate}
Then there is a new model structure $\C_B(\T)=(\W_B,\cof_B,\fib_B)$ on $\C$ which is a left Bousfield localization, such that:
\begin{itemize}
\item $\T=\{\T_i: \C \to \armij \}_{i \in I}$ defines an Quillen-Segal theory such that every $\T_i$ is also right Quillen with respect to the model structure $\C_B(\T)$;
\item  every fibrant object in this new model structure satisfies the generalized Segal conditions.
\end{itemize}
%    $\C_B$  %Bousfield localization $L(\C)$ of $\C$,  
\end{thmsans}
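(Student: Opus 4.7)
The plan is to obtain $\C_B(\T)$ as a left Bousfield localization of $(\C,\W,\cof,\fib)$ at a small set $S$ of morphisms, chosen so that the fibrant $S$-local objects are precisely the fibrant $c \in \C$ for which $\T_i(c)$ is a weak equivalence in $\M$ for every $i \in I$. Since $\C$ is assumed left proper and either cellular or combinatorial, the existence of left Bousfield localizations at small sets of morphisms is guaranteed by Hirschhorn's theorem (cellular case) or Jeff Smith's theorem (combinatorial case); the content is thus concentrated in constructing $S$.

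\textbf{Detection inside $\Ar(\M)_{\tx{inj}}$.} Since $\M$ is cofibrantly generated, let $\Ob(I_\M)$ denote the (small) set of domains and codomains of its generating cofibrations, which may be assumed cofibrant after replacement. For each $A \in \Ob(I_\M)$, introduce the morphism
\[\tau_A \;:\; (\emptyset \to A) \longrightarrow (A \xrightarrow{\Id} A)\]
in $\Ar(\M)$, whose source- and target-horizontal components are the unique map $\emptyset \to A$ and $\Id_A$ respectively. Both endpoints are cofibrant in $\Ar(\M)_{\tx{inj}}$, and $\tau_A$ is itself a cofibration there. Unpacking derived mapping spaces, an injectively fibrant arrow $f:X \to Y$ is $\tau_A$-local if and only if the induced map $\Map_\M(A,X) \to \Map_\M(A,Y)$ is a weak equivalence of spaces. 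Using the standard fact that weak equivalences between fibrant objects are detected by the family $\Map_\M(A,-)$ with $A \in \Ob(I_\M)$, the set $S_{\tx{Ar}} := \{\tau_A\}_{A \in \Ob(I_\M)}$ has the property that an injectively fibrant $f$ is $S_{\tx{Ar}}$-local iff $f$ is a weak equivalence in $\M$.

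\textbf{Pulling back to $\C$.} Each $\T_i$ is right Quillen and therefore admits a left adjoint $L_i$. I set $S := \bigcup_{i \in I} L_i(S_{\tx{Ar}})$, a small set of cofibrations in $\C$, and let $\C_B(\T) := L_S \C$ be the Bousfield localization; then $\cof_B = \cof$, $\W \subseteq \W_B$, and $\fib_B \subseteq \fib$. By the adjunction $L_i \dashv \T_i$, a fibrant $c \in \C_B(\T)$ is $S$-local iff each $\T_i(c)$ is $S_{\tx{Ar}}$-local in $\Ar(\M)_{\tx{inj}}$; since $\T_i$ is right Quillen and $c$ is fibrant in $\C$, $\T_i(c)$ is injectively fibrant, whence by the detection step $\T_i(c)$ is a weak equivalence in $\M$, i.e.\ the generalized Segal conditions hold. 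Each $\T_i$ remains right Quillen for $\C_B(\T)$ because left Bousfield localization leaves cofibrations and trivial fibrations unchanged and only shrinks the class of fibrations.

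\textbf{Main obstacle.} The principal technical point will be the detection lemma, i.e.\ showing that $\tau_A$-locality on an injectively fibrant $f$ coincides with $f$ being a weak equivalence in $\M$. This relies on a careful computation of derived mapping spaces in $\Ar(\M)_{\tx{inj}}$ and on the small-generator test for weak equivalences between fibrant objects. With that lemma in hand, the remainder is routine Bousfield-localization bookkeeping.
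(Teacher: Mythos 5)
Your construction is close in spirit but differs from the paper's in a way that introduces a genuine gap. You localize at the maps $\tau_A\colon (\emptyset\to A)\to \Id_A$ for $A$ ranging over domains and codomains of generating cofibrations; the paper instead localizes at the maps $\alpha_i\colon i\to\Id_B$ for $i\colon A\to B$ an actual generating cofibration (Notation \ref{nota-alpha}, Definition \ref{univ-inj-set}, Definition \ref{localization-set}). The difference matters. The set $\alpha_{\Iam}$ encodes the generating cofibrations \emph{as maps}, and Lemma \ref{lifting-lem} then gives a purely strict RLP detection: $f\to\ast$ is $\alpha_{\Iam}$-injective in $\arm$ if and only if $f$ is a trivial fibration in $\M$. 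Once the elements of the localizing set become trivial cofibrations in $\C_B(\T)$, a fibrant $c$ has $c\to\ast$ with the RLP against them, hence by adjunction $\T_i(c)$ has the RLP against $\Iam$ and is literally a trivial fibration. No mapping spaces, no cofibrancy of domains, no properness of $\M$ — this is what makes the proof go through for an arbitrary cofibrantly generated $\M$.

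Your set remembers only the objects $A$, so the only available detection is via the claim that weak equivalences between fibrant objects are detected by $\Map_{\M}(A,-)$ as $A$ runs over (cofibrant replacements of) the domains and codomains of $\Iam$. That claim is the HELP lemma in disguise, and the paper is explicit that it uses it \emph{only} in the tractable case — it is quoted from Simpson and deployed only for Theorem \ref{local-proj-theory}. Theorem \ref{local-inj-theory} is stated for a general cofibrantly generated $\M$, where the domains and codomains of $\Iam$ need not be cofibrant. Your remedy of cofibrantly replacing them does not help: the replaced objects are no longer the domains and codomains of a generating set, so $\Map_{\M}(A',f)$ being a weak equivalence carries no lifting information against $\Iam$, and the chain $\Map$-condition $\Rightarrow$ HELP $\Rightarrow$ weak equivalence breaks at its first link. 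In effect, the repair requires assuming $\M$ tractable, at which point you have reproved Theorem \ref{local-proj-theory} rather than Theorem \ref{local-inj-theory}. Your final bookkeeping (cofibrations and trivial fibrations are unchanged by left Bousfield localization, so each $\T_i$ stays right Quillen) is fine and matches what is needed, but the detection step needs to be replaced by the paper's $\alpha_{\Iam}$-based RLP argument for the theorem to hold in the stated generality.
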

Here $\arm_{inj}$ is the \emph{injective model} structure that can be found for example in Hovey \cite{Hovey_Arr}. There is also a \emph{projective model} structure on $\arm$ denoted by $\arm_{proj}$. We will review them later in a more general context. 
We also have the following theorem if the cofibrations in $\M$ are generated by a set of maps between cofibrant objects. Example of such model categories are \emph{tractable} model categories in the sense of Barwick \cite{Barwick_localization} . The following theorem is valid outside tractable model categories (see Theorem \ref{local-proj-theory}).
\begin{thmsans}
Let $\M$ be a tractable model category and let $\T=(\T_i)$ be a Quillen-Segal theory on $\C$.
Assume that one the following conditions holds.

\begin{enumerate}
\item $\C$ has a model structure $(\W,\cof,\fib)$ which is cellular and left proper such that every $\T_i \in \Hom(\C, \arm_{proj})$ is right Quillen.
\item $\C$ has a model structure $(\W,\cof,\fib)$ which is combinatorial and left proper such that every $\T_i \in \Hom(\C, \arm_{proj})$ is right Quillen.
\end{enumerate}
Then there is a new model structure $\C_B(\T)=(\W_B,\cof_B,\fib_B)$ on $\C$ which is a left Bousfield localization, such that:
\begin{itemize}
\item $\T=\{\T_i: \C \to \armpj \}_{i \in I}$ defines an Quillen-Segal theory such that every $\T_i$ is also right Quillen with respect to the model structure $\C_B(\T)$;
\item  every fibrant object in this new model structure satisfies the generalized Segal conditions.
\end{itemize}
%    $\C_B$  %Bousfield localization $L(\C)$ of $\C$,  
\end{thmsans}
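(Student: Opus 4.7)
The plan runs parallel to the proof of the injective version (the preceding theorem), but uses tractability in an essential way to construct test cofibrations between cofibrant objects. Under either hypothesis, the Bousfield localization of $\C$ at a suitable small set $\K$ of cofibrations exists---by Hirschhorn's theorem in case (1) (cellular, left proper) and by Smith's theorem in case (2) (combinatorial, left proper). The task is then to choose $\K$ so that $\K$-locality of a fibrant object $c \in \C$ is equivalent to each $\T_i(c)$ being a weak equivalence in $\M$. Once this is done, the conclusion that every $\T_i$ remains right Quillen on $\C_B(\T)$ is automatic, since Bousfield localization leaves cofibrations and trivial cofibrations unchanged, so each left adjoint $\T_i^{\star} : \armpj \to \C_B(\T)$ is still left Quillen.

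The construction of $\K$ proceeds in two stages. First I would exhibit, inside $\armpj$, a small set $S$ of cofibrations such that a fibrant $(X \xrightarrow{f} Y)$ is $S$-local if and only if $f$ is a weak equivalence in $\M$. Using tractability, fix a set $I_{\M}$ of generating cofibrations with cofibrant domains (and hence cofibrant codomains) and let $G$ be the set of these objects. For each $g \in G$ form the morphism
$$
\alpha_g \;:\; (0 \to g) \longrightarrow (g = g) \qquad \text{in } \armpj,
$$
whose source and target are both cofibrant because $g$ is; after cofibrant replacement, $\alpha_g$ becomes a cofibration between cofibrant objects. A direct computation yields natural identifications
$$
\Map_{\armpj}\!\bigl((g = g),(X \to Y)\bigr) \simeq \Map_{\M}(g,X), \qquad \Map_{\armpj}\!\bigl((0 \to g),(X \to Y)\bigr) \simeq \Map_{\M}(g,Y),
$$
under which the map induced by $\alpha_g$ is post-composition by $f$. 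Since in a tractable combinatorial model category the domains and codomains of generating cofibrations detect weak equivalences between fibrant objects, $S := \{\alpha_g\}_{g \in G}$ has the desired property. I then set $\K := \bigcup_{i \in I} \T_i^{\star}(S)$, cofibrantly replacing each map, so that $\K$ consists of cofibrations between cofibrant objects in $\C$. By the adjunction $\T_i^{\star} \dashv \T_i$, for fibrant $c \in \C$, $\K$-locality of $c$ is equivalent to each $\T_i(c)$ being $S$-local in $\armpj$, which by construction of $S$ is equivalent to $\T_i(c)$ being a weak equivalence in $\M$, i.e.\ to the generalized Segal condition.

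The main technical obstacle is the existence of such a set $S$: one must verify that $G$ genuinely detects weak equivalences between fibrant objects of $\M$ via derived mapping spaces. This is exactly where tractability is indispensable, both to make the test objects $g$ cofibrant (so that $\Map_{\M}(g,-)$ computes the correct derived mapping space on fibrant targets) and to supply the detection lemma through the small object argument applied to $I_{\M}$. Once this step is in place, the existence of the localization, the persistence of the left Quillen character of each $\T_i^{\star}$, and the translation of $\K$-locality into the Segal condition are formal consequences of the Bousfield localization machinery.
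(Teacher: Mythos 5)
Your proposal takes a genuinely different route from the paper. The paper carries out Simpson's HELP (Homotopy Extension Lifting Property) strategy---the second of the three detection techniques outlined before Theorems \ref{local-inj-theory} and \ref{local-proj-theory}: the localizing set is $\kbi_{proj}(\T)=\coprod_{i}\Upsilon_i(\zeta_{\Iam})$, each $\zeta_s$ built from a relative cylinder of a generating cofibration $s$, so that $\kbi_{proj}$-injectivity of a fibrant $c$ translates by adjunction into $\zeta_{\Iam}$-injectivity of $\T_i(c)$, and Lemma \ref{lifting-lem-proj} (the HELP Lemma) then identifies $\T_i(c)$ as a weak equivalence. You instead implement Hovey's function-complex strategy (the third technique, which the paper explicitly mentions and defers): you localize at maps indexed by the \emph{objects} $g$ running over domains and codomains of generating cofibrations, using the identifications $\Map_{\armpj}(L_0(g),-)\simeq\Map_\M(g,\Ev_0(-))$ and $\Map_{\armpj}(L_1(g),-)\simeq\Map_\M(g,\Ev_1(-))$, both valid since $L_0$ and $L_1$ are left Quillen into $\armpj$. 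One concrete advantage of the paper's choice: $\zeta_s$ is already a projective cofibration with cofibrant source whenever $s$ has cofibrant domain, so it feeds into the small object argument with no modification; your map $L_1(g)\to L_0(g)$ is only an injective cofibration (its pushout corner is the fold map $g\sqcup g\to g$) and, as you note, must first be replaced.

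Two corrections. First, ``Bousfield localization leaves cofibrations and trivial cofibrations unchanged'' is false: cofibrations are unchanged, but trivial cofibrations strictly grow, since the weak equivalences grow while the cofibrations stay put. Your conclusion is still right, by the standard fact that any left Quillen functor into $\C$ remains left Quillen into any left Bousfield localization of $\C$, but the stated reason is not. Second, and more substantially, the detection lemma you flag as the main obstacle---that $\Map_\M(g,-)$ for $g$ ranging over the domains and codomains of $\Iam$ jointly detect weak equivalences between fibrant objects---is asserted but not proved, and the gesture toward ``the small object argument applied to $I_\M$'' does not supply it. The lemma is true, but a clean proof observes that your hypothesis forces the square of mapping spaces associated to each $s\in\Iam$ to be a homotopy pullback, which is exactly the HELP against $s$, and then invokes the same Lemma \ref{lifting-lem-proj} the paper uses. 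So your proof as written is not self-contained; closing the gap essentially reinstalls the HELP machinery, which is why the paper takes that route directly.
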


These theorems require the left properness of the model structure on  $\C$. But it's possible to have a localization of a theory without left properness using a result of Beke \cite{Beke_2} which is itself a consequence of a theorem of Jeff Smith. This is what we do for the comma category $\mua:=\mdua$ and in particular for Quillen-Segal algebras for a combinatorial model category $\M$.\ \\
First we prove that:
\begin{enumerate}
\item There is an injective model structure on the comma category $\mua:= \mdua$ (Theorem \ref{inj-thm})
\item There is a projective model structure on $\mua:= \mdua$ (Theorem \ref{proj-thm}).
\item If $\Ub= \Id$ we recover the injective and projective model structure on $\arm$.
\end{enumerate} 

Category theory provides an embedding $\iota: \ag \to \mdua$ with $\iota(\Pa)=[\Ub(\Pa), \Pa, \Id_{\Ub(\Pa)}]$ (see Proposition \ref{embed-prop}) ; and this is how usual strict algebras are $QS$-algebras ($\Id_{\Ub(\Pa)}$ is always a weak equivalence). We have a functor $\Piun:\mdua \to \ag$ that is simultaneously a left adjoint and a retraction for $\iota$. The object $\F_1$ is a usual $\O$-algebra and with the weak equivalence $\pif:\F_0 \to \Ub(\F_1)$, one may want to lift the $\O$-algebra structure to $\F_0$. This is a classical problem of \emph{homotopy invariance for algebras} that goes back to Boardman and Vogt \cite{board_vogt}, Dwyer, Kan and Smith \cite{DKS} and others. There are many results in this direction that can be found for example in Berger-Moerdijk \cite{Ber_Moer_axio_hop}, Johnson-Yau \cite{Jo_Yau} and the many references therein. We will discuss it in a future work.\ \\

One of the central results on the homotopy theory on $\mdua$ is that we can localize directly the injective and projective model structure along the functor $\Piun: \mdua \to \ag$. 
\begin{thmsans}
Let $\Ub:\ag \leftrightarrows \M: \Fb$ be Quillen adjunction between combinatorial model categories where $\Ub$ is right adjoint.   
Then the following hold.
\begin{enumerate}
\item There is a model structure on the category $\mdua=\mua$ such that:
\begin{enumerate}
\item fibrant objects satisfy the Segal condition and % Quillen-Segal algebras 
\item  we have a   a Quillen equivalence $\iota: \ag \leftrightarrows \mua: \Piun $, where $\iota$ is right Quillen and $\Piun(\Fc)=\F_1$.
\end{enumerate}
\item If the Quillen pair $\Ub:\ag \leftrightarrows \M: \Fb$ is a Quillen equivalence then the functor $$\Pio: \mdu \to \M$$ is a right Quillen equivalence and $\Ub$ is a composite of Quillen equivalences:
$$\ag \xrw{\iota} \mua \xrw{\Pio} \M$$
\end{enumerate}
\end{thmsans}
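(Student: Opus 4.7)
The plan is to construct the model structure in part (1) by a left Bousfield localization of the injective structure on $\mua$ from Theorem \ref{inj-thm} along the functor $\Piun$. One defines the new weak equivalences $\W_B$ to be the $\Piun$-equivalences --- maps $f$ with $\Piun(f)$ a weak equivalence in $\ag$ --- keeps the same cofibrations, and lets fibrations be determined by right lifting. The key pointwise observation is that the unit $\eta_\Fc: \Fc \to \iota\Piun(\Fc) = [\Ub(\F_1), \F_1, \Id_{\Ub(\F_1)}]$ has components $(\pif, \Id_{\F_1})$, so it is a weak equivalence in $\mua_{\tx{inj}}$ precisely when $\pif$ is a weak equivalence in $\M$ --- i.e.\ when $\Fc$ satisfies the Segal condition --- while being automatically a $\Piun$-equivalence since $\Piun(\eta_\Fc) = \Id$.

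Since $\mua_{\tx{inj}}$ need not be left proper, I would avoid Hirschhorn's machinery and instead invoke Jeff Smith's recognition theorem in the form used by Beke \cite{Beke_2}. Smith's hypotheses --- $2$-out-of-$3$ and retract closure of $\W_B$, its accessibility, a solution set condition, and stability of $\W_B \cap \cof_{\tx{inj}}$ under pushout and transfinite composition --- follow from $\Piun$ being a colimit-preserving left Quillen functor on $\mua_{\tx{inj}}$ combined with combinatoriality of $\ag$. Part (1a) is then immediate: fibrant objects in $\mua_B$ are the $S$-local fibrant objects of $\mua_{\tx{inj}}$, which by the above is equivalent to the Segal condition on $\pif$. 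For (1b), $\Piun \dashv \iota$ remains a Quillen adjunction (cofibrations are unchanged, and $\Piun$ sends the enlarged acyclic cofibrations to acyclic cofibrations in $\ag$ by the very construction of $\W_B$); the counit $\Piun \iota = \Id_{\ag}$ is literally an isomorphism, and by Hovey's criterion, for cofibrant $\Fc$ and fibrant $X \in \ag$, a map $f: \Piun(\Fc) \to X$ is a weak equivalence iff its adjoint $\tilde{f}: \Fc \to \iota(X)$ is a weak equivalence in $\mua_B$ --- which, since $\Piun(\tilde{f}) = f$, is automatic.

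For part (2), observe the factorization $\Ub = \Pio \circ \iota$ on the nose, since $\Pio([\Ub(\Pa), \Pa, \Id]) = \Ub(\Pa)$. Part (1b) yields that $\iota$ is a right Quillen equivalence, and by hypothesis so is $\Ub$. The $2$-out-of-$3$ property for Quillen equivalences applied to this composable pair of right Quillen adjoints then gives that $\Pio: \mua \to \M$ is itself a right Quillen equivalence, producing the claimed factorization of $\Ub$. The main technical obstacle throughout is the Smith recognition step in the absence of left properness --- specifically, stability of $\W_B \cap \cof_{\tx{inj}}$ under pushout by arbitrary cofibrations --- but once that is in hand the remainder is essentially formal, driven by the identity $\Piun \iota = \Id$ and the componentwise description of $\iota$.
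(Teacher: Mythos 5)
Your high-level strategy matches the paper's: invert the $\Piun$-equivalences via Smith's recognition theorem (in Beke's form), keep the injective cofibrations, and exploit the retraction $\Piun\circ\iota=\Id$; the Quillen-equivalence step is essentially the paper's (which cites Beke, Prop.~4.4), though you use Hovey's criterion directly, and both routes are fine. However, the step "fibrant objects in $\mua_B$ are the $S$-local fibrant objects of $\mua_{\tx{inj}}$" is unjustified in your setup. You never specify a set $S$, and — as you yourself note — you are deliberately avoiding Hirschhorn's machinery because $\muaij$ need not be left proper, so the usual characterization of new-fibrant objects as local ones is not automatic. You need either (i) the paper's argument: show each $\Gamma(\alpha_i)$, $i\in\Iam$, is an injective cofibration lying in $\W_L$ (Prop.~\ref{prop-aij-new}), so a new-fibrant $\Fc$ has the RLP against $\Gamma(\alpha_{\Iam})$, which by the adjunction $\Gamma\dashv\Piar$ and Lemma~\ref{lifting-lem} forces $\pif$ to be a trivial fibration; or (ii) a retract argument built around your unit observation: factor $\eta_\Fc$ in $\mua_B$ as a cofibration $i$ followed by a trivial fibration $p$ (the latter is a level-wise weak equivalence since trivial fibrations agree with those of $\muaij$), note $i$ is a new weak equivalence by two-of-three, section $i$ using fibrancy of $\Fc$, and conclude $\eta_\Fc$ is a retract of $p$, hence a level-wise weak equivalence, hence $\pif\in\W$. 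Your unit computation is the right starting point, but the retract step is the actual content and must be spelled out.

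For part (2), the two-out-of-three property for Quillen equivalences requires all three functors to already be Quillen; you apply it to $\iota$ and $\Ub=\Pio\circ\iota$ without first verifying that $\Pio$ is right Quillen for $\muaij^+$. This is easy but should be said: $\Pio$ has left adjoint $\Fb^+=\Gamma\circ L_0$, where $L_0:\M\to\armij$ is clearly left Quillen and $\Gamma:\armij\to\muaij^+$ is left Quillen because $\muaij^+$ has the same cofibrations as $\muaij$ and a larger class of weak equivalences. With that in place, your factorization $\Ub=\Pio\circ\iota$ and two-out-of-three argument is correct.
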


If $\M$ is tractable we can show that this localization is in fact a left Bousfield localization. The proof is just a consequence of Ken Brown's lemma. One gets the homotopy theory of Quillen-Segal $\O$-algebras when $\Ub$ is the forgetful functor.\ \\

The relation with Stable Homotopy Theory comes when $\ag= \M$ and we consider  the product $\prod_{n \in \Z} \mum$ for an endofunctor $\Ub: \M \to \M$ such as the loop space functor $\Omega: \sset_{\ast} \to \sset_{\ast}$. This product is in fact a functor category $\Hom(\zd, \mum)$ where $\zd$ is the set of integers regarded as a discrete category and \emph{not} as posetal category.  The category $\spu$, of $\Ub$-prespectrum, is equivalent to a category of objects in $\Hom(\zd, \mum)$ that are \emph{linked} in the sense of Definition \ref{def-spectra}. This link condition is just a $2$-pullback condition, that can be considered as a \emph{descent} condition in some sense. 

Our goal here is to have a picture where the operations on spectra are seen in the diagram category $\Hom(\zd, \mum)$. We take the benefit of the existing results on the homotopy theory and the algebra of diagram categories that have been studied for decades.  For example if we index over $\N$ the \emph{strict projective model structure} that goes back to Bousfield and Friedlander \cite{Bous_Fried}, follows directly from the projective (=Reedy) model structure on $(\M \downarrow \Ub)= \mum$ in Theorem \ref{proj-thm}.\ \\

The functor $P: \spu \to \Hom(\zd, \mum)$ that forgets the link has a left adjoint (Proposition \ref{prop-adj-P}); and we use this (monadic) adjunction  to get various  model structure on the category of $\Ub$-prespectrum. If $\M$ is locally presentable, there are 4 model structures on $\spu$ that follow from the injective and projective model structures on $\Hom(\zd, \mum_{inj})$ and $\Hom(\zd, \mum_{proj})$. All of them are combinatorial and left proper if $\M$ is left proper. In this paper we restrict to the case where $\M$ is combinatorial and left proper to keep the paper short. We will discuss in a different paper the case where $\M$ is cellular. The arguments remain essentially the same. 

We show in virtue of Theorem \ref{local-proj-theory} that:
\begin{thmsans}
Let $\C$ be the category of prespectrum in simplicial sets  with the  Quillen-Segal theory $\T_n: \C \to \Ar(\sset_{\ast})$ given  $\T_n(X)= [X_n \to \Omega(X_{n+1})]$. 
\begin{enumerate}
\item Then there is a model structure on $\C$ such that the fibrant objects are the $\Omega$-spectrum $X$  that are level fibrant (Kan).
\item The homotopy category is equivalent to the stable homotopy category of Bousfield-Friedlander obtained by Hovey \cite{Hov_stable} and Schwede \cite{Schwede_stable}.
\end{enumerate}
\end{thmsans}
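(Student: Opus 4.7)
The plan is to apply Theorem \ref{local-proj-theory} to the strict projective (level) model structure on $\C = \spu$ (with $\Ub = \Omega$). I would first note that $\sset_{\ast}$ is tractable, as every simplicial set is cofibrant and thus the usual generating cofibrations are trivially maps between cofibrant objects. By the discussion preceding the statement, the strict projective model structure on $\C$ is transferred from the projective model structure on the comma category $(\sset_{\ast} \downarrow \Omega)$ of Theorem \ref{proj-thm} via the monadic adjunction $P : \C \to \Hom(\zd, (\sset_{\ast} \downarrow \Omega))$; it is combinatorial and left proper because $\sset_{\ast}$ is.

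The next step is to verify that each $\T_n : \C \to \Ar(\sset_{\ast})_{\mathrm{proj}}$ is right Quillen. A (trivial) fibration in the strict projective structure is a levelwise Kan (trivial) fibration. Since $\Omega(-) \cong \HOM_{*}(S^{1}, -)$ is cotensoring by the cofibrant pointed simplicial set $S^{1}$, the loop functor preserves (trivial) fibrations. Hence, for a level (trivial) fibration $X \to Y$, the induced morphism of arrows
\[
\bigl[\, X_n \to \Omega X_{n+1} \,\bigr] \longrightarrow \bigl[\, Y_n \to \Omega Y_{n+1} \,\bigr]
\]
has both vertical components (trivial) Kan fibrations, hence is a (trivial) fibration in $\Ar(\sset_{\ast})_{\mathrm{proj}}$; a left adjoint exists via the standard ``free prespectrum on an arrow placed at level $n$'' construction. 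All the hypotheses of Theorem \ref{local-proj-theory} being satisfied, we obtain a left Bousfield localization $\C_B(\T)$. An object is fibrant in $\C_B(\T)$ iff it is fibrant in the underlying level structure (hence level Kan) and satisfies the Segal conditions $X_n \xrightarrow{\sim} \Omega X_{n+1}$ for every $n$, which is exactly the definition of a level-Kan $\Omega$-spectrum. This establishes (1).

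For (2), I would compare $\C_B(\T)$ with the stable model structure on $\spu$ of Hovey \cite{Hov_stable} and Schwede \cite{Schwede_stable}. Both are left Bousfield localizations of the same strict projective model structure, and by the uniqueness theorem for left Bousfield localizations, two such localizations having the same class of fibrant (local) objects agree as model structures. Since in both cases the fibrant objects are precisely the level-Kan $\Omega$-spectra, the identity is a Quillen equivalence $\C_B(\T) \simeq \spu_{\mathrm{stable}}$, which descends to an equivalence of homotopy categories with the Bousfield-Friedlander stable homotopy category.

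The main obstacle, while mostly bookkeeping, is the rigorous identification of fibrant objects in part (2): one has to check that the local objects for the set of maps implicitly produced by Theorem \ref{local-proj-theory} coincide with those for Hovey's and Schwede's explicit stabilizing maps. This should follow by standard adjunction arguments exploiting that in both cases the local objects are characterized by the Segal / $\Omega$-spectrum condition $X_n \xrightarrow{\sim} \Omega X_{n+1}$.
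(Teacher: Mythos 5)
Your overall strategy is the same as the paper's: apply Theorem~\ref{local-proj-theory} to the strict projective model structure on $\spu$ and then compare the resulting localization with Hovey's stable model structure by showing they are left Bousfield localizations of the same base structure with identical fibrant objects. The verification that $\sset_{\ast}$ is tractable, that the strict model structure is combinatorial and left proper, and that $\T_n$ is right Quillen are all handled correctly.

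The genuine gap is in your claim that ``an object is fibrant in $\C_B(\T)$ iff it is fibrant in the underlying level structure and satisfies the Segal conditions.'' Theorem~\ref{local-proj-theory} only gives the \emph{forward} direction: a fibrant object of $\C_B(\T)$ is level-Kan and satisfies the Segal conditions. The converse --- that a level-Kan $\Omega$-spectrum is automatically local with respect to the localizing set $\kbi_{proj} = \{\Upsilon_n\Gamma(\zeta_i)\}$ --- is the nontrivial part, and you assert it without argument in part~(1), then lean on it again in part~(2). You do flag this as the ``main obstacle'' in your last paragraph, but dismissing it as ``mostly bookkeeping'' and ``standard adjunction arguments'' underestimates it: being local for a set of trivial cofibrations is a mapping-space condition, and satisfying the Segal condition is a priori only the weaker RLP/HELP consequence proved in the theorem. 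The paper closes this gap through Lemma~\ref{lem-stab-alpha-zeta}, showing that each $\Upsilon_n\Gamma(\zeta_i)$ is a stable equivalence (by computing that its components stabilize to level trivial cofibrations for $k \geq n+1$, hence do not change stable homotopy groups). From this one gets: Hovey-fibrant $\Rightarrow$ local for all stable equivalences $\Rightarrow$ local for the $\Upsilon_n\Gamma(\zeta_i)$ $\Rightarrow$ fibrant in $\C_B(\T)$. Without this lemma, or an explicit direct computation that a level-Kan $\Omega$-spectrum is $\kbi_{proj}$-local, your proof does not establish either half of the theorem as stated.
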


Finally we set up a definition of generalized $\Ub$-chain complexes for future applications as linked $\Z$-sequences that satisfy the chain condition.  It's not new that chain complexes and spectra behave similarly.

\subsection*{Applications} 
There are various applications of the theory that is being developed here. We outline for the moment some  directions of interest. They will appear in the subsequent papers. 
\begin{enumerate}
\item Our interest in comma categories comes when we consider a right Quillen  $\Ub : \ag \to \M$ that is a  Quillen equivalence. An important example is the homotopy coherent nerve of Cordier and Porter \cite{PC-hcc} from simplicial categories to quasicategories.  Joyal showed that this right Quillen functor is in fact a Quillen equivalence (see Bergner  \cite{Bergner_inf,Bergner_scat}). Our theorem shows that the comma category that lives in between is Quillen equivalent to both simplicial categories and quasicategories. On the one hand, Lurie and Joyal gave a significant amount of work on quasicategories. And on the other hand, simplicial categories are $\sset$-enriched categories, and there is a lot in the literature on the subject. A classical reference is the book of Kelly \cite{Ke}. We would like to understand how the existing constructions for enriched categories such as weighted limits, colimits, adjunction, Cauchy completion, etc; interact with the corresponding notions introduced by Joyal and Lurie.

\item Following the ideas of Jardine \cite{Jardine_simpresh}, Hirshowitz-Simpson \cite{HS}, Morel-Voevodsky \cite{Mo_Vo_A1}, Toën-Vezzozi \cite{ToVe1} and others the study of presheaves on a Grothendieck site with coefficient in $\Hom(\zd, \mum)$ has application in motivic homotopy theory. Indeed we can attain presheaves of spectra this way.
\item The theory of operads, properads, PROP's and algebras over them has applications in several fields of mathematics (see for example Loday-Vallette \cite{Lo_Val}, Markl-Shnider-Stasheff \cite{Markl_SS}). Many of the existing construction for usual algebras can be transported to Quillen-Segal algebras. When we consider a general model category,  the problem of homotopy invariance for algebras doesn't always have a solution. It turns out that this problem is intimately related to a conjecture of Simpson about \emph{weak units} in Higher categories (see Joyal-Kock \cite{Joyal_Kock_wu}). We will address  this later.

\item One of the consequences  of this work is a \emph{strictification} theorem for co-Segal algebras and co-Segal categories. This is the analogue of a theorem of Bergner \cite{Bergner_rigid} for the strictification of Segal categories.

\end{enumerate}

\subsection*{Organization of the paper}
\begin{itemize}
\item Section $2$ is about Quillen-Segal theories. The main result is the ``Bousfield localization of the theory''.
\item We introduce Quillen-Segal algebras in Section $3$.
\item Section $4$ contains the category theory of comma constructions.
\item  In Section $5$, we develop the homotopy theory on the comma category $\mdua$. We follow closely the same method as Hovey \cite{Hovey_Arr} to get an \emph{injective} and \emph{projective} model structure. 
\item In Section $6$, we set up some material on Stable Homotopy Theory.
\end{itemize}

\subsection*{Related works} 
Model structures on comma categories have been discussed  by  Stanculescu \cite[Section 6.5]{Stanculescu_multi}, Toën \cite{To_hall} and others. But they consider instead the comma categories $(\Ub \downarrow \M)$, where $\Ub$ is a left Quillen functor. Considering other motivations, Stanculescu focused on the dual notion of what we call Quillen-Segal object, in that one would demand  $\Ub(\F_0) \to \F_1$ to be a weak equivalence. But in the case of algebras,  we will capture the homotopy theory of free algebras.  He also mentioned that it could be interesting to study the comma category $\mdua$ and consider what we call Quillen-Segal object. %our formalism would not work here  because the category $(\Ub \downarrow \M)$ doesn't possess good properties. Indeed,  in our case the functor $\Ub$ is a right adjoint and therefore doesn't always preserves colimits. \ \\ 

The general philosophy of this paper is also close to that of Hollander \cite{Hollander_stack}, Hirschowitz-Simpson \cite{HS}, Jardine \cite{Jardine_simpresh}, Joyal-Tierney \cite{Joy-Tier},  Stanculescu \cite{Stanculescu_stack} and many others. By this we mean that the theory of \emph{higher stacks} is a theory of fibrant objects. And these fibrant objects satisfy some generalized Segal conditions with respect to a certain Quillen-Segal theory.

\begin{thank}
Part of this paper was done when I was a postdoctoral fellow in the research group of Rick Jardine, and I would like to thank him for his support during those years. I would like to thank David White for some helpful comments  and for referring me to a paper of  Mark Hovey that was inspirational. 
\end{thank}

\newpage
  
\section{Quillen-Segal theories}
The goal of this section is to provide a general framework that is commonly used to produce a model structure on some category $\C$ such that the fibrant objects satisfy \emph{some generalized Segal conditions}.\ \\

Let $\M$ be a general model category, that will be in many cases a monoidal model category in the sense of Hovey \cite{Hov-model}. We will denote by $\Ar(\M)$ its category of morphisms, also called the \emph{arrow category}. Hovey \cite{Hovey_Arr} provides the \textbf{injective and projective} model structures on $\Ar(\M)$. In both model structure the weak equivalences are the objective-wise weak equivalences. We will denote them by $\armij$ and $\armpj$ and they will be reviewed in Section \ref{inj-proj-comma}. The category $\Arr(\M)$ is a diagram category and these model structures are particular case of Reedy model structures which have been widely discussed in the literature (see  \cite{DHK},\cite{Jardine-Goerss},\cite{Hirsch-model-loc},\cite{Simpson_HTHC}).

\begin{df}\label{qs-theory}
Let $\C$ be an arbitrary category. 
\begin{enumerate}
\item An \emph{$\M$-valued Quillen-Segal theory $\T$} on $\C$ is a family of functors called \emph{Segal data functors}: $\{\T_i: \C \to \Ar(\M) \}_{i \in I},$
for some small set $I$.
\item Say that an object $c \in \C$ satisfies the \emph{generalized Segal conditions}, if for every $i \in I$ $\T_i(c) \in \Ar(\M)$ is a weak equivalence in $\M$. 
\item Inductively, an  \emph{$n$-fold $\M$-valued Quillen-Segal theory $\T$} on $\C$ is a family of functors, called \emph{Segal $n$-data functors}: $\{\T_i: \C \to \Ar^{n}(\M) \}_{i \in I},$
for some small set $I$.
\end{enumerate}
%We will say that there is an $\M$-valued  Quillen-Segal theory on $\C$ if there is one for some $n \in \N$.
\end{df}

\begin{ex}
The first example comes of course from $\Gamma$-spaces and Segal $n$-categories. We shall outline the idea for classical Segal categories. Let $\sset$ be the category of simplicial sets and set $I= \Ob(\Delta)$ and let $\C= \Hom(\Delta^{op}, \sset)$ be the category of simplicial spaces.  For $n \in \Delta$, let $\T_n: \C \to \Ar(\sset) $ be the functor that takes a simplicial space $\Aa$ and  projects the $n$th Segal map: $p_n: \Aa(n) \to  \Aa(1) \times_{\Aa(0)} \cdots \times_{\Aa(0)} \Aa(1).$
As mentioned before, it's the same formulas for Segal spaces \cite{Rezk_mh}. 
\begin{ex}
Leinster \cite{Lei3} introduced up-to-homotopy algebras as as colax monoidal functor $\Xa: (\Delta^{+},+,0) \to (\ul{M}, \otimes ,I)$ such that for every $n, m \in \Delta^+$ the colaxity map $\varphi_{n,m}: \Xa(n+m) \to  \Xa(n) \otimes \Xa(m)$ and $\varphi_0: \Xa(\0) \to I$ are weak equivalences. In this case the theory is given by the functor $\T_{n,m}$ defined by $\T_{n,m}(\Xa)=\varphi_{n,m}$ and $\T_0(\Xa)= \varphi_0$. We've generalized this notion to Segal enriched category \cite{SEC1}. And the homotopy theory is difficult because precisely the functors $\T_{n,m}$ are hardly right adjoints for a general tensor products $\otimes \neq \times$. We will discuss it in \cite{SEC2}.
\end{ex}
% simplicial space $X: \Delta$ We have a functor called \emph{Segal map functor}
\end{ex}
\subsection{Injective and projective  theory}

\begin{df}\label{inj-proj-qs-theory}
Let $\C$ be a model category. 

\begin{enumerate}
\item An \textbf{injective  $\M$-valued Quillen-Segal theory } on $\C$ is a family of right Quillen functors, called \emph{Segal data functors}: $$\{\T_i: \C \to \armij \}_{i \in I}$$
for some small set $I$.
\item A \textbf{projective $\M$-valued Quillen-Segal theory} on $\C$ is a family of right Quillen functors, called \emph{Segal data functors}: $$\{\T_i: \C \to \armpj \}_{i \in I}$$
for some small set $I$.
\item We will say that there is an $\M$-valued  Quillen-Segal theory on $\C$ if there is a projective or an injective $\M$-valued Quillen-Segal theory on $\C$.
\end{enumerate}
\end{df}

We remind the reader that part of being a right Quillen functor means that there is a left adjoint $\Upsilon_i: \arm \to \C$ to each of the Segal data functor $\T_i$. It also implies that $\Upsilon_i$ is automatically a left Quillen, in that it preserves the cofibrations and the trivial cofibrations in $\arm$.

%In the presence of an injective or projective theory on $\C$, under some assumptions on $\C$ we can perform a Bousfield localization
\begin{nota}
If $\C$ is model category we will denote by $\W$, $\cof$, $\fib$, the three classes of weak equivalences, cofibrations, and fibrations, respectively.
\end{nota}

%We have the following theorem.

\begin{thm}\label{local-inj-theory}
Let $\M$ be a cofibrantly generated model category and let $\T=(\T_i)$ be a Quillen-Segal theory on $\C$.
Assume that one the following conditions holds.

\begin{enumerate}
\item $\C$ has a model structure $(\W,\cof,\fib)$ which is cellular and left proper such that every $\T_i \in \Hom(\C, \arm_{inj})$ is right Quillen.
\item $\C$ has a model structure $(\W,\cof,\fib)$ which is combinatorial and left proper such that every $\T_i \in \Hom(\C, \arm_{inj})$ is right Quillen.
\end{enumerate}
Then there is a new model structure $\C_B(\T)=(\W_B,\cof_B,\fib_B)$ on $\C$ which is a left Bousfield localization, such that:
\begin{itemize}
\item $\T=\{\T_i: \C \to \armij \}_{i \in I}$ defines an Quillen-Segal theory such that every $\T_i$ is also right Quillen with respect to the model structure $\C_B(\T)$;
\item  every fibrant object in this new model structure satisfies the generalized Segal conditions.
\end{itemize}
%    $\C_B$  %Bousfield localization $L(\C)$ of $\C$,  
\end{thm}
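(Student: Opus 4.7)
My plan is to obtain $\C_B(\T)$ as a left Bousfield localization of $\C$ at a carefully chosen small set $S$ of morphisms. Under either hypothesis (cellular, respectively combinatorial, both with left properness) left Bousfield localizations exist at any small set by Hirschhorn's theorem, respectively Smith's theorem as presented by Barwick. The resulting model structure $L_S\C$ has cofibrations $\cof_B = \cof$ and trivial fibrations equal to those of $\C$, while its fibrant objects are exactly the $\C$-fibrant objects that are $S$-local. The task therefore reduces to exhibiting a set $S$ for which $S$-locality of a $\C$-fibrant $c$ is equivalent to the generalized Segal conditions $\T_i(c) \in \W_\M$ for every $i$.

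I build $S$ using the Quillen adjunctions $\Upsilon_i \dashv \T_i$ that come with the theory. By the derived adjunction, $\Map_\C(\Upsilon_i(u),c) \simeq \Map_{\arm_{inj}}(u,\T_i(c))$ for $u$ cofibrant in $\arm_{inj}$ and $c$ fibrant in $\C$, so it suffices to construct, for each $i$, a set $S_i$ of cofibrations between cofibrant objects of $\arm_{inj}$ whose local fibrant objects are precisely the fibrant arrows with underlying map a weak equivalence in $\M$; one then takes $S := \bigcup_i \Upsilon_i(S_i)$. For $S_i$, I use, for each generating cofibration $\lambda : A \to B$ in $\Iam$, the commutative square
$$\hat\lambda : (\lambda : A \to B) \longrightarrow (\mathrm{id}_B : B \to B)$$
viewed as a morphism in $\arm$ with components $(\lambda, \mathrm{id}_B)$. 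Both components are cofibrations in $\M$, so $\hat\lambda$ is a cofibration in $\arm_{inj}$, and a cofibrant replacement ensures cofibrant source and target when needed. A routine derived-mapping-space computation identifies locality of a fibrant $(f : X \to Y) \in \arm_{inj}$ against $\hat\lambda$ with the canonical comparison $\Map_\M(B,X) \to \Map_\M(A,X) \times^h_{\Map_\M(A,Y)} \Map_\M(B,Y)$ being a weak equivalence. Running this condition over all $\lambda \in \Iam$ is, by the standard fibrant-object detection criterion in a cofibrantly generated model category, equivalent to $f$ itself being a weak equivalence in $\M$.

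It remains to verify the two bulleted claims. That each $\T_i : \C_B(\T) \to \arm_{inj}$ is still right Quillen is immediate from the general properties of left Bousfield localization: such a localization leaves cofibrations and trivial fibrations of $\C$ unchanged while only shrinking the class of fibrations (there are more trivial cofibrations to lift against), hence $\T_i$ still preserves trivial fibrations and the new, smaller, class of fibrations. That fibrant objects of $\C_B(\T)$ satisfy the generalized Segal conditions is the standard characterization of fibrant objects in a left Bousfield localization combined with the construction of $S$: $c$ is fibrant in $\C_B(\T)$ iff $c$ is $\C$-fibrant and $S$-local, iff each $\T_i(c)$ is $S_i$-local in $\arm_{inj}$, iff each $\T_i(c)$ has underlying arrow a weak equivalence in $\M$. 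The main obstacle in the whole argument is checking that the set $\{\hat\lambda : \lambda \in \Iam\}$ really does detect weak equivalences in $\arm_{inj}$ at the object level: both directions are standard, but the argument must navigate the mixed role of source and target in the injective model structure and must use the cofibrant generation of $\M$ with some care, in particular so that cofibrant replacement of each $\hat\lambda$ does not destroy the detection property.
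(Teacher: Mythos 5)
Your localizing set $S$ coincides with the paper's $\kbi_{inj}(\T)$: the map $\hat\lambda \colon \lambda \to \Id_B$ with components $(\lambda, \Id_B)$ is precisely the map $\alpha_\lambda$ of Notation \ref{nota-alpha}, so $S = \coprod_i \Upsilon_i(\alpha_{\Iam})$. The existence of the localization and the verification that each $\T_i$ stays right Quillen for $\C_B(\T)$ are both correct.

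The gap is the step you yourself flag at the end: that $S_i$-locality of the fibrant object $\T_i(c) \in \armij$ forces $\T_i(c)$ to be a weak equivalence in $\M$. The ``standard fibrant-object detection criterion'' you appeal to is a function-complex statement which requires the domains of $\Iam$ to be cofibrant, because $\Map^h(A,-)$ is computed after a cofibrant replacement of $A$, and when $A$ is not cofibrant a cofibrant approximation of $\hat\lambda$ need not retain the detection property. In effect you are using the third of the three detection techniques outlined in the paper (function complexes, Hovey-style), which implicitly needs tractability, while Theorem \ref{local-inj-theory} assumes only cofibrant generation. The paper's proof uses the first technique (strict RLP) together with a rigidity your argument never exploits: each $\Upsilon_i(\alpha_\lambda)$ is an honest cofibration of $\C$ (since $\Upsilon_i$ is left Quillen and $\alpha_\lambda$ is an injective cofibration of $\arm$), and every element of the localizing set is automatically an $S$-local equivalence, so these maps are trivial cofibrations in $\C_B(\T)$. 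A fibrant $c$ therefore has the \emph{strict} right lifting property against $\Upsilon_i(\alpha_{\Iam})$, which transposes under $\Upsilon_i \dashv \T_i$ to: $\T_i(c) \to \ast$ has the strict RLP against $\alpha_{\Iam}$. Lemma \ref{lifting-lem} then identifies $\T_i(c)$ as a trivial fibration, hence a weak equivalence, with no hypothesis on the domains of $\Iam$. Replace your mapping-space detection with this strict lifting argument to close the gap.
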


We will give  the proof at the end of the section.  For a projective Quillen-Segal theory, we need to recall the definition of a tractable model category as in Barwick \cite{Barwick_localization}.
\begin{df}
A cofibrantly generated model category $\M$ is \textbf{tractable} if $\M$ is locally presentable and both cofibrations and trivial cofibrations are generated by a set of morphisms between cofibrant objects. 
\end{df}

\begin{thm}\label{local-proj-theory}
Let $\M$ be a tractable  model category and let $\C$ be a category.
Assume that one the following conditions holds.

\begin{enumerate}
\item $\C$ has a model structure $(\W,\cof,\fib)$ which is cellular and left proper.
\item $\C$ has a model structure $(\W,\cof,\fib)$ which is combinatorial and left proper.
\end{enumerate}
Then for any projective theory $\T=\{\T_i: \C \to \armpj \}_{i \in I}$ on $\C$ with  respect to the model structure $(\W,\cof,\fib)$, there is a new model structure $\C_B(\T)=(\W_B,\cof_B,\fib_B)$ on $\C$ which is a left Bousfield localization such that:
\begin{itemize}
\item $\T=\{\T_i: \C \to \armpj \}_{i \in I}$ defines a projective theory with respect to the model structure $\C_B(\T)$ and
\item  every fibrant object in this new model structure satisfies the generalized Segal conditions.
\end{itemize}
\end{thm}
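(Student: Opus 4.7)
The strategy parallels the proof of Theorem~\ref{local-inj-theory}: produce a set $S$ of morphisms in $\C$ whose left Bousfield localization $\C_B(\T):=L_S\C$ forces the generalized Segal conditions to hold on fibrant objects. The role of tractability is to guarantee that the detecting morphisms in $\armpj$ --- and, after pulling them back through the left adjoints $\Upsilon_i\dashv\T_i$, the resulting morphisms in $\C$ --- lie between cofibrant objects, which is what left Bousfield localization requires in the projective setting.

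The first step is to construct, using the generating cofibrations $\Iam$ and trivial cofibrations $\Jam$ of $\M$ (chosen so that all domains and codomains are cofibrant), a set $K$ of morphisms of $\armpj$ between projectively cofibrant objects such that a projective-fibrant object $[f\colon X\to Y]$ is $K$-local in the sense of Hirschhorn if and only if $f$ is a weak equivalence in $\M$. The building blocks are the two left Quillen functors $F_0,F_1\colon\M\to\armpj$ given by $F_0(Z)=[\Id_Z]$ and $F_1(Z)=[\emptyset\to Z]$; tractability ensures $F_0(A)$ and $F_1(A)$ are projectively cofibrant whenever $A$ is cofibrant in $\M$. Conceptually, $K$-locality captures the fact that the canonical morphism $[f]\to[\Id_Y]$ in $\armpj$ (a weak equivalence iff $f$ is) becomes an equivalence at the level of derived mapping spaces out of the relevant cofibrant test objects.

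Next, since each $\Upsilon_i\colon\armpj\to\C$ is left Quillen, it preserves cofibrant objects, so $S:=\bigcup_{i\in I}\Upsilon_i(K)$ is a set of morphisms in $\C$ between cofibrant objects. Applying Hirschhorn's localization theorem in the cellular case, or the Smith--Barwick theorem in the combinatorial case, one obtains the desired left Bousfield localization $\C_B(\T)=L_S\C$, whose cofibrations coincide with those of $\C$ and whose weak equivalences contain $\W$. The adjunction isomorphism shows that a $\C$-fibrant object $c$ is $S$-local iff $\T_i(c)$ is $K$-local for every $i$, which by the first step happens iff every $\T_i(c)$ is a weak equivalence in $\M$; hence the fibrant objects of $\C_B(\T)$ are exactly the $\C$-fibrant objects satisfying the generalized Segal conditions. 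Because left Bousfield localization leaves cofibrations unchanged and can only enlarge the class of trivial cofibrations, each adjunction $\Upsilon_i\dashv\T_i$ remains Quillen, so $\T_i\colon\C_B(\T)\to\armpj$ is still right Quillen.

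The main obstacle is the first step. In the injective setting of Theorem~\ref{local-inj-theory} one has pointwise cofibrations in $\arm$, so every levelwise-reasonable test morphism between levelwise-cofibrant arrows is available and the analogue of $K$ is essentially immediate. In the projective setting, the projectively cofibrant objects are retracts of cell complexes built solely from $F_0(\Iam)$ and $F_1(\Iam)$; one must therefore realise the detecting set $K$ using only these building blocks and verify that $K$-locality really does translate into $f$ being a weak equivalence in $\M$. This is precisely where tractability is indispensable: without cofibrant domains and codomains for the generating (trivial) cofibrations of $\M$, one cannot produce the projectively cofibrant test objects on which the Bousfield localization machinery relies.
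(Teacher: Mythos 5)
Your outline correctly identifies the overall strategy (mirror the injective case, produce a localizing set, use tractability to stay within the projective framework), but it stops short at what you yourself flag as ``the main obstacle'': you never actually produce the detecting set $K$, nor establish the equivalence you claim between ``$[f]$ is $K$-local'' and ``$f$ is a weak equivalence.'' That is precisely the nontrivial content of the theorem. The paper resolves it by constructing, for each generating cofibration $s : A \to B$ of $\M$, a projective cofibration $\zeta_s : s \to j_1$ in $\arm$, where $j_1 : B \to E_s$ comes from a relative cylinder factorization of the fold map $B \cup^{A} B \to B$ (Notation~\ref{nota-zeta}, Proposition~\ref{alph-zeta}), and then setting $\kbi_{proj}(\T) = \coprod_{i \in I} \Upsilon_i(\zeta_{\Iam})$. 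The key technical input you are missing is the HELP Lemma (Lemma~\ref{lifting-lem-proj}): for a tractable $\M$, a morphism $f$ between fibrant objects whose terminal map $f \to \ast$ has the RLP against $\zeta_{\Iam}$ is a weak equivalence. Tractability is used here, in making the homotopy-extension lifting property actually detect weak equivalences, and not merely, as you suggest, to arrange for projectively cofibrant test objects.

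There is also a methodological divergence worth flagging. You formulate detection via Hirschhorn-style $S$-locality (derived mapping spaces), whereas the paper's argument never computes a mapping space. Once $\C_B(\T)$ is formed, the paper simply observes that the elements of $\kbi_{proj}$ have become trivial cofibrations, so any fibrant object $c$ has the RLP against them; transposing through the adjunctions $\Upsilon_i \dashv \T_i$ shows $\T_i(c) \to \ast$ has the RLP against $\zeta_{\Iam}$, and since $\T_i$ is right Quillen and $c$ is fibrant in the original model structure, $\T_i(c)$ is a map between fibrant objects, so the HELP Lemma applies at once. This RLP route is the exact projective analogue of the injective proof of Theorem~\ref{local-inj-theory} (which uses $\alpha_{\Iam}$ and Lemma~\ref{lifting-lem}); your switch to mapping-space locality for the projective case introduces an extra layer that, left unresolved, leaves the argument incomplete.
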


The guiding principle in proving Theorem \ref{local-inj-theory} and Theorem \ref{local-proj-theory} is to develop some techniques that allow one to perform two tasks: the first task is to be able to detect when an object satisfies the Segal conditions; and  the second task is to have a functorial process that takes an object $c$ and creates an object $\Sim(c)$ that satisfies the Segal conditions. For the first task, this amounts to know when a morphism $f=\T_i(c) \in \arm$ is a weak equivalence in $\M$. There are three known techniques to detect this  when $\M$ is cofibrantly generated and we outline them briefly hereafter.
\begin{enumerate}
\item $f$ will be a weak equivalence if we can show that $f$ has the RLP with respect to the set of generating cofibrations $\I$ of $\M$. In that case $f$ is in fact a trivial fibration. 
\item  If $f$ is a map between fibrant objects, $f$ will be a weak equivalence if we can show that it satisfies the  \emph{Homotopy Extension Lifting Property (HELP)} with respect to the elements of $\I$.
\item If $f :X \to Y$, we can use \emph{function complexes}  and check whether  $\Map(C,f): \Map(C,X) \to \Map(C,Y)$ is a weak equivalence of simplicial sets, as $C$ runs through the set of domain and the codomain of maps in $\I$.
\end{enumerate}
With these techniques in mind, it suffices to provide a set of maps $\kbi$ and consider the left Bousfield localization with respect to $\kbi$ such  that by adjunction, being $\kbi$-local as in \cite[Definition 3.2.4]{Hirsch-model-loc} forces our map $f= \T_i(c)$ to be a weak equivalence in $\M$ through one of the above techniques. A $\kbi$-localization in the sense of \cite[Definition 3.3.11]{Hirsch-model-loc} will allow by \emph{the small object argument} of Quillen to perform the second task i.e, produce a functor $\Sim$ with a natural transformation $\Id \to \Sim$  (a fibrant replacement functor) such that $\Sim(c)$ satisfies the generalized Segal conditions.\ \\ %The functor $\Sim$ is a $\kbi$-injective replacement functor, for a set of maps $\kbi$ that depends on $\I$; and $\Sim$ is a localization process in the sense of Bousfield. \ \\

These techniques are now standard in Homotopy theory and go back to Bousfield-Friedlander \cite{Bous_Fried}, Jardine \cite{Jardine_simpresh}, Joyal \cite{Joyal_simpsh}, Kan \cite{Kan_exinf} and others. Simpson \cite[Chapter 7.7]{Simpson_HTHC} used the first two techniques for Segal categories where the functor $\Sim$ is a \emph{Segalification functor}. Hovey \cite[Proposition 3.2]{Hov_stable} used the third technique for spectra, where the functor $\Sim$ is weakly equivalent to the \emph{$\Omega$-spectrification} as in Bousfield-Friedlander \cite{Bous_Fried}.\ \\

In this paper we use the first two techniques like Simpson to prove Theorem \ref{local-inj-theory} and Theorem \ref{local-proj-theory}. As a consequence we get, among other things, a generalized \emph{Segalification functor} that encompasses the \emph{$\Omega$-spectrification}. Hovey's method can also be used in our context and we will explain later the connection between his approach and ours.
We will produce two sets of maps $\kbi_{inj}$ and $\kbi_{proj}$ in $\arm$ that will be used in the small object argument to \emph{force and to detect}, by adjunction, the RLP and the HELP against elements of $\Iam$. The set $\kbi_{inj}$ is suitable for an injective Quillen-Segal theory and $\kbi_{proj}$ is for  a projective Quillen-Segal theory.

%These two techniques have been used by  for Segal $\M$ categories

%and for the second task it's Quillen's \emph{small object argument} that 
%In a model category, one of Quillen's axiom is to characterize 

%%%%%% new%%%%%%%%%%%%%
\subsection{Lifting properties and detection of the Segal conditions}
\subsubsection{Homotopy theory of the arrow category}
Let's review briefly the \textbf{injective}  and the \textbf{projective} model structure on the category $\arm$ that can be found in Hovey \cite{Hovey_Arr}. Let $\Un=[0 \to 1]$ be a the walking morphism category. Then $\Ar(\M)$ is the functor category $\Hom(\Un, \M)$ and these model structures are special cases of Reedy model structures (see \cite{Hov-model}). Indeed, one can consider the category $\Un=[0 \to 1]$ as inverse category, and in that case the Reedy structure is the injective model structure. We can also consider $\Un=[0 \to 1]$ as a direct category and  we get the projective model structure (see \cite{Hovey_Arr},\cite{Hov-model} for details).  It's important to observe $\M$ need not to be a cofibrantly generated to get the Reedy model structure. \ \\

Given two objects of $\Ar(\M)$, $f: X_0 \to X_1$ and  $g: Y_0 \to Y_1$, a map $\alpha: f \to g$ in $\Ar(\M)$ consists of two morphisms $\alpha_i : X_i \to Y_i$ such that we have a commutative square  in $\M$ :
 \[
 \xy
(0,18)*+{X_0}="W";
(0,0)*+{ X_1}="X";
(30,0)*+{Y_1}="Y";
(30,18)*+{Y_0}="E";
{\ar@{->}^-{\alpha_1}"X";"Y"};
{\ar@{->}^-{f}"W";"X"};
{\ar@{->}^-{\alpha_0}"W";"E"};
{\ar@{->}^-{g}"E";"Y"};
\endxy
\]

\begin{df}
Let $\alpha: f \to g$ be a map in $\Ar(\M)$. With the previous notation we will say that:
\begin{enumerate}
\item $\alpha$ is a \textbf{injective cofibration} if $\alpha_0$ and $\alpha_1$ are cofibrations in $\M$. 
\item $\alpha$ is a  \textbf{projective  cofibration} if
\begin{itemize}
\item $\alpha_0$ is a cofibration  in $\M$ and 
\item  the unique induced morphism $ X_1 \cup^{X_0} Y_0 \to Y_1$ is a  cofibration in $\M$.
\end{itemize} 
\item $\alpha$ is a \textbf{level-wise weak equivalence} (resp. \textbf{level-wise fibration}) if $\alpha_0$ and $\alpha_1$ are weak equivalences (resp. fibrations) in $\M$.
\item $\alpha$ is an \textbf{injective fibration} if $\alpha_1: X_1 \to Y_1$ is a fibration and if the induced map : 
$$ X_0 \to X_1 \times_{Y_1} Y_0 $$
is a fibration. 
\end{enumerate}
\end{df}
\newpage
\begin{thm}\label{hovey-arr-thm}
Let $\M$ be a model category.  Then with the previous definitions, the following hold.
\begin{enumerate}
\item The three classes of injective fibrations, injective cofibrations and level-wise weak equivalences determine a model structure on $\Ar(\M)$ called \textbf{the injective model structure}.
\item The three classes of projective fibrations, projective cofibrations and level-wise weak equivalences determine a model structure on $\Ar(\M)$ called \textbf{the injective model structure}.
\item If $\M$ is cofibrantly generated (resp combinatorial) then so are the injective and projective model structures.
\end{enumerate}
\end{thm}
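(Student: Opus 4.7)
The plan is to identify $\Ar(\M)=\Hom(\Un,\M)$ with a diagram category on the Reedy category $\Un=[0\to 1]$ and deduce both model structures from the standard Reedy theorem (Hovey, \textit{Model Categories}, Theorem 5.2.5). The walking arrow $\Un$ can be endowed with two degree functions. With $\deg(0)=0,\deg(1)=1$, only degree-raising morphisms exist, so $\Un$ is a direct Reedy category; with $\deg(0)=1,\deg(1)=0$ it is an inverse Reedy category. For any model category $\M$, the Reedy theorem then yields two model structures on $\Ar(\M)$ for which the weak equivalences are, in both cases, the level-wise ones.

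Next I would check that these two Reedy structures coincide with the projective, resp.\ injective, structure as described. For the direct structure, the latching object at $0$ is the initial object $\emptyset$ (so the latching map of $\alpha\colon f\to g$ at $0$ is $\alpha_0\colon X_0\to Y_0$), and the latching object at $1$ is the pushout $X_1\cup^{X_0}Y_0$ (so the latching map at $1$ is the induced arrow into $Y_1$). The Reedy cofibrations are exactly those maps whose latching maps are cofibrations in $\M$, which matches the definition of projective cofibration, while the fibrations are level-wise by direct Reedy-ness. Dually, for the inverse structure, the matching object at $1$ is terminal (so matching map is $\alpha_1\colon X_1\to Y_1$) and at $0$ is the pullback $X_1\times_{Y_1}Y_0$ (so matching map is $X_0\to X_1\times_{Y_1}Y_0$). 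The Reedy fibrations are those maps whose matching maps are fibrations, matching the definition of injective fibration, while the cofibrations are level-wise, i.e.\ injective cofibrations.

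For the third claim, assume $\M$ is cofibrantly generated by $(\I,\Ja)$. For the projective structure one exhibits explicit generating sets: the generating projective cofibrations are the maps of the form $(i\colon A\to B,\Id_B)\colon(A\to B)\to(B\to B)$ and $(\emptyset\to A,\emptyset\to B)$ for $i\in\I$ seen suitably as arrows, and analogously from $\Ja$ for trivial cofibrations; verification of the lifting properties against level-wise (trivial) fibrations is routine because the latching maps are built from the components. This is the argument Hovey \cite{Hovey_Arr} carries out. If $\M$ is moreover locally presentable (hence combinatorial), then so is $\Ar(\M)$, and the class of level-wise weak equivalences is an accessible subcategory of $\Ar(\M)^{[1]}$ closed under the usual $2$-out-of-$3$ and retract properties; by J.\ Smith's theorem, the injective cofibrations together with the level-wise weak equivalences determine a cofibrantly generated model structure, which must coincide with the injective Reedy structure obtained above.

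The main obstacle is the injective cofibrant generation in part (3): producing an explicit small set of generating trivial cofibrations is not straightforward, because there is no small set of level-wise trivial injective cofibrations with the required solution-set property. The clean way around this is to invoke Smith's recognition theorem, using accessibility of level-wise weak equivalences, rather than attempting a hands-on construction; this is why the combinatorial (or, alternatively, cellular) hypothesis is needed for the injective side.
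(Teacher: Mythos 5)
Your identification of the injective and projective structures with the Reedy model structures for the two Reedy structures on $\Un$ is exactly the route the paper (and Hovey's arrow paper) take, and the latching/matching computations you give are correct. The problems are in part (3).

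First, the generating set you write for the \emph{projective} structure is wrong: the map $(i,\Id_B)\colon(A\to B)\to(B\to B)$ is the map $\alpha_i$ of Notation \ref{nota-alpha}, and it is \emph{not} a projective cofibration in general. Its latching map at $1$ is the codiagonal $B\cup^A B\to B$, which need not be a cofibration. The correct generating projective cofibrations are $L_0(i)=(i,i)\colon\Id_A\to\Id_B$ together with $L_1(i)=(\Id_\emptyset,i)\colon(\emptyset\to A)\to(\emptyset\to B)$, i.e.\ the images of $\I$ under the left adjoints to $\Ev_0,\Ev_1$; this is also what the paper's later Theorem on $\I_{\muapj}=L_1(\Iag)\coprod\Fb^+(\Iam)$ specializes to when $\Ub=\Id$. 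Second, and more importantly, your concluding paragraph is incorrect: the set $\alpha_{\I}\cup L_1(\I)$ that you (mis)assigned to the projective side is in fact an \emph{explicit} generating set for the \emph{injective} structure. One checks directly (cf.\ Lemma \ref{lifting-lem}) that a map $\beta$ has the RLP against all $\alpha_i$, $i\in\I$, iff the induced map $X_0\to X_1\times_{Y_1}Y_0$ is a trivial fibration, and against all $L_1(i)$ iff $\beta_1$ is a trivial fibration; together these are exactly the injective trivial fibrations. So the injective structure on $\Ar(\M)$ \emph{is} cofibrantly generated whenever $\M$ is, with no appeal to accessibility or J.~Smith's theorem, and the theorem correctly requires only cofibrant generation of $\M$ for this conclusion. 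The combinatorial hypothesis in part (3) is not what makes the injective side work; it is only needed to upgrade ``cofibrantly generated'' to ``combinatorial,'' which is immediate once one knows $\Ar(\M)$ is locally presentable. This explicit generating set (rather than a non-constructive Smith argument) is precisely the content of Hovey's arrow paper, and is what the paper needs downstream, since the set $\alpha_{\I}$ is used as a localizing set in Section~2.
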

\begin{proof}
See Hovey \cite{Hovey_Arr}.
\end{proof}
\subsubsection{Localizing sets}

We borrow here some notation in Hovey \cite{Hovey_Arr}.
\begin{nota}\label{nota-alpha}
Let $\M$ be a cofibrantly generated model category with $\Iam$  and $\Jam$ the respective sets of generating cofibrations and trivial cofibrations. 
\begin{enumerate}
\item For any morphism $ s : A \to B$ of $\M$ we will denote by $\alpha_s: s \to \Id_B$ the map in $\Ar(\M)$ corresponding to the following commutative square.
 \[
 \xy
(0,18)*+{A}="W";
(0,0)*+{B}="X";
(30,0)*+{B}="Y";
(30,18)*+{B}="E";
{\ar@{->}^-{\Id}"X";"Y"};
{\ar@{->}^-{s}"W";"X"};
{\ar@{->}^-{s}"W";"E"};
{\ar@{->}^-{\Id}"E";"Y"};
\endxy
\]

\item We will denote by $\alpha_{\Iam}$ the set of all $\alpha_i$ for $i \in \Iam$, that is, $\alpha_{\Iam}= \{\alpha_i\}_{i \in \Iam}$.
\item Similarly will denote by  $\alpha_{\Jam}$ the set of all $\alpha_j$ for $j \in \Jam$ : $\alpha_{\Jam}= \{\alpha_j\}_{j \in \Jam}$.

\end{enumerate}
\end{nota}

\begin{df}\label{univ-inj-set}
Define the \textbf{localizing injective set} as the set $\alpha_{\Iam}$.
\end{df}

\begin{prop}
Let $s: A \to B$ be a morphism of $\M$ and let $\Id_{\emptyset}$ be the identity morphism of  the initial object of $\M$
\begin{enumerate}
\item If  $s: A \to B$ is a weak equivalence in $\M$ then $\alpha_s: s \to \Id_B$ is a level-wise weak equivalence in $\Ar(\M)$.
\item If  $s: A \to B$ is a (trivial) cofibration  in $\M$ then $\alpha_s: s \to \Id_B$ is a level-wise (trivial) cofibration in $\Ar(\M)$. %In other words $\alpha_s$ is an injective (trivial) cofibration.
\item If $A$ is cofibrant in $\M$, and $s$ is a cofibration,  then the unique map $\Id_{\emptyset} \to s$ is a level-wise cofibration in $\arm$.
\end{enumerate}
\end{prop}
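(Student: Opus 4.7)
My plan is to observe that each assertion is really just a direct unpacking of the definitions together with the fact that an identity map is a trivial cofibration and a trivial fibration in any model category. Once the two components $\alpha_0$ and $\alpha_1$ of the relevant square are identified, the claim reduces to standard model-category closure properties.

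For parts (1) and (2), the map $\alpha_s : s \to \Id_B$ is the commutative square with left vertical $s : A \to B$, right vertical $\Id_B : B \to B$, top horizontal $\alpha_0 = s : A \to B$, and bottom horizontal $\alpha_1 = \Id_B : B \to B$. Thus if $s$ is a weak equivalence in $\M$ then both $\alpha_0 = s$ and $\alpha_1 = \Id_B$ are weak equivalences (the identity is always a weak equivalence), proving (1). The same observation with ``(trivial) cofibration'' replacing ``weak equivalence'' proves (2), since the identity is a (trivial) cofibration in any model category.

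For part (3), the unique map $\Id_\emptyset \to s$ in $\arm$ has components $\beta_0 : \emptyset \to A$ and $\beta_1 : \emptyset \to B$, each being the canonical arrow from the initial object. By hypothesis $A$ is cofibrant, so $\beta_0$ is a cofibration. For $\beta_1$, I would note that $\beta_1 = s \circ \beta_0$, and since cofibrations are closed under composition in any model category and $s$ is a cofibration by hypothesis, $\beta_1$ is also a cofibration. Hence both components are cofibrations, which is by definition the statement that $\Id_\emptyset \to s$ is a level-wise cofibration.

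The only remark worth making is that no properness, cofibrant generation, or any structural assumption on $\M$ beyond the model-category axioms is needed; each part follows purely from the closure properties of the distinguished classes. There is no real obstacle here — the proposition is set up precisely so that its contents are immediate consequences of the definitions, and it is being recorded for later reference when the sets $\alpha_{\Iam}$ and $\alpha_{\Jam}$ of Notation~\ref{nota-alpha} are used in the localization arguments.
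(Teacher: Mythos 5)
Your proof is correct and matches the paper's argument essentially step for step: both rest on the fact that $\Id_B$ is simultaneously a weak equivalence and a (trivial) cofibration, and both obtain part (3) by factoring $\emptyset \to B$ through $A$ and invoking closure of cofibrations under composition. No discrepancy to report.
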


\begin{proof}
Like any isomorphism of $\M$, the identity $\Id_B$  is a simultaneously a cofibration, a trivial cofibration, and a weak equivalence.  The maps $s$ and $\Id_B$ are  the components defining the map $\alpha_s$.  Therefore $\alpha_s$ is a level-wise weak equivalence (resp. (co)fibration) if and only if $s$ is a weak equivalence (resp. (co) fibration).

The third assertion is clear because $B$ is also cofibrant since by composition the unique map $\emptyset \to A \xrw{s} B$ is a cofibration.
\end{proof}

\paragraph{Relative cylinder object} For a cofibration $s: A \to B$ the map $\alpha_s$ is related to a \emph{relative cylinder object} for the cofibration $s$ (see \cite[Chapter 9.4]{Simpson_HTHC}). Consider the pushout of $s$ along itself with the pushout data $B \xlw{s} A \xrw{s} B$ and write $B \cup^{A} B$ for the pushout-object. From the commutative square that defines the map $\alpha_s:s \xrw{(s,\Id_B)} \Id_B$, we get a unique map $\phi:B \cup^{A} B \to  B$ using the universal property of the pushout. In particular everything  commutes below.

 \[
 \xy
(0,20)*+{A}="W";
(0,0)*+{B}="X";
(40,0)*+{B}="Y";
(40,20)*+{B}="E";
(20,6)*+{B \cup^{A} B}="Z";
{\ar@{->}_-{\Id}"X";"Y"};
{\ar@{->}^-{s}"W";"X"};
{\ar@{->}^-{s}"W";"E"};
{\ar@{->}^-{\Id}"E";"Y"};
%%%%%%%%%%%%%%%%%
{\ar@{->}^-{\epsilon_0}"X";"Z"};
{\ar@{->}^-{\phi}"Z";"Y"};
{\ar@{->}^-{\epsilon_1}"E";"Z"};
\endxy
\]

Now use the axiom of the model category $\M$ to factor the map $\phi$ as a cofibration followed by a trivial fibration:
$$B \cup^{A} B \xhrw{\delta} E_s  \xtwoheadrightarrow[\sim]{p} B.$$
This diagram determines a \emph{relative cylinder object} for the cofibration $s : A \to B$. 
The maps $\epsilon_1$ and $\epsilon_2$ are cofibrations because cofibration are closed under cobase change; and by composition $\delta \circ \epsilon_0$ and $\delta \circ \epsilon_1$ are also cofibrations. Moreover by $3$-for-$2$ with respect to the equality $p \circ [\delta\circ \epsilon_i] = \Id_B$, they are also weak equivalences. It turns out that  $\delta \circ \epsilon_1$ and $\delta \circ \epsilon_2$ are trivial cofibrations.

\begin{nota}\label{nota-zeta}
Let $s: A \to B$ be a cofibration as previously. 
\begin{enumerate}
\item Denote by $j_0$ the composite $\delta \circ \epsilon_0$ will write $j_1= \delta \circ \epsilon_1$. Both are trivial cofibrations.
\item Let $\zeta_s: s \xrw{(s,j_0)} j_1$ be the map in $\arm$ given by the commutative square:
 \[
 \xy
(0,18)*+{A}="W";
(0,0)*+{B}="X";
(30,0)*+{E_s}="Y";
(30,18)*+{B}="E";
{\ar@{->}^-{j_0}_{\sim}"X";"Y"};
{\ar@{->}^-{s}"W";"X"};
{\ar@{->}^-{s}"W";"E"};
{\ar@{->}^-{j_1}_{\sim}"E";"Y"};
\endxy
\]
\item Let $\ell_s: j_1 \xrw{(\Id_B, p)} \Id_B$ be the level wise weak equivalence in $\arm$ given by the commutative square: 
 \[
 \xy
(0,18)*+{B}="W";
(0,0)*+{E_s}="X";
(30,0)*+{B}="Y";
(30,18)*+{B}="E";
{\ar@{->>}^-{p}_{\sim}"X";"Y"};
{\ar@{->}^-{j_1}"W";"X"};
{\ar@{->}^-{\Id_B}"W";"E"};
{\ar@{->}^-{\Id_B}"E";"Y"};
\endxy
\]
\end{enumerate}
\end{nota}

\begin{df}\label{univ-proj-set}
Define the \textbf{localizing projective set} as the set $\zeta_{\Iam}= \{ \zeta_i \}_{i \in \Iam}$.
\end{df}

\begin{prop}\label{alph-zeta}
With the previous notation the following hold. 
\begin{enumerate}
\item For every cofibration $s$, $\zeta_s$ is a cofibration in $\armpj$
\item For every $s$ we have $\alpha_s= l_s \circ \zeta_s$
\item If $s: A \to B$ is a cofibration with $A$ cofibrant then,  $\ell_s$ is a level-wise weak equivalence between cofibrant objects in $\armpj$ and thus $\armij$. 
\item Let $\Upsilon: \arm \to \C$ be a left Quillen functor with respect to either  $\arm_{inj}$ or $\arm_{proj}$. If $s$ is a cofibration between cofibrant objects, then  $\Upsilon(\ell_s)$ is a weak equivalence in $\C$.
\item Assume that  $\Upsilon: \arm \to \C$ is a left Quillen functor with respect to either  $\arm_{inj}$ or $\arm_{proj}$. Then for any cofibration  $s$ between cofibrant objects, $\Upsilon(\alpha_s)$ is a weak equivalence if and only if $\Upsilon(\zeta_s)$ is a weak equivalence.
\end{enumerate}
\end{prop}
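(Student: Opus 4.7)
The plan is to handle the five items in order, since each feeds into the next, and to verify everything directly from the definitions of the injective/projective structures on $\arm$ and the construction of the relative cylinder object.

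For (1), I unpack what a projective cofibration $\zeta_s : s \to j_1$ requires. Writing the components as $(\zeta_s)_0 = s : A \to B$ and $(\zeta_s)_1 = j_0 : B \to E_s$, the two conditions are (a) $(\zeta_s)_0$ is a cofibration — which is our hypothesis on $s$ — and (b) the induced map from the pushout $B \cup^{A} B \to E_s$ is a cofibration. But this induced map is precisely $\delta$ from the factorization $\phi = p \circ \delta$, which is a cofibration by construction. So $\zeta_s$ is a projective cofibration.

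For (2), I compute the composite $\ell_s \circ \zeta_s$ componentwise. At index $0$ I get $\Id_B \circ s = s$. At index $1$ I get $p \circ j_0 = p \circ \delta \circ \epsilon_0$, and since $\phi = p \circ \delta$ and the defining property of the pushout forces $\phi \circ \epsilon_0 = \Id_B$, this composite is $\Id_B$. So the pair $(s, \Id_B)$ recovers exactly $\alpha_s$.

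For (3), the levels of $\ell_s$ are $\Id_B$ and $p$, both of which are weak equivalences in $\M$, so $\ell_s$ is a level-wise weak equivalence. For the cofibrancy claim, I recall that $f : X_0 \to X_1$ is projective cofibrant iff $X_0$ is cofibrant and $f$ is a cofibration in $\M$ (applying the projective cofibration criterion to $\emptyset \to f$). For $j_1 : B \to E_s$, $B$ is cofibrant because it is the target of the cofibration $s$ whose source $A$ is cofibrant, and $j_1$ is a trivial cofibration. For $\Id_B : B \to B$, both conditions are immediate. Since projective cofibrations are level-wise cofibrations (the second component is the composition $X_1 \to X_1 \cup^{X_0} Y_0 \to Y_1$ of two cofibrations), projective cofibrancy implies injective cofibrancy, so both endpoints of $\ell_s$ are cofibrant in $\armij$ as well.

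Item (4) is then an immediate invocation of Ken Brown's lemma: any left Quillen functor sends weak equivalences between cofibrant objects to weak equivalences, and (3) ensures the hypothesis is met in either the injective or the projective structure. Finally, (5) follows by applying $\Upsilon$ to the factorization in (2): the equality $\Upsilon(\alpha_s) = \Upsilon(\ell_s) \circ \Upsilon(\zeta_s)$ combined with the fact from (4) that $\Upsilon(\ell_s)$ is a weak equivalence lets 2-out-of-3 conclude the stated equivalence. The only mildly delicate step is (3), where one has to chase cofibrancy back through the cylinder construction; everything else is a direct unwinding of definitions.
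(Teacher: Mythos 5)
Your proof is correct and follows essentially the same approach as the paper. You supply the explicit verification that the pushout-corner map in the projective-cofibration criterion for $\zeta_s$ is exactly $\delta$ and that $p\circ j_0 = \phi\circ\epsilon_0 = \Id_B$, which the paper compresses into ``immediate by construction,'' and your item (3) is slightly more careful than the paper's in identifying $j_1$ and $\Id_B$ (rather than $s$) as the objects whose cofibrancy must be checked; the remaining items (Ken Brown plus 2-out-of-3) coincide with the paper.
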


\begin{proof}
Assertion $(1)$ and $(2)$ are  immediate by construction. 
Observe that the cofibrant object in the projective model structure $\arm_{proj}$ are the cofibrations $s : A \to B$ whose domain is cofibrant. Moreover any projective cofibration is  also an injective cofibration, thus $s$ is also a cofibrant object in the injective model structure. Now clearly $\ell_s$ is a level-wise weak equivalence. This gives Assertion $(3)$. \ \\

Assertion $(4)$ is a consequence of Ken Brown's Lemma \cite[Lemma 1.1.12 ]{Hov-model}. Indeed, $s$ and $\Id_B$ are cofibrant and $\ell_s$ is weak equivalence between cofibrant objects, so by Ken Brown's Lemma any left Quillen functor must send it to a weak equivalence between cofibrant objects.

Assertion $(5)$ is a consequence of Assertion $(4)$ and the $3$-for-$2$ property of weak equivalences in the model category $\C$.
\end{proof}

\begin{df}
Let $s:A \to B$ be a cofibration and let  $u$ and $v$ be two elements in $\Hom_{\M}(B, X)$. Say that $f$ and $g$ are \emph{homotopic relative to $s$} if there is a map $h : E_s \to X$ such that $h\circ j_0= u$ and $h \circ j_1= v$.
\end{df}

\begin{rmk}
It's important to observe the following.
\begin{enumerate}
\item Note that the maps $j_0$ and $j_1$ restrict to the same map on $A$. Therefore we have an equality $h \circ (j_0 \circ s) =  h \circ (j_1 \circ s)$.
\item Any map $h: E_s \to X$ defines tautologically a homotopy between $u:=h\circ j_0$ and $v:=h \circ j_1$.
\end{enumerate}

\end{rmk}

\paragraph{Homotopy lifting problem}
\begin{df}
Let $f:X_0 \to X_1$ be an arbitrary morphism and let $s: A \to B$ be a cofibration in $\M$.  Consider a lifting problem of solid arrows defined by $s$ and $f$ through a map $\theta: s \xrw{(\theta_0,\theta_1)} f$  in $\arm$ as follows. 
\[
 \xy
(0,18)*+{A}="W";
(0,0)*+{B}="X";
(30,0)*+{X_1}="Y";
(30,18)*+{X_0}="E";
{\ar@{->}^-{\theta_1}"X";"Y"};
{\ar@{->}^-{s}"W";"X"};
{\ar@{->}^-{\theta_0}"W";"E"};
{\ar@{->}^-{f}"E";"Y"};
%%%%
{\ar@{-->}^-{}"X";"E"};
\endxy
\]
A solution  \textbf{up-to-homotopy} to this problem consists of a map $r: B \dashrightarrow X_0$ such that:
\begin{enumerate}
\item $r \circ s = \theta_0$ i.e, $r$ solves strictly the upper triangle; and 
\item $f \circ r$ and $\theta_1$ are homotopic relative to $s$, in that, there exists a map $h : E_s \to X_1$ such that $h \circ j_0= \theta_1$ and $h \circ j_1=f \circ r$. In other words $r$ solves the lower triangle up-to a relative homotopy.
\end{enumerate} 
Say that $f$ has the \emph{Homotopy Extension Lifting  Property} with respect to $s$ if there is a solution up-to-homotopy to any lifting problem defined by $s$ and $f$.
\end{df}

\begin{prop}
Let $f : X_0 \to X_1$ be an object in $\arm$ and let $f \to \ast$ be the unique map going to the terminal object. Consider a lifting problem of solid arrows defined by $\zeta_s$ and $f \to \ast$ as follows. 
 \[
 \xy
(0,18)*+{s}="W";
(0,0)*+{j_1}="X";
(30,0)*+{\ast}="Y";
(30,18)*+{f}="E";
{\ar@{->}^-{!}"X";"Y"};
{\ar@{->}_-{\zeta_s=(s,j_0)}"W";"X"};
{\ar@{-->}^-{(\gamma_0, \gamma_1)}"X";"E"};
{\ar@{->}^-{(\theta_0,\theta_1)}"W";"E"};
{\ar@{->}^-{!}"E";"Y"};
\endxy
\]

Then we have an equivalence between the following.
\begin{enumerate}
\item A solution $\gamma=(\gamma_0,\gamma_1): j_1 \to f$ to this lifting problem.
\item An up-to-homotopy solution to the lifting problem in $\M$ defined by $s$ and $f$ by the map $\theta: s \xrw{(\theta_0,\theta_1)} f$.
\end{enumerate}
\end{prop}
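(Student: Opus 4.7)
The plan is to observe that this is essentially a direct unpacking of the two definitions into the same data, with the three commutativity conditions on each side matching term-by-term.

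First I would decode what a lift $\gamma=(\gamma_0,\gamma_1): j_1 \to f$ in $\arm$ amounts to. Since a morphism in $\arm$ from $j_1: B \to E_s$ to $f:X_0 \to X_1$ is a pair $(\gamma_0:B\to X_0,\ \gamma_1:E_s \to X_1)$ satisfying $f\circ \gamma_0 = \gamma_1 \circ j_1$, and since the compatibility $\gamma \circ \zeta_s = \theta$ unpacks component-wise (recalling $\zeta_s=(s,j_0)$ and $\theta=(\theta_0,\theta_1)$) to $\gamma_0 \circ s = \theta_0$ and $\gamma_1\circ j_0 = \theta_1$, the data of $\gamma$ is exactly a pair $(\gamma_0,\gamma_1)$ satisfying the three equalities
\[
\gamma_0 \circ s = \theta_0,\qquad \gamma_1\circ j_0=\theta_1,\qquad f\circ\gamma_0 = \gamma_1\circ j_1.
\]
The right-hand square from $f$ to $\ast$ gives no further constraint since $\ast$ is terminal.

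Next I would compare this with the data of an up-to-homotopy solution $(r,h)$: we have $r:B\to X_0$ with $r\circ s=\theta_0$, together with $h:E_s \to X_1$ satisfying $h\circ j_0=\theta_1$ and $h\circ j_1 = f\circ r$. These are the same three equalities, under the correspondence $r\leftrightarrow \gamma_0$ and $h\leftrightarrow \gamma_1$.

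The proof then consists of exhibiting the two directions of the bijection explicitly: given $\gamma$, set $(r,h):=(\gamma_0,\gamma_1)$ and verify the three equalities above now read exactly as the conditions defining a homotopy solution, using that $h$ itself provides, by the remark just before the proposition, a tautological relative-to-$s$ homotopy between $h\circ j_0 = \theta_1$ and $h\circ j_1 = f\circ r$; conversely, from $(r,h)$ set $\gamma_0:=r$ and $\gamma_1:=h$, and check that the same three equations make $\gamma$ a morphism $j_1 \to f$ in $\arm$ whose precomposition with $\zeta_s$ is $\theta$. The assignments are mutually inverse because they are literally the identity on the underlying pair of morphisms.

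There is no real obstacle here; the content of the statement is simply that the two mapping-space descriptions agree, and the only thing to be careful about is bookkeeping the indices $0,1$ consistently (which component of the arrow category corresponds to which triangle of the lifting square), so that the equality $f\circ \gamma_0=\gamma_1\circ j_1$ that encodes $\gamma$ being a morphism in $\arm$ is recognized as the statement $h\circ j_1 = f\circ r$ that makes $h$ a relative homotopy from $\theta_1$ to $f\circ r$.
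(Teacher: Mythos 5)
Your proof is correct and takes the same approach as the paper: the paper's proof is simply ``one writes down everything'' and sets $r=\gamma_0$, $h=\gamma_1$ (and conversely), and you have carried out exactly that unpacking, spelling out the three matching equalities explicitly.
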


\begin{proof}
One simply writes down everything. 
Given a solution $\gamma=(\gamma_0,\gamma_1): j_1 \to f$, we get a solution up-to-homotopy by letting $r=\gamma_0$ and $h=\gamma_1$ and conversely given a solution up-to-homotopy $r: B \to X_0$ and $h: E_s \to X_1$ defines a map $\gamma=(r,h): j_1 \to f$ that is a solution to the lifting problem.  
\end{proof}

\subsubsection{Detecting the Segal conditions}

With the sets of maps $\alpha_{\Iam}$ and $\alpha_{\Jam}$ we are able to detect in $\Ar(\M)$ whether a map $f: X_0 \to X_1$ is a trivial fibration or  fibration by lifting property just like we do with the set $\Iam$ and $\Jam$ in $\M$. Hovey \cite[Proposition 2.2]{Hovey_Arr} showed  that $\alpha_{\Iam}$ is a subset of the generating set of cofibrations in the \textbf{injective model structure} on $\Ar(\M)$.

\begin{lem}\label{lifting-lem}
Let $\M$ be a  model category and let $S$ be a set of maps. 
\begin{enumerate}
\item A morphism $f: X_0 \to X_1$ has the RLP in $\M$  with respect to $S$, if and only if the unique map  $f \to \ast$ going to the terminal object has the RLP in $\Ar(\M)$ with respect to the set $\alpha_S = \{\alpha_s\}_{s \in S}$.
\item More generally, if $g: Y_0 \to Y_1$, then a map $\beta: f \to g$ in $\Ar(\M)$ has the RLP with respect to $\alpha_S = \{\alpha_s\}_{s \in S}$, if and only if the induced map 
$ X_0 \to X_1 \times_{Y_1} Y_0 $ has the RLP in $\M$ with respect to the set $S$.
\item In particular a morphism $f: X_0 \to X_1$ is a trivial fibration , if and only if, $f \to \ast$ has the RLP with respect to the set $\alpha_{\Iam}$.
\end{enumerate}
\end{lem}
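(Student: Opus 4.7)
The plan is to observe that assertions $(1)$ and $(3)$ are essentially corollaries of $(2)$, so the real content lies in proving $(2)$. Indeed, $(1)$ is the special case of $(2)$ obtained by taking $g$ to be the identity of the terminal object: then $X_1 \times_{Y_1} Y_0$ collapses to $X_1$ and the induced map $X_0 \to X_1 \times_{Y_1} Y_0$ reduces to $f$ itself. Assertion $(3)$ is then $(1)$ combined with the standard recognition principle that a map is a trivial fibration in a cofibrantly generated model category iff it has the RLP against $\Iam$.

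For $(2)$, the first step is to unwind what a commutative square $\alpha_s \to \beta$ in $\Ar(\M)$ really is: a commutative cube in $\M$ whose top face is a morphism $\theta : s \to f$ (data $\theta_0 : A \to X_0$, $\theta_1 : B \to X_1$ with $\theta_1 s = f \theta_0$), whose bottom face is a morphism $\psi : \Id_B \to g$ (data $\psi_0, \psi_1 : B \to Y_0, Y_1$ with $\psi_1 = g \psi_0$), and which is compatible in the sense that $\beta \circ \theta = \psi \circ \alpha_s$. Because the right-hand components of $\alpha_s$ are identities, this compatibility collapses to the two equations $\beta_0 \theta_0 = \psi_0 s$ and $\psi_1 = \beta_1 \theta_1$.

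The key translation step is to repackage this datum as a lifting problem in $\M$ for $X_0 \to X_1 \times_{Y_1} Y_0$ against $s$. From $\beta_1 \theta_1 = g \psi_0$ the pair $(\theta_1, \psi_0)$ factors uniquely through $X_1 \times_{Y_1} Y_0$, and the equations $\beta_0 \theta_0 = \psi_0 s$ and $\theta_1 s = f \theta_0$ say exactly that this pair is compatible with $\theta_0$ under the induced map out of $X_0$. Conversely, any lifting problem for the induced map unfolds back into such a cube, so the two kinds of lifting data are naturally in bijection. A solution in $\Ar(\M)$ is a morphism $r : \Id_B \to f$, hence determined by its first component $r_0 : B \to X_0$ (the second component being forced to be $f r_0$); the two triangle identities then collapse to $r_0 s = \theta_0$, $f r_0 = \theta_1$ and $\beta_0 r_0 = \psi_0$, which is precisely a solution to the $\M$-lifting problem.

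The only delicate aspect of the argument is the bookkeeping: verifying that the apparently extra equations (the second components of the lifts, and the equation on $\psi_1$) truly are redundant, each following by composition from the explicit constraints imposed by $\alpha_s$ and $\beta$. I do not expect a genuine obstacle here, only attentive diagram-chasing.
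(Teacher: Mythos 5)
Your proof is correct and proceeds by essentially the same argument as the paper: reduce to assertion (2), unfold the lifting square in $\Ar(\M)$ into a cube in $\M$, use the pullback's universal property to repackage the data as a lifting problem for the induced map $X_0 \to X_1 \times_{Y_1} Y_0$ against $s$, and verify that solutions correspond. You are slightly more explicit than the paper in two places that it glosses over — spelling out that (1) is the case $g = \Id_{\ast}$ of (2), and noting that the correspondence of lifting data goes both ways — but the approach is the same.
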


The last  assertion is also mentioned in Rosicky \cite[Proof of Proposition 3.3]{Rosicky_combinatorial}.

\begin{proof}

\begin{comment}

\[
\xy
%Le digram de dessus%%%
(-60,20)*+{A}="A";
(20,30)*+{X_0}="B";
(-20,-10)*+{B}="C";
(60,0)*+{Y_0}="D";
%%%%% la double fleche du milieu pour la face au dessus%%%
%%%%% la double fleche du milieu pour la face en dessous%%%
{\ar@{->}^{}"A";"B"};
{\ar@{->}_{s}"A";"C"};
{\ar@{->}^{\beta_0}"B";"D"};
{\ar@{->}_{}"C";"D"};
% le diagram d'en dessous%%%
(-60,-30)*+{B}="X";
(20,-20)*+{X_1}="Y";
(-20,-60)*+{B}="Z";
(60,-50)*+{Y_1}="W";
{\ar@{-->}^{\gamma_1~~~~~~~~}"X";"Y"};
{\ar@{->}_{s}"A";"X"};
{\ar@{->}^{\beta_1}"Y";"W"};
{\ar@{->}^{}"C";"Z"};
{\ar@{->}^{}"B";"Y"};
{\ar@{->}^{\L_{\W_{UZ}}}"D";"W"};
{\ar@{=}_{\Id_B}"X";"Z"};
{\ar@{->}_{}"Z";"W"};
%%%%%%%%% produit fibre%%%%%%%%%%%
(20,30)+(11,-25)*+{X_1 \times_{Y_1} Y_0 }="E";
{\ar@{->}^{}"B";"E"};
{\ar@{.>}^{}"E";"D"};
{\ar@{.>}^{}"E";"Y"};
%{\ar@{->}^{}"C";"E"};
\endxy
\]

\end{comment}
It suffices to prove Assertion $(2)$. 
Consider a lifting problem defined by $\alpha_s$ and $\beta$:
 \[
 \xy
(0,18)*+{s}="W";
(0,0)*+{\Id_B}="X";
(30,0)*+{g}="Y";
(30,18)*+{f}="E";
{\ar@{->}^-{(\gamma_0, \gamma_1)}"X";"Y"};
{\ar@{->}_-{(s,\Id_B)}"W";"X"};
{\ar@{->}^-{(\theta_0,\theta_1)}"W";"E"};
{\ar@{->}^-{(\beta_0,\beta_1)}"E";"Y"};
\endxy
\]

This commutative square corresponds to a commutative cube in $\M$. And if we unfold it, we see that the maps $\gamma_0 : B \to Y_0$ and $\theta_1 : B \to X_1$ determine a commutative square with the maps $g$ and $\beta_1$; in that $\beta_1 \circ \theta_1 = g \circ \gamma_0$. The universal property of the pullback square gives a unique map $\delta: B \to X_1 \times_{Y_1} Y_0$ that makes everything compatible. In particular $\theta_1$ factors through $\delta$. If we put this in the original cube, everything below commutes.  
\[
\xy
%Le digram de dessus%%%
(-60,20)*+{A}="A";
(20,30)*+{X_0}="B";
(-20,-10)*+{B}="C";
(60,0)*+{Y_0}="D";
%%%%% la double fleche du milieu pour la face au dessus%%%
%%%%% la double fleche du milieu pour la face en dessous%%%
{\ar@{->}^{\theta_0}"A";"B"};
{\ar@{->}_{s}"A";"C"};
{\ar@{->}^{\beta_0}"B";"D"};
{\ar@{->}_{\gamma_0~~~~~~~}"C";"D"};
% le diagram d'en dessous%%%
(-60,-30)*+{B}="X";
(20,-20)*+{X_1}="Y";
(-20,-60)*+{B}="Z";
(60,-50)*+{Y_1}="W";
{\ar@{-->}^{\quad \quad \theta_1}"X";"Y"};
{\ar@{->}_{s}"A";"X"};
{\ar@{->}^{\beta_1}"Y";"W"};
{\ar@{=}^{}"C";"Z"};
{\ar@{->}_-{f}"B";"Y"};
{\ar@{->}^{g}"D";"W"};
{\ar@{=}_{\Id_B}"X";"Z"};
{\ar@{->}_{\gamma_1}"Z";"W"};
%%%%%%%%% produit fibre%%%%%%%%%%%
(20,30)+(11,-25)*+{X_1 \times_{Y_1} Y_0 }="E";
{\ar@{->}^{}"B";"E"};
{\ar@{.>}^{}"E";"D"};
{\ar@{.>}^{}"E";"Y"};
%{\ar@{->}^{}"C";"E"};
%%%%%%%%% induced map %%%%%%%%%%%
{\ar@{.>}^{\delta}"C";"E"};
\endxy
\]

As we see, we get a commutative square that corresponds to a lifting problem defined by the map $s:A \to B$ and the map $ X_0 \to X_1 \times_{Y_1} Y_0 $:
 \[
 \xy
(0,18)*+{A}="W";
(0,0)*+{B}="X";
(30,0)*+{X_1 \times_{Y_1} Y_0}="Y";
(30,18)*+{X_0}="E";
{\ar@{->}^-{\delta}"X";"Y"};
{\ar@{->}_-{s}"W";"X"};
{\ar@{->}^-{\theta_0}"W";"E"};
{\ar@{->}^-{}"E";"Y"};
\endxy
\]

Now it suffices to observe that this lifting problem has a solution if and only if our original lifting problem has a solution. Indeed if $\chi :  B \to X_0$ is a solution to the previous lifting problem, then the map $(\chi,\theta_1): \Id_B \to f$ in $\Ar(\M)$ determines a solution to the original lifting problem. And reciprocally given a solution to the original lifting problem $(\chi,\theta_1): \Id_B \to f$, then the component $\chi: B \to X_0$ is a solution to the lifting problem defined by $s$ and $ X_0 \to X_1 \times_{Y_1} Y_0 $.
\end{proof}

\paragraph{HELP Lemma} There are various versions of the HELP Lemma and we refer the reader to Boardman-Vogt \cite{board_vogt}, May \cite{May_dual_white}, Simpson \cite{Simpson_HTHC}, Vogt \cite{Vogt_help} and the references therein. We will use the  version used by Simpson \cite[Lemma 7.5.1]{Simpson_HTHC}. This version is  for tractable model categories but the argument remains exactly the same for a cofibrantly generated model category where the cofibrations are generated by a set of maps between cofibrant objects.

\begin{lem}
Let $\M$ be a tractable model category with a set of cofibration $\Iam$. Then any morphism $f$ between fibrant objects and that satisfies the HELP with respect to every element in $\Iam$ is a weak equivalence. 
\end{lem}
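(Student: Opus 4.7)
My strategy is to factor $f$ as a cofibration followed by a trivial fibration, and reduce (via $2$-of-$3$) to checking that the cofibration half is a weak equivalence. I will then use HELP together with the trivial fibration factor to construct a retract whose composites satisfy the hypotheses of $2$-of-$6$ for weak equivalences.

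Factor $f$ as $X_0 \xrw{i} Z \xrw{q} X_1$ where $i$ is a cofibration and $q$ a trivial fibration; then $Z$ is fibrant (since $X_1$ is and $q$ is a fibration), and by $2$-of-$3$ it suffices to show $i$ is a weak equivalence. Using the equivalence of HELP against $s$ with the RLP of $f \to \ast$ against $\zeta_s$ in $\arm$ (the earlier proposition), HELP of $f$ against $\Iam$ extends to HELP against cofibrations built as relative cell complexes from $\Iam$, so one can arrange the factorization so that $i$ belongs to this class.

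Now apply HELP of $f$ to the cofibration $i$ and the commuting square $f \circ \Id_{X_0} = q \circ i$: this yields $r : Z \to X_0$ with $r \circ i = \Id_{X_0}$, together with a relative homotopy $h : E_i \to X_1$ such that $h \circ j_0 = q$ and $h \circ j_1 = q \circ (ir)$. The identity $ri = \Id_{X_0}$ gives $(ir) \circ i = i = \Id_Z \circ i$, so the pair $(\Id_Z, ir)$ assembles into a map $[\Id_Z, ir] : Z \cup^{X_0} Z \to Z$, and the square with left leg $\delta$, right leg $q$, top $[\Id_Z, ir]$, and bottom $h$ commutes. Since $\delta$ is a cofibration and $q$ is a trivial fibration, this square admits a strict lift $\tilde h : E_i \to Z$ satisfying $\tilde h \circ j_0 = \Id_Z$ and $\tilde h \circ j_1 = ir$.

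Finally, since $j_0$ and $j_1$ are trivial cofibrations, $\tilde h \circ j_0 = \Id_Z$ combined with $2$-of-$3$ forces $\tilde h$ to be a weak equivalence, and then $ir = \tilde h \circ j_1$ is also a weak equivalence. Combined with $r \circ i = \Id_{X_0}$, the $2$-of-$6$ property of weak equivalences in a model category, applied to the composable triple $X_0 \xrw{i} Z \xrw{r} X_0 \xrw{i} Z$, forces $i$ (and $r$) to be weak equivalences, so $f = qi$ is too. The main obstacle is confirming that HELP extends from $\Iam$ to the specific cofibration $i$ arising from the chosen factorization; this uses a cell induction together with the tractability hypothesis (so that generating cofibrations behave well with respect to the relative cylinders $E_s$). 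Once that is in place, everything else is diagram chasing plus $2$-of-$3$ and $2$-of-$6$.
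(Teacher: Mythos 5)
The overall shape of your argument is attractive: after factoring $f = q \circ i$ with $i$ a cofibration and $q$ a trivial fibration, the final steps are correct. Specifically, once you have HELP of $f$ against $i$ producing $r$ with $ri=\Id_{X_0}$ and a relative homotopy $h$, the observation that $iri=i$ lets you assemble $[\Id_Z,ir]\colon Z\cup^{X_0}Z\to Z$, the square against $\delta$ and the trivial fibration $q$ does commute, the strict lift $\tilde h$ exists, and $\tilde h j_0=\Id_Z$ with $j_0,j_1$ trivial cofibrations forces $ir$ to be a weak equivalence by $2$-of-$3$; combined with $ri=\Id$, the retract diagram exhibiting $i$ as a retract of $ir$ (or $2$-of-$6$) gives that $i$ is a weak equivalence, hence so is $f$. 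That part is a genuinely nice piece of reasoning.

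The gap is exactly where you flagged it, and it is not a small one. You apply HELP of $f$ to the cofibration $i$, but HELP is only hypothesized against the elements of $\Iam$. Passing from $\Iam$ to a relative $\Iam$-cell complex $i$ is \emph{not} a routine saturation argument. The subtlety is in the relative cylinder: in the reformulation of the paper, HELP against $s$ is the RLP of $f\to\ast$ against the single map $\zeta_s\colon s\to j_1$, and $\zeta_s$ depends on the chosen cylinder $E_s$. If $i$ is a transfinite composition of pushouts of elements of $\Iam$, the cylinder $E_i$ produced by the small object argument has no canonical comparison to the cylinders $E_s$ of the individual cells; in particular $\zeta_i$ is not obviously in the saturation of $\{\zeta_s : s\in\Iam\}$, so the RLP does not automatically propagate. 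Making the cell induction work requires building compatible cylinders cell-by-cell, together with a stacking/composition argument for the relative homotopies, and this is also where the fibrancy hypothesis (which your sketch otherwise never invokes) must actually enter. In short, the sentence ``this uses a cell induction together with the tractability hypothesis'' is precisely the content of the result, not a finishing touch.

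For comparison, the paper itself offers no proof here beyond a citation to Simpson (Lemma 7.5.1 of his book), so there is no route in the paper to check your approach against; but the work your sketch defers is exactly what that cited lemma carries out. If you want to make your argument self-contained, you would need either (a) to prove the HELP-extension from $\Iam$ to $\Iam$-cell complexes carefully, with compatible cylinders, or (b) to restructure so that HELP is only ever invoked against elements of $\Iam$ directly — for instance by first reducing (via the factorization $f=p\circ g$ with $g$ a trivial cofibration and $p$ a fibration) to showing that $p$ is a trivial fibration, and then solving strict lifting problems for $p$ against $s\in\Iam$ by combining a HELP solution for $f$ with the homotopy lifting property of the fibration $p$ against the trivial cofibration $j_1$. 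Either way there is real work to do that the current sketch does not contain.
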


Equivalently we have:
\begin{lem}\label{lifting-lem-proj}
Let $\M$ be a tractable model category with a set of cofibration $\Iam$. If $f$ is a morphism in  $\M$ such that the unique map $f \to \ast$ is a level fibration in $\arm$ and  has the RLP with respect to every element in $\zeta_{\Iam}$,  then $f$ is a weak equivalence.
\end{lem}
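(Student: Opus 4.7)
The plan is to reduce this statement directly to the HELP Lemma stated just above. First, I would observe that since $f\to \ast$ is a level fibration in $\arm$, both of its components $X_0 \to \ast$ and $X_1 \to \ast$ are fibrations in $\M$, which means precisely that $X_0$ and $X_1$ are fibrant. Hence $f$ is a morphism between fibrant objects, supplying one of the hypotheses required by the HELP Lemma.

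Next, for each $s \in \Iam$ I would invoke the preceding proposition, which establishes an equivalence between solutions of the lifting problem defined by $\zeta_s$ and $f \to \ast$ in $\arm$, and up-to-homotopy solutions of the lifting problem defined by $s$ and $f$ in $\M$. Concretely, any commutative square in $\M$ defined by $s$ and $f$ is the same data as a map $\theta:s \to f$ in $\arm$, and therefore determines a lifting problem $\zeta_s \to (f\to\ast)$. By hypothesis $f \to \ast$ has the RLP with respect to $\zeta_s$, so this lifting problem admits a solution; the proposition then translates this solution into an up-to-homotopy solution of the original problem in $\M$. Since $s$ was arbitrary in $\Iam$, this shows that $f$ satisfies the HELP with respect to every element of $\Iam$.

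Combining these two observations, $f$ is a morphism between fibrant objects satisfying the HELP with respect to every cofibration in $\Iam$, so the HELP Lemma yields that $f$ is a weak equivalence. There is essentially no obstacle here: the preceding proposition does all the bookkeeping, and the only point to verify is that the correspondence it provides really covers every lifting problem defined by $s$ and $f$ in $\M$, which is immediate from unravelling what a map $\theta:s \to f$ in $\arm$ encodes.
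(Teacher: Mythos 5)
Your proof is correct and follows the route the paper signals when it introduces the statement with ``Equivalently we have'': it combines the HELP Lemma with the immediately preceding proposition that identifies solutions of the $\zeta_s$-lifting problem for $f \to \ast$ in $\arm$ with up-to-homotopy solutions of the $s$-lifting problem for $f$ in $\M$, and it correctly reads the level-fibration hypothesis as saying both $X_0$ and $X_1$ are fibrant. The paper itself just cites Simpson for the argument, so you have essentially spelled out the intended derivation rather than taken a different path.
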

\begin{proof}
See \cite[Lemma 7.5.1]{Simpson_HTHC}.
\end{proof}

\subsection{Localizing a Quillen-Segal theory}
We now give the proof of Theorem \ref{local-inj-theory} and Theorem \ref{local-proj-theory}. 

\begin{df}
Let $\T=\{\T_i: \C \to \Ar(\M) \}_{i \in I}$ be an $\M$-valued Quillen-Segal theory on $\C$ such that for every $i \in I$ we have a left adjoint $\Upsilon_i: \arm \to \C$. 
\begin{itemize}
\item Define the injective localizing set for $\T$ as:
$$\kbi_{inj}(\T):= \coprod_{i \in I} \Upsilon_i(\alpha_{\Iam}) $$
\item Define the projective localizing set for $\T$ as:
$$\kbi_{proj}(\T):= \coprod_{i \in I} \Upsilon_i(\zeta_{\Iam}) $$
\end{itemize}
\end{df}

\begin{prop}
With the previous notation we have the following. 
\begin{enumerate}
\item If $\T$ is an injective theory then every element in $\kbi_{inj}$ is a cofibration in $\C$
\item If $\T$ is a projective theory then every element in $\kbi_{proj}$ is a cofibration in $\C$
\end{enumerate}
\end{prop}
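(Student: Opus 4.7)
The plan is to unpack the definitions and appeal to the fact that the left adjoint of a right Quillen functor preserves cofibrations, together with the structural properties of $\alpha_s$ and $\zeta_s$ that were already established earlier in the section.

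First, for each $i \in I$ the functor $\T_i$ is right Quillen either with respect to $\armij$ or $\armpj$, so by definition it comes with a left adjoint $\Upsilon_i$ which is automatically left Quillen. In particular $\Upsilon_i$ sends cofibrations in $\armij$ (resp. $\armpj$) to cofibrations in $\C$. Thus it suffices to show that every $\alpha_s$ with $s \in \Iam$ is a cofibration in $\armij$, and that every $\zeta_s$ with $s \in \Iam$ is a cofibration in $\armpj$, and then conclude by taking the union of the images $\Upsilon_i(\alpha_s)$ (resp. $\Upsilon_i(\zeta_s)$) over $i \in I$.

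For the injective case, I would recall that $\alpha_s : s \to \Id_B$ has components $s : A \to B$ and $\Id_B : B \to B$; since $s \in \Iam$ is a cofibration of $\M$ and $\Id_B$ is an isomorphism hence a cofibration, both components are cofibrations. By the definition of injective cofibration in $\arm$, this means $\alpha_s$ is an injective cofibration. Applying the left Quillen functor $\Upsilon_i : \armij \to \C$ then yields a cofibration $\Upsilon_i(\alpha_s)$ in $\C$, proving (1). For the projective case, the statement that $\zeta_s$ is a cofibration in $\armpj$ for every cofibration $s \in \Iam$ is exactly assertion (1) of Proposition \ref{alph-zeta}, which was proved by construction of the relative cylinder object. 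Applying the left Quillen functor $\Upsilon_i : \armpj \to \C$ yields a cofibration $\Upsilon_i(\zeta_s)$ in $\C$, proving (2).

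There is no serious obstacle here: the proof is essentially a bookkeeping exercise combining (a) the definition of the two model structures on $\arm$, (b) the already verified cofibrancy of $\alpha_s$ (resp. $\zeta_s$), and (c) the preservation of cofibrations by left Quillen functors. The only point to be careful about is distinguishing between the two model structures on $\arm$ when applying the adjunction, since $\T_i$ being right Quillen for $\armij$ is a strictly stronger hypothesis than being right Quillen for $\armpj$ (every projective cofibration is an injective cofibration, so the identity $\armpj \to \armij$ is left Quillen, meaning injective-right-Quillen implies projective-right-Quillen but not conversely). This is why the injective statement and the projective statement are kept separate and invoke the appropriate cofibrant object among $\alpha_s$ or $\zeta_s$.
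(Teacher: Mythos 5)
Your proof is correct and follows essentially the same route as the paper: identify $\alpha_s$ (resp. $\zeta_s$) as an injective (resp. projective) cofibration in $\arm$ and push forward through the left Quillen functor $\Upsilon_i$. The paper's version is more terse but makes precisely the same two moves.
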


\begin{proof}
Indeed, every element in $\alpha_{\Iam}$ (resp. $\zeta_{\Iam}$) is a injective (reps. projective) cofibration in $\arm$; therefore its image by the left Quillen functor $\Upsilon_i$ is a cofibration in $\C$.
\end{proof}

\begin{proof}[Proof of Theorem \ref{local-inj-theory}]
The hypotheses of the theorem allow us to consider the left Bousfield localization  with respect to the set $\kbi_{inj}$ (see Hirschhorn \cite{Hirsch-model-loc} and Barwick \cite{Barwick_localization}). Then thanks to the previous proposition every element in $\kbi_{inj}$ is a cofibration and a weak equivalence; thus a new trivial cofibrations. It follows that for any fibrant object $c$ in this new model structure the unique map $c \to \ast$ in $\C$ must be $\Upsilon_i(\alpha_{\Iam})$-injective for every $i \in I$. And by adjunction we find that  the map $\T_i(c) \to \ast$ in $\arm$ is $\alpha_{\Iam}$-injective and $\T_i(c)$ is a trivial fibration by Lemma \ref{lifting-lem}. In particular for every $i \in I$, $\T_i(c)$ is a weak equivalence (between fibrant objects); thus $c$ satisfies the generalized Segal conditions. 
\end{proof}

\begin{proof}[Proof of Theorem \ref{local-proj-theory}]
The proof is the same as in the injective case. 
Consider the left Bousfield localization  with respect to the set $\kbi_{proj}$. Then thanks to the previous proposition every element in $\kbi_{proj}$ is a cofibration and a weak equivalence; thus a new trivial cofibrations. It follows that for any fibrant object $c$ in this new model structure the unique map $c \to \ast$ in $\C$ must be $\Upsilon_i(\zeta_{\Iam})$-injective for every $i \in I$. And by adjunction we find that  the map $\T_i(c) \to \ast$ in $\arm$ is $\zeta_{\Iam}$-injective and $\T_i(c)$ is a weak equivalence by the HELP Lemma \ref{lifting-lem-proj}. In particular for every $i \in I$, $\T_i(c)$ is a weak equivalence (between fibrant objects); thus $c$ satisfies the generalized Segal conditions. 
\end{proof}

\section{Quillen-Segal algebras}

\subsection{Comma categories and Quillen-Segal objects}
\subsubsection{Comma categories}
We recall here some definitions and properties on comma categories. These results are well known in Category Theory (see for example \cite{borceux_1}, \cite{Mac}, \cite{Stanculescu_multi}). We include some of them for completeness.
\begin{df}
Let $\Ub:\ag \to \M$ be a functor. The \emph{comma category} $(\M \downarrow \Ub)$ is the category described as follows.
\begin{enumerate}
\item Objects are triples $[\F] = [\F_0, \F_{1}, \pi_{\F}: \F_0 \to \Ub(\F_1)] \in \M \times \ag \times \Ar(\M)$
\item Given two objects $[\F] = [\F_0, \F_{1}, \pi_{\F}]$ and $[\G] = [\G_0, \G_{1}, \pi_{\G}]$, a map $\sigma : [\F] \to [\G]$ is a pair $\sigma=[\sigma_0,\sigma_1] \in \Hom_{\M}(\F_0, \G_0) \times \Hom_{\ag}(\F_1, \G_1)$ such that we have a commutative square in $\M$:
\[
\xy
(0,18)*+{\F_0}="W";
(0,0)*+{\Ub(\F_1)}="X";
(30,0)*+{\Ub(\G_1)}="Y";
(30,18)*+{\G_0}="E";
{\ar@{->}^-{\Ub(\sigma_1)}"X";"Y"};
{\ar@{->}^-{\pi_{\F}}"W";"X"};
{\ar@{->}^-{\sigma_0}"W";"E"};
{\ar@{->}^-{\pi_{\G}}"E";"Y"};
\endxy
\]
That is:
$$  \Hom_{(\M\downarrow\Ub)}([\F],[\G])=\{ [\sigma_0, \sigma_1] \in \Hom_{\M}(\F_0,\G_0) \times \Hom_{\ag}(\F_1,\G_1) \quad | \quad \pi_{\G} \circ \sigma_0 = \Ub(\sigma_1)\circ \pi_{\F} \}$$
\end{enumerate}
\ \\
We have three obvious functors :
\begin{itemize}
\item  $\Pi_0: \mdu \to \M $, with $\Pi_0([\F])= \F_0$;
\item  $\Pi_1: \mdu \to \ag $, with  $\Pi_1([\F])= \F_1$;
\item  $\Pi_{\Ar}: \mdu \to \Ar(\M)$, with $\Pi_{\Ar}([\F])= \pi_{\F}$.
\end{itemize}
\end{df}

\begin{nota}
It's sometimes convenient to adopt another notation for $\mdua$ because we don't see the category $\ag$. So we will also write $\mua := \mdua$.
\end{nota}
\subsubsection{Quillen-Segal algebras and objects}
We are now able to formulate the definition of Quillen-Segal algebras and  Quillen-Segal objects.. 
\begin{df}
Let $\M$ be a model category and let $\Ub: \ag \to \M$ be a functor. \ \\
An object \emph{$[\F]=[\F_0, \F_{1}, \pi_{\F}] \in \mdu $ is a Quillen-Segal $\Ub$-object} if \emph{$[\F]$ satisfies the Segal condition}, that is if  $\pi_{\F}: \F_0 \to \Ub(\F_1)$ is a weak equivalence in $\M$. 
\end{df}

\begin{df}
Let $\M$ be a model category and let $\O$ be a monad on $\M$ or an operad enriched over $\M$.

A Quillen-Segal $\O$-algebra is a Quillen-Segal $\Ub$-object $[\F]=[\F_0, \F_{1}, \pi_{\F}] \in \mdu $  for the forgetful functor $\Ub: \oalg(\M) \to \M$.
\end{df}

In the previous definition, $\O$ is not restricted to be a monad or an operad. Indeed $\O$ can be a PROP, properad, cyclic operad etc. These objects can be viewed themselves as algebras over some  monad $\O'$ defined on another category $\M'$ (see \cite{Markl_prop}, \cite{Yau_HDA}, \cite{Vallette_properad}).  Moreover if we fix such `operator-object' $\O$, its category of algebras with coefficient in $\M$, is equivalent to a category of a monad  defined on  another category $\M'$,  where $\M'$ is in many cases  a product of copies of $\M$ (see for example \cite{Robertson_Hack}) or more generally a diagram category $\Hom(\C,\M)$.
\begin{rmk}
\begin{enumerate} 

\item The previous definition makes sense in any category with weak equivalences. 
\item  Let $\Ar(\W) \hookrightarrow \Ar(\M)$ be the subcategory of weak equivalences in $\M$. Then the category of Quillen-Segal objects is precisely the full subcategory of $\mdu$ of objects that are sends to $\Ar(\W)$ through the functor $\Pi_{\Ar} : \mdu \to \Ar(\M)$
\end{enumerate}

\end{rmk}

\section{Properties of comma constructions}
We give here the required definitions and properties on comma categories that are necessary to deploy the homotopy theory on $\mdua$. Most of the material is classical in category theory and many results are straightforward.
\subsection{Embedding} 

A direct consequence of the definition is the following:
\begin{prop}\label{embed-prop}
We have a functor $\iota : \ag \hrw \mdu $ defined as follows. 
\begin{enumerate}
\item For $\Pa \in \ag$,  $\iota(\Pa)=[\Ub(\Pa), \Pa, \Id_{\Ub(\Pa)}]$. %According to our notation we have $\pi_{\iota(\Pa)}=  \Id_{\Ub(\Pa)}$.
\item The functor $\iota$ takes a map $\theta : \Pa \to \Qa$ of $\ag$ to the map $\iota(\theta)=[\Ub(\theta), \theta] $.
\item The composite $\Pi_1 \circ \iota$ is the identity. In other words $\ag$ is a retract of $\mdu$ in the category $\Cat$ of small categories. 
\item The functor $\iota$ is injective on objects and we have an isomorphism of hom-sets
$$\Hom_{\ag}(\Pa,\Qa) \cong \Hom_{\mdu}(\iota(\Pa), \iota(\Qa)) $$
In particular the functor $\iota: \ag \to \mdu$ exhibits $\ag$ as a full subcategory of $\mdu$ 
\end{enumerate}

\end{prop}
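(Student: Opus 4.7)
The plan is to verify each of the four claims by directly unwinding the definition of $\mdua$, so the argument is essentially bookkeeping. I would begin by checking that $\iota$ is well defined and functorial. For an object $\Pa \in \ag$, the triple $[\Ub(\Pa), \Pa, \Id_{\Ub(\Pa)}]$ is an object of $\mdu$ since $\Id_{\Ub(\Pa)}$ is a morphism in $\M$ with source $\Ub(\Pa)$ and target $\Ub(\Pa)=\Ub(\Pi_1(\iota(\Pa)))$. For a morphism $\theta: \Pa \to \Qa$, the pair $[\Ub(\theta),\theta]$ fits into the required commutative square, because the square has $\Id_{\Ub(\Pa)}$ and $\Id_{\Ub(\Qa)}$ on its vertical sides, so commutativity reduces to $\Ub(\theta)=\Ub(\theta)$. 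Functoriality of $\iota$ then follows from functoriality of $\Ub$: $\iota(\Id_{\Pa})=[\Id_{\Ub(\Pa)},\Id_{\Pa}]$ is the identity of $\iota(\Pa)$, and $\iota(\theta'\circ\theta)=[\Ub(\theta')\circ \Ub(\theta),\theta'\circ\theta]=\iota(\theta')\circ \iota(\theta)$.

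Next I would verify (3), which is immediate from the formulas: on objects $\Pi_1(\iota(\Pa))=\Pa$, and on morphisms $\Pi_1(\iota(\theta))=\theta$. This also forces injectivity of $\iota$ on objects, since if $\iota(\Pa)=\iota(\Qa)$ then applying $\Pi_1$ gives $\Pa=\Qa$, settling the first half of (4).

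For the fully faithful claim in (4), the key observation is that the commutative square defining a morphism $\sigma=[\sigma_0,\sigma_1]:\iota(\Pa)\to \iota(\Qa)$ in $\mdu$ reads
\[
\Id_{\Ub(\Qa)}\circ \sigma_0 \;=\; \Ub(\sigma_1)\circ \Id_{\Ub(\Pa)},
\]
which collapses to $\sigma_0=\Ub(\sigma_1)$. Hence $\sigma$ is entirely determined by its second component $\sigma_1 \in \Hom_{\ag}(\Pa,\Qa)$, and conversely every $\theta \in \Hom_{\ag}(\Pa,\Qa)$ gives a morphism $[\Ub(\theta),\theta]$ in $\mdu$. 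This exhibits the assignment $\theta \mapsto \iota(\theta)$ as a mutually inverse bijection to $\sigma \mapsto \sigma_1$, yielding the claimed isomorphism of hom-sets and the fullness/faithfulness of $\iota$.

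There is no genuine obstacle here; the only ``subtle'' point is recognizing that the defining commutative square of a morphism in $\mdu$ degenerates when both source and target are in the image of $\iota$, forcing $\sigma_0$ to be $\Ub(\sigma_1)$. Everything else is a direct consequence of the definitions of $\mdu$, $\Pi_1$, and functoriality of $\Ub$.
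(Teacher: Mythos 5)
Your proof is correct and takes essentially the same approach as the paper: the key step in both is observing that the commutativity condition for a morphism $\sigma=[\sigma_0,\sigma_1]:\iota(\Pa)\to\iota(\Qa)$ degenerates to $\sigma_0=\Ub(\sigma_1)$, from which faithfulness and fullness follow immediately. The only cosmetic differences are that you derive injectivity on objects via the retraction $\Pi_1$ rather than by inspection, and that you explicitly spell out well-definedness and functoriality of $\iota$, which the paper treats as immediate.
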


\begin{proof}
The fact that $\iota$ is injective on objects is clear since $\Pa$ appears alone in the triple $[\Ub(\Pa), \Pa, \Id_{\Ub(\Pa)}]$. \ \\
On morphisms we have $\iota(\theta_1) = \iota(\theta_2) \Leftrightarrow [\Ub(\theta_0), \theta_0] = [\Ub(\theta_1), \theta_1]  \Leftrightarrow \theta_1 = \theta_2$ and  $\Ub(\theta_0)=\Ub(\theta_1)$; which means that $\iota$ is also injective on morphisms (= $\iota$ is  faithful).\ \\

Let's now show that $\iota$ is also surjective on morphisms (= $\iota$ is  full). 
If $\sigma= [\sigma_1, \sigma_1]$ is a map in $\mdu$ from $\iota(\Pa)$ to $\iota(\Qa)$, we have:
$$  \Id_{\Ub(\Qa)} \circ \sigma_0 = \Ub(\sigma_1) \circ  \Id_{\Ub(\Pa)} \Leftrightarrow \sigma_0 = \Ub(\sigma_1) \Leftrightarrow \sigma = [\Ub(\sigma_1), \sigma_1] = \iota(\sigma_1)$$ with $\sigma_1: \Pa \to \Qa$.
\end{proof}
\subsection{Arrow-category as comma category.}
Recall that $\Un= [0 \to 1]$  is the \emph{walking-morphism category} and therefore $\arm$ is the functor category $\Hom(\Un,\M)$. If we put together this fact and the previous definition of comma category we get the following obvious results. \ \\
\begin{prop}\label{arr-is-comma}
Let $\Ub : \M \to \M$ be the identity functor. Then the following hold.
\begin{enumerate}
\item The category of morphisms  $\Ar(\M)$ is isomorphic to the comma category $\mum:= \mdua$. And the functor $\Pi_{\Ar}: \mdua \to \Ar(\M)$ is an isomorphism.
\item The category of  morphisms  $\Ar(\M)$ is isomorphic to the functor category $\Hom(\Un, \M)$.  
\item The functor $\Pi_0 : \mum \to \M$ is the source-functor and corresponds to the evaluation functor $\Ev_0: \Hom(\Un, \M) \to \M$.
\item The functor $\Pi_1 : \mum \to \M$ is the target-functor and corresponds to the evaluation functor $\Ev_1: \Hom(\Un, \M) \to \M$.

\end{enumerate}

\hfill $\qed$
\end{prop}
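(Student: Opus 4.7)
The proof will proceed by directly unfolding the three relevant definitions (comma category, arrow category, and functor category out of $\Un$) and observing that the specialization $\Ub = \Id_{\M}$ collapses them onto each other. Since all four assertions are about explicit isomorphisms of categories, the plan is simply to write down the functors in each direction and check they are mutually inverse on both objects and morphisms. No homotopical input is needed; this is purely a category-theoretic observation.

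For assertion (1), I would define a functor $\Phi: \mum \to \Ar(\M)$ by $\Phi([\F_0,\F_1,\pi_\F]) = \pi_\F$, and on morphisms by $\Phi([\sigma_0,\sigma_1]) = (\sigma_0,\sigma_1)$; this is well-defined because the commuting square in the definition of $\mdua$ reads $\pi_\G \circ \sigma_0 = \Ub(\sigma_1)\circ \pi_\F = \sigma_1 \circ \pi_\F$ when $\Ub = \Id$, which is exactly the commuting square for a morphism in $\Ar(\M)$. Conversely, a functor $\Psi: \Ar(\M) \to \mum$ sends $f: X_0 \to X_1$ to $[X_0, X_1, f]$ and a commutative square to the obvious pair. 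One checks $\Phi\Psi = \Id$ and $\Psi\Phi = \Id$ tautologically, and by construction $\Phi = \Pi_{\Ar}$, proving that $\Pi_{\Ar}$ is an isomorphism.

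For assertion (2), I would observe that a functor $F: \Un \to \M$ is uniquely determined by the triple $(F(0), F(1), F(0 \to 1))$, which is an object of $\Ar(\M)$, and a natural transformation between two such functors corresponds precisely to a commuting square. This gives an isomorphism $\Hom(\Un,\M) \cong \Ar(\M)$. For (3) and (4), I would simply chase the identifications from (1) and (2): the source projection on $\Ar(\M)$ corresponds to $\Pi_0$ via $\Psi$ and to $\Ev_0$ via the functor-category identification, and the dual holds for the target.

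I do not anticipate a main obstacle; the only minor point that requires attention is verifying that the defining commutative squares in the three categories coincide under the identifications, which relies crucially on $\Ub(\sigma_1) = \sigma_1$. I would highlight this single equation as the heart of the proof and leave the remaining verifications as immediate.
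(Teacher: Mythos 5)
Your proposal is correct, and it matches the paper's (implicit) approach: the paper states this proposition with no written proof, treating it as an immediate consequence of unfolding the definitions of comma category, arrow category, and the functor category $\Hom(\Un,\M)$ once $\Ub=\Id_{\M}$. Your explicit identification of the mutually inverse functors and your flagging of $\Ub(\sigma_1)=\sigma_1$ as the equation that collapses the comma-category commuting square onto the arrow-category one is exactly the check the paper leaves to the reader.
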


If we apply Proposition \ref{embed-prop} for the functor $\Ub= \Id_{\M}$ then $\mum$ is the category of $\Ar(\M)$. In particular one gets the following results that can also be found in Hovey  \cite[Lemma 1.1]{Hovey_Arr}.

\begin{prop}\label{m-in-arm}
Let $\M$ be any category. Then the following hold.

\begin{enumerate}
\item We have an embedding $\iota_{\M} : \M \hrw \Ar(\M)$ that takes an object $X \in \M$ to the identity $\Id_X$ and a morphism $f: X \to X'$ to the map $\Id_X \to \Id_{X'}$ whose components are both equal to $f$. 
\item The functor $\iota_{\M}$ is left adjoint to the functor $\Ev_0: \Ar(\M) \to \M$. We will denote by $L_0= \iota_{\M}$. 
\item If there is an initial object $\emptyset \in \M$, then the functor $\Ev_1: \Ar(\M) \to \M$ has also a left adjoint $L_1: \M \to \Ar(\M)$. 
We have $L_1(X)=\emptyset \to X$. 
\end{enumerate}

\end{prop}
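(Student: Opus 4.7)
The plan is to derive the three assertions by direct unwinding of the definitions, leveraging the identification $\Ar(\M) \cong \M_{\Id}[\M]$ from Proposition \ref{arr-is-comma} so that Assertion (1) becomes a special case of the embedding Proposition \ref{embed-prop}.

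For Assertion (1), I would take $\Ub = \Id_{\M}$ in Proposition \ref{embed-prop}, which gives an embedding $\iota: \M \hookrightarrow \mum$ sending $X$ to $[X, X, \Id_X]$ and $f: X \to X'$ to $[f, f]$. Composing with the isomorphism $\Pi_{\Ar}: \mum \xrightarrow{\sim} \Ar(\M)$ of Proposition \ref{arr-is-comma} yields the desired $\iota_{\M}: \M \hookrightarrow \Ar(\M)$ with exactly the stated effect on objects and morphisms, and fullness/faithfulness are inherited from Proposition \ref{embed-prop}.

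For Assertion (2), I would exhibit the adjunction $\iota_{\M} \dashv \Ev_0$ by writing down a natural bijection of hom-sets. A morphism $\iota_{\M}(X) \to g$ in $\Ar(\M)$, where $g: Y_0 \to Y_1$, is a commutative square
\[
\xy
(0,18)*+{X}="W";
(0,0)*+{X}="X";
(30,0)*+{Y_1}="Y";
(30,18)*+{Y_0}="E";
{\ar@{->}^-{\alpha_1}"X";"Y"};
{\ar@{->}^-{\Id_X}"W";"X"};
{\ar@{->}^-{\alpha_0}"W";"E"};
{\ar@{->}^-{g}"E";"Y"};
\endxy
\]
i.e. a pair $(\alpha_0, \alpha_1)$ with $\alpha_1 = g \circ \alpha_0$. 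Such a pair is completely determined by $\alpha_0 \in \Hom_{\M}(X, Y_0) = \Hom_{\M}(X, \Ev_0(g))$, and this assignment is natural in both variables. This gives the adjunction $L_0 := \iota_{\M} \dashv \Ev_0$.

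For Assertion (3), assuming an initial object $\emptyset$, define $L_1(X) := (\emptyset \to X)$ on objects, and on a morphism $f: X \to X'$ send it to the square with top arrow $\Id_{\emptyset}$ and bottom arrow $f$ (commutativity is forced by initiality). To check $L_1 \dashv \Ev_1$, a morphism $L_1(X) \to g$ is a square
\[
\xy
(0,18)*+{\emptyset}="W";
(0,0)*+{X}="X";
(30,0)*+{Y_1}="Y";
(30,18)*+{Y_0}="E";
{\ar@{->}^-{\beta_1}"X";"Y"};
{\ar@{->}^-{!}"W";"X"};
{\ar@{->}^-{!}"W";"E"};
{\ar@{->}^-{g}"E";"Y"};
\endxy
\]
whose top edge and left edge are forced by the universal property of $\emptyset$; the square commutes automatically because both composites $\emptyset \to Y_1$ coincide with the unique arrow from $\emptyset$. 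Hence such a morphism is the same as the data of $\beta_1 \in \Hom_{\M}(X, Y_1) = \Hom_{\M}(X, \Ev_1(g))$, and the bijection is natural in $X$ and $g$.

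No step is really an obstacle here; the content is purely formal. The only mild care is in Assertion (3), where one must explicitly invoke initiality to see that no additional commutativity condition survives and that naturality in $X$ is well defined (which follows because any $X \to X'$ induces only one map $\emptyset \to X \to X'$, namely the unique one).
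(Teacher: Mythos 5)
Your proof is correct and takes essentially the same route the paper has in mind: the paper introduces Proposition \ref{m-in-arm} right after remarking that applying Proposition \ref{embed-prop} with $\Ub=\Id_{\M}$ identifies $\mum$ with $\Ar(\M)$, and defers the rest to Hovey's Lemma 1.1 without writing out the adjunction bijections. Your explicit hom-set computations for $\iota_{\M}\dashv \Ev_0$ and $L_1\dashv\Ev_1$ simply fill in the details the paper leaves implicit.
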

\newpage
\subsection{Adjunctions, limits and colimits}
%We consider here situations where the functor $\Ub : \ag \to \M$ that posses a left adjoint $\Fb: \M \to \ag$. Our goal is to show that the category $\mdu$ is complete and cocomplete if $\ag$ and $\M$ are. We also establish adjunctions results.

\subsubsection{Adjunctions}
We give here the adjunctions that will be needed to get the various model structures on our comma category. The constructions and the results that will follow are also classical in category theory. We include the proof for completeness. Recall that we have an embedding $\iota: \ag \to \mdu$ and a functor $\Pi_1: \mdu \to \ag$. Our goal in this section is to establish the following theorem.

\begin{thm}\label{adjunction-thm}
Let $\Ub: \ag \to \M$ be a functor. With the previous notation the following hold. 

\begin{enumerate}
\item The functor $\Pi_1: \mdu \to \ag$ is left adjoint to the embedding $\iota : \ag \to \mdu$. In particular $\ag$ is equivalent to a full reflective subcategory of $\mdu$.
\item The functor $\Pi_1: \mdu \to \ag$ has also a left adjoint $L_1: \ag \to \mdu$ with $L_1(\Pa)= [\emptyset, \Pa, \emptyset \to  \Ub(\Pa)]$, where $\emptyset \to \Ub(\Pa)$ is the unique map.
\item  If furthermore $\Ub$ has a left adjoint $\Fb: \M \to \ag$, then
\begin{enumerate}
\item The functor $\Pi_{\Ar}: \mdu \to \Ar(\M)$  has a left adjoint 
$$\Gamma : \Ar(\M) \to \mdu$$
\item The functor $\Pi_0: \mdu \to \M$ has a left adjoint $\Fb^+ : \M \to \mdu$ such that the composite $\Pi_1 \circ \Fb^+$ is the functor $\Fb$. \\
We have $\Fb^+= \Gamma \circ L_0$. 
\end{enumerate}
\end{enumerate}
\end{thm}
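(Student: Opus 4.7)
The proof is essentially a routine unfolding of the definition of the comma category, so the plan is to treat each part by exhibiting the counit/unit explicitly and then checking the hom-set bijection is natural.

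For part (1), the plan is to observe that a morphism $\sigma=[\sigma_0,\sigma_1]\colon [\F]\to \iota(\Pa)=[\Ub(\Pa),\Pa,\Id_{\Ub(\Pa)}]$ consists of $\sigma_0\colon\F_0\to\Ub(\Pa)$ and $\sigma_1\colon\F_1\to\Pa$ subject to the square condition $\sigma_0=\Ub(\sigma_1)\circ\pi_{\F}$. Thus $\sigma_0$ is \emph{forced} by $\sigma_1$, which gives a natural bijection $\Hom_{\mdu}([\F],\iota(\Pa))\cong \Hom_{\ag}(\F_1,\Pa)$. The counit at $\Pa$ is $\iota(\Id_{\Pa})$ and the unit at $[\F]$ is $[\pi_{\F},\Id_{\F_1}]$; naturality is immediate from the commutative square defining a morphism in $\mdu$.

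For part (2), the plan is symmetric: a morphism $L_1(\Pa)=[\emptyset,\Pa,!]\to [\F]$ has first component forced to be the unique map $\emptyset\to\F_0$ (and the square condition is automatic, since both parallel composites emanate from the initial object), so the data is exactly $\sigma_1\colon \Pa\to\F_1$. This gives $\Hom_{\mdu}(L_1\Pa,[\F])\cong\Hom_{\ag}(\Pa,\F_1)$ naturally.

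Part (3a) is the main construction. Given the adjunction $\Fb\dashv\Ub$ with unit $\eta\colon\Id_{\M}\to\Ub\Fb$, the plan is to define on objects
\[
\Gamma(f\colon X_0\to X_1) \;:=\; [\,X_0,\; \Fb(X_1),\; \eta_{X_1}\circ f\,],
\]
and on a morphism $(\alpha_0,\alpha_1)\colon f\to g$ of $\Ar(\M)$ to take components $(\alpha_0,\Fb(\alpha_1))$; one checks the square commutes using naturality of $\eta$. To verify $\Gamma\dashv\Pi_{\Ar}$, I would unpack a morphism $[\sigma_0,\sigma_1]\colon\Gamma(f)\to[\G]$: it consists of $\sigma_0\colon X_0\to\G_0$, $\sigma_1\colon\Fb(X_1)\to\G_1$, subject to $\pi_{\G}\circ\sigma_0=\Ub(\sigma_1)\circ\eta_{X_1}\circ f$. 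Transposing $\sigma_1$ under $\Fb\dashv\Ub$ to $\widetilde{\sigma_1}=\Ub(\sigma_1)\circ\eta_{X_1}\colon X_1\to\Ub(\G_1)$, the constraint becomes exactly the commutativity of the square saying $(\sigma_0,\widetilde{\sigma_1})\colon f\to\pi_{\G}$ is a morphism in $\Ar(\M)$. This bijection is natural, so $\Gamma\dashv \Pi_{\Ar}$.

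Part (3b) then follows by composing adjunctions. From Proposition \ref{m-in-arm} we have $L_0\dashv \Ev_0$, and clearly $\Pi_0=\Ev_0\circ\Pi_{\Ar}$, so the composite left adjoint gives $\Fb^+:=\Gamma\circ L_0\dashv \Pi_0$. Evaluating, $\Fb^+(X)=\Gamma(\Id_X)=[X,\Fb(X),\eta_X]$, and $\Pi_1\Fb^+(X)=\Fb(X)$ so $\Pi_1\circ\Fb^+=\Fb$. The only real obstacle is in part (3a), namely picking the correct formula for $\Gamma$ and verifying that the square condition in $\mdu$ translates under the adjunction $\Fb\dashv\Ub$ to the commutative square that defines a morphism in $\Ar(\M)$; everything else is formal manipulation of the two defining commutative squares.
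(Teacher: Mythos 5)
Your proof is correct and follows essentially the same route as the paper: part (1) matches Proposition \ref{prop-gen-embed}, part (3a) reproduces the construction of $\Gamma$ in Proposition \ref{prop-gamma} and the hom-set bijection of Proposition \ref{adj-prop-gamma} (you build the bijection from $\Hom_{\mdu}(\Gamma(f),\gcr)$ toward $\Hom_{\Ar(\M)}(f,\pi_{\G})$, the paper goes the other direction, but it is the same transposition under $\Fb\dashv\Ub$), and part (3b) is the same composition of adjunctions via $L_0\dashv\Ev_0$ and $\Pi_0=\Ev_0\circ\Pi_{\Ar}$. Your explicit check of part (2), which the paper dismisses as easily verified, is a welcome addition.
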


\begin{proof}
Assertion $(1)$ is the content of Proposition \ref{prop-gen-embed}.\ \\
Assertion $(2)$ can be easily verified.\ \\
Assertion $(3)$ is given by Proposition \ref{prop-gamma}  and Proposition \ref{adj-prop-gamma}.

\end{proof}

\begin{prop}\label{prop-gen-embed}
Let $\Ub: \ag \to \M$ be a functor. Then the functor  $\iota : \ag \to \mdu$ is right adjoint to the  functor  $\Pi_1: \mdu \to \ag$. \ \\
In particular $\ag$ is equivalent to a full reflective subcategory of $\mdu$.
\end{prop}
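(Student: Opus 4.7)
The plan is to verify the adjunction $\Pi_1 \dashv \iota$ directly, by exhibiting a natural bijection of hom-sets. Fix $[\F] = [\F_0,\F_1,\pi_\F]$ in $\mdu$ and $\Pa \in \ag$. The target object is $\iota(\Pa) = [\Ub(\Pa), \Pa, \Id_{\Ub(\Pa)}]$, so by definition a morphism $\sigma = [\sigma_0,\sigma_1] : [\F] \to \iota(\Pa)$ is a pair with $\sigma_0 : \F_0 \to \Ub(\Pa)$ in $\M$ and $\sigma_1 : \F_1 \to \Pa$ in $\ag$ satisfying the commutative square $\Id_{\Ub(\Pa)} \circ \sigma_0 = \Ub(\sigma_1) \circ \pi_\F$. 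That equation collapses to $\sigma_0 = \Ub(\sigma_1) \circ \pi_\F$, so $\sigma_0$ is uniquely determined by $\sigma_1$.

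First I would define the candidate bijection
\[
\Phi_{[\F],\Pa} : \Hom_{\mdu}([\F], \iota(\Pa)) \longrightarrow \Hom_{\ag}(\F_1, \Pa) = \Hom_{\ag}(\Pi_1([\F]), \Pa)
\]
by $\Phi([\sigma_0,\sigma_1]) := \sigma_1$, with inverse $\Psi(\tau) := [\Ub(\tau)\circ \pi_\F,\, \tau]$. The observation above shows $\Phi \circ \Psi = \Id$ and $\Psi \circ \Phi = \Id$. Next I would check naturality in both variables: a morphism $[\F] \to [\G]$ in $\mdu$ acts on the left-hand side by precomposition of each component, and on the right-hand side by precomposition with $\Pi_1$ of that morphism, and these two are compatible since $\Phi$ reads off only the second component; naturality in $\Pa$ is similarly routine.

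It is also worth recording the unit and counit: the unit $\eta_{[\F]} : [\F] \to \iota(\Pi_1([\F]))$ is given by $[\pi_\F, \Id_{\F_1}]$ (the compatibility square commutes on the nose since the target map is an identity), while the counit $\varepsilon_{\Pa} : \Pi_1(\iota(\Pa)) \to \Pa$ is $\Id_\Pa$ because $\Pi_1 \circ \iota = \Id_\ag$ by Proposition~\ref{embed-prop}(3). The triangle identities are then immediate. For the ``in particular'' clause, since the counit is an isomorphism, the right adjoint $\iota$ is fully faithful — but this already follows from Proposition~\ref{embed-prop}(4) — and hence $\ag$ is a full reflective subcategory of $\mdu$.

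There is essentially no obstacle here: the content of the proposition is the elementary observation that the compatibility condition in $\mdu$ against an object of the form $\iota(\Pa)$ forces the $\M$-component of a morphism, reducing the hom-set to $\Hom_\ag(\F_1,\Pa)$. The only thing to be careful about is writing the naturality squares without confusing the roles of the two components of a morphism of $\mdu$, which is purely bookkeeping.
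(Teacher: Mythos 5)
Your argument is essentially identical to the paper's: both pivot on the observation that the compatibility square against $\iota(\Pa)$ forces $\sigma_0 = \Ub(\sigma_1)\circ\pi_\F$, which collapses $\Hom_{\mdu}([\F],\iota(\Pa))$ to $\Hom_\ag(\F_1,\Pa)$. You simply write the bijection in the opposite direction and spell out the unit/counit (which the paper records separately in Proposition~\ref{prop-unit-adj}); the substance is the same.
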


\begin{proof}
Let $[\F]$ be an object of $\mdu$ and $\Pa$ be an object of $\ag$. We wish to show that we have a functorial isomorphism of hom-sets :
$$\Hom_{\ag}(\Pi_1([\F]), \Pa) \cong  \Hom_{\mdu}([\F], \iota(\Pa))$$ 
Recall that $\Pi_1([\F])= \F_1$ and that by definition 
a map $\sigma=[\sigma_0,\sigma_1] : [\F] \to \iota(\Pa)$ consists of two maps $\sigma_0 \in \Hom_{\M}(\F_0, \Ub(\Pa))$ and  $\sigma_1 \in \Hom_{\ag}(\F_1, \Pa)$ such that :
 $$\Id_{\Ub(\Pa)} \circ \sigma_0 = \Ub(\sigma_1) \circ \pi_{\F} \Longleftrightarrow \sigma_0 = \Ub(\sigma_1) \circ \pi_{\F} \Longleftrightarrow \sigma = [\Ub(\sigma_1) \circ \pi_{\F}, \sigma_1].$$ 
 It follows from the above equivalence that the function$$\varphi : \Hom_{\ag}(\F_1, \Pa) \to \Hom_{\mdu}([\F], \iota(\Pa)),$$ defined by $\varphi(\sigma_1)=[\Ub(\sigma_1) \circ \pi_{\F}, \sigma_1]$ is surjective.  The presence of $\sigma_1$ alone in the couple $[\Ub(\sigma_1) \circ \pi_{\F}, \sigma_1]$ implies that $\varphi$ is also injective. Therefore  $\varphi$ is an isomorphism as required. The functoriality in both variables $[\F]$ and $\Pa$ is clear. 
\end{proof}

\begin{prop}\label{prop-unit-adj}
With the previous notation the following hold.
\begin{enumerate}
\item For any $[\F] \in \mdu$, the unit  $\eta_1([\F]): [\F] \to \iota(\Pi_1([\F]))$ of the previous adjunction is given by the pair $[\pi_{\F}, \Id_{\F_1}]$. 
\item For any $\Pa \in \ag$ the counit $\varepsilon_1(\Pa): \Pi_1(\iota(\Pa)) \to \Pa$ of the adjunction is the identity.
\end{enumerate}

\end{prop}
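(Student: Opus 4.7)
The plan is to extract both components of the proposition directly from the explicit formula for the adjunction bijection obtained in the proof of Proposition \ref{prop-gen-embed}, using the standard recipe that the unit is the image of an identity on one side and the counit is the preimage of an identity on the other.

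First I would recall the well-known general fact: for an adjunction $L \dashv R$ with natural isomorphism $\varphi_{c,d}: \Hom(L(c),d) \xrightarrow{\sim} \Hom(c,R(d))$, the unit at $c$ is $\eta_c = \varphi_{c,L(c)}(\Id_{L(c)})$ and the counit at $d$ is $\varepsilon_d = \varphi_{R(d),d}^{-1}(\Id_{R(d)})$. Here $L=\Pi_1$, $R=\iota$, and the bijection is $\varphi(\sigma_1) = [\,\Ub(\sigma_1) \circ \pi_{\F}, \sigma_1\,]$ established in Proposition \ref{prop-gen-embed}.

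For assertion $(1)$, I would plug $\sigma_1 = \Id_{\F_1}$ into the formula: since $\Ub(\Id_{\F_1}) = \Id_{\Ub(\F_1)}$, we obtain
\[
\eta_1([\F]) \;=\; \varphi(\Id_{\F_1}) \;=\; [\,\Id_{\Ub(\F_1)} \circ \pi_{\F},\, \Id_{\F_1}\,] \;=\; [\pi_{\F}, \Id_{\F_1}],
\]
viewed as a map $[\F_0, \F_1, \pi_{\F}] \to [\Ub(\F_1), \F_1, \Id_{\Ub(\F_1)}] = \iota(\Pi_1([\F]))$. For assertion $(2)$, I note that $\iota(\Pa) = [\Ub(\Pa), \Pa, \Id_{\Ub(\Pa)}]$, so when we apply $\varphi$ with $[\F] = \iota(\Pa)$ (hence $\pi_{\F} = \Id_{\Ub(\Pa)}$), the formula reduces to $\varphi(\sigma_1) = [\Ub(\sigma_1), \sigma_1]$. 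Setting this equal to the identity $\Id_{\iota(\Pa)} = [\Id_{\Ub(\Pa)}, \Id_{\Pa}]$ forces $\sigma_1 = \Id_{\Pa}$, hence $\varepsilon_1(\Pa) = \Id_{\Pa}$.

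There is no substantive obstacle here; the proof is a direct unpacking of $\varphi$ on an identity morphism. The only small bookkeeping is to keep track of which component lives in $\M$ and which in $\ag$, and to confirm that $[\pi_{\F}, \Id_{\F_1}]$ is indeed a well-defined morphism in $\mdu$, which amounts to the tautological square $\Id_{\Ub(\F_1)} \circ \pi_{\F} = \Ub(\Id_{\F_1}) \circ \pi_{\F}$.
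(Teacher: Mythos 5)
Your proof is correct and is exactly the computation the paper has in mind when it writes ``Clear.''; you extract the unit and counit from the explicit bijection $\varphi$ of Proposition \ref{prop-gen-embed} via the standard adjunction recipe, which is the canonical way to establish this. Nothing is missing, and the sanity check that $[\pi_{\F},\Id_{\F_1}]$ is a well-defined morphism in $\mdu$ is a nice touch.
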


\begin{proof}
Clear.
\end{proof}

\begin{prop}\label{prop-gamma}
Let $\Ub : \ag \to \M$ be a functor that posses a left adjoint $\Fb: \M \to \ag$. Let $\eta : \Id_{\M} \to \Ub\Fb$ be the unit of this adjunction.
Then we have a functor $$\Gamma : \Ar(\M) \to \mdu$$
 defined as follows.
\begin{enumerate}
\item If $f : X_0 \to X_1$, then $\Gamma(f) = [X_0, \Fb(X_1), \eta_{X_1}\circ f]$ with $\eta_{X_1} : X_1 \to \Ub\Fb(X_1)$
\item If $g : Y_0 \to Y_1$ and  $\alpha=(\alpha_0,\alpha_1) : f \to g$ is a map in $\Ar(\M)$, then $\Gamma(\alpha)= [\alpha_0, \Fb(\alpha_1)]$.
\end{enumerate}
In particular
\begin{itemize}
\item $\Pi_0(\Gamma(f))=X_0$, $\Pi_0(\Gamma(\alpha))=\alpha_0$;
\item $\Pi_1(\Gamma(f))= \Fb (X_1)$, $\Pi_1(\Gamma(\alpha))= \Fb (\alpha_1)$
\item  $\Pi_{\Ar}(\Gamma(f))= \eta_{X_1}\circ f : X_0 \to X_1 \to \Ub\Fb(X_1)$
\end{itemize}

\end{prop}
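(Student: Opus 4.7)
The plan is to verify directly, from the definitions, that the assignments given on objects and morphisms satisfy (i) the compatibility condition defining an object of $\mdu$, (ii) the commuting-square condition defining a morphism of $\mdu$, and (iii) functoriality. The three ``in particular'' bullet points are then immediate by inspecting components.

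First, I would check that $\Gamma(f) = [X_0, \Fb(X_1), \eta_{X_1} \circ f]$ is a legitimate object of $\mdu$. This is automatic: $X_0 \in \M$, $\Fb(X_1) \in \ag$, and $\eta_{X_1} \circ f$ is a morphism $X_0 \to \Ub(\Fb(X_1))$ in $\M$, which is exactly the data required by the definition of an object in $\mdua$.

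Next, for a map $\alpha = (\alpha_0, \alpha_1) : f \to g$ in $\Ar(\M)$, I would show that $\Gamma(\alpha) = [\alpha_0, \Fb(\alpha_1)]$ is a morphism $\Gamma(f) \to \Gamma(g)$ in $\mdu$. The only nontrivial point is the commuting square
\[
\pi_{\Gamma(g)} \circ \alpha_0 \;=\; \Ub(\Fb(\alpha_1)) \circ \pi_{\Gamma(f)},
\]
i.e. $\eta_{Y_1} \circ g \circ \alpha_0 = \Ub\Fb(\alpha_1) \circ \eta_{X_1} \circ f$. Using commutativity of the arrow-category square ($g \circ \alpha_0 = \alpha_1 \circ f$) and naturality of the unit $\eta$ applied to $\alpha_1 : X_1 \to Y_1$ (which gives $\Ub\Fb(\alpha_1) \circ \eta_{X_1} = \eta_{Y_1} \circ \alpha_1$), both sides reduce to $\eta_{Y_1} \circ \alpha_1 \circ f$. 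This is the one place where the adjunction really enters; the rest is bookkeeping.

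Functoriality of $\Gamma$ then follows because $\Gamma(\Id_f) = [\Id_{X_0}, \Fb(\Id_{X_1})] = [\Id_{X_0}, \Id_{\Fb(X_1)}]$, which is the identity of $\Gamma(f)$ in $\mdu$, and for composable maps $\alpha : f \to g$, $\beta : g \to h$ one has
\[
\Gamma(\beta \circ \alpha) = [\beta_0 \circ \alpha_0, \Fb(\beta_1 \circ \alpha_1)] = [\beta_0, \Fb(\beta_1)] \circ [\alpha_0, \Fb(\alpha_1)] = \Gamma(\beta) \circ \Gamma(\alpha),
\]
using functoriality of $\Fb$. Finally, the three statements about $\Pi_0 \circ \Gamma$, $\Pi_1 \circ \Gamma$, and $\Pi_{\Ar} \circ \Gamma$ are read off directly from the formula defining $\Gamma$. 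The main (and only real) obstacle is the naturality check in the commuting-square verification; everything else is definitional.
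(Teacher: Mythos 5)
Your proof is correct and takes essentially the same approach as the paper: the crucial step in both is the identical chain of equalities $\eta_{Y_1}\circ g \circ \alpha_0 = \eta_{Y_1}\circ \alpha_1 \circ f = \Ub\Fb(\alpha_1) \circ \eta_{X_1}\circ f$, obtained from the commutativity of the square defining $\alpha$ together with naturality of the unit $\eta$ applied to $\alpha_1$. You additionally spell out the routine functoriality checks (preservation of identities and composites), which the paper leaves implicit; this is sound but not a different argument.
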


\begin{proof}
For the second assertion  it suffices to show that we have an equality $$\eta_{Y_1}\circ g \circ \alpha_0 = \Ub\Fb(\alpha_1) \circ \eta_{X_1}\circ f .$$ 
We get this equality by observing that the naturality of $\eta : \Id_{\M} \to \Ub\Fb$ and part of $\alpha$ being a map in $\arm$ imply that all three squares below are commutative.

\[
\xy
(0,18)*+{X_0}="W";
(0,0)*+{\Ub\Fb(X_0)}="X";
(30,0)*+{\Ub\Fb(X_1)}="Y";
(30,18)*+{X_1}="E";
{\ar@{->}^-{\Ub\Fb(f)}"X";"Y"};
{\ar@{->}^-{\eta_{X_0}}"W";"X"};
{\ar@{->}^-{f}"W";"E"};
{\ar@{->}^-{\eta_{X_1}}"E";"Y"};
\endxy
\quad \quad 
\xy
(0,18)*+{X_1}="W";
(0,0)*+{\Ub\Fb(X_1)}="X";
(30,0)*+{\Ub\Fb(Y_1)}="Y";
(30,18)*+{Y_1}="E";
{\ar@{->}^-{\Ub\Fb(\alpha_1)}"X";"Y"};
{\ar@{->}^-{\eta_{X_1}}"W";"X"};
{\ar@{->}^-{\alpha_1}"W";"E"};
{\ar@{->}^-{\eta_{Y_1}}"E";"Y"};
\endxy
\quad \quad 
\xy
(0,18)*+{X_0}="W";
(0,0)*+{Y_0}="X";
(30,0)*+{Y_1}="Y";
(30,18)*+{X_1}="E";
{\ar@{->}^-{g}"X";"Y"};
{\ar@{->}^-{\alpha_0}"W";"X"};
{\ar@{->}^-{f}"W";"E"};
{\ar@{->}^-{\alpha_1}"E";"Y"};
\endxy
\]

And we get:
\begin{equation*}
\begin{split}
 \eta_{Y_1}\circ g \circ \alpha_0 &= \eta_{Y_1}\circ (g \circ \alpha_0)\\ 
 &= \eta_{Y_1}\circ (\alpha_1 \circ f)\\ 
  &= (\eta_{Y_1}\circ \alpha_1) \circ f\\ 
    &= (\Ub\Fb(\alpha_1) \circ \eta_{X_1}) \circ f\\ 
\end{split}
\end{equation*}
\end{proof}

We are now able to establish the following adjunction. 

\begin{prop}\label{adj-prop-gamma}
The functor $\Gamma : \Ar(\M) \to \mdu$ is left adjoint to the functor $\Pi_{\Ar} : \mdu \to \Ar(\M)$.
\end{prop}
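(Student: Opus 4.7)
The plan is to exhibit a natural bijection
$$\Hom_{\mdu}(\Gamma(f),[\G]) \;\cong\; \Hom_{\Ar(\M)}(f,\Pi_{\Ar}([\G]))$$
for every $f:X_0\to X_1$ in $\Ar(\M)$ and every $[\G]=[\G_0,\G_1,\pi_\G]$ in $\mdu$, obtained by transporting the ``second coordinate'' through the given adjunction $\Fb\dashv\Ub$. Since $\Gamma(f)=[X_0,\Fb(X_1),\eta_{X_1}\circ f]$, a morphism $\sigma=[\sigma_0,\sigma_1]$ on the left consists of $\sigma_0\in\Hom_{\M}(X_0,\G_0)$ and $\sigma_1\in\Hom_{\ag}(\Fb(X_1),\G_1)$ satisfying
$$\pi_\G\circ\sigma_0=\Ub(\sigma_1)\circ\eta_{X_1}\circ f,$$
while a morphism $\alpha=(\alpha_0,\alpha_1)$ on the right consists of $\alpha_0:X_0\to\G_0$ and $\alpha_1:X_1\to\Ub(\G_1)$ with $\pi_\G\circ\alpha_0=\alpha_1\circ f$.

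First I would define the comparison in both directions. Given $\sigma=[\sigma_0,\sigma_1]$, set $\alpha_0:=\sigma_0$ and $\alpha_1:=\Ub(\sigma_1)\circ\eta_{X_1}$, i.e.\ the image of $\sigma_1$ under the adjunction bijection $\Hom_{\ag}(\Fb(X_1),\G_1)\cong\Hom_{\M}(X_1,\Ub(\G_1))$. Then the compatibility equation for $\sigma$ is literally the compatibility equation $\pi_\G\circ\alpha_0=\alpha_1\circ f$ for $\alpha$. Conversely, given $\alpha$, let $\tilde\alpha_1:\Fb(X_1)\to\G_1$ be the adjoint of $\alpha_1$, and set $\sigma_0:=\alpha_0$, $\sigma_1:=\tilde\alpha_1$; the identity $\Ub(\tilde\alpha_1)\circ\eta_{X_1}=\alpha_1$ shows that the square for $\sigma$ commutes. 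These two constructions are mutually inverse because the adjunction $\Fb\dashv\Ub$ already supplies a bijection on the second component, and the first component is unchanged.

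The remaining step is naturality in $f$ and in $[\G]$. Naturality in $[\G]$ for a map $[\tau_0,\tau_1]:[\G]\to[\H]$ reduces, on the second coordinate, to naturality of the adjunction bijection with respect to $\tau_1:\G_1\to\H_1$, and on the first coordinate to post-composition with $\tau_0$, which is the same on both sides. Naturality in $f$ for a map $\beta:(f:X_0\to X_1)\to(f':X_0'\to X_1')$ in $\Ar(\M)$ uses the explicit description $\Gamma(\beta)=[\beta_0,\Fb(\beta_1)]$ from Proposition~\ref{prop-gamma}: pre-composition with $\Fb(\beta_1)$ on the second coordinate corresponds under $\Fb\dashv\Ub$ to pre-composition with $\beta_1$, which is exactly pre-composition with $\Pi_{\Ar}$ of $\Gamma(\beta)$ after applying $\Pi_{\Ar}$. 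Both naturality statements are therefore direct consequences of the naturality of the underlying adjunction unit $\eta$.

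The argument is essentially bookkeeping, and there is no real obstacle; the only thing to watch is that one correctly identifies, via $\Fb\dashv\Ub$, the map $\sigma_1:\Fb(X_1)\to\G_1$ with $\alpha_1:X_1\to\Ub(\G_1)$, and that this identification turns the commuting square defining a morphism in $\mdu$ into the commuting square defining a morphism in $\Ar(\M)$. Once this is observed, the adjunction follows, and as corollaries one obtains $\Fb^{+}=\Gamma\circ L_0$ as the left adjoint to $\Pi_0$ and the identity $\Pi_1\circ\Fb^{+}=\Fb$ claimed in Theorem~\ref{adjunction-thm}.
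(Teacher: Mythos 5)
Your proof is correct and follows essentially the same route as the paper: identify the second coordinate of a morphism $\Gamma(f)\to[\G]$ with the second coordinate of a morphism $f\to\pi_\G$ via the adjunction bijection $\varrho:\Hom_{\M}(X_1,\Ub(\G_1))\cong\Hom_{\ag}(\Fb(X_1),\G_1)$, keep the first coordinate fixed, and check that the two compatibility squares are translated into one another. You also spell out naturality in both variables, which the paper leaves implicit, but the key idea is identical.
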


\begin{proof}
The proof is tedious but straightforward so we shall give an outline of it.\ \\
Let $f: X_0 \to X_1$ be an object of $\Ar(\M)$ and let $\gcr = \ogcr$ be an object of $\mdu$. We wish to establish that we have a functorial isomorphism of hom-sets:
$$\Hom_{\mdu}(\Gamma(f), \gcr) \cong \Hom_{\Ar(\M)}(f, \pi_{\G})$$

We define a function $\phi: \Hom_{\Ar(\M)}(f, \pi_{\G}) \to \Hom_{\mdu}(\Gamma(f), \gcr)$ as follows. \ \\
\begin{itemize}
\item Let $\alpha: f \to \pig$ a map in $\Ar(\M)$ displayed by the  commutative square:
 \[
 \xy
(0,18)*+{X_0}="W";
(0,0)*+{ X_1}="X";
(30,0)*+{\Ub(\G_1)}="Y";
(30,18)*+{\G_0}="E";
{\ar@{->}^-{\alpha_1}"X";"Y"};
{\ar@{->}^-{f}"W";"X"};
{\ar@{->}^-{\alpha_0}"W";"E"};
{\ar@{->}^-{\pig}"E";"Y"};
\endxy
\]
From the adjunction $\Fb \dashv \Ub$ there is a functorial isomorphism of hom-sets:
$$\varrho : \Hom_{\M}(X_1, \Ub(\G_1))  \xrightarrow{\cong} \Hom_{\ag}(\Fb X_1, \G_1) $$
\item Consider $\varrho(\alpha_1) : \Fb(X_1) \to \G_1$,  the adjoint transpose of $\alpha_1 : X_1 \to \Ub(\G_1)$.
\item  We define $\phi(\alpha)=[\alpha_0, \varrho(\alpha_1)] \in \Hom_{\M}(X_0, \G_0) \times \Hom_{\ag}(\Fb X_1, \G_1)$.
\end{itemize}
 Let's check  that the pair $\phi(\alpha)=[\alpha_0, \varrho(\alpha_1)]$ defines indeed a morphism in $\mdu$, that is, we must prove that we have an equality: $\Ub(\varrho(\alpha_1)) \circ \Pi_{\Ar}\Gamma(f) = \pig \circ \alpha_1.$\\
By definition, $\Pi_{\Ar}\Gamma(f)= \eta_{X_1} \circ f$ and from the adjunction $\Fb \dashv \Ub$, we have: $$\alpha_1= \Ub(\varrho(\alpha_1)) \circ \eta_{X_1}.$$
Putting all together we have a diagram in which everything commutes: 

 \[
 \xy
(0,18)+(0,10)*+{X_0}="W";
(0,0)*+{ X_1}="X";
(0,0)+(20,10)*+{\Ub\Fb(X_1)}="O";
(30,0)+(20,0)*+{\Ub(\G_1)}="Y";
(30,18)+(20,10)*+{\G_0}="E";
{\ar@{->}^-{\alpha_1}"X";"Y"};
{\ar@{->}^-{f}"W";"X"};
{\ar@{->}^-{\alpha_0}"W";"E"};
{\ar@{->}^-{\pig}"E";"Y"};
%morphism involving O%%%
{\ar@{->}^-{\eta_{X_1}}"X";"O"};
{\ar@{->}^-{\Pi_{\Ar}\Gamma(f)}"W";"O"};
{\ar@{->}^-{\Ub(\varrho(\alpha_1))}"O";"Y"};
\endxy
\]
The inner commutative square gives a map $\Pi_{\Ar}\Gamma(f) \to \pig$  in $\Ar(\M)$. This means that $\phi(\alpha)=[\alpha_0, \varrho(\alpha_1)]$ is a map in $\mdu$. \ \\

This function is clearly an injection since $\varrho$ is an isomorphism. And we have an inverse function $\phi^{-1}$ that takes a map $\sigma=[\sigma_0,\sigma_1]: \Gamma(f) \to \G$ to the map $$\phi^{-1}(\sigma)= [\sigma_0, \varrho^{-1}(\sigma_1)]: f \to \pig.$$
\end{proof}

Let's now put together the various adjunctions and see how they interact. 
\begin{prop}\label{all-adjunction}
Let  $\Ub: \ag \to \M$ be a functor. Then the following hold.
\begin{enumerate}
\item We have a commutative diagram.
\[
 \xy
(0,18)*+{\ag}="W";
(0,0)*+{ \M}="X";
(30,0)*+{\Ar(\M)}="Y";
(30,18)*+{\mdu}="E";
{\ar@{<-}^-{\Ev_0}"X";"Y"};
{\ar@{->}_-{\Pi_0}"E";"X"};
{\ar@{->}^-{\Ub}"W";"X"};
{\ar@{->}^-{\iota}"W";"E"};
{\ar@{->}^-{\Pi_{\Ar}}"E";"Y"};
\endxy
\]
\item If moreover  $\Ub$ has a left adjoint $\Fb: \M \to \ag$ then we have a commutative diagram of left adjoints functors.
\[
 \xy
(0,18)*+{\ag}="W";
(0,0)*+{ \M}="X";
(30,0)*+{\Ar(\M)}="Y";
(30,18)*+{\mdu}="E";
{\ar@{->}^-{L_0}"X";"Y"};
{\ar@{->}^-{\Fb^+}"X";"E"};
{\ar@{<-}^-{\Fb}"W";"X"};
{\ar@{<-}^-{\Pi_1}"W";"E"};
{\ar@{<-}^-{\Gamma}"E";"Y"};
\endxy
\]
With $\Fb^+= \Gamma \circ L_0$.
\end{enumerate}
\end{prop}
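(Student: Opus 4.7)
The plan is to verify both diagrams by direct chase on objects and morphisms, using the explicit descriptions of all the functors involved from the preceding propositions. The proof is essentially bookkeeping; no substantive obstacle is expected, so the main task is to organize the verifications cleanly.

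For part (1), I would check the two triangles separately. For $\Pi_0 \circ \iota = \Ub$, observe that for any $\Pa \in \ag$, by definition $\iota(\Pa) = [\Ub(\Pa), \Pa, \Id_{\Ub(\Pa)}]$, so $\Pi_0(\iota(\Pa)) = \Ub(\Pa)$, and similarly on morphisms. For $\Ev_0 \circ \Pi_{\Ar} = \Pi_0$, note that for any $[\F] = [\F_0, \F_1, \pi_\F] \in \mdu$, we have $\Pi_{\Ar}([\F]) = \pi_\F \colon \F_0 \to \Ub(\F_1)$ so $\Ev_0(\pi_\F) = \F_0 = \Pi_0([\F])$, and an analogous check on maps $\sigma = [\sigma_0, \sigma_1]$. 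Both identities are on-the-nose equalities.

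For part (2), I would first observe that $\Fb^+ = \Gamma \circ L_0$ can be taken as the definition; this was already asserted in Theorem \ref{adjunction-thm}(3)(b), and it is really just the composition of the two adjunctions $L_0 \dashv \Ev_0$ and $\Gamma \dashv \Pi_{\Ar}$ (so that by uniqueness of left adjoints, $\Fb^+$ is left adjoint to $\Ev_0 \circ \Pi_{\Ar} = \Pi_0$). To verify $\Pi_1 \circ \Fb^+ = \Fb$, I would unwind: for $X \in \M$, $L_0(X) = \Id_X$ and by Proposition \ref{prop-gamma}, $\Gamma(\Id_X) = [X, \Fb(X), \eta_X \circ \Id_X] = [X, \Fb(X), \eta_X]$; applying $\Pi_1$ yields $\Fb(X)$. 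Compatibility on morphisms follows from $\Pi_1 \circ \Gamma(\alpha) = \Pi_1[\alpha_0, \Fb(\alpha_1)] = \Fb(\alpha_1)$ together with $L_0(f) = (f,f)$.

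A more conceptual (and shorter) alternative is to argue purely by uniqueness of adjoints: part (1) is a commutative diagram of right adjoint functors (once one observes $\Ub$, $\Pi_0$, $\Pi_{\Ar}$, $\iota$, $\Ev_0$ are all right adjoints under the hypothesis of (2)), and taking pointwise left adjoints on each edge yields a commutative diagram of left adjoints, canonically isomorphic to the diagram in (2). I would mention this perspective but still carry out the direct on-the-nose verification above, since the proposed equalities hold strictly and the explicit check is short. The only mildly delicate point is making sure the naturality squares from Proposition \ref{prop-gamma} (which I used to show $\Gamma$ is well-defined on morphisms) carry through so that the equalities hold on morphisms and not merely on objects; but this is immediate from the definitions.
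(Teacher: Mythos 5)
Your verification is correct; the paper states this proposition without a proof (it is treated as a routine consequence of the explicit constructions in Propositions \ref{embed-prop}, \ref{m-in-arm}, \ref{arr-is-comma}, \ref{prop-gamma} and Theorem \ref{adjunction-thm}), and your direct unwinding of the definitions is precisely the check the paper implicitly leaves to the reader. Your remark that the uniqueness-of-adjoints argument only yields the second diagram up to natural isomorphism, so that the strict equalities still warrant the explicit computation, is also well taken.
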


\subsubsection{Limits and colimits}
Given a diagram $D: \J \to \mua$ we have three induced diagrams:
\begin{itemize}
\item $D_0=\Pi_0(D): \J \to \M$ 
\item $D_1=\Pi_1(D): \J \to \ag$ 
\item $\pi_D=\Piar(D):\J \to \Ar(\M).$
\end{itemize}
For every $j \in \J$ we have $D(j)= [D_0(j), D_0(j), \pi_D(j)]$. Our goal here is to see how we compute limits and colimits in the comma category $\mua:=\mdua$.
%Using these functors we can establish the following. 
\newpage

\begin{prop}\label{prop-limit-colimit}
Let $\Ub: \ag \leftrightarrows \M: \Fb$ be an adjunction between complete and cocomplete categories. 
Then with the previous notation the following hold. 
\begin{enumerate}
\item There is an induced map in $\M$: $$\pi_{\colim}:\colim D_0 \to \Ub(\colim D_1)$$ 

And the triple $[\colim D_0, \colim D_1, \pi_{\colim}]$ equipped with the obvious maps is the colimit of the diagram  $D: \J \to \mua$.
\item As the functor $\Ub$ preserves limits, there is also an induced map in  $\M$ 
$$\pi_{\lim}: \lim D_0 \to \Ub(\lim D_1)$$
And the triple $[\lim D_0, \lim D_1, \pi_{\lim}]$ equipped with the obvious maps is the limit of the diagram  $D: \J \to \mua$.
\end{enumerate}
The category $\mdu$ is also complete and cocomplete.
\end{prop}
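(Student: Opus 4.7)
The plan is to construct the canonical comparison maps $\pi_{\colim}$ and $\pi_{\lim}$ from the natural transformation $\pi_D \colon D_0 \Rightarrow \Ub \circ D_1$, then check that the resulting triples satisfy the universal properties by reducing to the universal properties of the corresponding colimits and limits in $\M$ and $\ag$.

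For the colimit part, I would first form $\colim D_0$ in $\M$ with coprojections $u_j \colon D_0(j) \to \colim D_0$ and $\colim D_1$ in $\ag$ with coprojections $v_j \colon D_1(j) \to \colim D_1$; both exist by hypothesis. For each $j \in \J$, the composite $\Ub(v_j) \circ \pi_D(j) \colon D_0(j) \to \Ub(\colim D_1)$ assembles into a cocone on $D_0$ (this uses naturality of $\pi_D$, functoriality of $\Ub$, and compatibility of the $v_j$ with arrows of $\J$), which by the universal property of $\colim D_0$ yields a unique map $\pi_{\colim} \colon \colim D_0 \to \Ub(\colim D_1)$ with $\pi_{\colim} \circ u_j = \Ub(v_j) \circ \pi_D(j)$. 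This identity is precisely what says that $[u_j, v_j] \colon D(j) \to [\colim D_0, \colim D_1, \pi_{\colim}]$ is a morphism in $\mua$, so we obtain a cocone on $D$.

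For universality, suppose $[c_0, c_1, \pi_c] \in \mua$ admits a cocone $[\sigma_0^j, \sigma_1^j] \colon D(j) \to [c_0, c_1, \pi_c]$. The components $\sigma_0^j$ and $\sigma_1^j$ are cocones on $D_0$ and $D_1$, hence induce unique $\sigma_0 \colon \colim D_0 \to c_0$ and $\sigma_1 \colon \colim D_1 \to c_1$. To verify that $[\sigma_0, \sigma_1]$ defines a morphism in $\mua$, I must check $\pi_c \circ \sigma_0 = \Ub(\sigma_1) \circ \pi_{\colim}$; by the universal property of $\colim D_0$, it suffices to precompose with each $u_j$, whereupon both sides reduce to $\pi_c \circ \sigma_0^j = \Ub(\sigma_1^j) \circ \pi_D(j)$, an identity that holds because $[\sigma_0^j, \sigma_1^j]$ is a morphism in $\mua$. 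Uniqueness of $[\sigma_0, \sigma_1]$ follows from the uniqueness of $\sigma_0$ and $\sigma_1$ individually.

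The limit part is easier because $\Ub$, being a right adjoint, preserves limits, so $\Ub(\lim D_1) \cong \lim(\Ub \circ D_1)$ canonically. The natural transformation $\pi_D \colon D_0 \Rightarrow \Ub \circ D_1$ induces on limits the comparison $\pi_{\lim} \colon \lim D_0 \to \lim(\Ub \circ D_1) = \Ub(\lim D_1)$ satisfying $\Ub(q_j) \circ \pi_{\lim} = \pi_D(j) \circ p_j$ for the projections $p_j$ and $q_j$ of $D_0$ and $D_1$ respectively. The projections $[p_j, q_j]$ thus form a cone on $D$ in $\mua$, and the universal property is checked by exactly the dual diagram chase, this time using the universal property of $\lim D_0$ (here the real content is the preservation of limits by $\Ub$, whereas in the colimit case no preservation property of $\Ub$ was needed). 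The only mild obstacle is the morphism-in-$\mua$ check in the universal property, which in both cases is a pure diagram chase closed off by the defining equations of $\pi_{\colim}$ and $\pi_{\lim}$. Completeness and cocompleteness of $\mua$ then follow since $\J$ was arbitrary.
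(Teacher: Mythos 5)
Your proposal is correct and takes essentially the same approach as the paper: construct $\pi_{\colim}$ (resp.\ $\pi_{\lim}$) from the cocone (resp.\ cone) assembled out of $\pi_D$ and the structure maps of the colimit (resp.\ limit) of $D_1$, then verify the universal property componentwise. The only difference is that you spell out the universal-property check that the paper dismisses as ``tedious but straightforward,'' and you explicitly flag that preservation of limits by $\Ub$ is used only in the limit case, which is a useful observation.
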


An immediate consequence is that: 
\begin{cor}
\begin{enumerate}
\item The functors $\Pi_0: \mdu \to \M$ and $\Pi_1: \mdu \to \ag$ preserve limits and colimits. In particular they preserve pushouts.
\item The functor $\Piar: \mdu \to \Ar(\M)$ preserves limits.
\end{enumerate}
\end{cor}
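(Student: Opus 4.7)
The plan is to derive both claims directly from the pointwise formulas for (co)limits in $\mdu$ established in Proposition \ref{prop-limit-colimit}, together with the fact that $\Ub$, being a right adjoint, preserves limits.

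For part (1), let $D: \J \to \mdu$ be any small diagram and set $D_0 = \Pi_0 \circ D$, $D_1 = \Pi_1 \circ D$. Proposition \ref{prop-limit-colimit} asserts that $\colim D = [\colim D_0,\, \colim D_1,\, \pi_{\colim}]$, equipped with the canonical coprojections. Applying $\Pi_0$ simply reads off the first coordinate, giving $\Pi_0(\colim D) = \colim D_0 = \colim(\Pi_0 \circ D)$; likewise $\Pi_1(\colim D) = \colim(\Pi_1 \circ D)$. I would then verify that the coprojections $D(j) \to \colim D$ in $\mdu$ are sent by $\Pi_0$ (resp.\ $\Pi_1$) to the universal coprojections $D_0(j) \to \colim D_0$ (resp.\ $D_1(j) \to \colim D_1$), which is immediate by construction; this upgrades the coordinate identification to the statement that $\Pi_0$ and $\Pi_1$ preserve colimits. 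The identical argument with $\lim$ in place of $\colim$ gives preservation of limits. Since pushouts are a particular case of colimits, they are preserved as well, proving the second sentence of (1).

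For part (2), I would use that $\Ar(\M) = \Hom(\Un, \M)$ is a functor category into a complete category, so that limits in $\Ar(\M)$ are computed pointwise. Concretely, for the induced diagram $\pi_D = \Piar \circ D: \J \to \Ar(\M)$, the limit $\lim \pi_D$ is the morphism $\lim D_0 \to \lim(\Ub \circ D_1)$ assembled levelwise. Because $\Ub$ is a right adjoint, it preserves limits, so the canonical comparison $\Ub(\lim D_1) \xrightarrow{\cong} \lim(\Ub \circ D_1)$ is an isomorphism, and under this identification the map $\lim \pi_D$ becomes exactly the map $\pi_{\lim}: \lim D_0 \to \Ub(\lim D_1)$ produced in Proposition \ref{prop-limit-colimit}. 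Thus $\Piar(\lim D) = \pi_{\lim} \cong \lim \pi_D = \lim(\Piar \circ D)$, which is the desired preservation.

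No step is genuinely an obstacle: both assertions amount to reading off a coordinate from the formulas already established. The only place that requires any honest input is part (2), where the identification $\Piar(\lim D) = \lim \pi_D$ rests on $\Ub$ commuting with limits; this is also the precise reason why one would not expect $\Piar$ to preserve colimits in general, since $\Ub$ typically fails to preserve them, so $\pi_{\colim}$ and $\colim \pi_D$ land over different objects $\Ub(\colim D_1)$ and $\colim(\Ub \circ D_1)$ and are only related by a non-invertible comparison map.
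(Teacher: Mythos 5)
Your argument is correct and is exactly the justification the paper intends: the corollary is stated as an immediate consequence of Proposition \ref{prop-limit-colimit}, whose coordinatewise formulas $[\colim D_0,\colim D_1,\pi_{\colim}]$ and $[\lim D_0,\lim D_1,\pi_{\lim}]$ give parts (1) and (2) by reading off components, with the identification $\Ub(\lim D_1)\cong\lim(\Ub\circ D_1)$ (from $\Ub$ being a right adjoint) handling $\Piar$ on limits. Your closing remark on why $\Piar$ need not preserve colimits is also accurate and consistent with the paper only claiming limit preservation for $\Piar$.
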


In other words the functor $(\Pi_0,\Pi_1): \mdu \to \M \times \ag $ creates limits and colimits (see \cite{Low_heart})

\begin{proof}
For every $j \in \J$ denote by $\epsilon_1(j): D_1(j) \to \colim D_1$ the canonical map in $\ag$  going to the colimit and consider its image $\Ub(\epsilon_1(j)): \Ub(D_1(j)) \to \Ub(\colim D_1).$ Then the maps $$\{ \Ub(\epsilon_1(j)) \circ \pi_D(j) : D_0(j) \to \Ub(\colim D_1) \}_{j \in \J}$$
 determine a natural transformation from $D_0$ to the constant diagram of value $\Ub(\colim D_1)$. Indeed,  for every structure map $l:j \to j'$ in $\J$ we can apply the functor $\Ub$ to the equality $\epsilon_1(j) = \epsilon_1(j') \circ D_1(l)$ and get:
$$\Ub(\epsilon_1(j) )= \Ub(\epsilon_1(j'))  \circ \Ub(D_1(l)).$$
Moreover, we also have a commutative diagram in $\M$:

\[
\xy
(0,18)*+{D_0(j)}="W";
(0,0)*+{\Ub(D_1(j))}="X";
(30,0)*+{\Ub(D_1(j'))}="Y";
(30,0)+(50,0)*+{\Ub(\colim D_1) }="Z";
(30,18)*+{D_0(j')}="E";
{\ar@{->}^-{\Ub(D_1(l))}"X";"Y"};
{\ar@{->}^-{\pi_D(j)}"W";"X"};
{\ar@{->}^-{D_0(l)}"W";"E"};
{\ar@{->}^-{\pi_D(j')}"E";"Y"};
{\ar@{.>}^-{\Ub(\epsilon_1(j'))}"Y";"Z"};
\endxy
\]
Using the universal property of the colimit of $D_0$, we find a unique map  $$\pi_{\colim}:\colim D_0 \to \Ub(\colim D_1)$$
that makes everything compatible.  If we write $\epsilon_0(j) : D_0(j) \to colim D_1$, the canonical map in $\M$ going to the colimit, then the pair $[\epsilon_0(j),\epsilon_1(j)]$ determines a map in $\mdu$: 
$$[D_0(j), D_0(j), \pi_D(j)] \to  [\colim D_0, \colim D_1, \pi_{\colim}].$$

It is tedious but straightforward to check that $[\colim D_0, \colim D_1, \pi_{\colim}]$ equipped with these maps satisfies the universal property of the colimit of the diagram $D$. This proves Assertion $(1)$. \ \\

For Assertion $(2)$ we proceed in a dual manner using the fact that $\Ub$ preserves limits like any right adjoint functor.  Using this, we know that $\Ub(\lim D_1)$ equipped with the obvious maps is the limit of the diagram $\Ub(D_1)$. \ \\
Let $p_0(j): \lim D_0 \to D_0(j)$ be the canonical projection so that for every structure map $l : j \to j'$ we have $p_0(j') = D_0(l) \circ p_0(j)$. Applying $\Ub$ to this equality yields:
$$\Ub(p_0(j')) = \Ub(D_0(l)) \circ \Ub(p_0(j)).$$
Then the maps: $\{ \pi_D(j) \circ p_0(j): \lim D_0 \to \Ub(D_1(j)) \}_{j \in \J} $,
determine a natural transformation from the constant diagram of value $\lim D_0$, to the diagram $\Ub(D_1)$.\\ 
By the universal property of the limit $\Ub(\lim D_1)$ we find a unique map 
$$\pi_{\lim}: \lim D_0 \to \Ub(\lim D_1)$$
that makes everything compatible. In particular for every structure map $l : j \to j'$ we have a commutative diagram:
\[
\xy
(0,18)*+{D_0(j)}="W";
(0,0)*+{\Ub(D_1(j))}="X";
(30,0)*+{\Ub(D_1(j'))}="Y";
(30,18)+(-70,0)*+{\lim D_0 }="Z";
(30,0)+(-70,0)*+{ \Ub(\lim D_1) }="T";
(30,18)*+{D_0(j')}="E";
{\ar@{->}^-{\Ub(D_1(l))}"X";"Y"};
{\ar@{->}^-{\pi_D(j)}"W";"X"};
{\ar@{->}^-{D_0(l)}"W";"E"};
{\ar@{->}^-{\pi_D(j')}"E";"Y"};
{\ar@{.>}^-{p_0(j)}"Z";"W"};
{\ar@{.>}^-{\Ub(p_1(j))}"T";"X"};
{\ar@{.>}^-{\pi_{\lim}}"Z";"T"};
\endxy
\]
It's not hard to check that the object $[\lim D_0, \lim D_1, \pi_{\lim}]$ equipped with the maps defined by the maps 
$$[p_0(j), p_1(j)] : [\lim D_0, \lim D_1, \pi_{\lim}] \to D(j)$$
 satisfies the universal property of the limit for the diagram $D$.
\end{proof}

\subsubsection{Accessibility of comma categories}
\begin{note}
We list below some technical results on locally presentable categories. Good references on the subject include \cite{Adamek-Rosicky-loc-pres}, \cite{Chorny_Rosi}, \cite{Low_heart}. %The last reference is very useful for our discussion because the author deeply treats in \cite{Low_heart}, the size issue for combinatorial model categories. 

\end{note}
\begin{prop}
Let  $\Ub: \ag \to  \M$ be an accessible functor between accessible categories. 
Then the category $\mua= (\M \downarrow \Ub)$ is also accessible.
\end{prop}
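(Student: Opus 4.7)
The plan is to recognize $\mua = (\M \downarrow \Ub)$ as the strict pullback in $\mathbf{CAT}$ of the cospan $\ag \xrightarrow{\Ub} \M \xleftarrow{\tx{cod}} \Ar(\M)$, where $\tx{cod}$ sends $f: X_0 \to X_1$ to $X_1$, and then to invoke the closure of accessible categories under pseudo-pullbacks along accessible functors, a standard theorem of Makkai--Par\'e (see \cite{Adamek-Rosicky-loc-pres}, around Theorem 2.43).

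First I would verify the strict-pullback description: an object of $\mua$ is a triple $[\F_0, \F_1, \pi_\F]$ with $\pi_\F \in \Ar(\M)$ and $\tx{cod}(\pi_\F) = \Ub(\F_1)$, which is precisely the data of an object in the strict fibre product of $\Ub$ and $\tx{cod}$; the analogous check goes through on morphisms and on the universal property.

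Next I would check that all the inputs to the closure theorem are satisfied. The arrow category $\Ar(\M) = \Hom(\Un, \M)$ is accessible whenever $\M$ is, because accessibility is preserved under functor categories $\Hom(\C, -)$ with $\C$ small (\cite{Adamek-Rosicky-loc-pres}, 2.39). The codomain functor $\tx{cod} = \Ev_1$ is an evaluation functor on a small-diagram category, so it preserves all small limits and colimits; in particular it is accessible. The functor $\Ub$ is accessible by hypothesis. Finally, $\tx{cod}$ is an isofibration: given $f: X_0 \to X_1$ in $\Ar(\M)$ and an isomorphism $\alpha: X_1 \xrightarrow{\sim} Y_1$ in $\M$, the map $(\Id_{X_0}, \alpha): f \to \alpha \circ f$ is an isomorphism in $\Ar(\M)$ lifting $\alpha$ strictly. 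Because one leg of the cospan is an isofibration, the strict pullback coincides (up to equivalence of categories) with the pseudo-pullback, and the Makkai--Par\'e closure theorem then yields that $\mua$ is accessible.

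The only real obstacle is knowing the closure theorem in the appropriate form. For a more hands-on alternative, one would pick a regular cardinal $\mu$ such that $\M$ and $\ag$ are both $\mu$-accessible, $\Ub$ preserves $\mu$-filtered colimits, and $\Ub$ sends $\mu$-presentable objects to $\mu$-presentable ones; such $\mu$ exists by the ``sharply larger than'' calculus of \cite{Adamek-Rosicky-loc-pres}. Using the fact that $\mu$-filtered colimits in $\mua$ are computed componentwise (by the same argument as in Proposition \ref{prop-limit-colimit}, which only needs the existence of the relevant colimits in $\M$ and $\ag$, not full cocompleteness), one checks that the essentially small full subcategory of triples $[\F_0, \F_1, \pi_\F]$ with both $\F_0$ and $\F_1$ $\mu$-presentable furnishes a set of $\mu$-presentable strong generators of $\mua$ under $\mu$-filtered colimits.
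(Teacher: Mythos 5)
Your proof is correct, but it takes a more roundabout route than the paper's. The paper simply observes that $\mua$ is by definition the comma category $(\Id_{\M} \downarrow \Ub)$ and invokes Ad\'amek--Rosick\'y's Theorem 2.43 (the Limit Theorem, after Makkai--Par\'e), which directly covers comma objects as weighted bilimits of accessible categories along accessible functors. You instead rewrite $\mua$ as the strict pullback of $\Ub$ against the codomain evaluation $\tx{cod}: \Ar(\M) \to \M$, note that $\Ar(\M) = \Hom(\Un,\M)$ and $\tx{cod}$ are accessible, observe that $\tx{cod}$ is an isofibration so the strict pullback is equivalent to the pseudo-pullback, and then apply the pseudo-pullback case of the very same Limit Theorem. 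Both arguments therefore rest on identical machinery, so neither is strictly more elementary; the paper's application is shorter and avoids the detour through $\Ar(\M)$, while your version has the pedagogical merit of making explicit the genuine subtlety that strict pullbacks of accessible categories along accessible functors need not be accessible --- a trap your isofibration observation is designed to evade, and one the paper's comma-category phrasing sidesteps silently. Your hands-on alternative, choosing $\mu$ sharply large enough and using the componentwise computation of $\mu$-filtered colimits from the proof of Proposition~\ref{prop-limit-colimit}, is also sound in outline and would yield a self-contained proof, though as written you assert rather than verify that the triples with $\mu$-presentable components are dense under $\mu$-filtered colimits.
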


\begin{proof}
This is a special case of Theorem 2.43 in the book of Ad\'amek and Rosick\'y \cite{Adamek-Rosicky-loc-pres}  for the comma category  $\Id_{\M} \downarrow \Ub \cong \mdu$.
\end{proof}

\begin{cor}\label{cor-presentable}
Let $\Ub : \ag \rightleftarrows \M : \Fb$ be an adjunction between locally presentable categories. Then the category $\mdu$ is also locally presentable. 
\end{cor}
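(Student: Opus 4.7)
The plan is to combine the previous two results: the accessibility proposition and Proposition \ref{prop-limit-colimit}. Recall that a category is locally presentable if and only if it is accessible and cocomplete (equivalently, accessible and complete); see for example \cite{Adamek-Rosicky-loc-pres}. So the argument splits into two verifications.

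First, I would check accessibility. Since $\Ub : \ag \to \M$ is a right adjoint between locally presentable categories, it is in particular an accessible functor between accessible categories. The preceding proposition then gives that $\mua = \mdu$ is accessible. This is the off-the-shelf input.

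Second, I would check completeness and cocompleteness. Since $\ag$ and $\M$ are locally presentable, they are complete and cocomplete, and since $\Ub$ is a right adjoint, it preserves limits. Both hypotheses of Proposition \ref{prop-limit-colimit} are therefore satisfied. That proposition directly yields that $\mdu$ admits all small limits and colimits, computed component-wise in $\M$ and $\ag$.

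Putting the two steps together, $\mdu$ is an accessible category that is both complete and cocomplete, hence locally presentable. There is no real obstacle here; the only thing to watch is that we genuinely need $\Ub$ to be right adjoint in order to invoke Proposition \ref{prop-limit-colimit} for limits (to guarantee that $\Ub$ preserves them), while for the accessibility step it suffices that $\Ub$ be accessible, which is automatic for a right adjoint between locally presentable categories.
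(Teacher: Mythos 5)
Your proof is correct and follows essentially the same route as the paper: accessibility of $\mdu$ from the preceding proposition (which you justify by noting that a right adjoint between locally presentable categories is automatically accessible), plus cocompleteness from Proposition \ref{prop-limit-colimit}, and then the standard characterization of local presentability as accessible plus cocomplete.
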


\begin{proof}
By the previous result we know that $\mdu$ is an accessible category. So it remains to show that $\mdu$ is also cocomplete. But this is given by Proposition \ref{prop-limit-colimit}. 
\end{proof}

\section{Injective and projective model structures}\label{inj-proj-comma}
In this section we consider a cofibrantly generated model category $\M$ without requiring the axioms of a monoidal model category yet.  
\subsection{Detecting the Segal conditions for comma categories}
Consider the adjunction $\Piar: \mua \rightleftarrows \Ar(\M): \Gamma$ given in Proposition \ref{prop-gamma}. 
\begin{df}\label{localization-set}
Define the \textbf{injective localizing set}   $\kb_{\Iam}= \{\Gamma(\alpha_i) \}_{i \in \Iam} = \Gamma (\alpha_{\Iam})$ as the image of $\alpha_{\Iam}$ under the left adjoint $\Gamma$. 
\end{df}
 An immediate consequence of Lemma \ref{lifting-lem} is:
\begin{prop}\label{detect-segal}
In the category $\mua$ the following hold. 

\begin{enumerate}
\item An object $[\F]=[\F_0,\F_1,\pif]$ is $\kbi$-injective, that is $[\F] \to \ast$ has the RLP with respect to $\kbi$, if and only if, the map $\pif : \F_0 \to \Ub(\F_1)$ is a trivial fibration in $\M$. In particular $[\F]$ satisfies the Segal condition.
\item For any $\Pa \in \ag$, $\iota(\Pa)=[\Ub(\Pa), \Pa, \Id_{\Ub(\Pa)}]$ is $\kbi$-injective. 
\end{enumerate}
\end{prop}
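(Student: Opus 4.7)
The proof proposal is essentially a two-step transfer: pass through the adjunction $\Gamma \dashv \Piar$, then apply the lifting lemma for $\alpha_{\Iam}$ already proved in the arrow category.

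For assertion $(1)$, I would start by unpacking what it means for $[\F]$ to be $\kbi$-injective: the unique map $[\F] \to \ast$ in $\mua$ has the RLP with respect to every $\Gamma(\alpha_i)$ with $i \in \Iam$. Since $\Gamma$ is left adjoint to $\Piar$ by Proposition \ref{adj-prop-gamma}, a standard adjointness argument shows that $[\F] \to \ast$ has the RLP with respect to $\Gamma(\alpha_i)$ if and only if the transposed map $\Piar([\F] \to \ast)$ has the RLP with respect to $\alpha_i$ in $\arm$. Now $\Piar$ preserves terminal objects (it is a right adjoint, hence preserves limits, and concretely the terminal object of $\mua$ is $[\ast,\ast_\ag,\Id_\ast]$ whose image under $\Piar$ is $\Id_\ast$, which is terminal in $\arm$). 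Thus the transposed map is $\pif \to \ast$ in $\arm$, and asking that it has the RLP against the whole set $\alpha_{\Iam}$ is exactly the hypothesis of Lemma \ref{lifting-lem}(3), which identifies this with $\pif$ being a trivial fibration in $\M$. Since trivial fibrations are in particular weak equivalences, $[\F]$ then satisfies the Segal condition.

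For assertion $(2)$, I would just invoke $(1)$: for $\Pa \in \ag$ the object $\iota(\Pa)=[\Ub(\Pa),\Pa,\Id_{\Ub(\Pa)}]$ has $\pif=\Id_{\Ub(\Pa)}$, which is an isomorphism in $\M$ and thus a trivial fibration, so $\iota(\Pa)$ is $\kbi$-injective.

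The one thing that needs a small verification is the identification $\Piar(\ast_{\mua}) = \ast_{\arm}$ and the careful application of adjointness to the lifting property (i.e.\ that a map $u$ has RLP w.r.t.\ $\Gamma(s)$ iff $\Piar(u)$ has RLP w.r.t.\ $s$); both are routine since $\Piar$ preserves limits and adjunctions transform lifting problems in the standard way. There is no serious obstacle here: the whole point of defining $\kbi$ as $\Gamma(\alpha_{\Iam})$ is precisely to make this detection principle hold by adjunction from Lemma \ref{lifting-lem}.
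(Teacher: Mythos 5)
Your proposal is correct and follows essentially the same route as the paper: pass the lifting problem through the adjunction $\Gamma \dashv \Piar$, identify the transposed problem as the one in Lemma~\ref{lifting-lem}(3), and conclude; part~(2) is then the observation that an identity is a trivial fibration. The only difference is that you spell out the (routine) verification that $\Piar$ carries the terminal object of $\mua$ to that of $\arm$, which the paper leaves implicit.
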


\begin{proof}
We proceed by adjunction. $[\F] \to \ast$ has the  RLP with respect to $\kbi$, if and only if, the map $\pif : \F_0 \to \Ub(\F_1)$ has the RLP with respect to $\alpha_{\I}$, if and only if $\pif$ is a trivial fibration, thanks to Lemma \ref{lifting-lem}. This gives the first assertion. The second assertion is clear since every identity morphism is a trivial fibration. 
\end{proof}

\begin{df}\label{localization-set-proj}
Let $\M$ be a tractable model category. Define the \textbf{projective localizing set}   $\kbi_{proj}= \{\Gamma(\zeta_i) \}_{i \in \Iam} = \Gamma (\zeta_{\Iam})$ as the image of $\zeta_{\Iam}$ under the left adjoint $\Gamma$. 
\end{df}
The following proposition will be used for the projective model structure.
\begin{prop}\label{detect-segal-pj}
Let $\M$ be a tractable model category. Let $[\F]=[\F_0,\F_1,\pif]$ be an object in the category $\mua$ and assume that:
\begin{enumerate}
\item $[\F_0,\F_1,\pif]$ is $\kbi_{proj}$-injective, that is $[\F] \to \ast$ has the RLP with respect to $\kbi_{proj}$;
\item $\pif$ is a map between fibrant object i.e, the unique map $\Fc \to \ast$ is a level fibration.
\end{enumerate}
Then $\Fc$ satisfies the Segal conditions i.e, $\pif$ is a weak equivalence. 
\end{prop}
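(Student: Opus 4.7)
The plan is to mirror the proof of Proposition \ref{detect-segal}, but where the key input lemma is now the HELP Lemma (Lemma \ref{lifting-lem-proj}) rather than the RLP-characterization of trivial fibrations (Lemma \ref{lifting-lem}). The whole argument is an adjunction transposition followed by an appeal to that lemma, so there is essentially no new content beyond setting things up carefully.

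First, I would invoke the adjunction $\Gamma : \arm \rightleftarrows \mua : \Piar$ from Proposition \ref{adj-prop-gamma}. By definition $\kbi_{proj} = \Gamma(\zeta_{\Iam})$, so hypothesis (1) says that $\Fc \to \ast$ has the RLP in $\mua$ with respect to every $\Gamma(\zeta_i)$ for $i \in \Iam$. Transposing across the adjunction, this is equivalent to asking that $\Piar(\Fc) \to \Piar(\ast) = \pif \to \ast$ has the RLP in $\arm$ with respect to every $\zeta_i$, i.e. $\pif \to \ast$ is $\zeta_{\Iam}$-injective.

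Next, hypothesis (2) says that $\pif \to \ast$ is a level fibration in $\arm$ (equivalently, both $\F_0$ and $\Ub(\F_1)$ are fibrant in $\M$, so $\pif$ is a morphism between fibrant objects). These are precisely the two hypotheses required by Lemma \ref{lifting-lem-proj}, which then yields that $\pif$ is a weak equivalence in $\M$. This says exactly that $\Fc$ satisfies the Segal condition, and we are done.

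The only mildly delicate point is making sure the fibrancy hypothesis is read correctly: ``level fibration'' for $\Fc \to \ast$ means fibrancy of $\F_0$ and of $\Ub(\F_1)$, and hence $\pif$ is a map between fibrant objects in $\M$, which is what the HELP Lemma needs. Everything else is formal. I do not anticipate any real obstacle, since all the heavy lifting (tractability of $\M$, the HELP Lemma, the $\Gamma \dashv \Piar$ adjunction, and the definition of $\kbi_{proj}$) is already in hand from earlier in the section.
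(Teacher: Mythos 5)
Your proof is correct and follows exactly the same route as the paper's: transpose the $\kbi_{\tx{proj}}$-injectivity of $\Fc \to \ast$ across the adjunction $\Gamma \dashv \Piar$ to obtain $\zeta_{\Iam}$-injectivity of $\pif \to \ast$, then combine with the level-fibrancy hypothesis to apply Lemma~\ref{lifting-lem-proj}. The paper's proof says only ``Just proceed by adjunction with Lemma~\ref{lifting-lem-proj}''; your write-up is that same argument spelled out.
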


\begin{proof}
Just proceed by adjunction with Lemma \ref{lifting-lem-proj}.
\end{proof}

\subsection{Injective and projective model structures on $\mua$}
We are going to mimic Hovey's result and have an \textbf{injective model structure} on $\mua$ such that when $\Ub=\Id_{\M}$ we recover Hovey's theorem. Stanculescu \cite{Stanculescu_multi} and Toën \cite{To_hall} considered model structures on the (equivalent) category $(\Fb \downarrow \ag)$. \ \\
 
Recall that from the adjunction $\Fb \dashv \Ub$ there is a functorial isomorphism of hom-sets:
$$\varrho : \Hom_{\M}(\F_0,  \Ub(\F_1))  \xrightarrow{\cong} \Hom_{\ag}(\Fb (\F_0), \F_1).$$
Then if $\sigma=[\sigma_0,\sigma_1]: \Fc \to \Gc$ is a map in $\mua$ we have two commutative squares that are mutually adjoint:
\[
\xy
(0,18)*+{\F_0}="W";
(0,0)*+{\Ub(\F_1)}="X";
(30,0)*+{\Ub(\G_1)}="Y";
(30,18)*+{\G_0}="E";
{\ar@{->}^-{\Ub(\sigma_1)}"X";"Y"};
{\ar@{->}^-{\pi_{\F}}"W";"X"};
{\ar@{->}^-{\sigma_0}"W";"E"};
{\ar@{->}^-{\pi_{\G}}"E";"Y"};
(45,9)*+{\Longleftrightarrow}="Z";
\endxy
\xy
(0,18)*+{\Fb(\F_0)}="W";
(0,0)*+{\F_1}="X";
(30,0)*+{\G_1}="Y";
(30,18)*+{\Fb(\G_0)}="E";
{\ar@{->}^-{\sigma_1}"X";"Y"};
{\ar@{->}^-{\varrho(\pi_{\F})}"W";"X"};
{\ar@{->}^-{\Fb(\sigma_0)}"W";"E"};
{\ar@{->}^-{\varrho(\pi_{\G})}"E";"Y"};
\endxy
\]
\begin{df}\label{injective-data}
Let $\sigma: \Fc \to \Gc$ be a map in $\mua$. With the previous notation we will say that:
\begin{enumerate}
\item $\sigma$ is a \textbf{injective (trivial) cofibration} if $\sigma_0$ and $\sigma_1$ are (trivial) cofibrations in $\M$. 
\item $\sigma$ is a \textbf{level-wise weak equivalence} (resp. \textbf{level-wise fibration}) if:
\begin{itemize}
\item  $\sigma_0$ is a weak equivalence (resp. fibration)  in $\M$ and
\item  $\sigma_1$ is a weak equivalence (resp. fibration) in $\ag$.
\end{itemize}
\item $\sigma$ is an \textbf{injective (trivial) fibration}  if:
\begin{itemize}
\item $\sigma_1: \F_1 \to \G_1$ is a  (trivial) fibration in $\ag$ and 
\item the induced map $ \F_0 \to \Ub(\F_1) \times_{\Ub(\G_1)} \G_0 $
is a (trivial) fibration in $\M$. 
\end{itemize}

\item $\sigma$ is an \textbf{projective (trivial) cofibration}  if:
\begin{itemize}
\item $\sigma_0: \F_0 \to \G_0$ is a  (trivial) cofibration in $\M$ and 
\item the induced map $ \F_1 \cup^{\Fb(\F_0)} \Fb(\G_0) \to  \G_1 $ is a (trivial) cofibration in $\ag$.
\end{itemize}
\end{enumerate}
\end{df}

We need the following result to establish the Reedy model structure. The proof is straightforward but we include it for completeness. 
\begin{lem}\label{lem-fib-inj}
Let $\Ub: \ag \to \M$ be a right Quillen functor. Then a map $\sigma: \Fc \to \Gc$ is an injective trivial fibration if and only if, it's an injective fibration and a level-wise weak equivalence. 
\end{lem}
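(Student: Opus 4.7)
The plan is to analyze the two directions separately, using throughout the fact that $\Ub$ is right Quillen and therefore preserves (trivial) fibrations, together with the stability of (trivial) fibrations under pullback in the model category $\M$.

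For the forward direction, I would assume $\sigma$ is an injective trivial fibration, so by definition $\sigma_1$ is a trivial fibration in $\ag$ and the induced map $\F_0 \to \Ub(\F_1) \times_{\Ub(\G_1)} \G_0$ is a trivial fibration in $\M$. The injective fibration condition is then immediate, and $\sigma_1$ is in particular a weak equivalence. To obtain the level-wise weak equivalence it remains to prove that $\sigma_0$ is a weak equivalence. For this I would factor $\sigma_0$ as
\[
\F_0 \xrightarrow{\;} \Ub(\F_1)\times_{\Ub(\G_1)} \G_0 \xrightarrow{\;\mathrm{pr}\;} \G_0,
\]
where the first map is the one appearing in the definition and $\mathrm{pr}$ is the canonical projection from the pullback. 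Since $\Ub$ is right Quillen, $\Ub(\sigma_1)$ is a trivial fibration in $\M$, and the projection $\mathrm{pr}$, being the pullback of $\Ub(\sigma_1)$ along $\pi_{\G}:\G_0\to\Ub(\G_1)$, is then also a trivial fibration. Composing two trivial fibrations shows that $\sigma_0$ is a trivial fibration, hence a weak equivalence, as desired.

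For the backward direction, assume $\sigma$ is an injective fibration and a level-wise weak equivalence. Then $\sigma_1$ is both a fibration and a weak equivalence in $\ag$, so it is a trivial fibration; this gives half of what is needed. For the other half I must show that the induced map $\F_0\to \Ub(\F_1)\times_{\Ub(\G_1)} \G_0$ is also a weak equivalence (it is already a fibration by hypothesis). The same factorization of $\sigma_0$ used above applies: since $\Ub(\sigma_1)$ is a trivial fibration in $\M$, the projection $\mathrm{pr}$ from the pullback to $\G_0$ is a trivial fibration, hence a weak equivalence, and $\sigma_0$ is a weak equivalence. The $2$-out-of-$3$ property then forces the induced map $\F_0\to \Ub(\F_1)\times_{\Ub(\G_1)} \G_0$ to be a weak equivalence, and combined with being a fibration it is a trivial fibration.

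I do not anticipate a serious obstacle here: the proof is essentially a bookkeeping argument using the right Quillen hypothesis on $\Ub$, the stability of (trivial) fibrations under pullback, and $2$-out-of-$3$. The only mild subtlety is remembering to invoke right Quillen-ness at the single point where one needs $\Ub(\sigma_1)$ to be a trivial fibration in $\M$; without this assumption the pullback argument collapses.
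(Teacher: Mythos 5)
Your proof is correct and follows essentially the same route as the paper: both directions factor $\sigma_0$ as the induced map to the pullback followed by the projection, then use that $\Ub$ preserves trivial fibrations, pullback-stability of trivial fibrations, and the $2$-out-of-$3$ property. The only (cosmetic) difference is that you spell out why $\sigma_1$ is a trivial fibration in the backward direction, whereas the paper states it more tersely.
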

%The proof is exactly the same as for classical Reedy structures. We give it for completeness.\ \\
\begin{proof}
To prove the only if part, it suffices to show that $\sigma$ is a level-wise weak equivalence.  If $\sigma=[\sigma_0,\sigma_1]$ is an injective trivial fibration, then $\sigma_1$ is a trivial fibration in $\ag$ by definition, and therefore $\Ub(\sigma_1)$ is also trivial fibration in $\M$, since $\Ub$ preserves trivial fibrations. Now in any model category, trivial fibrations are closed under pullback, and we see that the canonical map $p: \Ub(\F_1) \times_{\Ub(\G_1)} \G_0 \to \G_0 $ is also a trivial fibration.  The other part of being an injective trivial fibration means that the map $ \F_0 \to \Ub(\F_1) \times_{\Ub(\G_1)} \G_0 $ is a trivial fibration. The map $\sigma_0$ is the composite the previous map followed by the map $p$, and both are trivial fibrations, thus $\sigma_0$ is a trivial fibrations as well.  In the end $\sigma_1$ and $\sigma_0$ are trivial fibrations in the respective model category, in particular each of them is a  weak equivalence.  \ \\

For the if part, we simply need to show that the map $ \F_0 \to \Ub(\F_1) \times_{\Ub(\G_1)} \G_0 $ is a weak equivalence since it's already a fibration.  The argument is based on the $3$-for-$2$ property of weak equivalences in $\M$. Indeed, assume $\sigma=[\sigma_0,\sigma_1]$ is an injective fibration and a level-wise weak equivalence. Then $\sigma_1$ is a trivial fibration in $\ag$ by definition, and therefore $\Ub(\sigma_1)$ and then  its pullback $p$ are also a trivial fibrations.  As mentioned above, we have a factorization $\sigma_0$ as the map $\F_0 \to \Ub(\F_1) \times_{\Ub(\G_1)} \G_0 $ followed by the map $p: \Ub(\F_1) \times_{\Ub(\G_1)} \G_0 \to \G_0 $. Since $\sigma_0$ and $p$ are weak equivalences, then by $3$-for-$2$, the map 
$ \F_0 \to \Ub(\F_1) \times_{\Ub(\G_1)} \G_0 $ is also a weak equivalence as desired. 
\end{proof}

The dual statement is:
\begin{lem}\label{lem-cof-proj}
 A map $\sigma: \Fc \to \Gc$ is a projective trivial cofibration if and only if, it's a level-wise weak equivalence and a projective cofibration. 
\end{lem}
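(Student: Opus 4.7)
The plan is to mirror the proof of Lemma \ref{lem-fib-inj} by dualizing every step: replace pullbacks by pushouts, trivial fibrations by trivial cofibrations, and the right Quillen functor $\Ub$ by its left adjoint $\Fb$, which is automatically left Quillen.

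For the \emph{only if} direction, assume $\sigma = [\sigma_0,\sigma_1]$ is a projective trivial cofibration. By definition $\sigma_0$ is already a trivial cofibration in $\M$, so in particular a weak equivalence, giving the first half of the level-wise weak equivalence. To see that $\sigma_1$ is a weak equivalence, I factor it as
\[
\F_1 \xrightarrow{\ j\ } \F_1 \cup^{\Fb(\F_0)} \Fb(\G_0) \xrightarrow{\ q\ } \G_1,
\]
where $q$ is the induced map from the projective cofibration data, which is a trivial cofibration by assumption. Since $\Fb$ is left Quillen it sends the trivial cofibration $\sigma_0$ to a trivial cofibration $\Fb(\sigma_0)$ in $\ag$, and since trivial cofibrations are closed under pushout in any model category the map $j$ is also a trivial cofibration. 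By composition $\sigma_1 = q \circ j$ is a trivial cofibration, hence a weak equivalence.

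For the \emph{if} direction, assume $\sigma$ is a projective cofibration and a level-wise weak equivalence. Only the trivial-cofibration condition on $q: \F_1 \cup^{\Fb(\F_0)} \Fb(\G_0) \to \G_1$ needs to be upgraded; being a cofibration it suffices to show it is a weak equivalence. As above $\sigma_0$ is now a cofibration and a weak equivalence, so $\Fb(\sigma_0)$ is a trivial cofibration in $\ag$, and its pushout $j: \F_1 \to \F_1 \cup^{\Fb(\F_0)} \Fb(\G_0)$ is a trivial cofibration as well, in particular a weak equivalence. Using the factorization $\sigma_1 = q \circ j$ together with the hypothesis that $\sigma_1$ is a weak equivalence, the $3$-for-$2$ property applied in $\ag$ yields that $q$ is a weak equivalence.

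There is no real obstacle here: the argument is purely formal and relies only on the pushout stability of trivial cofibrations, the $3$-for-$2$ property, and the fact that the left adjoint $\Fb$ to the right Quillen $\Ub$ is a left Quillen functor. The only minor point to be attentive to is the direction of the comparison map (it is the \emph{induced} map out of the pushout, so $\sigma_1$ factors through it, not the other way around), which is what makes the same $3$-for-$2$ trick work on the left side that the pullback/right side trick accomplished in Lemma \ref{lem-fib-inj}.
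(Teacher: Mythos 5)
Your proof is correct and takes essentially the same approach as the paper: both dualize Lemma \ref{lem-fib-inj} by factoring $\sigma_1$ through the pushout $\F_1 \cup^{\Fb(\F_0)} \Fb(\G_0)$, using that $\Fb$ is left Quillen and that trivial cofibrations are stable under cobase change, then closing the ``only if'' direction by composition and the ``if'' direction by $3$-for-$2$. The only difference is superficial naming of the two maps in the factorization.
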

The proof is dual to the previous one so we just need to adapt it. 
\begin{proof}
To prove the only if part, it suffices to show that $\sigma$ is a level-wise weak equivalence.  If $\sigma=[\sigma_0,\sigma_1]$ is an projective trivial cofibration, then $\sigma_0$ is a trivial cofibration in $\M$ by definition, and therefore $\Fb(\sigma_0)$ is also trivial cofibration in $\ag$, since $\Fb$ preserves trivial cofibrations. Now in any model category, trivial cofibrations are closed under cobase change. It follows that the canonical map $q: \F_1 \to   \F_1 \cup^{\Fb(\F_0)} \Fb(\G_0) $ is a trivial cofibration in $\ag$. The  other part of being an projective trivial cofibration means that the map $ \F_1 \cup^{\Fb(\F_0)} \Fb(\G_0) \to  \G_1 $ is a trivial cofibration. The map $\sigma_1$ is the composite of the map $q$ followed  by the previous map, and both are trivial cofibrations. Then $\sigma_0$ is a trivial cofibrations as well.  In the end $\sigma_0$ and $\sigma_1$ are trivial cofibrations in the respective model category, in particular each of them is a  weak equivalence.  \ \\

For the if part, we simply need to show that the map  $ \F_1 \cup^{\Fb(\F_0)} \Fb(\G_0) \to  \G_1 $ is a weak equivalence since it's already a cofibration.  The argument is also based on the $3$-for-$2$ property of weak equivalences in $\ag$. Indeed, assume $\sigma=[\sigma_0,\sigma_1]$ is a projective cofibration and a level-wise weak equivalence. Then $\sigma_0$ is a trivial cofibration in $\M$ by definition,  therefore $\Fb(\sigma_0)$ and  its cobase change $q$ are also a trivial cofibrations in $\ag$.  As mentioned above, we have a factorization $\sigma_1$ as the map $q$ followed by the map $ \F_1 \cup^{\Fb(\F_0)} \Fb(\G_0) \to  \G_1 $. Since $\sigma_1$ and $q$ are weak equivalences, then by $3$-for-$2$, the map $$\F_1 \cup^{\Fb(\F_0)} \Fb(\G_0) \to  \G_1 $$ is also a weak equivalence as desired.
\end{proof}

\subsection{Factorizations}
We generalize here to comma categories the required factorizations that are necessary to get the Reedy model structure. We simply follow the classical method.

\subsubsection{Injective factorizations}
\begin{prop}\label{inj-fact-reedy}
Let $\Ub: \ag \to \M$ be a right Quillen functor. Then with the previous definition the following hold. 
\begin{enumerate}
\item Any map $\sigma: \Fc \to \Gc$ can be factored as an injective cofibration followed by a an injective trivial fibration.
\item Any map $\sigma: \Fc \to \Gc$ can be factored as an injective trivial cofibration followed by an injective fibration. 
\end{enumerate}
\end{prop}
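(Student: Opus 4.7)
The plan is to mimic the classical Reedy factorization, using that $\Ub$ is right Quillen (so it preserves fibrations and trivial fibrations) and that trivial fibrations in $\M$ are stable under pullback. In both cases the factorization will be built by first factoring the $\ag$-component, then forming a pullback over $\Ub$, and finally factoring an induced map in $\M$.

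For part $(1)$, given $\sigma = [\sigma_0,\sigma_1]:\Fc \to \Gc$, I would first factor $\sigma_1$ in the model category $\ag$ as a cofibration $i_1:\F_1 \hookrightarrow H_1$ followed by a trivial fibration $p_1:H_1 \twoheadrightarrow \G_1$. Applying $\Ub$ gives a trivial fibration $\Ub(p_1):\Ub(H_1)\twoheadrightarrow \Ub(\G_1)$. Form the pullback $P = \Ub(H_1)\times_{\Ub(\G_1)} \G_0$ along $\pi_\G$; the composite $\Ub(i_1)\circ\pi_\F:\F_0\to\Ub(H_1)$ and $\sigma_0:\F_0\to\G_0$ are compatible by the commutativity of the square defining $\sigma$, hence induce a canonical map $u:\F_0\to P$. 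Now factor $u$ in $\M$ as a cofibration $i_0:\F_0\hookrightarrow H_0$ followed by a trivial fibration $p_0:H_0\twoheadrightarrow P$. Take $\pi_H$ to be the composite $H_0\xrw{p_0} P\to \Ub(H_1)$ and set $\Hc=[H_0,H_1,\pi_H]$. Then $[i_0,i_1]:\Fc\to\Hc$ is an injective cofibration by construction, while $[p_0',p_1]:\Hc\to\Gc$ (with $p_0'$ the composite $H_0\to P\to \G_0$) satisfies the two conditions of Definition \ref{injective-data}(3) for an injective trivial fibration, since $p_1$ is a trivial fibration in $\ag$ and the induced map $H_0\to \Ub(H_1)\times_{\Ub(\G_1)}\G_0 = P$ is by construction the trivial fibration $p_0$. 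The compatibility squares in $\mua$ follow from the universal property of the pullback.

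For part $(2)$, I would run the same construction but swap which halves of the factorizations are trivial: factor $\sigma_1$ as a trivial cofibration $j_1:\F_1\hookrightarrow H_1$ followed by a fibration $q_1:H_1\twoheadrightarrow \G_1$, form the analogous pullback $P=\Ub(H_1)\times_{\Ub(\G_1)}\G_0$, and factor the induced map $\F_0\to P$ as a trivial cofibration $j_0:\F_0\hookrightarrow H_0$ followed by a fibration $q_0:H_0\twoheadrightarrow P$. Then $[j_0,j_1]$ is an injective trivial cofibration (both components are trivial cofibrations in the respective model categories, which is exactly Definition \ref{injective-data}(1) with the trivial qualifier). The map $\Hc\to \Gc$ has $q_1$ a fibration in $\ag$ and the induced map to the pullback is $q_0$, a fibration in $\M$, so it is an injective fibration by Definition \ref{injective-data}(3).

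The only step that requires a moment of thought is checking that the maps produced genuinely assemble into morphisms in $\mua$, i.e.\ that the required squares commute. This is where the pullback is essential: the two maps out of $\F_0$ (namely $\sigma_0$ and $\Ub(i_1)\circ\pi_\F$, respectively its analogue in part $(2)$) are compatible precisely because the square for $\sigma$ commutes, so they factor uniquely through $P$, and the resulting $\pi_H:H_0\to\Ub(H_1)$ automatically makes both $[\Fc\to \Hc]$ and $[\Hc\to \Gc]$ into maps of the comma category. The main obstacle, if any, is really notational bookkeeping; there is no delicate model-categorical input beyond $\Ub$ being right Quillen (needed to transport $p_1$ to a trivial fibration in $\M$ so that its pullback is again a trivial fibration).
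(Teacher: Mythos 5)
Your proof is correct and follows essentially the same construction as the paper: factor the $\ag$-component, apply $\Ub$, form the pullback over $\Ub(\G_1)$, then factor the induced map in $\M$. The only stylistic difference is that the paper writes this once in the generic language of weak factorization systems $(\mathscr{L},\mathscr{R})$ so that both assertions fall out of a single argument, whereas you spell out the two cases separately; this is immaterial.
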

If we decide to prove each assertion separately, this will be very long. So we will use a generic language of \emph{(weak) factorization systems} (see for example \cite{Riehl_algcat}). In any model category we have two weak factorizations systems $(\cof, \fib \cap \W)$ and $(\cof \cap \W, \fib)$, where $\cof$, $\fib$ and $\W$ are respectively the classes of cofibrations, fibrations and weak equivalences.  Then we have two different factorization  systems $\faca$ on $\ag$ and $\facm$ on $\M$ that determine the model structure on each model category. The functor $\Ub$ is such that $\Ub(\ra ) \subseteq \mr$ for each of the  corresponding factorization systems. 

\begin{proof}[Proof of Proposition \ref{inj-fact-reedy}]
Let  $\sigma=[\sg_0,\sg_1]: \Fc \to \Gc$ be a map in $\mua$.\ \\
Use the axiom of the model category $\ag$ with respect to the factorization system $\faca$ to write $\sigma_1$ as $\sg_1= r(\sg_1) \circ l(\sg_1)$:
$$\F_1 \xrightarrow{\sg_1} \G_1 = \F_1\xhookrightarrow{l(\sg_1)}  E_1 \xtwoheadrightarrow{r(\sg_1)} \G_1,$$ 
with $ r(\sg_1) \in \ra $ and $ l(\sg_1) \in \la$. The image under $\Ub$ of this factorization, gives a factorization  $\Ub(\sg_1)= \Ub(r(\sg_1)) \circ \Ub(l(\sg_1))$, with $\Ub(r(\sg_1)) \in \mr$ since  $\Ub(\ra ) \subseteq \mr$.

Form the pullback square in $\M$ defined by the pullback data: $$\Ub(E_1) \xrightarrow{\Ub(r(\sg_1))} \Ub(\G_1) \xleftarrow{\pig} \G_0,$$
and let $p_1 : \Ub(E_1) \times_{\Ub(\G_1)} \G_0 \to \G_0$ and $p_2:   \Ub(E_1) \times_{\Ub(\G_1)} \G_0  \to \Ub(\Ea_1)$ be the canonical maps. Then $p_1 \in \mr$ because $\mr$ is closed under pullbacks.  
The universal property of the pullback square gives a unique map $\delta : \F_0 \to \Ub(E_1) \times_{\Ub(\G_1)} \G_0,$
such that everything below commutes. 
\[
\xy
(0,30)*+{\F_0}="W";
(0,0)*+{\Ub(\F_1)}="X";
(60,0)*+{\Ub(\G_1)}="Y";
(60,30)*+{\G_0}="E";
{\ar@{->}^-{\pi_{\F}}"W";"X"};
{\ar@{->}^-{\sigma_0}"W";"E"};
{\ar@{->}^-{\pi_{\G}}"E";"Y"};
%%% added %%%%
(30,20)*+{\Ub(E_1) \times_{\Ub(\G_1)} \G_0}="U";
(30,0)*+{\Ub(E_1)}="V";
{\ar@{->}^-{\Ub(l(\sg_1))}"X";"V"};
{\ar@{->>}^-{\Ub(r(\sg_1))}"V";"Y"};
{\ar@{->}^-{p_2}"U";"V"};
{\ar@{->>}^-{p_1}"U";"E"};
{\ar@{->}^-{\delta}"W";"U"};
\endxy
\]

Now we use the factorization system $\facm$ to factor the map $\delta$:
$$ \delta : \F_0 \to \Ub(E_1) \times_{\Ub(\G_1)} \G_0 = \F_0  \xhookrightarrow{l(\delta)}  m_0 \xtwoheadrightarrow{r(\delta)}\Ub(E_1) \times_{\Ub(\G_1)} \G_0,$$ 
with $ r(\delta) \in \mr $ and $ l(\delta) \in \lm$.
Let $\Ec=[\Ea_0,\Ea_1,\pi_{\Ea}]$ be the object of $\mua$ defined by
$$ \Ea_0= m_0, \quad \Ea_1=E_1, \quad \pi_{\Ea}=p_2 \circ r(\delta).$$

We  have a map $l(\sigma) : \Fc \to \Ec$ given by the couple $[l(\delta),  l(\sigma_1)] \in \lm \times \la$ and a map $r(\sigma) : \Ec \to \Gc$ given by the couple $[p_1 \circ r(\delta),  r(\sigma_1)] $  with $[r(\delta),  r(\sigma_1)] \in \mr \times \ra$,  such that $\sg= r(\sg) \circ l(\sg)$.  This gives the assertions. 
\end{proof}

\subsubsection{Projective factorizations}
\begin{prop}\label{proj-fact-reedy}
Let $\Ub: \ag \to \M$ be a right Quillen functor. Then with the previous definition the following hold. 
\item Any map $\sigma: \Fc \to \Gc$ can be factored as a projective cofibration followed by a  projective trivial fibration.
\item Any map $\sigma: \Fc \to \Gc$ can be factored as a projective trivial cofibration followed by a projective fibration. 
\end{prop}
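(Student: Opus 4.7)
The plan is to mimic, by duality, the construction used for Proposition \ref{inj-fact-reedy}, with the roles of $\ag$ and $\M$ swapped and every pullback-of-$\Ub$ replaced by a pushout-of-$\Fb$. The factorization systems $\facm = (\lm, \mr)$ on $\M$ and $\faca = (\la, \ra)$ on $\ag$ remain the two standard ones (cofibration--trivial fibration, and trivial cofibration--fibration); I intend to handle both statements uniformly using this generic language, as was done in the injective case. The one structural fact I shall use freely is that $\Fb$, being left adjoint to a right Quillen functor, sends the class $\la$ (for either choice of factorization system on $\M$) into the class $\la$ on $\ag$, and that these left classes are stable under cobase change.

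Concretely, given $\sigma = [\sigma_0,\sigma_1]\colon \Fc \to \Gc$, I would first apply the factorization axiom in $\M$ to write $\sigma_0 = r(\sigma_0)\circ l(\sigma_0)$ through an intermediate object $m_0$, with $l(\sigma_0)\in\lm$ and $r(\sigma_0)\in\mr$. Applying $\Fb$ gives a map $\Fb(l(\sigma_0))\in\la$, which I push out against the adjoint transpose $\varrho(\pi_{\F})\colon \Fb(\F_0)\to\F_1$, obtaining $\F_1 \cup^{\Fb(\F_0)} \Fb(m_0)$ together with the canonical map $q\colon \F_1 \to \F_1 \cup^{\Fb(\F_0)} \Fb(m_0)$, which lies in $\la$ by cobase change. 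The universal property of the pushout, applied to the data $\F_1\xrw{\sigma_1}\G_1$ and $\Fb(m_0)\xrw{\Fb(r(\sigma_0))}\Fb(\G_0)\xrw{\varrho(\pi_{\G})}\G_1$, produces a unique induced map $\delta\colon \F_1 \cup^{\Fb(\F_0)} \Fb(m_0) \to \G_1$. I then factor $\delta$ in $\ag$ as $l(\delta)\in\la$ followed by $r(\delta)\in\ra$ through an intermediate object $e_1$.

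The intermediate object in $\mua$ is $\Ec = [m_0, e_1, \pi_{\Ec}]$, where $\pi_{\Ec}\colon m_0 \to \Ub(e_1)$ is obtained as the adjoint transpose of the composite $\Fb(m_0)\to \F_1 \cup^{\Fb(\F_0)} \Fb(m_0) \xrw{l(\delta)} e_1$. I would then define $l(\sigma) = [l(\sigma_0),\, l(\delta)\circ q]\colon \Fc \to \Ec$ and $r(\sigma) = [r(\sigma_0),\, r(\delta)]\colon \Ec \to \Gc$, and verify that $r(\sigma) \circ l(\sigma) = \sigma$ by unfolding the adjunction. By Definition \ref{injective-data}, $l(\sigma)$ is a projective (trivial) cofibration precisely because $l(\sigma_0)\in\lm$ and the induced map $\F_1 \cup^{\Fb(\F_0)} \Fb(m_0)\to e_1$ is exactly $l(\delta)\in\la$; while $r(\sigma)$ is a level-wise (trivial) fibration by construction, i.e.\ a projective (trivial) fibration. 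Choosing the factorization system $(\lm,\mr) = (\cof,\fib\cap\W)$ yields assertion (1), and the choice $(\cof\cap\W,\fib)$ yields assertion (2), invoking Lemma \ref{lem-cof-proj} to recognize level-wise weak equivalences on the cofibration side.

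The main obstacle I anticipate is purely bookkeeping: checking that the two definitions of $\pi_{\Ec}$ (one needed to make $l(\sigma)$ commute as a morphism in $\mua$, one needed for $r(\sigma)$) coincide with the transpose described above, and that the composite $r(\sigma)\circ l(\sigma)$ recovers $\sigma$ on both components. These are diagrammatic verifications that follow from the naturality of the unit $\eta$ of the adjunction $\Fb\dashv\Ub$ together with the pushout universal property, exactly analogous to the compatibility checks in the proof of Proposition \ref{inj-fact-reedy}, and they carry no genuine content beyond unwinding the transpose $\varrho$.
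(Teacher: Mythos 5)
Your proposal is correct and follows essentially the same route as the paper's proof: factor $\sigma_0$ in $\M$ through $m_0$, apply $\Fb$, push out the left factor along $\varrho(\pi_{\F})$, use the universal property to produce the comparison map to $\G_1$ (your $\delta$ is the paper's $\zeta$), factor it in $\ag$ through $E_1$, and read off the two halves of the factorization with the middle object $\Ec = [m_0, E_1, \varrho^{-1}(l(\zeta)\circ i_1)]$ — identical in all but notation. The only blemish is a slip of notation where you wrote ``the class $\la$ for either choice of factorization system on $\M$'' where you mean $\lm$ (and that $\Fb(\lm)\subseteq\la$), but the intent is unambiguous and the argument is sound.
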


\begin{proof}
Consider a factorization system $\faca$ on $\ag$ and a factorization system $\facm$ on $\M$ such that $\Ub(\ra) \subseteq \mr$ and $\Fb(\lm) \subseteq \la$. \ \\

Let  $\sigma=[\sg_0,\sg_1]: \Fc \to \Gc$ be a map in $\mua$.\ \\
Use the axiom of the model category $\ag$ with respect to the factorization system $\faca$ to write $\sigma_0$ as $\sg_0= r(\sg_0) \circ l(\sg_0)$:
$$\F_0 \xrightarrow{\sg_0} \G_0 = \F_1\xhookrightarrow{l(\sg_0)}  m_0 \xtwoheadrightarrow{r(\sg_0)} \G_0,$$ 
with $ r(\sg_0) \in \mr $ and $ l(\sg_0) \in \lm$.\ \\
The image under $\Fb$ of this factorization, gives a factorization  $\Fb(\sg_0)= \Fb(r(\sg_0)) \circ \Fb(l(\sg_0))$, with $\Fb(l(\sg_0)) \in \la$ since  $\Fb(\lm ) \subseteq \la$. 

Form the pushout square in $\ag$ defined by the pushout data: $$\F_1 \xleftarrow{\varrho(\pif)} \Fb(\F_0) \xrightarrow{\Fb(l(\sg_0)) }\Fb (m_0), $$
and let $i_2 : \F_1 \to  \F_1 \cup^{\Fb(\F_0)} \Fb(\G_0)$ and $i_1: \Fb (m_0)   \to  \F_1 \cup^{\Fb(\F_0)} \Fb(\G_0)$ be the canonical maps. Then $i_2 \in \la$ because $\la$ is closed under pushouts.
The universal property of the pushout square gives a unique map 
$$ \zeta : \F_1 \cup^{\Fb(\F_0)} \Fb(m_0) \to  \G_1,$$
such that everything below commutes. 
\[
\xy
(0,30)*+{\Fb(\F_0)}="W";
(0,0)*+{\F_1}="X";
(60,0)*+{\G_1}="Y";
(60,30)*+{\Fb(\G_0)}="E";
{\ar@{->}^-{\varrho(\pif)}"W";"X"};
{\ar@{->}^-{\sg_1}"X";"Y"};
{\ar@{->}^-{\varrho(\pig)}"E";"Y"};
%%% added %%%%
(30,10)*+{\F_1 \cup^{\Fb(\F_0)} \Fb(m_0)}="U";
(30,30)*+{\Fb(m_0)}="V";
{\ar@{->}^-{\Fb(l(\sg_0))}"W";"V"};
{\ar@{->}^-{\Fb(r(\sg_0))}"V";"E"};
{\ar@{->}^-{i_2}"X";"U"};
{\ar@{->}^-{i_1}"V";"U"};
{\ar@{->}^-{\zeta}"U";"Y"};
\endxy
\]
Now we use the factorization system $\faca$ to factor the map $\zeta$:
$$ \delta : \F_1 \cup^{\Fb(\F_0)} \Fb(m_0) \to \G_1 = \F_1 \cup^{\Fb(\F_0)} \Fb(m_0)  \xhookrightarrow{l(\zeta)}  E_1 \xtwoheadrightarrow{r(\zeta)} \G_1,$$ 
with $ l(\zeta) \in \la $ and $ r(\zeta) \in \ra$.
Let $\Ec=[\Ea_0,\Ea_1,\pi_{\Ea}]$ be the object of $\mua$ defined by
$$ \Ea_0= m_0, \quad \Ea_1=E_1, \quad \pi_{\Ea}= \varrho^{-1} (l(\zeta) \circ i_1) \in  \Hom_{\M}(m_0,\Ub(E_1)).$$

We  have a map $l(\sigma) : \Fc \to \Ec$ given by the couple $[l(\sg_0),  l(\zeta) \circ i_2]$, with  $[l(\sg_0),  l(\zeta)] \in \lm \times \la$, and a map $r(\sg) : \Ec \to \Gc$ given by the couple $[ r(\sg_0),  r(\zeta)] \in \mr \times \ra $ ;  such that $\sg= r(\sg) \circ l(\sg)$.  These gives the assertions. 

\end{proof}

\subsection{Lifting properties}
On the model category $\ag$ we will use here again the generic notation $\faca$ for both factorization systems. And similarly we will denote by $\facm$ both factorization systems on $\M$. In both cases we have $\Ub(\ra) \subseteq \mr$ and $\Fb(\lm) \subseteq \la$. Recall that $\sg=[\sg_0,\sg_1]: \Fc \to \Gc$ is an injective (trivial) cofibration in $\mua$ if $[\sg_0,\sg_1] \in \lm \times \la$. And $\beta=[\beta_0,\beta_1]: [\Pa] \to [\Qa]$ is an injective (trivial) fibration if $[ \delta , \beta_1] \in \lm \times \la$, where $\delta:\Pa_0 \to \Ub(\Pa_1) \times_{\Ub(\Qa_1)} \Qa_0$ is the induced map.
\subsubsection{Injective lifting properties}
\begin{prop}\label{inj-lifting}
Let $\Ub: \ag \to \M$ be a right Quillen functor. Then with the previous definition the following hold. 
\begin{enumerate}
\item Any lifting problem defined by an injective cofibration and  an injective trivial fibration has a solution.
\item Any lifting problem defined by an injective trivial cofibration and  an injective fibration has a solution.
\end{enumerate}
\end{prop}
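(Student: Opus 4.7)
The plan is to treat both cases uniformly, using the observation that an injective (trivial) cofibration is a level-wise cofibration datum while an injective (trivial) fibration is a datum consisting of a fibration in $\ag$ together with a fibration of a pullback-comparison in $\M$. In both cases the lift is built in two stages: first lift the component in $\ag$, then use this to form a lifting square in $\M$ against the map into the pullback $\Ub(\Pa_1) \times_{\Ub(\Qa_1)} \Qa_0$.

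Concretely, suppose we are given a commutative square in $\mua$
\[
\xy
(0,18)*+{\Fc}="W";
(0,0)*+{\Gc}="X";
(30,0)*+{[\Qa]}="Y";
(30,18)*+{[\Pa]}="E";
{\ar@{->}^-{\beta}"E";"Y"};
{\ar@{->}_-{\sigma}"W";"X"};
{\ar@{->}^-{\phi}"W";"E"};
{\ar@{->}_-{\psi}"X";"Y"};
\endxy
\]
where $\sigma=[\sigma_0,\sigma_1]$ is an injective cofibration and $\beta=[\beta_0,\beta_1]$ is an injective trivial fibration (the case of part~(2) being identical with the word ``trivial'' on the other side). Since $\sigma_1$ is a cofibration in $\ag$ and $\beta_1$ is a trivial fibration in $\ag$, the model category axioms of $\ag$ give a lift $\lambda_1:\G_1\to\Pa_1$ satisfying $\lambda_1\circ\sigma_1=\phi_1$ and $\beta_1\circ\lambda_1=\psi_1$.

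Next, I would form the map $\theta:\G_0\to\Ub(\Pa_1)\times_{\Ub(\Qa_1)}\Qa_0$ whose two components are $\Ub(\lambda_1)\circ\pig$ and $\psi_0$. These are compatible via $\Ub(\beta_1)$ and $\piq$ because $\Ub(\beta_1)\circ\Ub(\lambda_1)\circ\pig=\Ub(\psi_1)\circ\pig=\piq\circ\psi_0$, using that $\psi$ is a morphism in $\mua$. Similarly $\theta\circ\sigma_0=\delta\circ\phi_0$, as a short component-wise check shows: the first component reduces to $\Ub(\phi_1)\circ\pif=\pi_{\Pa}\circ\phi_0$ (since $\phi$ is a morphism in $\mua$), and the second component is $\psi_0\circ\sigma_0=\beta_0\circ\phi_0$ (from $\beta\circ\phi=\psi\circ\sigma$). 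Thus we have a lifting problem in $\M$ between $\sigma_0$ (a cofibration) and $\delta:\Pa_0\to\Ub(\Pa_1)\times_{\Ub(\Qa_1)}\Qa_0$ (a trivial fibration, by hypothesis on $\beta$), and the model structure on $\M$ provides a lift $\lambda_0:\G_0\to\Pa_0$.

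Finally, I would verify that the pair $\lambda=[\lambda_0,\lambda_1]$ defines a morphism in $\mua$ and solves the original lifting problem. The compatibility condition $\pi_{\Pa}\circ\lambda_0=\Ub(\lambda_1)\circ\pig$ is precisely the first projection of the equality $\delta\circ\lambda_0=\theta$, while the identities $\beta\circ\lambda=\psi$ and $\lambda\circ\sigma=\phi$ reduce to $\beta_i\circ\lambda_i=\psi_i$ and $\lambda_i\circ\sigma_i=\phi_i$ for $i=0,1$, all of which were established above. For part~(2) the argument is the same: $\sigma_1$ trivial cofibration against $\beta_1$ fibration gives $\lambda_1$ in $\ag$, and $\sigma_0$ trivial cofibration against $\delta$ fibration gives $\lambda_0$ in $\M$. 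The only subtlety, and the one step worth being careful about, is the compatibility check producing $\theta$ — everything else is a direct application of the lifting axioms in $\M$ and $\ag$.
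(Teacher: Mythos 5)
Your proof is correct and follows essentially the same route as the paper: lift the $\ag$-component $\sigma_1$ against $\beta_1$ first, then use the resulting lift to translate the remaining data into a lifting problem in $\M$ between $\sigma_0$ and the pullback-comparison map $\delta$, which is solvable by the injective (trivial) fibration hypothesis. The only difference is cosmetic — the paper unfolds the commutative cube in $\M$ and reads the compatibility from the diagram, where you verify the two equalities component-wise; the content is the same.
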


\begin{proof}
The proof is the same for both assertions by considering the appropriate factorization system on $\ag$ and $\M$.\ \\
Consider a lifting problem in $\mua$ defined by $\sigma : \Fc \xrw{[\sg_0,\sg_1]} \Gc$ and $\beta: \Pc \xrw{[\beta_0,\beta_1]} \Qc $ as follows.
 \[
 \xy
(0,18)*+{\Fc}="W";
(0,0)*+{\Gc}="X";
(30,0)*+{\Qc}="Y";
(30,18)*+{\Pc}="E";
{\ar@{->}^-{(\gamma_0, \gamma_1)}"X";"Y"};
{\ar@{->}_-{(\sg_0,\sg_1)}"W";"X"};
{\ar@{->}^-{(\theta_0,\theta_1)}"W";"E"};
{\ar@{->}^-{(\beta_0,\beta_1)}"E";"Y"};
\endxy
\]
The image of this lifting problem under $\Piun: \mua \to \ag$ is a lifting problem defined by $\sg_1$ and $\beta_1$. Therefore if $[\sg_1, \beta_1]\in \la \times \ra$, then there is a solution $s_1: \G_1 \to \Pa_1$ of this lifting problem in $\ag$. Then by functoriality of $\Ub$, the map $\Ub(s_1)$ is a solution to the induced lifting problem defined by $\Ub(\sg_1)$ and $\Ub(\beta_1)$ in $\M$. Part of $\Ub(s_1)$ being a solution gives an equality $\Ub(\gamma_1)= \Ub(\beta_1) \circ \Ub(s_1)$. And  $[\gamma]=[\gamma_0,\gamma_1]$ being a morphism in $\mua$ gives the equality $\pi_{\Qa} \circ \gamma_0 = \Ub(\gamma_1) \circ \pig$.\ \\

Now consider the map $\Ub(s_1) \circ \pig \in \Hom_{\M}(\G_0,\Ub(\Pa_1))$ and the map $\gamma_0 \in \Hom_{\M}(\G_0,\Qa_0)$. Then by the above, it's not hard to see that these maps complete the pullback data $$\Ub(\Pa_1) \xrightarrow{\Ub(\beta_1)} \Ub(\Qa_1) \xleftarrow{\pi_{\Qa}} \Qa_0$$ into a commutative square ($\pi_{\Qa} \circ \gamma_0= \Ub(\beta_1) \circ \Ub(s_1) \circ \pig$). Therefore, by the universal property of the pullback square there is a unique map: $\zeta:\G_0 \to \Ub(\Pa_1) \times_{\Ub(\Qa_1)} \Qa_0$,
making everything compatible. In particular $\gamma_0$ and $\Ub(s_1) \circ \pig$  factor through $\zeta$.\ \\

Our original lifting problem in $\mua$ defined by $[\sigma]$ and $[\beta]$ is represented by a commutative cube in $\M$. If we unfold it, we find that everything commutes in the diagram hereafter:
\[
\xy
%Le digram de dessus%%%
(-60,20)*+{\F_0}="A";
(20,30)*+{\Pa_0}="B";
(-20,-10)*+{\G_0}="C";
(60,0)*+{\Qa_0}="D";
%%%%% la double fleche du milieu pour la face au dessus%%%
%%%%% la double fleche du milieu pour la face en dessous%%%
{\ar@{->}^{\theta_0}"A";"B"};
{\ar@{->}_{\sg_0}"A";"C"};
{\ar@{->}^{\beta_0}"B";"D"};
{\ar@{->}_{\gamma_0~~~~~~~}"C";"D"};
% le diagram d'en dessous%%%
(-60,-30)*+{\Ub(\F_1)}="X";
(20,-20)*+{\Ub(\Pa_1)}="Y";
(-20,-60)*+{\Ub(\G_1)}="Z";
(60,-50)*+{\Ub(\Qa_1)}="W";
{\ar@{-->}^{\quad \quad \theta_1}"X";"Y"};
{\ar@{->}_{\pif}"A";"X"};
{\ar@{->}^{\beta_1}"Y";"W"};
{\ar@{->}^{\pig}"C";"Z"};
{\ar@{->}_-{}"B";"Y"};
{\ar@{->}^{\pi_{\Qa}}"D";"W"};
{\ar@{->}_{\Ub(\sg_1)}"X";"Z"};
{\ar@{->}_{\gamma_1}"Z";"W"};
%%%%%%%%% produit fibre%%%%%%%%%%%
(20,30)+(11,-25)*+{\Ub(\Pa_1) \times_{\Ub(\Qa_1)} \Qa_0 }="E";
{\ar@{->>}^{\delta}"B";"E"};
{\ar@{.>}^{}"E";"D"};
{\ar@{.>}^{}"E";"Y"};
%{\ar@{->}^{}"C";"E"};
%%%%%%%%% induced map %%%%%%%%%%%
{\ar@{.>}^{\zeta}"C";"E"};
%%%%%%%%% map \Ub(s_1)%%%%%%%%%%%
{\ar@{->}_-{\Ub(s_1)}"Z";"Y"};
\endxy
\]

As we see, we get a commutative square that corresponds to a lifting problem defined by the map $\sg_0:\F_0 \to \G_0$ and the map $\delta: \Pa_0 \to  \Ub(\Pa_1) \times_{\Ub(\Qa_1)} \Qa_0 $:
 \[
 \xy
(0,18)*+{\F_0}="W";
(0,0)*+{\G_0}="X";
(30,0)*+{\Ub(\Pa_1) \times_{\Ub(\Qa_1)} \Qa_0}="Y";
(30,18)*+{\Pa_0}="E";
{\ar@{->}^-{\zeta}"X";"Y"};
{\ar@{->}_-{\sg_0}"W";"X"};
{\ar@{->}^-{\theta_0}"W";"E"};
{\ar@{->}^-{\delta}"E";"Y"};
\endxy
\]

Now it suffices to observe that this lifting problem has a solution if and only if our original lifting problem has a solution. Indeed, if $s_0 : \G_0 \to \Pa_0$ is a solution to the previous lifting problem, then we have map $[s]=[s_0,s_1]: \Gc \to \Pc$ that is a solution to the original problem. Conversely given a solution $[s]=[s_0,s_1]: \Gc \to \Pc$ to the original lifting problem , then the component $s_0: \G_0 \to \Pa_0$ is a solution to the lifting problem defined by $\sg_0$ and $\delta: \Pa_0 \to  \Ub(\Pa_1) \times_{\Ub(\Qa_1)} \Qa_0 $.\ \\

Finally one clear sees that the lifting problem defined by $\sg_0$ and $\delta$ has a  solution $s_0 \in \Hom_{\M}(\G_0,\Pa_0)$ since $[\sg_0,\delta] \in \lm \times \mr$.

\end{proof}

\subsubsection{Projective lifting properties}
\begin{prop}\label{proj-lifting}
Let $\Ub: \ag \to \M$ be a right Quillen functor. Then with the previous definition the following hold. 
\begin{enumerate}
\item Any lifting problem defined by a projective cofibration and a projective trivial fibration has a solution.
\item Any lifting problem defined by a projective trivial cofibration and  projective fibration has a solution.
\end{enumerate}
\end{prop}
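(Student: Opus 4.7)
The plan is to dualize exactly the argument of Proposition \ref{inj-lifting}, working first at level $0$ in $\M$ and then at level $1$ in $\ag$. As in the injective case, since the two assertions only differ by which factorization system $\faca$ on $\ag$ and $\facm$ on $\M$ one uses, I would handle both uniformly using the generic notation, exploiting that $\Fb(\lm)\subseteq \la$ because $\Fb$ is left Quillen.

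Consider a lifting problem in $\mua$ given by $\sigma=[\sg_0,\sg_1]\colon \Fc\to\Gc$ and $\beta=[\beta_0,\beta_1]\colon \Pc\to\Qc$, with upper map $\theta=[\theta_0,\theta_1]$ and lower map $\gamma=[\gamma_0,\gamma_1]$. First I apply $\Pio\colon \mua\to\M$: the resulting lifting problem in $\M$ is defined by $\sg_0$ and $\beta_0$, with $[\sg_0,\beta_0]\in \lm \times \mr$ (this is where both halves of ``projective (trivial) cofibration'' and ``projective (trivial) fibration'' are used for the $0$-level), so it admits a solution $s_0\colon\G_0\to\Pa_0$ satisfying $s_0\circ\sg_0=\theta_0$ and $\beta_0\circ s_0=\gamma_0$.

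Next I need to promote $s_0$ to level $1$. The key observation is that the two maps
\[
\theta_1\colon \F_1\to \Pa_1\qquad\text{and}\qquad \varrho(\pi_{\Pa})\circ \Fb(s_0)\colon \Fb(\G_0)\to \Pa_1
\]
agree after pulling back along the pushout data $\F_1\xleftarrow{\varrho(\pif)}\Fb(\F_0)\xrightarrow{\Fb(\sg_0)}\Fb(\G_0)$. Indeed, since $\theta$ is a morphism of $\mua$ one has $\pi_{\Pa}\circ\theta_0=\Ub(\theta_1)\circ\pif$, and applying $\varrho$ together with the naturality of the adjunction isomorphism converts this to $\varrho(\pi_{\Pa})\circ \Fb(\theta_0)=\theta_1\circ\varrho(\pif)$; then replace $\theta_0$ by $s_0\circ\sg_0$. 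By the universal property of the pushout $\F_1\cup^{\Fb(\F_0)}\Fb(\G_0)$, these two maps induce a unique $\zeta\colon \F_1\cup^{\Fb(\F_0)}\Fb(\G_0)\to \Pa_1$.

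Now I solve a lifting problem in $\ag$:
\[
\xy
(0,18)*+{\F_1\cup^{\Fb(\F_0)}\Fb(\G_0)}="W";
(0,0)*+{\G_1}="X";
(50,0)*+{\Qa_1}="Y";
(50,18)*+{\Pa_1}="E";
{\ar@{->}^-{\beta_1}"E";"Y"};
{\ar@{->}_-{}"W";"X"};
{\ar@{->}^-{\zeta}"W";"E"};
{\ar@{->}^-{\gamma_1}"X";"Y"};
\endxy
\]
The left-hand map is in $\la$ by the definition of a projective (trivial) cofibration, and $\beta_1\in\ra$ by the definition of a projective (trivial) fibration, so there exists $s_1\colon\G_1\to\Pa_1$ filling the diagram. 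Commutativity of the bottom triangle $\gamma_1=\beta_1\circ\zeta\circ(\text{left map})$ follows by checking separately on the two cone components of the pushout, using that $\gamma\colon \Gc\to\Qc$ is a morphism of $\mua$. Finally $[s_0,s_1]\colon \Gc\to\Pc$ is a morphism in $\mua$ (the compatibility square commutes by adjoint transposition of the defining pushout relation for $\zeta$) and solves the original lifting problem. The subtlest point, and the one I would check most carefully, is the bookkeeping in the second paragraph: transporting the compatibility of $\theta$ and of $\gamma$ with $\pi_{(-)}$ through the adjunction $\Fb\dashv\Ub$ so that the pushout--pullback duality with the injective argument is really exact.
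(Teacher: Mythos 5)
Your proof is correct and follows essentially the same dualization argument as the paper: solve at level $0$ via $\Pio$, transport $s_0$ along $\Fb$ and the adjunction to produce the canonical map out of the pushout $\F_1\cup^{\Fb(\F_0)}\Fb(\G_0)$, then solve the resulting lifting problem against $\beta_1$ in $\ag$. The only slip is the displayed relation ``$\gamma_1=\beta_1\circ\zeta\circ(\text{left map})$'', which does not type-check; you mean the commutativity of the square, $\gamma_1\circ\delta=\beta_1\circ\zeta$ where $\delta$ is the induced map $\F_1\cup^{\Fb(\F_0)}\Fb(\G_0)\to\G_1$.
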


\begin{proof}
We proceed in a dual manner as in the proof of Proposition \ref{inj-lifting} with the same notation for both factorizations systems on $\ag$ and $\M$. \ \\
Consider a lifting problem in $\mua$ defined by $\sigma : \Fc \xrw{[\sg_0,\sg_1]} \Gc$ and $\beta: \Pc \xrw{[\beta_0,\beta_1]} \Qc $ as follows.
 \[
 \xy
(0,18)*+{\Fc}="W";
(0,0)*+{\Gc}="X";
(30,0)*+{\Qc}="Y";
(30,18)*+{\Pc}="E";
{\ar@{->}^-{(\gamma_0, \gamma_1)}"X";"Y"};
{\ar@{->}_-{(\sg_0,\sg_1)}"W";"X"};
{\ar@{->}^-{(\theta_0,\theta_1)}"W";"E"};
{\ar@{->}^-{(\beta_0,\beta_1)}"E";"Y"};
\endxy
\]
The image of this lifting problem under $\Pio: \mua \to \M$ is a lifting problem defined by $\sg_0$ and $\beta_0$. Therefore if $[\sg_0, \beta_0]\in \lm \times \mr$, then there is a solution $s_0: \G_0 \to \Pa_0$ of this lifting problem in $\M$. Then by functoriality of $\Fb$, the map $\Fb(s_0)$ is a solution to the induced lifting problem defined by $\Fb(\sg_0)$ and $\Fb(\beta_0)$ in $\ag$. Part of $\Fb(s_0)$ being a solution gives an equality $\Fb(\theta_0)= \Fb(s_0) \circ \Fb(\sg_0)$. And  $[\theta]=[\theta_0,\theta_1]$ being a morphism in $\mua$ gives by adjunction a morphism $\varrho(\pif) \xrw{[\Fb(\sg_0), \theta_1]} \varrho(\piq)$ in $\Arr(\ag)$. In particular we have an the equality $\varrho(\pip) \circ \Fb(\theta_0) = \theta_1 \circ \varrho(\pif)$.\ \\

Now consider the map $\Fb(s_0) \circ \pip \in \Hom_{\ag}(\Fb(\G_0),\Pa_1)$ and the map $\theta_1 \in \Hom_{\ag}(\F_1,\Pa_1)$. Then using the previous equalities, it's not hard to see that these maps complete the pushout data $$\Fb(\G_0) \xlw{\Fb(\sg_0)} \Fb(\F_0) \xrw{\varrho(\pif)} \F_1$$ into a commutative square ($\theta_1 \circ \varrho(\pif)= \varrho(\pip) \circ \Fb(s_0) \circ  \Fb(\sg_0)$). Therefore, by the universal property of the pushout square, there is a unique map: $\xi:\F_1 \cup^{\Fb(\F_0)} \Fb(\G_0) \to  \Pa_1,$
making everything compatible. In particular  $\varrho(\piq) \circ \Fb(s_0)$ and $\theta_1$ factor through $\xi$.\ \\

The original lifting problem in $\mua$ defined by $[\sigma]$ and $[\beta]$ is represented by adjunction by a commutative cube in $\ag$. If we unfold it, we find that everything commutes in the diagram hereafter:
\[
\xy
%Le digram de dessus%%%
(-60,20)*+{\Fb(\F_0)}="A";
(20,30)*+{\Fb(\Pa_0)}="B";
(-20,-10)*+{\Fb(\G_0)}="C";
(60,0)*+{\Fb(\Qa_0)}="D";
%%%%% la double fleche du milieu pour la face au dessus%%%
%%%%% la double fleche du milieu pour la face en dessous%%%
{\ar@{->}^{\Fb(\theta_0)}"A";"B"};
{\ar@{->}_{\Fb(\sg_0)}"A";"C"};
{\ar@{->}_{\Fb(s_0)}"C";"B"};
{\ar@{->}^{\Fb(\beta_0)}"B";"D"};
{\ar@{->}_{\Fb(\gamma_0)~~~~~~~}"C";"D"};
% le diagram d'en dessous%%%
(-60,-30)*+{\F_1}="X";
(20,-20)*+{\Pa_1}="Y";
(-20,-60)*+{\G_1}="Z";
(60,-50)*+{\Qa_1}="W";
{\ar@{-->}^{\quad \quad \theta_1}"X";"Y"};
{\ar@{->}_{\varrho(\pif)}"A";"X"};
{\ar@{->}^{\beta_1}"Y";"W"};
{\ar@{->}^{\varrho(\pig)}"C";"Z"};
{\ar@{->}_-{}"B";"Y"};
{\ar@{->}^{\varrho(\piq)}"D";"W"};
{\ar@{->}_{\sg_1}"X";"Z"};
{\ar@{->}_{\gamma_1}"Z";"W"};
%%%%%%%%% pushout%%%%%%%%%%%
(-40,-20)*+{\F_1 \cup^{\Fb(\F_0)} \Fb(\G_0) }="E";
{\ar@{->}_-{i_1}"X";"E"};
{\ar@{^(->}^{\delta}"E";"Z"};
{\ar@{->}^{\xi}"E";"Y"};
%{\ar@{->}^{}"C";"E"};
%%%%%%%%% induced map %%%%%%%%%%%
{\ar@{.>}^{}"C";"E"};
%%%%%%%%% map \Ub(s_1)%%%%%%%%%%%
%{\ar@{->}_-{\Ub(s_1)}"Z";"Y"};
\endxy
\]

As we see, we get a commutative square that corresponds to a lifting problem defined by the maps $\delta: \F_1 \cup^{\Fb(\F_0)} \Fb(\G_0) \to \G_1$ and $\beta_1: \Pa_1 \to \Qa_1$:
 \[
 \xy
(0,18)*+{ \F_1 \cup^{\Fb(\F_0)} \Fb(\G_0)}="W";
(0,0)*+{\G_1}="X";
(30,0)*+{\Qa_1}="Y";
(30,18)*+{\Pa_1}="E";
{\ar@{->}^-{\gamma_1}"X";"Y"};
{\ar@{->}_-{\delta}"W";"X"};
{\ar@{->}^-{\xi}"W";"E"};
{\ar@{->}^-{\beta_1}"E";"Y"};
\endxy
\]

As in the injective case, it suffices to observe that this lifting problem has a solution if and only if our original lifting problem has a solution. Indeed, if $s_1 : \G_1 \to \Pa_1$ is a solution to the previous lifting problem, then we have map $[s]=[s_0,s_1]: \Gc \to \Pc$ that is a solution to the original problem. Conversely given a solution $[s]=[s_0,s_1]: \Gc \to \Pc$ to the original lifting problem , then the component $s_1: \G_1 \to \Pa_1$ is a solution to the lifting problem defined by $\delta: \F_1 \cup^{\Fb(\F_0)} \Fb(\G_0) \to \G_1$ and $\beta_1$.\ \\

Finally one clear sees that the lifting problem defined by $\delta$ and $\beta_1$ has a  solution $s_1 \in \Hom_{\ag}(\G_1,\Pa_1)$ since $[\sg_0,\delta] \in \la \times \ra$.
\end{proof}

\subsection{The model structures}
We are able to state our theorems. 
\subsubsection{Injective model structure}

\begin{thm}\label{inj-thm}
Let $\Ub : \ag \to \M$ be a right Quillen functor. Then with the previous definitions, the following hold. 
There is a model structure on the category $\mdua= \mua$ which may be described as follows. 
\begin{itemize}
\item A map $\sigma: \Fc \to \Gc$ is a weak equivalence if and only if it's a level-wise weak equivalence.
\item A map $\sigma: \Fc \to \Gc$ is cofibration if it's an injective cofibration. 
\item A map $\sigma: \Fc \to \Gc$ is fibration if it's an injective fibration. 
\item We will denote this model category  by $\muaij$. 
\end{itemize}
\end{thm}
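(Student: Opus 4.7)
The plan is to verify the four model category axioms (bicompleteness, 2-out-of-3, closure under retracts, and the factorization/lifting axioms) for the classes described, using the infrastructure built up in the preceding propositions. Most of the work has already been done; what remains is to glue the pieces together.

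First, bicompleteness is immediate from Proposition \ref{prop-limit-colimit}, which computes limits and colimits componentwise in $\M \times \ag$ together with an induced structure map. For 2-out-of-3, I would observe that since level-wise weak equivalences are weak equivalences in each component (in $\M$ and $\ag$ respectively), and since both model categories satisfy 2-out-of-3, this property lifts componentwise to $\mua$. The factorization axioms are exactly the content of Proposition \ref{inj-fact-reedy}, and the lifting axioms are exactly the content of Proposition \ref{inj-lifting}, noting that ``trivial fibration = injective fibration + level-wise weak equivalence'' by Lemma \ref{lem-fib-inj} (and dually the trivial cofibrations are the injective cofibrations that are level-wise weak equivalences, which is immediate from the componentwise definition and 2-out-of-3 in each model category).

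The remaining axiom is closure under retracts. For weak equivalences and for injective cofibrations this is again componentwise: a retract diagram in $\mua$ projects via $\Pi_0$ and $\Pi_1$ to retract diagrams in $\M$ and $\ag$, and each of those classes is closed under retracts in its respective model category. For injective fibrations the argument is slightly more delicate because the definition involves a pullback: if $\sg : \Fc \to \Gc$ is a retract of $\tau : \Fc' \to \Gc'$ and $\tau$ is an injective fibration, then $\sg_1$ is a retract of $\tau_1$ and hence a fibration in $\ag$, and one checks that the induced map $\F_0 \to \Ub(\F_1) \times_{\Ub(\G_1)} \G_0$ is a retract of the corresponding induced map for $\tau$ (the pullback is functorial in maps of cospans, and $\Ub$ preserves pullbacks as a right adjoint), hence is a fibration in $\M$.

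The main obstacle—though a minor one—is the retract step for injective fibrations, because the definition is not componentwise. The key observation to make that step routine is that the functor $\Fc \mapsto (\F_0 \to \Ub(\F_1) \times_{\Ub(\G_1)} \G_0)$ is itself functorial in the map $\Fc \to \Gc$, so retract diagrams of maps in $\mua$ produce retract diagrams of the induced maps in $\M$; once this is noted, everything reduces to closure under retracts in $\M$ and in $\ag$. With all axioms verified, the model structure exists and is uniquely determined, and the description in the statement follows.
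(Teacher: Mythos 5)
Your proposal is correct and takes essentially the same route as the paper, which likewise assembles the model structure from Proposition \ref{prop-limit-colimit}, Proposition \ref{inj-fact-reedy}, Proposition \ref{inj-lifting}, and Lemma \ref{lem-fib-inj}. The one place where you go beyond the paper's terse ``one can easily verify the axioms'' is the careful retract argument for injective fibrations (noting functoriality of the pullback in the cospan, and that $\Ub$ preserves pullbacks), which is a genuine if minor subtlety the paper glosses over.
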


\begin{proof}
The class of level-wise weak equivalences clearly satisfies the $2$-out-of-$3$ property. The three classes of cofibrations, fibrations and weak equivalences are closed under composition and retracts. With Proposition \ref{inj-lifting},  Proposition \ref{inj-fact-reedy} and Lemma \ref{lem-fib-inj}, one can easily verify that the axioms of a model structure hold (see for example \cite[Definition 1.1.3]{Hov-model}).
\end{proof}
\begin{cor}
Let $\Ub: \ag \to \M$ be a right Quillen functor. 
\begin{enumerate} 
\item We have a Quillen adjunction  $\Piun: \muaij  \rightleftarrows  \ag : \iota$, where $\Piun$ is left Quillen. 
\item We also have a Quillen adjunction  $L_1: \ag  \rightleftarrows \muaij : \Piun$, where $L_1$ is left Quillen. 
\item We have a Quillen adjunction $\Gamma: \armij  \rightleftarrows \muaij: \Piar$, where $\Gamma$ is left Quillen.
\item The functors $\Piun$ and $\Pio$ preserve the weak equivalences.
\end{enumerate}
\end{cor}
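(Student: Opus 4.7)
My plan is that all four items follow by unwinding definitions, since the underlying adjunctions are already established by Theorem \ref{adjunction-thm} and the model structure on $\muaij$ is given by Theorem \ref{inj-thm}. In each case the remaining task is to verify that the designated right adjoint preserves fibrations and trivial fibrations; left adjointness of cofibration preservation is then automatic. No new factorization or lifting argument should be required beyond what has already been proved.

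I would dispatch parts (2), (3) and (4) first since they are essentially immediate. For (2), a morphism $\sigma=[\sigma_0,\sigma_1]$ is an injective (trivial) fibration in $\muaij$ if, by Definition \ref{injective-data}, its $\ag$-component $\sigma_1=\Piun(\sigma)$ is a (trivial) fibration; hence $\Piun$ is right Quillen, and $L_1\dashv\Piun$ is Quillen. For (3), given an injective (trivial) fibration $\sigma$ in $\muaij$, the induced square $\Piar(\sigma):\pif\to\pig$ has lower map $\Ub(\sigma_1)$, which is a (trivial) fibration in $\M$ because $\Ub$ is right Quillen, and its associated map $\F_0\to\Ub(\F_1)\times_{\Ub(\G_1)}\G_0$ is a (trivial) fibration in $\M$ exactly by Definition \ref{injective-data}; these are precisely the two requirements for $\Piar(\sigma)$ to be an injective (trivial) fibration in $\armij$. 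Assertion (4) is immediate because weak equivalences in $\muaij$ are level-wise by Theorem \ref{inj-thm}, so $\Piun(\sigma)=\sigma_1$ and $\Pio(\sigma)=\sigma_0$ are weak equivalences in $\ag$ and $\M$ respectively whenever $\sigma$ is.

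Part (1) is the only place where a small explicit computation is needed, and is the mild obstacle. Given a (trivial) fibration $\theta:\Pa\to\Qa$ in $\ag$, I would check that $\iota(\theta)=[\Ub(\theta),\theta]$ is an injective (trivial) fibration in $\muaij$. The $\ag$-component is $\theta$ itself, so the first condition of Definition \ref{injective-data} holds by hypothesis. For the pullback criterion, since $\iota(\Pa)$ and $\iota(\Qa)$ carry the structure maps $\Id_{\Ub(\Pa)}$ and $\Id_{\Ub(\Qa)}$, the pullback $\Ub(\Pa)\times_{\Ub(\Qa)}\Ub(\Qa)$ formed along $\Id_{\Ub(\Qa)}$ is canonically identified with $\Ub(\Pa)$, and the induced map from $\F_0=\Ub(\Pa)$ is the identity, hence tautologically a (trivial) fibration. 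Once this pullback simplification is checked, one concludes that $\iota$ is right Quillen and the adjunction $\Piun\dashv\iota$ is Quillen, finishing (1).
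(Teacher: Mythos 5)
Your proposal is correct, and it follows essentially the same route as the paper: parts (2)–(4) unwind Definition \ref{injective-data} and Theorem \ref{inj-thm} in the same way. The only cosmetic difference is in part (1), where you verify $\iota$ is right Quillen by a direct pullback computation (the pullback collapses since one leg is an identity), whereas the paper observes once and for all that $\Piun$ preserves all four classes of maps, making it simultaneously left and right Quillen and giving (1) and (2) together; both arguments are equivalent and equally valid.
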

\begin{proof}
The functor $\Piun: \muaij \to \ag$ preserves (trivial) cofibrations and (trivial) fibrations. So clearly it's simultaneously a left Quillen functor and a right Quillen functor. This gives the first two assertions. \ \\

For Assertion $(3)$, it suffices to observe that if $\sigma=[\sg_0, \sg_1]$ is an injective trivial fibration then $\Piar(\sg)=[\sg_0, \Ub(\sg_1)]$ is an injective (trivial) fibration in $\armij$ by definition ($\Ub$ being a right Quillen functor). Therefore $\Piar$ is right Quillen which means automatically that $\Gamma$ is left Quillen. \ \\
The last assertion follows from the definition of a level-wise weak equivalences. 
\end{proof}

\paragraph{Cofibrantly generated}
Let's now assume that $\M$ and $\ag$ are cofibrantly generated in the sense of \cite[Definition 2.1.17]{Hov-model}. We will denote by $\Iam$ and $\Jam$ the respective sets of the generating cofibrations and trivial cofibrations for $\M$. And similarly let $\Iag$ and $\Jag$ be the respective sets of generating cofibrations and trivial cofibrations of $\ag$. Given  $s: A \to B \in \Arr(\M)$, we've introduced in Notation \ref{nota-alpha} a map $\alpha_s: s \xrw{(s,\Id)} \Id_{B}$ in $\Arr(\M)$. 
\begin{thm}
If $\ag$ and $\M$ are cofibrantly generated (resp. combinatorial), then $\muaij$ is cofibrantly generated (resp. combinatorial). 
\begin{enumerate}
\item The set $\I_{\muaij}=L_1(\Iag) \coprod \Gamma(\alpha_{\Iam})$ is a set of generating cofibrations in $\muaij$. 
\item The set $\Ja_{\muaij}=L_1(\Jag) \coprod \Gamma(\alpha_{\Jam})$ is a set of generating trivial cofibrations  in $\muaij$. 
\end{enumerate}
\end{thm}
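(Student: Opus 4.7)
The plan is to verify the two Kan recognition conditions for the existing model structure on $\mua$ from Theorem \ref{inj-thm}: first, that a map has the right lifting property with respect to $\I_{\muaij}$ (resp.\ $\Ja_{\muaij}$) if and only if it is an injective trivial fibration (resp.\ an injective fibration); and second, that the domains of $\I_{\muaij}$ and $\Ja_{\muaij}$ are small relative to the appropriate cell complexes. Once both ingredients are in place, the small object argument upgrades the given model structure to a cofibrantly generated one with the stated generators, and no further check is required since weak equivalences, cofibrations, and fibrations have already been pinned down.

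For the lifting identification, I would transpose across the two adjunctions $L_1 : \ag \rightleftarrows \mua : \Piun$ and $\Gamma : \arm \rightleftarrows \mua : \Piar$. Given $\sigma = [\sigma_0, \sigma_1] : \Fc \to \Gc$ in $\mua$, adjunction gives at once that $\sigma$ has the RLP with respect to $L_1(\Iag)$ if and only if $\Piun(\sigma) = \sigma_1$ has the RLP with respect to $\Iag$, i.e.\ if and only if $\sigma_1$ is a trivial fibration in $\ag$. Likewise, $\sigma$ has the RLP with respect to $\Gamma(\alpha_{\Iam})$ if and only if $\Piar(\sigma) = (\sigma_0, \Ub(\sigma_1)) : \pif \to \pig$ has the RLP with respect to $\alpha_{\Iam}$ in $\arm$, which by Lemma \ref{lifting-lem}(2) is equivalent to the induced map $\F_0 \to \Ub(\F_1) \times_{\Ub(\G_1)} \G_0$ being a trivial fibration in $\M$. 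The conjunction of these two conditions is exactly the notion of injective trivial fibration from Definition \ref{injective-data}, packaged using Lemma \ref{lem-fib-inj}. Substituting $\Jag$ for $\Iag$ and $\Jam$ for $\Iam$ throughout gives the analogous statement for $\Ja_{\muaij}$ and injective fibrations.

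The step I expect to require the most care is the smallness verification in the merely cofibrantly generated case, and this is where I would lean hardest on Proposition \ref{prop-limit-colimit}. In the combinatorial setting it is immediate: Corollary \ref{cor-presentable} says that $\mua$ is locally presentable whenever $\ag$ and $\M$ are, so every object is small. In general, the levelwise description of colimits in Proposition \ref{prop-limit-colimit} forces both $\Piun$ and $\Piar$ to preserve $\kappa$-filtered colimits; the adjoint identities $\Hom_{\mua}(L_1(A), -) \cong \Hom_{\ag}(A, \Piun(-))$ and $\Hom_{\mua}(\Gamma(s), -) \cong \Hom_{\arm}(s, \Piar(-))$ then transport $\kappa$-smallness of the domains of $\Iag, \Jag, \alpha_{\Iam}, \alpha_{\Jam}$ in the ambient categories to the domains of $\I_{\muaij}$ and $\Ja_{\muaij}$ inside $\mua$, noting that for $i : A \to B$ in $\Iam$ the domain of $\Gamma(\alpha_i)$ unwinds to the triple $[A, \Fb(B), \eta_B \circ i]$. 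With smallness in hand, Kan's recognition principle produces the two assertions of the theorem, and combinatoriality in the second case inherits directly from the local presentability of $\mua$.
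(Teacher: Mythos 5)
Your lifting identification matches the paper's proof exactly: both transpose across the adjunctions $L_1 \dashv \Piun$ and $\Gamma \dashv \Piar$ and invoke Lemma \ref{lifting-lem} to recognize $\I_{\muaij}$-injectives as injective trivial fibrations and $\Ja_{\muaij}$-injectives as injective fibrations. (Incidentally, the paper's proof writes ``using the adjunction $\Gamma \dashv \Piun$,'' a slip; you correctly pair $\Gamma$ with $\Piar$.)

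Where you diverge is the smallness discussion, which the paper handles only implicitly by citing Corollary \ref{cor-presentable} for the combinatorial case and saying nothing about the merely cofibrantly generated one. Your instinct to check smallness is right, but the specific justification has a flaw: Proposition \ref{prop-limit-colimit} does not force $\Piar$ to preserve filtered colimits. It shows $\Pio$ and $\Piun$ preserve all colimits, but $\Piar([\F]) = \pif : \F_0 \to \Ub(\F_1)$, and the colimit of the diagram $\Piar(D)$ in $\arm$ has target $\colim \Ub(D_1)$, while $\Piar(\colim D)$ has target $\Ub(\colim D_1)$; these agree only if $\Ub$ commutes with the colimit in question. In the locally presentable setting this is automatic (a right adjoint between locally presentable categories is accessible), so your smallness transfer works there — but there it is also superfluous, since every object of $\mua$ is already small. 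In the general cofibrantly generated case, your argument needs an additional hypothesis that $\Ub$ preserves $\kappa$-filtered colimits, or else a direct smallness analysis of the domain $\Gamma(i)=[A, \Fb(B), \eta_B\circ i]$ against $\I_{\muaij}$-cell complexes. The paper glosses over this point, so you have actually uncovered a place where an explicit accessibility assumption on $\Ub$ ought to appear in the non-combinatorial clause of the theorem.
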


\begin{proof}
Corollary \ref{cor-presentable} says that the category $\mdu$ is locally presentable. So we just need to prove that it is cofibrantly generated if we want to prove that it's a combinatorial model category. \ \\

A map $\sg=[\sg_0,\sg_1]$ has the RLP with respect to all maps in $L_1(\Iag) \coprod \Gamma(\alpha_{\Iam})$ if and only if it is simultaneously $L_1(\Iag)$-injective and $\Gamma(\alpha_{\Iam})$-injective. One the one hand, using the adjunction $ L_1 \dashv \Piun$,  $\sg=[\sg_0,\sg_1]$ is $L_1(\Iag)$-injective if and only if $\sg_1$ is $\Iag$-injective, if and only if, $\sg_1$ is a trivial fibration. \ \\

On the other hand, using the adjunction $ \Gamma \dashv \Piun$,  $\sg: \Pc \xrw{[\sg_0,\sg_1]} \Qc$ is $\Gamma(\alpha_{\Iam})$-injective if and only if $\Piar(\sg)=[\sg_0,\Ub(\sg_1)]$ is $\alpha_{\Iam}$-injective. By Lemma \ref{lifting-lem}, $\Piar(\sg)=[\sg_0,\Ub(\sg_1)]$ is $\alpha_{\Iam}$-injective if and only if  $\delta:\Pa_0 \to \Ub(\Pa_1) \times_{\Ub(\Qa_1)} \Qa_0$ is $\Iam$-injective, if and only if $\delta$ is a trivial fibration. This gives Assertion $(1)$. The second assertion is proved the same way. 
\end{proof}
\subsubsection{Localizing the injective model structure}
The functor $\Piun: \mua \to \ag$ preserves the weak equivalences. Let $\W_{\mua}$ be the class of level-wise weak equivalences and let $\W_{\ag}$ be the class of weak equivalences in $\ag$. Denote by $\W_L=\Piun^{-1}(\W_{\ag})$. Clearly $\W_{\mua} \subseteq \W_L$.
\begin{df}
Elements in $\W_L$ will be called \emph{new weak equivalences}. 
\end{df}
\begin{prop}\label{prop-aij-new}
For any $i \in \Iam$, $\Gamma(\alpha_i)$ is an injective cofibration and a new weak equivalence. 
\end{prop}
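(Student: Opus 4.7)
The plan is to unfold the formula for $\Gamma$ and check the two claimed properties component-wise; both reduce to the observation that $\alpha_i$ has the identity as its ``bottom'' component.

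First I would compute $\Gamma(\alpha_i)$ explicitly. Using Notation \ref{nota-alpha}, the map $\alpha_i: i \to \Id_B$ in $\Arr(\M)$ has components $(\alpha_i)_0 = i: A \to B$ and $(\alpha_i)_1 = \Id_B: B \to B$. Applying the formula $\Gamma(\alpha) = [\alpha_0, \Fb(\alpha_1)]$ from Proposition \ref{prop-gamma}, and using that $\Fb$ preserves identities, I get
$$\Gamma(\alpha_i) = [\,i,\; \Fb(\Id_B)\,] = [\,i,\; \Id_{\Fb(B)}\,]$$
as a map from $\Gamma(i) = [A, \Fb(B), \eta_B \circ i]$ to $\Gamma(\Id_B) = [B, \Fb(B), \eta_B]$ in $\mua$.

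Next, to see that $\Gamma(\alpha_i)$ is an injective cofibration, I apply Definition \ref{injective-data}(1), which requires both components to be cofibrations in their respective model categories. The first component is $i \in \Iam$, which is a generating cofibration in $\M$ and hence a cofibration. The second component is $\Id_{\Fb(B)}$, which is a cofibration because every identity morphism in any model category is a cofibration.

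Finally, to see that $\Gamma(\alpha_i)$ is a new weak equivalence, I simply compute $\Piun(\Gamma(\alpha_i)) = \Id_{\Fb(B)}$, which is a weak equivalence in $\ag$ (every identity is). By the definition of $\W_L = \Piun^{-1}(\W_\ag)$, this places $\Gamma(\alpha_i)$ in $\W_L$. There is really no obstacle here: once the formula for $\Gamma$ is unpacked, both conclusions are immediate consequences of the fact that $(\alpha_i)_1 = \Id_B$, together with $\Fb$ preserving identities.
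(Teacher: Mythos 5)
Your proof is correct and follows essentially the same route as the paper: compute the components of $\Gamma(\alpha_i)$ via Proposition \ref{prop-gamma}, observe that the first is $i\in\Iam$ and the second is the identity $\Id_{\Fb(B)}$, and conclude that both the injective-cofibration and $\W_L$ conditions are immediate. The paper's proof is phrased slightly differently but relies on exactly the same computation and the same observations.
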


\begin{proof}
If $i: U \to V$, then $\alpha_i: i \xrw{(i,\Id_V)} \Id_V$ is an injective cofibration in $\Arr(\M)$  whose components are $i$ and $\Id_V$. Then $\Gamma(\alpha_i): \Gamma(i) \xrw{[i, \Fb(\Id_V)]} \Gamma(\Id_V)$ is an injective cofibration in $\mua$ whose components are $i$ and $\Fb(\Id_V)$,  by definition of $\Gamma$ (see Proposition \ref{prop-gamma}). $\Piun(\Gamma(\alpha_i)) = \Fb(\Id_V)$ is an isomorphism, thus a weak equivalence and: 
$$\Piun(\Gamma(\alpha_i)) \in  \W_{\ag}\Leftrightarrow \Gamma(\alpha_i) \in \Piun^{-1}(\W_{\ag})= \W_L.$$ 
\end{proof}
We are now in the hypotheses of Smith's theorem to localize the injective model structure to get a model structure on $\mua$ with the class of new weak equivalences $\W_L$ and the same set of generating cofibrations $\I_{\muaij}$. We will use Smith's theorem and its consequences that can be found in  Beke \cite[Theorem 4.1, Proposition 4.2, Proposition 4.4]{Beke_2}. 

\begin{thm}\label{main-thm}
Let $\Ub: \ag \to \M$ be a right Quillen functor between combinatorial model categories. 
\begin{enumerate}
\item The data $(\I_{\muaij}, \W_L)$ define a combinatorial model structure on $\mua$ that will be denoted by $\muaij^+$ and which may be described as follows.
\begin{itemize}
\item A map $\sg=[\sg_0,\sg_1]$ is a weak equivalence if it's in $\W_L$, that is if $\sg_1$ is a weak equivalence in $\ag$.
\item The cofibrations are the injective cofibrations.
\item The fibrations are the maps satisfying the RLP with respect to every map that is simultaneously a cofibration and a weak equivalence. 
\end{itemize}
\item We have a Quillen equivalence $\Piun: \muaij^+  \rightleftarrows \ag : \iota$, where $\Piun$ is left Quillen. 
\item Any fibrant object $\Fc=[\F_0,\F_1,\pif]$ in $\muaij^+$ satisfies the Segal condition, that is $\pif: \F_0 \to \Ub(\F_1)$ is a weak equivalence.
\end{enumerate}
\end{thm}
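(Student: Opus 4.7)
The plan is to invoke Jeff Smith's recognition theorem for combinatorial model structures, as formulated in Beke \cite{Beke_2}. Its hypotheses are: (i) $\W_L$ is an accessibly embedded, accessible subclass of $\Arr(\mua)$, closed under retracts and 2-out-of-3; (ii) the cofibrations are generated by a set $\I_{\muaij}$; (iii) every $\I_{\muaij}$-injective map lies in $\W_L$; and (iv) $\cof(\I_{\muaij}) \cap \W_L$ is closed under transfinite composition and pushout. The model structure is then determined by these data, with fibrations defined by the RLP against $\cof(\I_{\muaij}) \cap \W_L$.

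First I would verify (i). Since $\Piun: \mua \to \ag$ is a left adjoint between locally presentable categories (Corollary \ref{cor-presentable}), it is accessible; and $\W_{\ag}$ is an accessibly embedded, accessible subclass of $\Arr(\ag)$ because $\ag$ is combinatorial. Hence $\W_L = \Piun^{-1}(\W_{\ag})$ is accessibly embedded and accessible, and closure under retracts and 2-out-of-3 is inherited pointwise. For (iii), a map $\sg$ with the RLP against $\I_{\muaij} = L_1(\Iag) \sqcup \Gamma(\alpha_{\Iam})$ is in particular $L_1(\Iag)$-injective, so by the adjunction $L_1 \dashv \Piun$ its image $\sg_1$ under $\Piun$ has the RLP against $\Iag$, hence is a trivial fibration in $\ag$; this places $\sg$ in $\W_L$.

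The central step is (iv), which I expect to be the main obstacle. Because $\Piun$ is a left adjoint it preserves pushouts and transfinite colimits, and by Theorem \ref{inj-thm} it sends injective cofibrations to cofibrations in $\ag$. A map $\sg \in \cof(\I_{\muaij}) \cap \W_L$ therefore projects to a cofibration $\Piun(\sg)$ that is also a weak equivalence, i.e., a trivial cofibration in $\ag$. Trivial cofibrations in $\ag$ are closed under pushout and transfinite composition, and injective cofibrations enjoy the same closure in $\mua$ by Theorem \ref{inj-thm}; combining these two facts via the preservation properties of $\Piun$ yields (iv). Proposition \ref{prop-aij-new} further guarantees $\Gamma(\alpha_{\Iam}) \subset \cof(\I_{\muaij}) \cap \W_L$, so Smith's theorem delivers the model structure $\muaij^+$.

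For (2), the functor $\Piun: \muaij^+ \to \ag$ is left Quillen because it preserves cofibrations and, by the very definition of $\W_L$, reflects weak equivalences. By Proposition \ref{prop-unit-adj} the counit $\Piun \iota \to \Id_{\ag}$ is the identity, and the unit $\eta_{\Fc}: \Fc \to \iota(\F_1)$ projects under $\Piun$ to the identity on $\F_1$, hence $\eta$ is already a weak equivalence in $\muaij^+$; this verifies the derived Quillen equivalence criterion. For (3), if $\Fc$ is fibrant in $\muaij^+$ then $\Fc \to \ast$ has the RLP against all new trivial cofibrations, in particular against each $\Gamma(\alpha_i)$; by the adjunction $\Gamma \dashv \Piar$ and Proposition \ref{detect-segal}, this forces $\pif$ to be a trivial fibration in $\M$, and in particular a weak equivalence, so $\Fc$ satisfies the Segal condition.
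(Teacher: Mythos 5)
Your proposal is correct and follows essentially the same route as the paper: the paper invokes Beke's Propositions 4.2 and 4.4 (which package Smith's recognition theorem for precisely this transfer-along-a-left-adjoint situation with a fully faithful right adjoint), while you unpack and verify the hypotheses of Smith's theorem directly and check the Quillen-equivalence criterion by hand via the unit and counit from Proposition \ref{prop-unit-adj}. The argument for (3) is identical to the paper's, using $\Gamma(\alpha_{\Iam})$-injectivity together with Proposition \ref{detect-segal}.
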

\begin{proof}
Assertion $(1)$ is a direct consequence of a result of  Beke \cite[Proposition 4.2]{Beke_2}, which is itself a consequence of Smith recognition theorem. \ \\
The Quillen equivalence is given by Proposition 4.4 in Beke \cite{Beke_2} since $\iota:\ag \to \mua$ exhibits $\ag$ as a full reflective subcategory and the functor $\Piun$ preserves the weak equivalences. This gives Assertion $(2)$. \ \\

For Assertion $(3)$ we proceed by adjunction. If $\Fc$ is fibrant in $\muaij^+$ , then the  unique map $\Fc \to \ast$ has the RLP with respect to all trivial cofibrations. And by Proposition \ref{prop-aij-new}, all elements of $\Gamma(\alpha_{\Iam})$ are trivial cofibrations in $\muaij^+$,  therefore $\Fc \to \ast$ has the RLP with respect to every element in $\Gamma(\alpha_{\Iam})$ i.e, $\Fc$ is $\Gamma(\alpha_{\Iam})$-injective.  Now by Proposition \ref{detect-segal}, we know that $\Fc$ satisfies the Segal condition ($\pif$ is a trivial fibration) and the assertion follows. 
\end{proof}
The following gives an explicit description of the fibrations.
\begin{thm}
With the previous definition we have the following. 
\begin{enumerate}
\item The set $\Ja_{\muaij}^+=\Ja_{\muaij} \coprod \Gamma(\alpha_{\Iam})$ is a set of generating trivial cofibrations.
\item  A map $\theta: \Pc \xrw{[\sg_0,\sg_1]} \Qc$ is fibration in  $\muaij^+$, if it's an injective fibration such that the induced map $\delta:\Pa_0 \to \Ub(\Pa_1) \times_{\Ub(\Qa_1)} \Qa_0$ is a trivial fibration in $\M$.
\end{enumerate}
\end{thm}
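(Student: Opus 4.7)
My plan is to first characterize the fibrations of $\muaij^+$ directly, which will give assertion $(2)$ as a biconditional, and then to deduce assertion $(1)$ from this characterization.

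The forward direction of $(2)$ is routine. Since $\W_{\mua}\subseteq\W_L$, every injective trivial cofibration remains a trivial cofibration in $\muaij^+$, so any $\muaij^+$-fibration $\theta=[\sg_0,\sg_1]$ is in particular an injective fibration in $\muaij$, forcing $\sg_1$ a fibration in $\ag$ and $\delta$ a fibration in $\M$ by Definition \ref{injective-data}. The maps in $\Gamma(\alpha_{\Iam})$ are trivial cofibrations in $\muaij^+$ by Proposition \ref{prop-aij-new}, so $\theta$ has the RLP against each of them; transposing across the adjunction $\Gamma\dashv\Piar$ and applying Lemma \ref{lifting-lem} to $\Piar(\theta)=[\sg_0,\Ub(\sg_1)]$ upgrades $\delta$ to a trivial fibration in $\M$. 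The converse direction is the key technical step. Given $\sg_1$ a fibration and $\delta$ a trivial fibration, take any trivial cofibration $i:\Fc\to\Gc$ of $\muaij^+$---a level-wise cofibration whose component $i_1$ is a weak equivalence in $\ag$---and any lifting problem against $\theta$. At level $1$ the pair $(i_1,\sg_1)$ admits a lift $r_1:\G_1\to\Pa_1$ in $\ag$; applying $\Ub$ to $r_1$ and pairing with the bottom morphism $\G_0\to\Qa_0$ produces a map $\G_0\to\Ub(\Pa_1)\times_{\Ub(\Qa_1)}\Qa_0$, which together with the top morphism $\F_0\to\Pa_0$ assembles, by naturality, into a lifting problem $(i_0,\delta)$ in $\M$. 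Since $\delta$ is a \emph{trivial} fibration, no triviality of $i_0$ is required, and the lift $r_0$ exists; the pair $(r_0,r_1)$ is the desired global lift in $\muaij^+$.

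For assertion $(1)$, I first observe that $\Ja_{\muaij}^+$ consists of trivial cofibrations in $\muaij^+$: elements of $\Ja_{\muaij}$ are level-wise trivial cofibrations hence a fortiori in $\W_L$, and $\Gamma(\alpha_{\Iam})$ is handled by Proposition \ref{prop-aij-new}. Next, by adjunction $\Ja_{\muaij}$-injective maps are exactly the injective fibrations in $\muaij$, while $\Gamma(\alpha_{\Iam})$-injective maps correspond via Lemma \ref{lifting-lem} applied to $\Piar(\theta)$ precisely to those $\theta$ for which $\delta$ is a trivial fibration in $\M$. Intersecting these two conditions gives exactly the characterization of fibrations in $\muaij^+$ established in $(2)$. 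A standard small-object-plus-retract argument---factor a trivial cofibration $i$ in $\muaij^+$ as a $\Ja_{\muaij}^+$-cell complex $j$ followed by a $\Ja_{\muaij}^+$-injective map $p$, note that $j$ is a trivial cofibration and hence $p$ is a trivial fibration by $2$-for-$3$, and lift in $(i,\Id;p,\Id)$ to exhibit $i$ as a retract of $j$---then completes the proof that $\Ja_{\muaij}^+$ is a set of generating trivial cofibrations.

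The only conceptually delicate point is the asymmetric lifting in the converse of $(2)$: the component $i_0$ of a trivial cofibration in $\muaij^+$ need not be a weak equivalence in $\M$, only $i_1$ must be one in $\ag$. This is exactly what forces $\delta$ to be a \emph{trivial} fibration (rather than merely a fibration) in the description of $\muaij^+$-fibrations, and it reflects the fact that the localization trades level-$1$ weak equivalences against a strengthened Segal-type condition at level $0$ encoded by $\pif:\F_0\to\Ub(\F_1)$.
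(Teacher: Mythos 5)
Your proof is correct, but it reverses the logical order of the paper's argument. The paper establishes assertion $(1)$ first, via a retract argument: it applies the small object argument to factor a trivial cofibration $\sg$ of $\muaij^+$ as a relative $\Ja_{\muaij}^+$-cell complex $l(\sg)$ followed by a $\Ja_{\muaij}^+$-injective map $r(\sg)$, then shows $r(\sg)$ is $\I_{\muaij}$-injective (hence an injective trivial fibration) by combining $3$-for-$2$ at level one with the built-in $\Gamma(\alpha_{\Iam})$-injectivity of $r(\sg)$, so that $\sg$ has the LLP against $r(\sg)$ and the retract argument applies; assertion $(2)$ is then derived from $(1)$ in one line via Lemma \ref{lifting-lem}. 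You instead prove $(2)$ first, as a biconditional, by a direct two-level lifting argument that is essentially the technique of Proposition \ref{inj-lifting} transported to the localized structure: lift at level one against the $\ag$-fibration $\sg_1$ using that $i_1$ is a trivial cofibration, then assemble the level-zero problem against the induced map $\delta$, the point being that it is exactly the \emph{trivial}-fibration-ness of $\delta$ that compensates for $i_0$ being only a cofibration and not a weak equivalence. Your derivation of $(1)$ then routes through $(2)$ to identify the $\Ja_{\muaij}^+$-injective factor $p$ as a $\muaij^+$-trivial fibration (fibration by $(2)$, weak equivalence by $3$-for-$2$), whereas the paper recognizes $p$ as an $\I_{\muaij}$-injective map in $\muaij$ directly. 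Both routes reach the same retract step; yours is slightly more conceptual in that it characterizes the fibrations of $\muaij^+$ intrinsically before identifying a generating set, and your closing remark on the asymmetry between $i_0$ and $i_1$ is precisely the right intuition for why $\delta$ must be a trivial, not merely ordinary, fibration. The paper's route is a bit more economical since it avoids re-running the cube-lifting argument.
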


\begin{proof}
The second assertion is a consequence of the first considering the fact that a map $\theta$  is $\Gamma(\alpha_{\Iam})$-injective if and only if the map $\delta$ is $\Iam$-injective, thus a trivial fibration in $\M$ ( see Lemma \ref{lifting-lem}).\ \\
 
To get the first assertion, we shall prove that $\cof(\I_{\muaij}) \cap \W_L \subseteq \cof(\Ja_{\muaij}^+).$\\
Let $\sg: \Fc \xrw{[\sg_0,\sg_1]} \Gc$ be an element in  $\cof(\I_{\muaij}) \cap \W_L$. Apply the small object argument to factor $\sg$ as a relative $\Jcij$-cell complex followed by a $\Jcij$-injective map, $\sg= r(\sg) \circ l(\sg)$, where $r(\sg)=[r(\sg)_0, r(\sg)_1]$ and $l(\sg)=[l(\sg)_0, l(\sg)_1]$. If we look at this factorization in $\ag$ it gives a factorization of $\sg_1= r(\sg)_1 \circ l(\sg)_1$ with $ l(\sg)_1 \in cell(\Jag)$ and  $r(\sg)_1$ is $\Jag$-injective map, thus a fibration. By assumption $\sg \in \W_L$ which means that $\sg_1$ is a weak equivalence in $\ag$ and so is $l(\sg)_1$; therefore by $3$-for-$2$, $r(\sg)_1$ is also a weak equivalence in $\ag$. So we find that $r(\sg)_1$ is a trivial fibration i.e, $r(\sg)$ is $L_1(\Iag)$-injective. \ \\

On the other hand, part of $r(\sg)$ being $\Jcij$-injective implies that $r(\sg)$ is in particular $\Gamma(\alpha_{\Iam})$-injective since $\Gamma(\alpha_{\Iam}) \subset \Jcij$. So in the end $r(\sg)$ is $\I_{\muaij}$-injective, which means that $r(\sg)$ is an injective trivial fibration. The map $\sg$ is an injective cofibration so it posses the LLP with respect to $r(\sg)$. Putting this together with the factorization $\sg= r(\sg) \circ l(\sg)$, the retract argument as in \cite[Lemme 1.1.9]{Hov-model} implies that  $\sg$ is a retract of $l(\sg) \in cell(\Jcij)$, thus $\sg \in \cof(\Jcij)$. 
\end{proof}

\subsubsection{Projective model structure}

\begin{thm}\label{proj-thm}
Let $\Ub : \ag \to \M$ be a right Quillen functor. Then with the previous definitions, the following hold. 
There is a model structure on the category $\mdua= \mua$ which may be described as follows. 
\begin{itemize}
\item A map $\sigma: \Fc \to \Gc$ is a weak equivalence if and only if it's a level-wise weak equivalence.
\item A map $\sigma: \Fc \to \Gc$ is cofibration if it's a projective cofibration. 
\item A map $\sigma: \Fc \to \Gc$ is fibration if it's a projective (= level) fibration. 
\end{itemize}
We will denote this model category  by $\muapj$. 
\end{thm}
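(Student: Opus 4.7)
The plan is to mirror the proof of Theorem \ref{inj-thm}, substituting the projective analogues of each ingredient. The three classes to verify are the level-wise weak equivalences, the projective cofibrations, and the level-wise (= projective) fibrations as declared in Definition \ref{injective-data} and in the theorem statement.

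First, the formal axioms are immediate: the 2-out-of-3 property for level-wise weak equivalences holds componentwise from the same property in $\M$ and $\ag$, and closure of the three classes under retracts and composition reduces to the corresponding closure in each coordinate (using that $\Fb$ preserves cofibrations and that $\la$ is closed under cobase change, so that the pushout–product formation defining projective cofibrations is well behaved). The key structural input is Lemma \ref{lem-cof-proj}, which identifies the projective cofibrations that are simultaneously level weak equivalences with the projective trivial cofibrations; dually, a level fibration that is a level weak equivalence is simply a level trivial fibration, so the two candidate notions of ``trivial fibration'' coincide.

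For the factorization axioms I would apply Proposition \ref{proj-fact-reedy}, which produces the two required factorizations, namely (projective cofibration, level trivial fibration) and (projective trivial cofibration, level fibration). A direct inspection of the construction in \ref{proj-fact-reedy} confirms that the right factor $r(\sg) = [r(\sg_0), r(\zeta)]$ always has both components lying in the class $\mr$ (resp.\ $\ra$) corresponding to the chosen factorization system on $\M$ and $\ag$, hence is indeed a level (trivial) fibration. The two lifting axioms follow in the same way from Proposition \ref{proj-lifting}, which supplies the lifts of projective cofibrations against level trivial fibrations and of projective trivial cofibrations against level fibrations.

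The only real bookkeeping obstacle — and it is minor — is to translate the abstract factorization-system language used in Propositions \ref{proj-fact-reedy} and \ref{proj-lifting} into the concrete classes declared here. This relies on the hypothesis that $\Ub$ is right Quillen, so that $\Ub(\ra) \subseteq \mr$ for each of the two standard factorization systems on $\ag$, and symmetrically $\Fb(\lm) \subseteq \la$ because $\Fb$ is left Quillen. Once this identification is made, all the ingredients assemble into a model structure in the sense of \cite[Definition 1.1.3]{Hov-model}, completing the proof.
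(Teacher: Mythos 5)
Your proposal is correct and takes essentially the same route as the paper's own proof, which likewise cites Proposition \ref{proj-lifting}, Proposition \ref{proj-fact-reedy}, and Lemma \ref{lem-cof-proj} and leaves the remaining verification to the reader. You have merely unpacked the paper's terse ``one can easily verify'' into the explicit checks (retract closure, the two factorization axioms, the two lifting axioms, and the characterization of trivial (co)fibrations), all of which are sound.
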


\begin{proof}
The class of level-wise weak equivalences clearly satisfies the $2$-out-of-$3$ property. The three classes of cofibrations, fibrations and weak equivalences are closed under composition and retracts. With Proposition \ref{proj-lifting},  Proposition \ref{proj-fact-reedy} and Lemma \ref{lem-cof-proj}, one can easily verify that the axioms of a model structure hold.
\end{proof}
\begin{cor}
Let $\Ub: \ag \to \M$ be a right Quillen functor. 
\begin{enumerate} 
\item We have a Quillen adjunction  $\Piun: \muapj  \rightleftarrows  \ag : \iota$, where $\Piun$ is left Quillen. 
\item We also have a Quillen adjunction  $L_1: \ag  \rightleftarrows \muapj : \Piun$, where $L_1$ is left Quillen. 
\item We have a Quillen adjunction $\Gamma: \armpj  \rightleftarrows \muapj: \Piar$, where $\Gamma$ is left Quillen.
\item The functors $\Piun$ and $\Pio$ preserve the weak equivalences.
\end{enumerate}
\end{cor}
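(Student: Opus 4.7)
The plan is to adapt, essentially verbatim, the argument used for the injective corollary, since the projective model structure on $\mua$ has the same class of weak equivalences (level-wise) and since the projective fibrations are precisely the level fibrations. First I would dispatch assertion (4): a level-wise weak equivalence $\sg=[\sg_0,\sg_1]$ has weak equivalence components by definition, so $\Pio(\sg)=\sg_0 \in \W_{\M}$ and $\Piun(\sg)=\sg_1 \in \W_{\ag}$, which is exactly the statement that these two functors preserve weak equivalences.

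Next, for (1) and (2), the strategy is to show that $\Piun \colon \muapj \to \ag$ is simultaneously left Quillen (with right adjoint $\iota$ from Proposition \ref{prop-gen-embed}) and right Quillen (with left adjoint $L_1$ from Theorem \ref{adjunction-thm}). That $\Piun$ preserves fibrations and trivial fibrations is immediate because projective fibrations are level fibrations by Theorem \ref{proj-thm}, so $\sg$ being a (trivial) level fibration forces $\sg_1$ to be a (trivial) fibration in $\ag$. The only point requiring an argument is preservation of cofibrations: if $\sg=[\sg_0,\sg_1]$ is a projective cofibration, then by Definition \ref{injective-data} both $\sg_0 \in \cofm$ and $\delta \colon \F_1 \cup^{\Fb(\F_0)} \Fb(\G_0) \to \G_1$ is in $\cofa$. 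Writing $\sg_1 = \delta \circ i_2$, where $i_2$ is the pushout of $\Fb(\sg_0) \in \cofa$ (using that $\Fb$ is left Quillen), one sees that $\sg_1$ is a composite of two cofibrations, hence a cofibration in $\ag$. Combined with Lemma \ref{lem-cof-proj}, this also handles the trivial cofibration case.

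For (3), I would argue the adjoint statement: $\Piar \colon \muapj \to \armpj$ is right Quillen. Given $\sg=[\sg_0,\sg_1]$ a projective (trivial) fibration in $\mua$, both $\sg_0$ and $\sg_1$ are (trivial) fibrations in $\M$ and $\ag$ respectively. Since $\Ub$ is right Quillen, $\Ub(\sg_1)$ is a (trivial) fibration in $\M$. Hence $\Piar(\sg) = [\sg_0, \Ub(\sg_1)]$ has both components (trivial) fibrations in $\M$, i.e.\ it is a level (trivial) fibration, which is exactly a projective (trivial) fibration in $\armpj$. Hence $\Piar$ is right Quillen and its left adjoint $\Gamma$ from Proposition \ref{adj-prop-gamma} is left Quillen.

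There is really no serious obstacle here; the argument is parallel to the injective corollary, and the only subtle bookkeeping is the decomposition $\sg_1 = \delta \circ i_2$ used to verify that $\Piun$ sends projective cofibrations to cofibrations in $\ag$ — a step that critically uses that $\Fb$ is left Quillen (the other half of the Quillen pair $\Ub \dashv \Fb$ provided by the hypothesis that $\Ub$ is right Quillen).
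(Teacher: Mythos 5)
Your proof follows essentially the same route as the paper's. The paper asserts without elaboration that $\Piun$ preserves (trivial) cofibrations and (trivial) fibrations; your proof supplies the missing detail for preservation of projective cofibrations by decomposing $\sg_1 = \delta \circ i_2$, which is exactly the decomposition used inside the proof of Lemma \ref{lem-cof-proj}, so this is a welcome clarification rather than a different argument. Assertions (3) and (4) are argued identically in both.
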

\begin{proof}
The functor $\Piun: \muapj \to \ag$ preserves (trivial) cofibrations and (trivial) fibrations. So clearly it's simultaneously a left Quillen functor and a right Quillen functor. This gives the first two assertions. \ \\

For Assertion $(3)$, it suffices to observe that if $\sigma=[\sg_0, \sg_1]$ is a projective trivial fibration then $\Piar(\sg)=[\sg_0, \Ub(\sg_1)]$ is a projective (trivial) fibration in $\armpj$ by definition ($\Ub$ being a right Quillen functor). Therefore $\Piar$ is right Quillen which means automatically that $\Gamma$ is left Quillen. \ \\
The last assertion follows from the definition of a level-wise weak equivalences. 
\end{proof}

\paragraph{Cofibrantly generated}
Let's now assume that $\M$ and $\ag$ are cofibrantly generated as before. We have a similar theorem as in the injective model structure. 
\begin{thm}
If $\ag$ and $\M$ are cofibrantly generated (resp. combinatorial), then $\muapj$ is cofibrantly generated (resp. combinatorial). 
\begin{enumerate}
\item The set $\I_{\muapj}=L_1(\Iag) \coprod \Gamma(L_0(\Iam))= L_1(\Iag) \coprod \Fb^+(\Iam)$ is a set of generating cofibrations in $\muapj$. 
\item The set $\Ja_{\muapj}=L_1(\Jag) \coprod \Gamma(L_0(\Jam))=L_1(\Jag) \coprod \Fb^+(\Jam)$ is a set of generating trivial cofibrations  in $\muapj$. 
\end{enumerate}
\end{thm}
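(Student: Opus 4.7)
The plan is to mirror the proof of the analogous statement for $\muaij$ given just above in the paper, replacing $\Gamma(\alpha_{\Iam})$ (resp.\ $\Gamma(\alpha_{\Jam})$) by $\Fb^+(\Iam)=\Gamma(L_0(\Iam))$ (resp.\ $\Fb^+(\Jam)=\Gamma(L_0(\Jam))$) and using the adjunction $\Fb^+\dashv \Pio$ of Theorem~\ref{adjunction-thm} instead of $\Gamma \dashv \Piar$ together with Lemma~\ref{lifting-lem}. First I would invoke Corollary~\ref{cor-presentable} to know that $\mua$ is locally presentable, so that we only have to verify the small-object/recognition conditions for a combinatorial model structure.

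The core of the argument is to identify the two classes of maps detected by the proposed generating sets. Consider a morphism $\sigma=[\sg_0,\sg_1]\colon \Pc \to \Qc$ in $\mua$. By the adjunction $L_1\dashv \Piun$ of Theorem~\ref{adjunction-thm}, $\sigma$ has the RLP with respect to $L_1(\Iag)$ if and only if $\Piun(\sigma)=\sg_1$ has the RLP with respect to $\Iag$, i.e.\ $\sg_1$ is a trivial fibration in $\ag$. By the adjunction $\Fb^+\dashv \Pio$, $\sigma$ has the RLP with respect to $\Fb^+(\Iam)$ if and only if $\Pio(\sigma)=\sg_0$ has the RLP with respect to $\Iam$, i.e.\ $\sg_0$ is a trivial fibration in $\M$. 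Together: $\sigma$ is $\I_{\muapj}$-injective if and only if it is a level trivial fibration, which by the definition in Theorem~\ref{proj-thm} is exactly a projective trivial fibration. The same argument with $\Jag$ and $\Jam$ in place of $\Iag$ and $\Iam$ shows that $\sigma$ is $\Ja_{\muapj}$-injective if and only if $\sg_0$ and $\sg_1$ are fibrations, i.e.\ $\sigma$ is a projective (= level) fibration.

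Next I would check that each element of $\I_{\muapj}$ (resp.\ $\Ja_{\muapj}$) is a projective cofibration (resp.\ a projective trivial cofibration) in $\muapj$. For $L_1(\Iag)$: $L_1$ is left Quillen by the corollary after Theorem~\ref{proj-thm}, so it sends generating (trivial) cofibrations to (trivial) cofibrations. For $\Fb^+(\Iam)=\Gamma(L_0(\Iam))$: one checks that $L_0\colon \M\to \arm_{\mathrm{proj}}$ is left Quillen (this is classical and follows from the characterisation of projective cofibrations in $\arm$, or alternatively from Proposition~\ref{alph-zeta}), and $\Gamma\colon \arm_{\mathrm{proj}}\to \muapj$ is left Quillen by the same corollary; their composite $\Fb^+$ is then left Quillen, and sends (trivial) cofibrations to (trivial) cofibrations.

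Finally I would apply Kan's recognition theorem \cite[Theorem~2.1.19]{Hov-model}. The $2$-out-of-$3$ and retract closure of level weak equivalences are immediate; the cofibrant generation smallness requirement holds because domains of $\Iag,\Jag,\Iam,\Jam$ are small in locally presentable categories and left adjoints preserve colimits (hence presentability bounds transfer through $L_1$ and $\Fb^+$). With the two RLP identifications above one has $\Ja_{\muapj}\text{-inj}=\text{level fibrations}$ and $\I_{\muapj}\text{-inj}=\text{level trivial fibrations}$, which are exactly the fibrations and trivial fibrations of the model structure $\muapj$ from Theorem~\ref{proj-thm}. Uniqueness of the model structure (given weak equivalences and one of the two distinguished classes) then forces this to agree with the one already established, proving it is cofibrantly generated with the claimed sets. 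I expect the only subtle point is the identification of $\Fb^+(\Iam)$-injectives with $\sg_0$ being a trivial fibration in $\M$; this is handled cleanly by the adjunction $\Fb^+\dashv \Pio$ in Theorem~\ref{adjunction-thm}, so no real obstacle arises — the proof is a direct transcription of the injective argument.
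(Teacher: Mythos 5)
Your proof is correct and follows essentially the same path as the paper: invoking Corollary~\ref{cor-presentable} for local presentability, then using the adjunctions $L_1\dashv\Piun$ and $\Fb^+\dashv\Pi_0$ (the paper phrases the latter as $\Fb^+\dashv(\Ev_0\circ\Piar)$, which is the same adjunction by Proposition~\ref{all-adjunction}) to identify the $\I_{\muapj}$-injectives with level trivial fibrations and the $\Ja_{\muapj}$-injectives with level fibrations. The extra remarks you add about checking the generating maps are cofibrations and applying the recognition theorem are implicit in the paper's terser argument but add no new ideas.
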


\begin{proof}
Like in the injective case, Corollary \ref{cor-presentable} says that the category $\mdu$ is locally presentable. So  it remains to prove that it is cofibrantly generated to prove that it's a combinatorial model category. \ \\

A map $\sg=[\sg_0,\sg_1]$ has the RLP with respect to all maps in $L_1(\Iag) \coprod \Gamma(L_0(\Iam))$ if and only if it is simultaneously $L_1(\Iag)$-injective and $\Gamma(L_0(\Iam))$-injective. One the one hand, using the adjunction $ L_1 \dashv \Piun$,  $\sg=[\sg_0,\sg_1]$ is $L_1(\Iag)$-injective if and only if $\sg_1$ is $\Iag$-injective, if and only if, $\sg_1$ is a trivial fibration. \ \\

On the other hand, using the adjunction $ \Fb^+ \dashv (\Ev_0 \circ \Piar)$,  $\sg=[\sg_0,\sg_1]$ is $\Fb^+(\Iam)$-injective if and only if $\Ev_0\Piar(\sg)=\sg_0$ is $\Iam$-injective, if and only if $\sg_0$ is a trivial fibration. This gives Assertion $(1)$. The second assertion is proved the same way. 
\end{proof}
\subsubsection{Localizing the projective model structure}
Given  $s: A \to B \in \Arr(\M)$, we've introduced in Notation \ref{nota-zeta} a map $\zeta_s: s \xrw{(s,\Id)} \Id_{B}$ in $\Arr(\M)$ and we've defined the universal projective localizing set $\zeta_{\I}$. Let $\kbi_{proj}= \Gamma(\zeta_{\Iam})=\{ \Gamma(\zeta_i) \}_{i \in \Iam}$.\ \\ 

Like in the injective model structure, we consider the same class $\W_L=\Piun^{-1}(\W_{\ag})$ of  \emph{new weak equivalences}. 

\begin{prop}\label{prop-aproj-new}
For any $i \in \Iam$, $\Gamma(\zeta_i)$ is a projective cofibration and a new weak equivalence. 
\end{prop}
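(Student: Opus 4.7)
The plan is to verify the two claims separately, each by reducing to established facts: Proposition \ref{alph-zeta}, Proposition \ref{prop-gamma}, and the Quillen adjunction $\Gamma \dashv \Piar$ between $\armpj$ and $\muapj$ given in the corollary after Theorem \ref{proj-thm}. The proof will closely mirror the injective analogue (Proposition \ref{prop-aij-new}), with $\alpha_i$ replaced by $\zeta_i$ and the trivial role of $\Fb(\Id_V)$ replaced by the trivial cofibration $\Fb(j_0)$.

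First I would handle the cofibration claim. By Proposition \ref{alph-zeta}(1), $\zeta_i$ is a projective cofibration in $\armpj$. Since $\Gamma : \armpj \to \muapj$ is left Quillen, in particular it preserves cofibrations, so $\Gamma(\zeta_i)$ is a projective cofibration in $\mua$.

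Next I would compute $\Piun(\Gamma(\zeta_i))$ directly and show it is a weak equivalence in $\ag$. Recall $\zeta_i : i \xrightarrow{(i,j_0)} j_1$ has components $i : A \to B$ and $j_0 : B \to E_i$. By the explicit formula in Proposition \ref{prop-gamma}(2), $\Gamma(\zeta_i) = [i,\Fb(j_0)]$, so $\Piun(\Gamma(\zeta_i)) = \Fb(j_0)$. As observed in the discussion following Notation \ref{nota-zeta}, $j_0 = \delta \circ \epsilon_0$ is a trivial cofibration in $\M$. Since $\Fb \dashv \Ub$ is a Quillen adjunction, $\Fb$ preserves trivial cofibrations, so $\Fb(j_0)$ is a trivial cofibration, and in particular a weak equivalence, in $\ag$. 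Thus
\[
\Piun(\Gamma(\zeta_i)) \in \W_{\ag} \iff \Gamma(\zeta_i) \in \Piun^{-1}(\W_{\ag}) = \W_L,
\]
which is exactly the claim that $\Gamma(\zeta_i)$ is a new weak equivalence.

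There is essentially no serious obstacle here; the proof is structurally identical to that of Proposition \ref{prop-aij-new}. The only point requiring any care is identifying the components of $\Gamma(\zeta_i)$ correctly so that $\Piun$ picks out $\Fb(j_0)$ rather than some pushout-type map; this is immediate from the explicit description of $\Gamma$ on morphisms. Once this identification is made, the conclusion follows from the Quillen property of $\Fb$.
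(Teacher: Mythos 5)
Your proof is correct and follows essentially the same route as the paper's: reduce the cofibration claim to $\zeta_i$ being a projective cofibration (Proposition \ref{alph-zeta}) plus $\Gamma$ being left Quillen, and reduce the weak-equivalence claim to the explicit identification $\Piun(\Gamma(\zeta_i)) = \Fb(j_0)$ together with $\Fb$ preserving the trivial cofibration $j_0$. The only cosmetic difference is that you cite the corollary after Theorem \ref{proj-thm} for the left Quillen property of $\Gamma$, which is in fact the more precise reference than the one the paper gives.
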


\begin{proof}
If $i: U \to V$, then $\zeta_i: i \xrw{(i,j_0)} j_1 $ is an projective cofibration in $\Arr(\M)$ by construction (see Proposition \ref{alph-zeta}). The components of $\zeta_i$ are $i$ and $j_0$. With the notation introduced in Notation \ref{nota-zeta}, $\Gamma(\zeta_i): \Gamma(i) \xrw{[i, \Fb(j_0)]} \Gamma(j_1)$ is therefore a projective cofibration in $\mua$ whose components are $i$ and $\Fb(j_0)$,  since $\Gamma$ is left Quillen (see Proposition \ref{prop-gamma}). $\Piun(\Gamma(\zeta_i)) = \Fb(j_0)$ is a trivial cofibration because $j_0$ is trivial cofibration and $\Fb$ is left Quillen. In particular, $\Fb(j_1)=\Piun(\Gamma(\zeta_i))$ is a weak equivalence; that is:
$$\Piun(\Gamma(\zeta_i)) \in  \W_{\ag}\Leftrightarrow \Gamma(\zeta_i) \in \Piun^{-1}(\W_{\ag})= \W_L.$$ 
\end{proof}
Just like in the injective case, we are in the hypotheses of Smith's theorem to localize the projective model structure to get a model structure on $\mua$ with the class of new weak equivalences $\W_L$ and the same set of generating cofibrations $\I_{\muapj}$. 
\begin{thm}\label{main-thm-proj}
Let $\M$ be a tractable model category and let $\Ub: \ag \to \M$ be a right Quillen functor between combinatorial model categories.
\begin{enumerate}
\item The data $(\I_{\muapj}, \W_L)$ define a combinatorial model structure on $\mua$ that will be denoted by $\muapj^+$ and which may be described as follows.
\begin{itemize}
\item A map $\sg=[\sg_0,\sg_1]$ is a weak equivalence if it's in $\W_L$, that is if $\sg_1$ is a weak equivalence in $\ag$.
\item The cofibrations are the projective cofibrations.
\item The fibrations are the maps satisfying the RLP with respect to every map that is simultaneously a cofibration and a weak equivalence. 
\end{itemize}
\item We have a Quillen equivalence $\Piun: \muapj^+  \rightleftarrows \ag : \iota$, where $\Piun$ is left Quillen. 
\item Any fibrant object $\Fc=[\F_0,\F_1,\pif]$ in $\muapj^+$ satisfies the Segal condition, that is $\pif: \F_0 \to \Ub(\F_1)$ is a weak equivalence.
\end{enumerate}
\end{thm}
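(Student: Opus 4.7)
The proof should proceed in close parallel with that of Theorem \ref{main-thm}, with the only substantive change being in the verification that fibrant objects satisfy the Segal condition, since the projective case forces us to invoke the HELP Lemma and therefore the tractability hypothesis on $\M$.

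For Assertion $(1)$, the plan is to apply Smith's recognition theorem in the form of Beke \cite[Proposition 4.2]{Beke_2} to the data $(\I_{\muapj}, \W_L)$. The category $\mua$ is locally presentable by Corollary \ref{cor-presentable}, and $\W_L = \Piun^{-1}(\W_{\ag})$ inherits from $\W_{\ag}$ the accessibility and the closure properties under retracts, $2$-for-$3$, and the necessary solution-set condition because $\Piun$ preserves filtered colimits. Together with the projective model structure of Theorem \ref{proj-thm} (which already witnesses $\I_{\muapj}$ as a set of generating cofibrations for which the level weak equivalences $\W_{\mua} \subseteq \W_L$ are closed under pushout and transfinite composition), this gives the required combinatorial model structure. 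The new trivial cofibrations contain $\kbi_{proj} = \Gamma(\zeta_{\Iam})$ thanks to Proposition \ref{prop-aproj-new}.

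For Assertion $(2)$, I would invoke Beke \cite[Proposition 4.4]{Beke_2} directly: since $\iota: \ag \hookrightarrow \mua$ exhibits $\ag$ as a full reflective subcategory (Proposition \ref{prop-gen-embed}) with reflector $\Piun$, and since by construction $\Piun: \muapj^+ \to \ag$ preserves and reflects weak equivalences (the former from the definition of $\W_L$, the latter trivially since $\W_L = \Piun^{-1}(\W_{\ag})$), the adjunction $\Piun \dashv \iota$ is automatically a Quillen equivalence.

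For Assertion $(3)$, the key observation is that the cofibrations of $\muapj^+$ coincide with those of $\muapj$, whence every fibration of $\muapj^+$ is a fibration of $\muapj$; in particular, any object fibrant in $\muapj^+$ is projectively fibrant, so both $\F_0 \in \M$ and $\F_1 \in \ag$ are fibrant, and since $\Ub$ is right Quillen, $\Ub(\F_1)$ is fibrant in $\M$. Thus $\pif: \F_0 \to \Ub(\F_1)$ is a map between fibrant objects, that is, $\pif \to \ast$ is a level fibration in $\arm$. Meanwhile fibrancy of $\Fc$ in $\muapj^+$ plus Proposition \ref{prop-aproj-new} imply that $\Fc \to \ast$ has the RLP with respect to every element of $\Gamma(\zeta_{\Iam})$, which by the adjunction $\Gamma \dashv \Piar$ is equivalent to $\pif \to \ast$ having the RLP with respect to $\zeta_{\Iam}$. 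Proposition \ref{detect-segal-pj} (whose proof rests on the HELP Lemma \ref{lifting-lem-proj}) then delivers that $\pif$ is a weak equivalence. The main obstacle in this step is precisely this last implication: it is the only point where tractability of $\M$ is genuinely needed, and one must be careful to check level-fibrancy of $\pif$ before invoking the HELP Lemma — which is why the hypotheses of Proposition \ref{detect-segal-pj} include both lifting properties \emph{and} level-fibrancy, in contrast to the injective case.
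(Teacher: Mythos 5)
Your proof follows the same route as the paper: Beke's Proposition 4.2 for the model structure, Beke's Proposition 4.4 for the Quillen equivalence, and the adjunction argument combining Proposition \ref{prop-aproj-new} with Proposition \ref{detect-segal-pj} for the Segal condition. You helpfully make explicit why an object fibrant in $\muapj^+$ is also level-fibrant (since the fibrations of $\muapj^+$ are contained in those of $\muapj$), a hypothesis of Proposition \ref{detect-segal-pj} that the paper's proof leaves implicit.
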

\begin{proof}
Assertion $(1)$ is a direct consequence of Proposition 4.2 in Beke \cite{Beke_2}, which is itself a consequence of Smith recognition theorem. \ \\
The Quillen equivalence is given by Proposition 4.4 in Beke \cite{Beke_2} since $\iota:\ag \to \mua$ exhibits $\ag$ as a full reflective subcategory and the functor $\Piun$ preserves the weak equivalences. This gives Assertion $(2)$. \ \\

For Assertion $(3)$ we proceed by adjunction. If $\Fc$ is fibrant in $\muapj^+$ , then the  unique map $\Fc \to \ast$ has the RLP with respect to all trivial cofibrations. And by Proposition \ref{prop-aproj-new}, all elements of $\Gamma(\zeta_{\Iam})$ are trivial cofibrations in $\muapj^+$,  therefore $\Fc \to \ast$ has the RLP with respect to every element in $\Gamma(\zeta_{\Iam})$ i.e, $\Fc$ is $\Gamma(\zeta_{\Iam})$-injective.  Now by Proposition \ref{detect-segal-pj}, we know that $\Fc$ satisfies the Segal condition ($\pif$ is a weak equivalence between fibrant objects) and the assertion follows. 
\end{proof}

\begin{cor}
The identity functor $\Id: \muapj^+ \to \muaij^+$ is a left Quillen functor which is a Quillen equivalence.
\end{cor}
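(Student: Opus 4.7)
The plan is to exploit the fact that the two localized model structures $\muapj^+$ and $\muaij^+$ live on the same underlying category and, crucially, share the same class of weak equivalences $\W_L = \Piun^{-1}(\W_{\ag})$ (compare the descriptions in Theorem \ref{main-thm-proj}(1) and Theorem \ref{main-thm}(1)). Consequently the whole statement reduces to two small checks: that the identity preserves cofibrations, and that a Quillen pair with common weak equivalences on source and target is automatically a Quillen equivalence.

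First I would verify that every projective cofibration in $\mua$ is an injective cofibration. Let $\sigma=[\sg_0,\sg_1]$ be a projective cofibration. By definition $\sg_0$ is a cofibration in $\M$, and the induced map $\F_1\cup^{\Fb(\F_0)}\Fb(\G_0)\to\G_1$ is a cofibration in $\ag$. Since $\Fb$ is left Quillen, $\Fb(\sg_0)$ is a cofibration in $\ag$; its pushout $\F_1\to \F_1\cup^{\Fb(\F_0)}\Fb(\G_0)$ is therefore also a cofibration, and composing with the given cofibration into $\G_1$ shows that $\sg_1$ is a cofibration. Hence $[\sg_0,\sg_1]$ is an injective cofibration. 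In particular $\Id\colon\muapj^+\to\muaij^+$ sends (projective) cofibrations to (injective) cofibrations. Because the two localized structures have identical weak equivalences, the identity automatically sends trivial cofibrations to trivial cofibrations, so it is left Quillen. Its right adjoint is, of course, the identity $\Id\colon\muaij^+\to\muapj^+$.

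To conclude that this Quillen pair is a Quillen equivalence, I would argue as follows. A Quillen adjunction $L\dashv R$ between model categories is a Quillen equivalence iff for every cofibrant $X$ in the source and every fibrant $Y$ in the target, a map $LX\to Y$ is a weak equivalence in the target iff its adjoint $X\to RY$ is a weak equivalence in the source. Here $L=R=\Id$, so a map $f\colon X\to Y$ and its adjoint coincide, and the two notions of weak equivalence coincide as well; the criterion is trivially satisfied. Equivalently, passing to homotopy categories, both localizations $\mua[\W_L^{-1}]$ agree on the nose, so the derived identity is an isomorphism of categories.

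There is no real obstacle in this argument: the only non-bookkeeping step is the passage from projective to injective cofibrations in the comma category, and that uses nothing more than the fact that $\Fb$ is left Quillen together with the closure of cofibrations under pushout and composition. Everything else follows formally from the two main theorems that equip $\muapj^+$ and $\muaij^+$ with the common class $\W_L$ of weak equivalences.
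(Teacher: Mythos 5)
Your proof is correct and follows the same route the paper takes; the paper's own proof is simply a terse assertion of the two facts you spell out (identical class of weak equivalences $\W_L$, and every projective cofibration is an injective cofibration). Your elaboration of the second fact via $\sg_1 = (\F_1\cup^{\Fb(\F_0)}\Fb(\G_0)\to\G_1)\circ q$ with $q$ the cobase change of $\Fb(\sg_0)$ is exactly the decomposition appearing in Lemma \ref{lem-cof-proj}.
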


\begin{proof}
It's the identity functor and we have the same class of new weak equivalences. Moreover any projective cofibration is an injective cofibration. 
\end{proof}

Unlike the injective model structure, it's difficult to give an explicit description of the fibrations in general without any assumption of (left)  properness on $\M$. This will require some work that will make the paper very long. We will do this in a future work.

\begin{rmk}
We will close this section with some observations.
\begin{enumerate}
\item If $\M$ is tractable, we can show that $\muaij^+$ is the left Bousfield localization with respect $\Gamma(\alpha_{\Iam})$. Indeed, elements in $\Gamma(\alpha_{\Iam}))$ are maps between cofibrant objects so there is no need to take a cofibrant approximation of these maps to define their image under a left derived functor.
\item There are other model structure on $\mdua$ that one can get from the injective and the projective model structure on $\arm$ with the adjunction $\Gamma \dashv \Piar$. They will be considered in a future work.  
\end{enumerate}
\end{rmk}

\section{Generating the Segal conditions}

\subsection{Factorization lemma and the small object argument} 

Recall that for any object $[\F]=[\F_0,\F_1,\pif]$ we have $\F_1= \Piun([\F])$. In the adjunction $\iota: \ag \rightleftarrows \mua : \Piun$, the unit $\eta_1([\F]): [\F] \to \iota(\F_1)$ is given by the pair $[\pi_{\F}, \Id_{\F_1}] \in \Hom_{\M}(\F_0,\Ub(\F_1)) \times \Hom_{\ag}(\F_1,\F_1)$ (see Proposition \ref{prop-unit-adj}).  \\

Consider the set $\kbi= \{\Gamma(\alpha_i) \}_{i \in \I}$ introduced previously. 
Our goal here is to apply the \emph{small object argument} (see \cite{Dwyer_Spalinski}, \cite{Hov-model}) with respect to $\kbi$ and show that the unit  $$\eta_1([\F]): [\F] \to \iota(\F_1)$$ can be factored as a relative $\kbi$-cell complex followed by a $\kbi$-injective map. Now observe that since the codomain of each $\alpha_i$ is the identity, the codomain of $\Gamma(\alpha_i)$ is also the identity. Moreover as $\Gamma$ is a left adjoint, it preserves any kind of colimits; in particular it distributes over coproducts. It follows that the codomain of any coproduct $ \sqcup_{N}\Gamma(\alpha_i) \cong \Gamma(\sqcup_{N}\alpha_i)$ of elements of $\kbi$ is a essentially a coproduct of identities. Following  Proposition \ref{prop-limit-colimit}, if we take the pushout of such a product along an attaching map $Domain(\sqcup_{N}\Gamma(\alpha_i)) \to [\F]$, we won't change the object $\F_1$ since $\Piun$ preserves colimits and the pushout of an isomorphism is an isomorphism. \ \\ 

It follows from this description that the factorization given by the small object argument, amounts to factor the map $\pif: \F_0 \to \Ub(\F_1)$ as a relative $\I$-cell complex followed by an $\I$-injective map, that is a trivial fibration, leaving $\F_1$ essentially unchanged.\\

We shall now describe explicitly the pushout of a single element of $\kbi$. 

Given a morphism $\theta_1: B \to \Ub(\F_1)$ in $\M$ we will denote by $\varrho(\theta_1): \Fb B \to \F_1$ its adjoint-transpose in the adjunction $\Ub: \ag \rightleftarrows \M : \Fb$.
\begin{lem}\label{lem-cobase-change-mua}
Let  $\theta=(\theta_0,\theta_1) : s \to \pif$ be a map in $\Ar(\M)$ whose adjoint-transpose $\varphi(\theta)=[\theta_0, \varrho(\theta_1)]:\Gamma(s) \to [\F] $ is  represented by the commutative square:
 \[
 \xy
(0,18)*+{A}="W";
(0,0)*+{\Ub(\Fb B)}="X";
(30,0)*+{\Ub(\F_1)}="Y";
(30,18)*+{\F_0}="E";
{\ar@{->}^-{\Ub(\varrho(\theta_1))}"X";"Y"};
{\ar@{->}_-{\eta_B \circ s}"W";"X"};
{\ar@{->}^-{\theta_0}"W";"E"};
{\ar@{->}^-{\pif}"E";"Y"};
\endxy
\]
Then the following hold.
\begin{enumerate}
\item The pushout of  $\Gamma(\alpha_s): \Gamma(s) \to \Gamma(\Id_B)$ along $\varphi(\theta)$ is the map $$l(\Gamma \alpha_s)=[l_s(\pif), \Id_{\F_1}]: [\F] \to [\F] \cup^{\Gamma(s)} \Gamma(\Id_B)$$ displayed by the commutative square:
 \[
 \xy
(0,18)*+{\F_0}="W";
(0,0)*+{\Ub(\F_1)}="X";
(30,0)*+{\Ub(\F_1)}="Y";
(30,18)*+{\F_0 \cup^{A} B}="E";
{\ar@{=}^-{\Id}"X";"Y"};
{\ar@{->}^-{\pif}"W";"X"};
{\ar@{->}^-{l_s(\pif)}"W";"E"};
{\ar@{->}^-{r_s(\pif)}"E";"Y"};
\endxy
\]
where $l_s(\pif)$ is the pushout of $s$ along $\theta_0$ and $r_s(\pif)$ is the canonical map induced by universal property of the pushout.  
\item The pushout-object $[\Ea]= [\F] \cup^{\Gamma(s)} \Gamma(\Id_B)$ is given by $[\Ea]=[\Ea_0, \Ea_1, \pi_{\Ea}]$ with 
$$ \Ea_0=\F_0 \cup^{A} B, \quad \Ea_1= \F_1, \quad \pi_{\Ea}=r_s(\pif)  $$
\item We have a factorization of $\pif$: 
$\F_0 \xrightarrow{l_s(\pif)} \F_0 \cup^{A} B   \xrightarrow{r_s(\pif)}\Ub(\F_1) $
that induces in turn a factorization of the unit 
$$\eta_1([\F]): [\F] \xrightarrow{[\pif, \Id_{\F_1}]} \iota(\F_1)= [\F] \xrightarrow{[l_s(\pif), \Id_{\F_1}]} [\Ea] \xrightarrow{\eta_1([\Ea])} \iota(\F_1) $$

\end{enumerate}
\end{lem}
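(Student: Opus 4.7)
The plan is to compute the pushout of $\Gamma(\alpha_s)$ along $\varphi(\theta)$ directly using Proposition \ref{prop-limit-colimit}, which says that colimits in $\mua$ are computed component-wise in $\M$ and $\ag$, equipped with a canonical structural map built from the universal property. Unpacking $\Gamma(\alpha_s):\Gamma(s)\to\Gamma(\Id_B)$, its $\M$-component is $s:A\to B$ and its $\ag$-component is $\Fb(\Id_B)=\Id_{\Fb B}$, while the attaching map $\varphi(\theta)=[\theta_0,\varrho(\theta_1)]$ has $\M$-component $\theta_0:A\to\F_0$ and $\ag$-component $\varrho(\theta_1):\Fb B\to\F_1$.

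First, I would form the pushout in each level. In $\M$, the pushout of $\F_0\xleftarrow{\theta_0}A\xrightarrow{s}B$ is $\F_0\cup^{A}B$, and I denote by $l_s(\pi_\F):\F_0\to\F_0\cup^A B$ the canonical leg. In $\ag$, the pushout of $\F_1\xleftarrow{\varrho(\theta_1)}\Fb B\xrightarrow{\Id}\Fb B$ is simply $\F_1$, since pushing out along an identity returns the other object; the leg $\F_1\to\F_1$ is $\Id_{\F_1}$. So, at the level of components, the pushout candidate is $\Ea_0=\F_0\cup^A B$ and $\Ea_1=\F_1$, with the cocone map having components $[l_s(\pi_\F),\Id_{\F_1}]$.

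Next I construct $\pi_{\Ea}$. The commutative square in the hypothesis gives $\pi_\F\circ\theta_0=\Ub(\varrho(\theta_1))\circ\eta_B\circ s$, which means the pair $\bigl(\pi_\F:\F_0\to\Ub(\F_1),\;\Ub(\varrho(\theta_1))\circ\eta_B:B\to\Ub(\F_1)\bigr)$ agrees on the common source $A$. By the universal property of $\F_0\cup^A B$ there is a unique map $r_s(\pi_\F):\F_0\cup^A B\to\Ub(\F_1)$ with $r_s(\pi_\F)\circ l_s(\pi_\F)=\pi_\F$ and $r_s(\pi_\F)$ restricted to $B$ equal to $\Ub(\varrho(\theta_1))\circ\eta_B$. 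Setting $\pi_{\Ea}=r_s(\pi_\F)$ produces a triple $[\Ea]=[\F_0\cup^A B,\F_1,r_s(\pi_\F)]$ together with the candidate cocone map $l(\Gamma\alpha_s)=[l_s(\pi_\F),\Id_{\F_1}]:[\F]\to[\Ea]$, and one checks directly that the relation $\Ub(\Id_{\F_1})\circ\pi_\F=\pi_{\Ea}\circ l_s(\pi_\F)$ holds by construction of $r_s(\pi_\F)$, so $l(\Gamma\alpha_s)$ is a genuine map in $\mua$. The universal property of $[\Ea]$ as the pushout in $\mua$ now follows from the component-wise description of colimits: any cocone in $\mua$ projects to cocones in $\M$ and $\ag$, yielding unique legs out of $\F_0\cup^A B$ and $\F_1$, and their compatibility with the structural maps is automatic from the universal property used to build $r_s(\pi_\F)$.

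For assertion (3), factoring $\pi_\F=r_s(\pi_\F)\circ l_s(\pi_\F)$ is immediate by the defining relation of $r_s(\pi_\F)$. Via Proposition \ref{prop-unit-adj}, the unit $\eta_1([\F])=[\pi_\F,\Id_{\F_1}]$ then decomposes as $[l_s(\pi_\F),\Id_{\F_1}]$ followed by $[r_s(\pi_\F),\Id_{\F_1}]=\eta_1([\Ea])$, which is precisely the displayed factorization. The main delicate point in the proof is tracking the adjunction $\Fb\dashv\Ub$ carefully: one must verify that the identification of the $\ag$-level pushout with $\F_1$ is compatible with $\Ub(\varrho(\theta_1))\circ\eta_B$ on $B$, so that the induced map $r_s(\pi_\F)$ is well-defined and equal to the structural map produced by Proposition \ref{prop-limit-colimit}. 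Once this book-keeping is done, the three assertions are essentially read off the pushout diagram.
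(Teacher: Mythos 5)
Your proposal is correct and follows essentially the same route as the paper's proof: both invoke Proposition \ref{prop-limit-colimit} to compute the pushout component-wise in $\M$ and in $\ag$, observe that the $\ag$-component pushout along $\Id_{\Fb B}$ is trivially $\F_1$, and identify the structural map with $r_s(\pi_\F)$ via the universal property of $\F_0\cup^A B$. The paper presents the same computation as a commutative cube in $\M$ rather than as the explicit component-level bookkeeping you give, but the content is identical.
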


\begin{proof}
This directly follows from Proposition \ref{prop-limit-colimit} where we explain how we compute colimits, and in particular pushouts in  $\mua$. The pushout square is represented by a commutative cube in $\M$ that we display below for the reader's convenience. 

\[
\xy
%%%%%%%%%%%face arriere%%%%%
(-30,20)*+{A}="A";
(20,20)+(0,10)+(0,-4)*+{\F_0}="B";
(-30,0)*+{\Ub(\Fb B)}="C";
(20,0)+(0,10)+(0,-4)*+{\Ub(\F_1)}="D";
{\ar@{->}^{\theta_0}"A";"B"};
{\ar@{->}_{\eta_B \circ  s}"A"+(0,-2);"C"};
{\ar@{.>}^<<<<<<<<<<<<<{\pif}"B"+(0,-2);"D"};
{\ar@{.>}^>>>>>>>>>{\Ub(\varrho(\theta_1))}"C";"D"};
%%%%%%%%%%%%%%%%%%%%face avant%%%%
(-30,20)+(-15,-10)+(40,0)+(0,5)*+{B}="E";
(20,20)+(-15,-10)+(40,0)+(0,10)+(0,5)+(0,-4)*+{\F_0 \cup^{A} B }="F";
(-30,0)+(-15,-10)+(40,0)+(0,5)*+{\Ub(\Fb B)}="G";
(20,0)+(-15,-10)+(40,0)+(0,10)+(0,5)+(0,-4)*+{\Ub(\F_1)}="H";
{\ar@{->}^{}"E";"F"};
{\ar@{->}_>>>>{\eta_B}"E"+(0,-2);"G"};
{\ar@{->}^{r_s(\pif)}"F";"H"};
{\ar@{->}^{}"G";"H"};
%%%%%%%%%%% fleches sortantes arrieres vers avant %%%%%%%
{\ar@{->}^{s}"A";"E"};
{\ar@{->}^{l_s(\pif)}"B";"F"};
{\ar@{=}^{}"C";"G"};
{\ar@{=}^{\Id}"D";"H"};
\endxy
\]  

In this cube the lower and upper faces are pushout squares in $\M$. Clearly we have a factorization of $\pif$:
$$\F_0 \xrightarrow{l_s(\pif)} \F_0 \cup^{A} B   \xrightarrow{r_s(\pif)}\Ub(\F_1).$$

The map $ B \to \F_0 \cup^{A} B $ is the canonical map going to the colimit and it gives a factorization of $\theta_1$:
$$ \theta_1 = B \xrightarrow{} \F_0 \cup^{A} B   \xrightarrow{r_s(\pif)} \Ub(\F_1). $$
The remaining assertions are clear. 
\end{proof}

\subsection{Identifying a pushout}
We also need a lemma that reverses the process.

\begin{lem}\label{lem-push-retract}
Let $[\F]=[\F_0,\F_1,\pif]$ be an object of $\mua$ and consider a factorization of $\pif= r(\pif) \circ l(\pif)$ displayed as follows:
$$\pif: \F_0 \to \Ub(\F_1) = \F_0 \xrightarrow{l(\pif)} E_0   \xrightarrow{r(\pif)} \Ub(\F_1)$$
Let $[\Ea]=[\Ea_0, \Ea_1, \pi_{\Ea}]$ be the object of $\mua$ defined by 
$$ \Ea_0=E_0, \quad \Ea_1= \F_1, \quad \pi_{\Ea}=r(\pif).$$
Then the following hold.
\begin{enumerate}
\item  The factorization induces a tautological map: $\psi_{\F}(l,r): [\F] \to [\Ea]$ in $\mua$:
\[
 \xy
(0,18)*+{\F_0}="W";
(0,0)*+{\Ub(\F_1)}="X";
(30,0)*+{\Ub(\F_1)=\Ub(\Ea_1)}="Y";
(30,18)*+{E_0=\Ea_0}="E";
{\ar@{=}^-{\Id}"X";"Y"};
{\ar@{->}^-{\pif}"W";"X"};
{\ar@{->}^-{l(\pif)}"W";"E"};
{\ar@{->}^-{r(\pif)}"E";"Y"};
\endxy
\] 
\item If $l(\pif)$ is a cobase of some map $s : A \to B$ then the map $\psi_{\F}(l,r)$ is a cobase change of the map $\Gamma(\alpha_s)$.
\end{enumerate}
\end{lem}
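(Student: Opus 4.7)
The plan is to handle assertion $(1)$ by direct inspection and then reduce assertion $(2)$ to Lemma \ref{lem-cobase-change-mua} by identifying the correct attaching map.

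For assertion $(1)$, I would just observe that the square defining $\psi_{\F}(l,r)$ commutes tautologically: its components are $l(\pif): \F_0 \to E_0 = \Ea_0$ at level $0$ and $\Id_{\F_1}: \F_1 \to \Ea_1$ at level $1$, and the compatibility with the structure maps reads $\pi_{\Ea}\circ l(\pif) = r(\pif)\circ l(\pif) = \pif = \Ub(\Id_{\F_1})\circ \pif$. So $\psi_{\F}(l,r)=[l(\pif),\Id_{\F_1}]$ is a well-defined morphism in $\mua$, which is the only content of $(1)$.

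For assertion $(2)$, assume that $l(\pif)$ is realized as the cobase change of $s: A \to B$ along some map $\theta_0: A \to \F_0$, so that $E_0 = \F_0\cup^{A} B$ with canonical maps $l(\pif):\F_0 \to E_0$ and $\mathrm{can}_B: B \to E_0$ satisfying $l(\pif)\circ \theta_0 = \mathrm{can}_B \circ s$. I would then define $\theta_1: B \to \Ub(\F_1)$ by $\theta_1 = r(\pif)\circ \mathrm{can}_B$. The computation $\theta_1\circ s = r(\pif)\circ \mathrm{can}_B\circ s = r(\pif)\circ l(\pif)\circ \theta_0 = \pif \circ \theta_0$ shows that $(\theta_0,\theta_1): s \to \pif$ is a morphism in $\Ar(\M)$. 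Taking its adjoint transpose under $\Gamma \dashv \Piar$ gives the map $\varphi(\theta)=[\theta_0,\varrho(\theta_1)]: \Gamma(s)\to [\F]$ of Lemma \ref{lem-cobase-change-mua}.

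Now the conclusion of Lemma \ref{lem-cobase-change-mua} identifies the pushout of $\Gamma(\alpha_s)$ along $\varphi(\theta)$ with the map $[l_s(\pif),\Id_{\F_1}]:[\F]\to [\F]\cup^{\Gamma(s)}\Gamma(\Id_B)$, where the target has first component $\F_0 \cup^{A} B$, second component $\F_1$, and structure map $r_s(\pif)$. By our choice of $\theta_0$ and $\theta_1$, we have $l_s(\pif) = l(\pif)$ and $r_s(\pif) = r(\pif) = \pi_{\Ea}$, so the target coincides with $[\Ea]$ and the pushout map coincides with $\psi_{\F}(l,r)$. I do not expect any real obstacle: the only point that requires a moment of care is checking that the $\theta_1$ one reads off from $r(\pif)$ is compatible with the original pushout data, and that is the small diagram chase above.
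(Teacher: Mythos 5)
Your proof is correct and follows the same strategy as the paper's: identify the attaching map $\theta=(\theta_0,\theta_1):s\to\pif$ with $\theta_1 = r(\pif)\circ\mathrm{can}_B$, pass to its adjoint transpose $\varphi(\theta)$, and invoke Lemma \ref{lem-cobase-change-mua}. You in fact supply slightly more detail than the paper does, by explicitly verifying that $(\theta_0,\theta_1)$ is a morphism in $\Ar(\M)$ and that the uniqueness clause of the pushout's universal property forces $l_s(\pif)=l(\pif)$ and $r_s(\pif)=r(\pif)$, which the paper dismisses with ``one clearly sees.''
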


\begin{proof}
The first assertion is clear. For Assertion $(2)$ we just need to define the attaching map $ \Gamma (s) \to [\F]$ in $\mua$ that will be used to get a pushout data:
$$\Gamma(\Id_B) \xleftarrow{\Gamma(\alpha_s)} \Gamma (s) \to [\F].$$
 Now by hypothesis the map $l(\pif)$ fits in a pushout square: 
 \[
 \xy
(0,18)*+{A}="W";
(0,0)*+{B}="X";
(30,0)*+{E_0\cong \F_0 \cup^{A} B}="Y";
(30,18)*+{\F_0}="E";
{\ar@{->}^-{}"X";"Y"};
{\ar@{->}^-{s}"W";"X"};
{\ar@{->}^-{}"W";"E"};
{\ar@{->}^-{l(\pif)}"E";"Y"};
\endxy
\]

We extend this commutative square with the map $r(\pif): E_0 \to \Ub(\F_1)$.  This gives a commutative square that defines a map $\theta : s \to \pif$ in the category $\Ar(\M)$. The adjoint transpose of $\theta$ gives the attaching map $\varphi(\theta)=[\theta, \varrho(\theta_1)]: \Gamma(\alpha_s) \to [\F]$ as in the previous lemma. And by the previous lemma one clearly sees that $\psi_{\F}(l,r)$ is the cobase change of $\Gamma(\alpha_s)$ along $\varphi(\theta)$.\ \\
\end{proof}

\subsection{Cell complexes} The last piece of ingredient we need is the following lemma which is a consequence of the previous ones. It's the general form of the second assertion in Lemma \ref{lem-push-retract}.

\begin{lem}\label{lem-cx}
Let $S$ be a small set of maps of $\M$ and denote by $\alpha_S = \{ \alpha_s\}_{s \in S}$. Let $[\F]=[\F_0,\F_1,\pif]$ be an object of $\mua$ and consider a factorization of $\pif= r(\pif) \circ l(\pif)$ displayed as follows:
$$\pif: \F_0 \to \Ub(\F_1) = \F_0 \xrightarrow{l(\pif)} E_0   \xrightarrow{r(\pif)} \Ub(\F_1)$$
Let $[\Ea]=[\Ea_0, \Ea_1, \pi_{\Ea}]$ be the object of $\mua$ defined by 
$$ \Ea_0=E_0, \quad \Ea_1= \F_1, \quad \pi_{\Ea}=r(\pif).$$

If the morphism $l(\pif)$ is a relative $S$-cell complex, then the map $\psi_{\F}(l,r) : [\F] \to [\Ea]$ in Lemma \ref{lem-push-retract} is a relative $\Gamma(\alpha_S)$-cell complex.
\end{lem}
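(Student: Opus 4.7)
The plan is a straightforward transfinite induction that upgrades the presentation of $l(\pif)$ as a relative $S$-cell complex in $\M$ to a presentation of $\psi_{\F}(l,r)$ as a relative $\Gamma(\alpha_S)$-cell complex in $\mua$, using Lemma \ref{lem-push-retract} as the engine at successor stages and Proposition \ref{prop-limit-colimit} at limit stages.

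First, by hypothesis we have a transfinite tower in $\M$:
\[
\F_0 = X_0 \xrw{l_0} X_1 \xrw{l_1} \cdots \xrw{l_\beta} X_{\beta+1} \to \cdots, \qquad X_\lambda = E_0,
\]
where at each successor stage $\beta+1$ the map $l_\beta: X_\beta \to X_{\beta+1}$ is a pushout of some $s_\beta: A_\beta \to B_\beta$ in $S$ along some attaching map $A_\beta \to X_\beta$, and at each limit ordinal $\mu \leq \lambda$ the object $X_\mu$ is the colimit $\colim_{\beta < \mu} X_\beta$. Composing the tail of this tower with the remaining map $r(\pif)$ yields for every $\beta$ a factorization $\pif = r_\beta \circ (l_\beta \circ \cdots \circ l_0) \circ \Id$, where $r_\beta : X_\beta \to \Ub(\F_1)$ is the composite of the remaining $l$'s followed by $r(\pif)$.

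Next I build a matching tower in $\mua$. Set $[\F^{(0)}] = [\F]$ and at each stage define $[\F^{(\beta)}] := [X_\beta, \F_1, r_\beta]$, noting that the $\Piun$-coordinate remains $\F_1$ throughout. By Lemma \ref{lem-push-retract}, the tautological map $[\F^{(\beta)}] \to [\F^{(\beta+1)}]$ induced by the one-step factorization $r_\beta = r_{\beta+1} \circ l_\beta$ is precisely a cobase change of $\Gamma(\alpha_{s_\beta}) \in \Gamma(\alpha_S)$ in $\mua$, via the attaching map produced in the proof of that lemma. This handles successor ordinals.

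At limit ordinals $\mu$, I invoke Proposition \ref{prop-limit-colimit}: colimits in $\mua$ are computed by taking colimits separately on $\Pi_0$ and $\Pi_1$, together with the induced map to $\Ub$ of the $\Pi_1$-colimit. Since the $\Pi_1$-coordinate is the constant diagram with value $\F_1$, its colimit is $\F_1$, and the $\Pi_0$-colimit is exactly $X_\mu = \colim_{\beta < \mu} X_\beta$; moreover the universal map $X_\mu \to \Ub(\F_1)$ induced by the cocone $\{r_\beta\}_{\beta < \mu}$ is by construction $r_\mu$. Hence $\colim_{\beta<\mu}[\F^{(\beta)}] = [\F^{(\mu)}]$ in $\mua$, so the tower $\{[\F^{(\beta)}]\}$ is a valid transfinite composition in $\mua$ whose successor steps are pushouts of elements of $\Gamma(\alpha_S)$. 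Its top map $[\F^{(0)}] \to [\F^{(\lambda)}] = [\Ea]$ coincides with $\psi_{\F}(l,r)$ by comparison with the factorization diagram in Lemma \ref{lem-push-retract}, which exhibits $\psi_{\F}(l,r)$ as a relative $\Gamma(\alpha_S)$-cell complex.

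The only real care point --- and the one step that could trip someone up --- is verifying that the colimit computation at limit stages genuinely returns $[\F^{(\mu)}]$ with the correct structure map $r_\mu$, rather than something that merely agrees with it up to canonical isomorphism in the two coordinates. This is exactly where Proposition \ref{prop-limit-colimit} is doing the work: the universal property of $\colim X_\beta$ in $\M$ and the constancy of the $\Piun$-diagram together pin down the induced map $r_\mu$ uniquely, so the tower extends transfinitely without any ambiguity.
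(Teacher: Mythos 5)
Your proof is correct and follows essentially the same strategy as the paper's: build a matching transfinite tower in $\mua$ by applying Lemma \ref{lem-push-retract} at each successor stage, and glue via Proposition \ref{prop-limit-colimit}. You are more careful than the paper's terse argument in two respects — you make explicit the verification at limit ordinals (constancy of the $\Piun$-coordinate plus the universal property of the colimit pins down the structure map $r_\mu$), and you fix the slight notational confusion in the paper where $\psi_\F(l_k,r_k)$ is described both as a map out of $[\F]$ and as a single pushout of $\Gamma(\alpha_{s_k})$.
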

\begin{proof}
This is just a repetition of the second assertion of Lemma \ref{lem-push-retract}. Indeed, the map $l(\pif)$ is a transfinite composition of maps $l(\pif)_k$; where each of them is a cobase change of some map $s_k \in S$.  In particular for each $k$ we have a factorization of $\pif$ : $\pif=r(\pif)_k\circ l(\pif)_k$.

By the same construction as in Lemma \ref{lem-push-retract},  this factorization defines maps $\psi_{\F}(l_k,r_k): [\F]\to \Ea_k$ whose transfinite composite is the map $\psi_{\F}(l,r)$.  Thanks to Lemma \ref{lem-push-retract}  we know that each $\psi_{\F}(l_k,r_k)$ is a pushout of $\Gamma(\alpha_{s_k})$.

We see that $\psi_{\pif}(l,r)$ is a transfinite composition of maps that are pushouts of elements of $\Gamma(\alpha_S)$, thus it's a relative $\Gamma(\alpha_S)$-cell complex as claimed.
\end{proof}

\begin{rmk}
In the previous lemma we can generalize the statement when the map $\F_0 \xrightarrow{l(\pif)} E_0$ is a retract of a map $l'(\pif):\F_0' \xrightarrow{} E'_0 $ that is a relative $S$-cell complex. In that case the map $\psi_{\F}(l,r) : [\F] \to [\Ea]$ would be a retract of relative $\Gamma(\alpha_s)$-cell complex. 

To see this, first observe that from the definition of a retract, there is map $a:E_0 \to E'_0$ and a retraction $b:E_0' \to E_0$ such that $b\circ a= \Id_{E_0}$.  Similarly we have a map $u:\F_0 \to \F_0'$ and a retraction $v:\F'_0 \to \F_0$ such that $v \circ u = \Id_{\F_0}$. If we set $\pif'= r(\pif) \circ b \circ l'_{\pif}$ we get an object $[\F'] \in \mua$ to which we can apply the construction of the previous lemma and get a map $\psi_{\F'}(l',r')$.  
It's not hard to see that the map $\pif$ is a retract of $\pi_{\F'}$ and that $\psi_{\F}(l,r)$ is a retract  of  $\psi_{\F'}(l',r')$.
\end{rmk}

\subsection{Factorization Theorem}
We give here one of the central results of this section. We show that the unit in the adjunction $\Piun \dashv \iota$ can be factored as as relative  $\kbi$-cell complex followed by a level-wise trivial fibration. 
\begin{thm}\label{thm-factorization-unit}
Let $\M$ be a cofibrantly generated model category with $\I$ the generating set of cofibrations . Consider the sets $\alpha_{\I}=  \{ \alpha_i\}_{i \in \I} $  and let $\kbi= \Gamma(\ali)$. Then for any object $\Fc \in \mua$ the following hold.

\begin{enumerate}
\item  We can factor the unit $\eta_{1,\Fc}: \Fc \to \iota(\Piun(\Fc))$ as a relative $\kbi$-cell complex followed by a map that is a level-wise trivial fibration:
$$\Fc \xrightarrow{\psi} \Ec \xrightarrow{}    \iota(\Piun(\Fc)).$$
\item The map  $\Ec \xrightarrow{}    \iota(\Piun(\Fc))$ is the unit of the adjunction for the object $\Ec$. In particular the unit $\eta_{1,\Ec}$ is a level-wise weak equivalence in $\mua$.
\end{enumerate}

\end{thm}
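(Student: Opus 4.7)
The plan is to build the factorization by applying the small object argument directly in $\M$ to the component map $\pif$, and then lift this factorization to $\mua$ using Lemma \ref{lem-push-retract} and Lemma \ref{lem-cx}.

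First, since $\M$ is cofibrantly generated with generating cofibrations $\I$, the small object argument on the map $\pif:\F_0 \to \Ub(\F_1)$ in $\M$ yields a factorization
$$\pif = r(\pif) \circ l(\pif), \qquad \F_0 \xrw{l(\pif)} E_0 \xrw{r(\pif)} \Ub(\F_1),$$
in which $l(\pif)$ is a relative $\I$-cell complex and $r(\pif)$ is an $\I$-injective map, hence a trivial fibration in $\M$. Next, following Lemma \ref{lem-push-retract}, I set $\Ec = [E_0, \F_1, r(\pif)] \in \mua$ and consider the tautological map $\psi := \psi_{\F}(l,r):\Fc \to \Ec$ with components $[l(\pif), \Id_{\F_1}]$. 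The map $\Ec \to \iota(\F_1) = [\Ub(\F_1), \F_1, \Id_{\Ub(\F_1)}]$ is taken to be the pair $[r(\pif), \Id_{\F_1}]$; this is a well-defined morphism in $\mua$ since $\Id_{\Ub(\F_1)} \circ r(\pif) = \Ub(\Id_{\F_1}) \circ r(\pif)$. The composite
$$\Fc \xrw{\psi} \Ec \xrw{[r(\pif),\Id_{\F_1}]} \iota(\F_1)$$
equals $[r(\pif)\circ l(\pif), \Id_{\F_1}] = [\pif, \Id_{\F_1}]$, which by Proposition \ref{prop-unit-adj} is exactly the unit $\eta_{1,\Fc}$.

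It remains to identify the two factors. The second map $[r(\pif),\Id_{\F_1}]$ is level-wise a trivial fibration because $r(\pif)$ is a trivial fibration in $\M$ and $\Id_{\F_1}$ is a trivial fibration in $\ag$. For the first map, since $l(\pif)$ is a relative $\I$-cell complex, Lemma \ref{lem-cx} (with $S = \I$ and therefore $\Gamma(\alpha_S) = \Gamma(\alpha_\I) = \kbi$) asserts precisely that $\psi = \psi_{\F}(l,r)$ is a relative $\kbi$-cell complex. This establishes assertion (1).

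For assertion (2), observe that by construction $\pi_{\Ec} = r(\pif)$ and $\Piun(\Ec) = \F_1$, so applying Proposition \ref{prop-unit-adj} to $\Ec$ gives $\eta_{1,\Ec} = [\pi_{\Ec}, \Id_{\F_1}] = [r(\pif), \Id_{\F_1}]$, which is exactly the second factor. Since $r(\pif)$ is a trivial fibration in $\M$ and $\Id_{\F_1}$ is a trivial fibration in $\ag$, the unit $\eta_{1,\Ec}$ is a level-wise trivial fibration, hence in particular a level-wise weak equivalence in $\mua$. There is no serious obstacle here beyond bookkeeping: the content is packaged entirely in Lemma \ref{lem-push-retract} and Lemma \ref{lem-cx}, which translate the $\M$-level factorization of $\pif$ into an $\mua$-level factorization of the unit.
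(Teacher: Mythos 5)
Your proof is correct and follows essentially the same approach as the paper: apply the small object argument to $\pif$ in $\M$, package the resulting factorization into an object $\Ec$ and a map $\psi$ via the construction of Lemma \ref{lem-push-retract}, and read off both assertions. In fact you are slightly more explicit than the paper, which leaves the identification of $\psi$ as a relative $\kbi$-cell complex implicit; your direct appeal to Lemma \ref{lem-cx} closes that small gap cleanly.
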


\begin{proof}
Apply the small object argument in $\M$, to get a factorization of $\pif$ as a relative $\I$-cell complex followed by an $\I$-injective map; that is a trivial fibration: 
$$\pif: \F_0 \to \Ub(\F_1) = \F_0 \xhookrightarrow{l(\pif)} E_0    \xtwoheadrightarrow[\sim]{r(\pif)}  \Ub(\F_1)$$

By the same construction as in Lemma \ref{lem-push-retract} we define the object $\Ec \in \mua$ with:
$$ \Ea_0=E_0, \quad \Ea_1= \F_1, \quad \pi_{\Ea}=r(\pif).$$
In particular $\Piun(\Ec)=\F_1$ and the unit of the adjunction for the object $\Ec$ is the map $\eta_{1,\Ec}=[r(\pif), \Id_{\F_1}]: \Ec \to \iota(\Piun(\Fc))$ which is a level-wise trivial fibration since $r$ and $\Id_{\F_1}$ are. \ \\

Let's denote simply by $\psi$, the map $\psi(l,r)=[l(\pif),\Id_{\F_1}]: \Fc \to \Ec$ constructed  in Lemma \ref{lem-push-retract}. Now it's clear that the composite $\eta_{1,\Ec} \circ \psi$ is just $\eta_{1,\Fc}$ since:
$$\eta_{1,\Ec} \circ \psi=[r(\pif), \Id_{\F_1}] \circ [l(\pif),\Id_{\F_1}]= [r(\pif)\circ l(\pif), \Id_{\F_1}]= [\pif, \Id_{\F_1}]=\eta_{1,\Fc}.$$
This completes the proof of the Lemma.
\end{proof}
\newpage

\section{Stable Homotopy Theory}
In this section we assume that $\ag=\M$ and that $\Ub: \M \to \M$ is an endofunctor which is right Quillen. The example that we shall keep in mind is $\Ub= \Omega: \sset_{\ast} \to \sset_{\ast}$. The left adjoint $\Fb: \M \to \M$ is to be thought as the suspension functor $\Sigma = S^1 \wedge -$. We will write $\Ub^n$ for the composite of $\Ub$ with itself $n$ times and similarly we have $\Fb^n$; with the convention $\Fb^0= \Id_{\M}= \Ub^0$.\ \\

There many references in the literature on stable homotopy theory. Classical ones include Adams \cite{Adams_stable}, Bousfield-Friedlander \cite{Bous_Fried}, Lima \cite{Lima_dual}. Modern foundations are to be found in  Elmendorf-Kriz-Mandell-May (EKMM) \cite{EKMM_book}, Hovey \cite{Hov_stable}, Schwede \cite{Schwede_stable} and the many references therein. In the world of $\infty$-categories we shall refer the reader to Lurie \cite{Lurie_Stable}.

%Recall from Bousfield-Friedlander \cite[Definition 2.1.]{Bous_Fried}:
\begin{df}\label{df-bous_fried}
A spectrum $X$ is a sequence of pointed simplicial sets $(X_n)_{n \in \N}$ together with basepoint preserving maps $\gamma: S^1 \wedge X_n \to X_{n+1}$.  A map $f: X \to Y$ of spectrum is a sequence $(f_n)$ of maps $f_n:X_n \to Y_n$ in $\sset_{\ast}$ such that the following commutes for every $n \geq 0$.
 
 \[
 \xy
(0,18)*+{S^1 \wedge X_n}="W";
(0,0)*+{ X_{n+1}}="X";
(30,0)*+{Y_{n+1}}="Y";
(30,18)*+{S^1 \wedge Y_n}="E";
{\ar@{->}^-{f_{n+1}}"X";"Y"};
{\ar@{->}^-{\gamma_X}"W";"X"};
{\ar@{->}^-{\Id \wedge f_n}"W";"E"};
{\ar@{->}^-{\gamma_Y}"E";"Y"};
\endxy
\]
\end{df}
If we take a general left adjoint $T$ that generalizes the suspension functor, we find the definition of spectrum given by Hovey \cite[Definition 1.1]{Hov_stable}.
With the adjunction $(S^1 \wedge -) \dashv \Omega$, the previous definition is equivalent to the one below that we shall work with (see Adams \cite[Part III]{Adams_stable}).
\begin{df}\label{df_spec_dual}
A spectrum $X$ is a sequence of pointed simplicial sets $(X_n)_{n \in \N}$ together with basepoint preserving maps $\varepsilon: X_n \to \Omega(X_{n+1})$.  A map $f: X \to Y$ of spectrum is a sequence $(f_n)$ of maps $f_n:X_n \to Y_n$ in $\sset_{\ast}$ such that the following commutes for every $n \geq 0$.
 
 \[
 \xy
(0,18)*+{ X_n}="W";
(0,0)*+{ \Omega (X_{n+1})}="X";
(30,0)*+{\Omega(Y_{n+1})}="Y";
(30,18)*+{ Y_n}="E";
{\ar@{->}^-{\Omega(f_{n+1})}"X";"Y"};
{\ar@{->}^-{\varepsilon_X}"W";"X"};
{\ar@{->}^-{f_n}"W";"E"};
{\ar@{->}^-{\varepsilon_Y}"E";"Y"};
\endxy
\]
\end{df}
The category of (pre)spectra will be denoted by $\spu$.
\begin{rmk}
For every $n$ we have an object $[X_n] \in \M \downarrow \Omega$ given by $[X_n]=[X_n, X_{n+1}, \varepsilon]$. This gives a sequence of objects in  $\M \downarrow \Omega$. In virtue of this, we will study spectra through sequences of objects in $\mdua$. 
\end{rmk}

We have a natural forgetful functor $P: \spu \to \prod_{n\in \N} \mum$ that maps $X$ to $([X_n])_{n \in \N}$.  Projecting the $n$th factor yields a functor $P_n: \spu \to \arm$ that maps $X \mapsto \varepsilon_n$.  As $n$ runs through $\N$, we have a Quillen-Segal theory on the category $\spu$. Our goal here is to show that we can transfer the model structure from $\prod_{n\in \Z} \mum$ to the category $\spu$ with the classical argument of right induced model structure (see for example \cite{Beke_2}, \cite{Jardine-Goerss}). On $\mum$ we have an injective model structure $\mum_{inj}$(Theorem \ref{inj-thm}) and the projective model structure $\mum_{proj}$ (Theorem \ref{proj-thm}).
Each model structure will induce a model structure on $\spu$ called projective and injective model structure just like for (unbouded)  chain complexes (see \cite{Hov-model}). But first we need to set up some definitions and properties.  
\subsection{$\Z$-sequences and spectra}

As pointed out by Adams, the indexing set for spectra can be either $\Z$ or $\N$. We will index our sequences over $\Z$ and mention explicitly when we index over $\N$. Many of the constructions that will follow hold for any ordinal $\od \subseteq \Z$, possibly finite. We will denote by $\zd$ the discrete category associated to the set $\Z$. Similarly we have $\nd$ and $\ods$ the discrete category associated to $\N$ and $\od$ respectively.

\begin{warn}
\begin{enumerate}
\item So far if $\Xa$ is an object of $\mdua$, we've written $\Xa=[\Xa_0, \Xa_1, \pi_{\Xa}: \Xa_0 \to \Ub(\Xa_1)]$. But when we consider sequences $(\Xa_n)$ there will be multiple indexes, so we will write instead $\Xa=[\Xa^0, \Xa^1, \qi: \Xa^0 \to \Ub(\Xa^1)]$ to put the indexing integer as $\Xa_n^0$ . We've change the letter $\pi$ into $\qi$ because the letter $\pi$ refers to homotopy groups. 
\item In the upcoming definitions we will be talking about $\Z$-sequences but this simply means $\Z$-indexed sequences and \textbf{not} a $\lambda$-sequence as in \cite[Definition 2.1.1]{Hov-model}. 
\end{enumerate}

\end{warn}

\begin{df}
Let $\Ub:\M \to \M$ be a right Quillen functor. \\
\begin{enumerate}
\item A $\Z$-sequence with coefficient in $\mum$ is  an object $\Xa=(\Xa_n)_{n \in \Z} \in \prod_{n\in \Z} \mum$, in that for every $n \in \Z$, $$\Xa_n=[\Xa_n^0, \Xa_{n}^1, \qi_n: \Xa_n^0 \to \Ub(\Xa_{n}^1)].$$
Equivalently $\Xa$ determines a functor $\Xa: \Z_{disc} \to \mum$, that maps $n \mapsto \Xa_n$.
\item Say that a $\Z$-sequence $\Xa$ is \textbf{linked} if for every $n$ there is an isomorphism:
$$\tau_n:\Xa_n^1 \xrw{\cong} \Xa_{n+1}^0.$$
We will denote by $(\Xa,\tau)$ the linked sequence. 
\item Say that a morphism $\sg: (\Xa,\tau) \xrw{(\sg_n)} (\Ya,\tau')$ is a \textbf{linked morphism}, if for every $n$, we have $ \tau'_n \circ \sg_{n}^1 = \sg_{n+1}^0 \circ \tau_n $, or equivalently  $ \sg_{n+1}^0=\tau'_n \circ \sg_{n}^1 \circ \tau_n^{-1}$. In other words the following commutes.

 \[
 \xy
(0,18)*+{\Xa_n^1}="W";
(0,0)*+{ \Xa_{n+1}^0}="X";
(30,0)*+{\Ya_{n+1}^0}="Y";
(30,18)*+{\Ya_n^1}="E";
{\ar@{->}^-{\sg_{n+1}^0}"X";"Y"};
{\ar@{->}^-{\tau_n}_{\cong}"W";"X"};
{\ar@{->}^-{\sg_n^1}"W";"E"};
{\ar@{->}^-{\tau'_n}_{\cong}"E";"Y"};
\endxy
\]
\item Say that a linked sequence $(\Xa,\tau)$ is \textbf{strictly linked} if for every $n$, $\tau_n$ is the identity morphism.
\end{enumerate}
\end{df}

%\begin{rmk}
%We learn from category theory that we should replace the equality $\Xa_{n,1} = \Xa_{n+1,0}$ by an isomorphism and linked morphisms must respect these isomorphisms. We will do this in a future work.
%\end{rmk}

Pictorially, we can represent an unlinked $\Z$-sequence by a `stair diagram':
\[
 \xy
(-30,18)*+{\Xa_n^0}="W";
(0,0)*+{\Xa_{n+1}^0}="X";
(30,0)*+{\Ub(\Xa_{n+1}^1)}="Y";
(0,18)*+{\Ub(\underbrace{\Xa_{n}^1})}="E";
{\ar@{->}^-{\qi_{n+1}}"X";"Y"};
{\ar@{.}^-{\tx{?}}"E";"X"};
{\ar@{->}^-{\qi_n}"W";"E"};
%{\ar@{->}^-{}"E";"Y"};
\endxy
\] 
When the sequence is linked we can move up the lower stair and get a map $\Ub(\tau_n) \circ \qi_n : \Xa_n^0 \to \Ub(\Xa_{n+1}^0)$. Each of this map has an adjoint $\Fb( \Xa_n^0) \to \Xa_{n+1}^0$. If $\Fb$ is the suspension functor given by $\Fb= S^1 \wedge -$, then the previous map takes the form $S^1 \wedge \Xa_n^0 \to \Xa_{n+1}^0$, and we see that we have a spectrum as in Definition \ref{df-bous_fried} above, when  the sequence is indexed over $n \in \N$. 
The sequence $(\Xa_n^0,\Ub(\tau_n) \circ \qi_n)$ is the same spectrum as in Definition \ref{df_spec_dual}. %The previous maps are important to define the \emph{naive homotopy group} as in Schwede \cite{Sch_stabl} which are defined by a direct colimit.

\begin{pdef}\label{def-spectra}
With the previous notation we have:
\begin{enumerate}
\item The category of $\Z$-sequences is equivalent to the functor category $\Hom(\zd, \mum)= (\mum)^{\zd}$.
\item Linked $\Z$-sequences and linked morphisms form a category $Sp_{\Ub}^+(\M)$ whose objects are called $\Ub$-prespectrum.
\item Strictly linked $\Z$-sequences form a subcategory $ Sp_{\Ub}(\M) \subset Sp_{\Ub}^+(\M)$. 
\end{enumerate}
\end{pdef}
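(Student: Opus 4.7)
The plan is to view the three assertions as bookkeeping unpacking of the definitions, and to verify the categorical compositions carefully.

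For $(1)$, I would observe that since $\zd$ is discrete (only identity morphisms), a functor $\Xa : \zd \to \mum$ is simply a choice of object $\Xa_n \in \mum$ for each $n \in \Z$, and a natural transformation between two such functors consists of an arbitrary family of morphisms $\sg_n : \Xa_n \to \Ya_n$ in $\mum$ (no naturality constraint, since there are no non-identity morphisms in $\zd$ to check). This is precisely the data defining $\prod_{n \in \Z} \mum$, and the equivalence is in fact an isomorphism of categories.

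For $(2)$, the only thing to verify is that the compatibility square defining a linked morphism is closed under identities and composition, so that $Sp_{\Ub}^+(\M)$ is genuinely a subcategory of the category of $\Z$-sequences. The identity $\Id_{(\Xa,\tau)}$ is linked because the square reduces to $\tau_n = \tau_n$. For composition, given linked morphisms $\sg: (\Xa,\tau) \to (\Ya,\tau')$ and $\theta: (\Ya,\tau') \to (\Za,\tau'')$, the composite $\theta \circ \sg$ has components $(\theta_n \circ \sg_n)$; pasting the two commutative squares
\[
 \xy
(0,18)*+{\Xa_n^1}="W";
(0,0)*+{\Xa_{n+1}^0}="X";
(30,0)*+{\Ya_{n+1}^0}="Y";
(30,18)*+{\Ya_n^1}="E";
(60,0)*+{\Za_{n+1}^0}="Y2";
(60,18)*+{\Za_n^1}="E2";
{\ar@{->}^-{\sg_{n+1}^0}"X";"Y"};
{\ar@{->}^-{\tau_n}_{\cong}"W";"X"};
{\ar@{->}^-{\sg_n^1}"W";"E"};
{\ar@{->}^-{\tau'_n}_{\cong}"E";"Y"};
{\ar@{->}^-{\theta_{n+1}^0}"Y";"Y2"};
{\ar@{->}^-{\theta_n^1}"E";"E2"};
{\ar@{->}^-{\tau''_n}_{\cong}"E2";"Y2"};
\endxy
\]
horizontally yields the required compatibility for $\theta \circ \sg$. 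Composition of linked morphisms is therefore linked, and the category $Sp_{\Ub}^+(\M)$ is well-defined.

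For $(3)$, a strictly linked sequence is just the special case of a linked sequence in which each $\tau_n = \Id$. The inclusion $Sp_{\Ub}(\M) \hookrightarrow Sp_{\Ub}^+(\M)$ sends $(\Xa_n)$ to $(\Xa_n, \Id)$, which is fully faithful because the linked-morphism condition between strictly linked sequences becomes the equality $\sg_{n+1}^0 = \sg_n^1$ (the condition reduces to the components matching across the link); such morphisms are clearly closed under identity and composition, giving a subcategory. There is nothing deep to overcome here: the proof is pure diagram chasing, the only mild subtlety being to keep the indexing consistent between $\Xa_n^1$ and $\Xa_{n+1}^0$ under the link isomorphisms, which is taken care of by the pasting argument above.
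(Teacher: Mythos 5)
Your verification is correct and is exactly the routine unwinding of definitions that the paper leaves implicit (the statement is a Proposition-Definition and the paper supplies no explicit argument). All three observations — that discreteness of $\zd$ makes functors and natural transformations coincide with tuples of objects and morphisms, that linked morphisms are closed under identity and horizontal pasting of the compatibility squares, and that restricting to strictly linked objects yields the (full) subcategory used in the subsequent equivalence — are exactly what is needed, and your index bookkeeping across $\Xa_n^1 \cong \Xa_{n+1}^0$ is carried out correctly.
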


\begin{df}
For a linked $\Z$-sequence $(\Xa,\tau)$, define the associated $\Ub$-prespectrum to be the sequence $(\Xa_n^0,\Ub(\tau_n) \circ \qi_n)$.
\end{df}

\begin{prop}
The inclusion $ Sp_{\Ub}(\M) \hrw Sp_{\Ub}^+(\M)$ is an equivalence of categories. A quasi inverse is the functor $\Theta: Sp_{\Ub}^+(\M) \to Sp_{\Ub}(\M) $  defined as follows. 
\begin{enumerate}
\item $\Theta$ maps $(\Xa,\tau)$ to the strictly linked sequence $\tld{\Xa}=(\tld{\Xa}_n)$ given by $$\tld{\Xa}_n= [\Xa_n^0, \Xa_{n+1}^0,\Ub(\tau_n) \circ \qi_n ].$$
\item $\Theta$ maps $\sg=(\sg_n)$ to $\tld{\sg}=(\tld{\sg}_n)$ given by: 
$$\tld{\sg}_n= (\tld{\sg}_n^0,\tld{\sg}_n^1) :=(\sg_n^0, \tau'_n \circ \sg_{n}^1 \circ \tau_n^{-1}).$$
\end{enumerate}
\end{prop}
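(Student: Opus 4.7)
The plan is to establish the equivalence by directly exhibiting natural isomorphisms on both sides, after verifying that $\Theta$ is well-defined as a functor. The underlying idea is that the linking isomorphisms $\tau_n$ are, up to isomorphism in $\mum$, nothing more than a relabelling of $\Xa_n^1$ as $\Xa_{n+1}^0$; the functor $\Theta$ performs this relabelling once and for all, so that the composite $\Theta \circ i$ is the identity on strictly linked sequences and $i \circ \Theta$ is naturally isomorphic (via the $\tau_n$ themselves) to the identity on linked sequences.

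First I would check that $\Theta$ is a functor. For objects this is immediate from the definition. For a linked morphism $\sg:(\Xa,\tau) \to (\Ya,\tau')$, I need to verify that each $\tld{\sg}_n = (\sg_n^0, \tau'_n \circ \sg_n^1 \circ \tau_n^{-1})$ is itself a morphism in $\mum$ (i.e. that the defining square commutes with $\Ub(\tau_n) \circ \qi_n$ and $\Ub(\tau'_n) \circ \qi'_n$ as the vertical arrows), and that the resulting sequence $(\tld{\sg}_n)$ satisfies the strict link condition. Both reduce to short diagram chases using the functoriality of $\Ub$, the morphism condition $\Ub(\sg_n^1) \circ \qi_n = \qi'_n \circ \sg_n^0$, and the link condition $\sg_{n+1}^0 = \tau'_n \circ \sg_n^1 \circ \tau_n^{-1}$. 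Preservation of identities and composition is then a direct verification in each factor.

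Next I would show that $\Theta \circ i$ equals the identity of $Sp_{\Ub}(\M)$ on the nose: if $(\Xa,\Id)$ is strictly linked, then $\Xa_n^1 = \Xa_{n+1}^0$ and $\Ub(\tau_n) \circ \qi_n = \qi_n$, so $\Theta(\Xa,\Id)_n = [\Xa_n^0, \Xa_{n+1}^0, \qi_n] = \Xa_n$; for a linked morphism between strictly linked objects each $\tau_n^{\pm 1}$ is $\Id$, so $\tld{\sg}_n = \sg_n$. For the other composite, I would define a natural transformation $\phi : i \circ \Theta \Rightarrow \Id_{Sp_{\Ub}^+(\M)}$ by
\[
\phi_{(\Xa,\tau),n} := (\Id_{\Xa_n^0},\, \tau_n) : \tld{\Xa}_n \longrightarrow \Xa_n ,
\]
or its inverse in the direction $\Xa \to \tld{\Xa}$, whichever sign matches the source/target of $\Theta$. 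Each $\phi_{(\Xa,\tau),n}$ is a morphism in $\mum$ because $\Ub(\tau_n) \circ \qi_n = \Ub(\tau_n) \circ \qi_n \circ \Id$, and it is an isomorphism because both components are. The strict link condition on $\tld{\Xa}$ matches $\tau_n$ on the other side, so $\phi$ is a linked morphism; naturality in $\sg$ amounts to the single identity $\tau'_n \circ \sg_n^1 = (\tau'_n \circ \sg_n^1 \circ \tau_n^{-1}) \circ \tau_n$.

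There is no real obstacle here, since the statement is essentially a bookkeeping lemma about transporting structure along isomorphisms; the only thing to be careful about is to keep the two adjacent levels $n$ and $n+1$ consistent when defining $\tld{\sg}$ (so that the new $0$-component at level $n+1$ agrees with the new $1$-component at level $n$), which is exactly what the linked-morphism condition was designed to guarantee. Once the naturality square is checked in both coordinates, the equivalence follows.
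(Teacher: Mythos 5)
Your proof is correct and takes a genuinely different route from the paper's. You directly verify the definition of a quasi-inverse: you show $\Theta \circ i$ is the identity on the nose (using that $\tau_n = \Id$ forces $\tld{\Xa}_n = \Xa_n$ and $\tld{\sg}_n = \sg_n$) and then you construct an explicit natural isomorphism between $i \circ \Theta$ and the identity on $Sp_{\Ub}^+(\M)$ with components $(\Id_{\Xa_n^0}, \tau_n)$. (A small point: with your sign convention $(\Id_{\Xa_n^0}, \tau_n)$ actually gives a map $\Xa_n \to \tld{\Xa}_n$, so the natural iso goes $\Id \Rightarrow i\circ\Theta$; you flag this ambiguity yourself, and it doesn't affect the argument.) The paper instead argues through the ``fully faithful plus essentially surjective'' criterion: after checking $\Theta$ is well-defined, it exhibits an inverse to the induced map on hom-sets $\Hom_{\spup}(\Xa,\Ya) \to \Hom_{\spu}(\tld{\Xa},\tld{\Ya})$, concluding full faithfulness of $\Theta$, and leaves essential surjectivity (which follows from $\Theta \circ i = \Id$) implicit. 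Both routes are standard; your version is somewhat more explicit about the second half of the quasi-inverse condition and would probably read as more complete to a careful referee, while the paper's version is terser because it leans on the general fact that a fully faithful, essentially surjective functor is an equivalence.
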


\begin{proof}
Clearly for every $n$ we have $\tld{\Xa}_n^1=\tld{\Xa}_{n+1}^0$ by construction. This means that $\Theta((\Xa,\tau)) \in Sp_{\Ub}(\M)$. By the same reasoning $\Theta(\sg)$ is a linked morphism if $\sg$ is. These constructions are clearly functorial and $\Theta$ is well defined.

The proposition will be proved as soon as we establish that  $\sg \mapsto \tld{\sg}$ is an isomorphism of hom-sets: 
$$\Theta: \Hom_{\spup}(\Xa,\Ya) \xrw{\cong} \Hom_{\spu}(\tld{\Xa},\tld{\Ya}).$$
And this is clear since we have an inverse  that takes $(\tld{\sg}_n^0,\tld{\sg}_n^1) \mapsto (\sg_n^0, {\tau'_n}^{-1} \circ \tld{\sg}_n^1 \circ \tau_n)=(\sg_n^0, {\tau'_n}^{-1} \circ \tld{\sg}_{n+1}^0 \circ \tau_n)$.  
\end{proof}

With the previous proposition we will focus on the strictly linked sequences to simplify the constructions.
\begin{rmk}
In the standard situation of spectra, $\M$ is a category of pointed objects, and therefore the initial object $\emptyset$ and the terminal object $\ast$ are uniquely isomorphic to form a zero object in $\M$. In the upcoming results we will only use this property when really needed. So we shall write distinctly $\emptyset$ for the initial object and $\ast$ for the terminal object. 
\end{rmk}

\begin{nota}
With the previous definition we will adopt the following notation. 
\begin{enumerate}
\item The above  forgetful functor $P: Sp_{\Ub}(\M) \to \zmum$ will take the form $P: Sp_{\Ub}(\M) \to (\mum)^{\zd}$.
\item Similarly we will denote by $P_n: Sp_{\Ub}(\M) \to \mum$, the composite of $P$ followed by the $n$th projection: $P_n(\Xa)=\Xa_n$.
\end{enumerate}
\end{nota}

\begin{prop}\label{prop-adj-P}
The category $Sp_{\Ub}(\M)$ is complete and cocomplete and the following hold. 

\begin{enumerate}
\item The functor $P:  Sp_{\Ub}(\M) \to  \zmum$ creates limits and colimits which are computed level-wise.
%\item If $\M$ is locally presentable, then so is  $Sp_{\Ub}(\M)$
\item The functor $P$ has a left adjoint and this adjunction is monadic.
\end{enumerate} 
\end{prop}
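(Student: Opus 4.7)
The plan for (1) is to compute limits and colimits levelwise in $\zmum$ and verify that the outcome remains strictly linked. Given $D \colon \J \to \spu$, set $E = \lim PD$ in $\zmum \cong \prod_{n \in \Z}\mum$, which is formed componentwise in $\mum$. By the corollary to Proposition \ref{prop-limit-colimit}, both $\Pio$ and $\Piun$ preserve limits and colimits, so $\Pio(E_n) = \lim_j D(j)_n^0$ and $\Piun(E_n) = \lim_j D(j)_n^1$. The strict linking $D(j)_n^1 = D(j)_{n+1}^0$ passes through the $\J$-indexed limit, yielding $\Piun(E_n) = \Pio(E_{n+1})$. Hence $E$ is strictly linked and universality identifies it with the limit in $\spu$; the same holds for colimits, so $P$ creates both.

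For (2), the strategy is the adjoint functor theorem followed by crude monadicity. Since $\mum$ is locally presentable by Corollary \ref{cor-presentable}, so is the product $\zmum$. The category $\spu$ can be identified with the equalizer in $\Cat$ of two accessible functors $\zmum \rightrightarrows \prod_{n \in \Z}\M$ sending $(Y_n)$ to $(Y_n^1)_n$ and to $(Y_{n+1}^0)_n$ respectively. Since an equalizer of accessible functors between accessible categories is accessible (Ad\'amek--Rosick\'y), and since $\spu$ is cocomplete by (1), $\spu$ is locally presentable. Because $P$ preserves all limits, the adjoint functor theorem furnishes a left adjoint $L \dashv P$.

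For monadicity I would apply the crude monadicity theorem. The left adjoint is in hand; $P$ reflects isomorphisms, because if $P(\sigma)$ is an iso in $\zmum$ then each component $\sigma_n$ is an iso in $\mum$, and the pointwise inverse $(\sigma_n^{-1})$ respects strict linking since $\sigma_n^1 = \sigma_{n+1}^0$ forces $(\sigma_n^1)^{-1} = (\sigma_{n+1}^0)^{-1}$; finally, $P$ creates reflexive coequalizers, being a creator of all colimits by (1). Beck's crude monadicity theorem therefore yields monadicity.

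The main obstacle I anticipate is verifying local presentability of $\spu$, since the linking conditions are strict equalities rather than diagrammatic constraints; the presentation of $\spu$ as an equalizer of accessible functors between locally presentable categories handles this cleanly. Constructing the left adjoint $L$ explicitly is also delicate---the na\"ive candidate $L(Y)_n^0 = Y_n^0 \sqcup Y_{n-1}^1$ fails because one of the required compatibility conditions on the structure map cannot be made natural in $X$---but the proposition asks only for the existence of $L$, so the abstract argument suffices.
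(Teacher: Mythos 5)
Your handling of (1) matches the paper's intent (the paper dismisses it as an exercise), and the key observation that $\Pi_0$ and $\Pi_1$ preserve limits and colimits so that the strict linking $\Xa_n^1 = \Xa_{n+1}^0$ survives passage to (co)limits is exactly the right reason.

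For (2), however, you take a genuinely different route. You argue for the existence of a left adjoint abstractly (local presentability plus the adjoint functor theorem) and then apply crude monadicity, whereas the paper constructs the left adjoint explicitly: it first builds a left adjoint $\Upsilon_n$ to each projection $P_n$ (Lemma~\ref{adj-pn}), a ``free prespectrum on a single cell'' functor whose formulas involve iterated applications of $\Fb$, and then sets $\Upsilon(\Aa)=\coprod_n \Upsilon_n(\Aa_n)$, verifying the adjunction isomorphism directly. It then applies Beck's monadicity theorem with $P$-split coequalizers; your crude monadicity route also works since $P$ creates \emph{all} colimits. The explicit construction is not a stylistic choice: the formulas for $\Upsilon_n$ are used essentially later --- Lemma~\ref{lem-pushout} computes $\Upsilon_n$ on projective (trivial) cofibrations level by level, and this is what makes the transfer argument of Theorem~\ref{proj-spec} go through --- so a purely abstract existence statement would not support the remainder of the section.

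There is also a gap in your justification of local presentability of $\spu$. You describe $\spu$ as the strict $1$-categorical equalizer of two accessible functors $\zmum \rightrightarrows \prod_{n\in\Z}\M$ and invoke Ad\'amek--Rosick\'y, but the relevant results there concern weighted bilimits (products, inserters, equifiers, iso-inserters), not strict equalizers; strict equalizers of accessible functors need not be accessible, since strict equality of objects is not an equivalence-invariant notion. In this specific case the argument can be repaired, for instance by identifying $\spu$ with the category of coalgebras for the accessible endofunctor $T$ on $\M^{\Z}$ given by $T(X)_n = \Ub(X_{n+1})$ --- an inserter, hence accessible, and cocomplete since colimits of coalgebras are created from $\M^{\Z}$ --- but this needs to be said. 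Note also that the paper establishes local presentability of $\spu$ \emph{after} this proposition (in the corollary to Theorem~\ref{proj-spec}, via finitariness of the induced monad), so your argument inverts the paper's logical order by taking local presentability as an input.
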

We differ the proof for the moment because we need the following intermediate result that will simplify the proof of the proposition.

\begin{lem}\label{adj-pn}
The projection functor $P_n$ has a left adjoint  $\Upsilon_n: \mum \to \spu$  defined as follows.
\begin{enumerate}
\item For $\Aa=[\Aa_0,\Aa_1, \qi] \in \mum$ we define $\Upsilon_n(\Aa)=(\tld{\Aa}_k)_{k \in \Z} \in \prod_{k\in \Z} \mum$,  by the formulas:
\begin{itemize}
\item $\tld{\Aa}_k= L_1(\emptyset)= [\emptyset, \emptyset, \emptyset \xrw{!} \Ub(\emptyset)]$ if $k<n-1$
\item $\tld{\Aa}_k= L_1(\Aa_1)= [\emptyset, \Aa_0, \emptyset \xrw{!} \Ub(\Aa_0)]$ if $k=n-1$
\item $\tld{\Aa}_k= \Aa= [\Aa_0,\Aa_1, \Aa_0 \xrw{\qi} \Ub(\Aa_1)]$ if $k=n$
%$\tld{\Aa}_k= \Fb^{+}(\Aa_1)= [\Aa_1, \Fb(\Aa_1), \Aa_1 \xrw{\eta} \Ub(\Fb(\Aa_1)]$ if $k = n+1$; where $\eta$ is the unit of the adjunction $\Fb \dashv \Ub$. 
\item $\tld{\Aa}_k= \Fb^{+}(\Fb^{k-(n+1)}(\Aa_1))= [\Fb^{k-(n+1)}(\Aa_1), \Fb(\Fb^{k-(n+1)}(\Aa_1)),  \eta_{\Fb^{k-(n+1)}(\Aa_1)}]$ if $k \geq n+1$; where $\eta$ is the unit of the adjunction $\Fb \dashv \Ub$. Equivalently we have the inductive formula:
 $$\tld{\Aa}_{k}= \Fb^+(\tld{\Aa}_{k-1}^1)=[\tld{\Aa}_{k-1}^1,\Fb(\tld{\Aa}_{k-1}^1), \eta_{\tld{\Aa}_{k-1}^1}], \quad k \geq n+1 .$$
\end{itemize}
\item If $\sg=[\sg_0; \sg_1] \in \Hom_{\mum}(\Aa,\Ba)$ we define $\Upsilon_n(\sg)=(\tld{\sg}_k)_{k \in \Z}$ by the formulas:
\begin{itemize}
\item $\tld{\sg}_k= [\Id_{\emptyset}, \Id_{\emptyset}]$ if $k<n-1$
\item $\tld{\sg}_k= [\Id_{\emptyset},\sg_0]$  if $k=n-1$
\item $\tld{\sg}_k=\sg= [\sg_0,\sg_1]$ if $k=n$
\item $\tld{\sg}_k= \Fb^{+}(\Fb^{k-(n+1)}(\sg_1))= [\Fb^{k-(n+1)}(\sg_1), \Fb( \Fb^{k-(n+1)}(\sg_1)) ]$ if $k \geq n+1$. In a short form we have the inductive formula:
$$\tld{\sg}_k= \Fb^+(\tld{\sg}_{k-1}^1)=[\tld{\sg}_{k-1}^1, \Fb(\tld{\sg}_{k-1}^1)], \quad k \geq n+1.$$
\end{itemize}
\end{enumerate}
\end{lem}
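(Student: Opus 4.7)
The plan is to prove the adjunction $\Upsilon_n \dashv P_n$ by showing that $\Upsilon_n(\Aa)$ belongs to $\spu$, that $\Upsilon_n$ is functorial, and then that extending a morphism $\alpha: \Aa \to \Xa_n$ of $\mum$ to a linked morphism $\tld{\alpha}: \Upsilon_n(\Aa) \to \Xa$ in $\spu$ is uniquely possible, the unit of the adjunction being the identity (since $P_n\Upsilon_n(\Aa)=\tld{\Aa}_n = \Aa$ on the nose). First I would check that the sequence $(\tld{\Aa}_k)$ is strictly linked: by construction $\tld{\Aa}_{k}^{1}=\tld{\Aa}_{k+1}^{0}$ in each of the four regions ($k<n-1$ both sides are $\emptyset$; at $k=n-1$ both are $\Aa_0$; at $k=n$ both are $\Aa_1$; and for $k\geq n+1$ the recursive formula $\tld{\Aa}_{k+1}=\Fb^{+}(\tld{\Aa}_k^1)$ makes $\tld{\Aa}_{k+1}^0=\tld{\Aa}_k^1$ immediate). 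The same case analysis shows that $\Upsilon_n(\sg)$ is a linked morphism and that functoriality holds, since at each level above $n+1$ the components are obtained by applying the functor $\Fb^{+}$.

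Next I would describe the bijection explicitly. The forward map $\tld{\alpha}\mapsto \tld{\alpha}_n=\alpha$ is just projection. For the inverse, given $\alpha=[\alpha_0,\alpha_1]:\Aa\to\Xa_n$, the components $\tld{\alpha}_k$ of the extension are forced and can only be defined in one way. Below level $n-1$ both components are the unique map $\Id_{\emptyset}$. At level $n-1$, linking imposes $\tld{\alpha}_{n-1}^1 = \tld{\alpha}_n^0=\alpha_0$, and compatibility with the structural map $!:\emptyset\to \Ub(\Aa_0)$ is automatic since it starts from the initial object. At level $n$ we put $\tld{\alpha}_n=\alpha$. Above level $n$ the construction is inductive: once $\tld{\alpha}_{k-1}$ is known, linking forces $\tld{\alpha}_k^0=\tld{\alpha}_{k-1}^1$, and the compatibility square for $\tld{\alpha}_k$ reads
\[
\Ub(\tld{\alpha}_k^1)\circ \eta_{\tld{\Aa}_{k-1}^{1}} \;=\; \qi_k\circ \tld{\alpha}_{k-1}^{1}.
\]
Since $\eta$ is the unit of the adjunction $\Fb\dashv\Ub$, the left-hand side is precisely the adjoint transpose of $\tld{\alpha}_k^1$; hence $\tld{\alpha}_k^1$ is uniquely determined as the adjoint of $\qi_k\circ \tld{\alpha}_{k-1}^{1}$. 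This proves both existence and uniqueness of the extension, hence the bijection $\Hom_{\spu}(\Upsilon_n(\Aa),\Xa)\cong \Hom_{\mum}(\Aa,P_n(\Xa))$.

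Finally I would verify naturality of this bijection in $\Aa$ and in $\Xa$. Naturality in $\Aa$ reduces, via the explicit formulas for $\Upsilon_n(\sg)$, to the fact that $\Fb$ respects the adjunction isomorphism, while naturality in $\Xa$ is automatic from the recursive definition of the inverse, which uses only the structure maps $\qi_k$ of $\Xa$. The main obstacle is essentially one of bookkeeping: handling the three indexing regimes ($k<n-1$, $k=n-1,n$, $k\geq n+1$) simultaneously and checking that the inductive use of the adjunction $\Fb\dashv \Ub$ correctly produces a \emph{linked} morphism. Once this is organised, the argument is formal and completes the proof of the lemma.
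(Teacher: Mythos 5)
Your proof is correct and follows essentially the same route as the paper: both establish that $\Upsilon_n(\Aa)$ is a strictly linked sequence, and both verify the adjunction bijection by showing that a map $\alpha:\Aa\to\Xa_n$ extends uniquely to a linked morphism $\Upsilon_n(\Aa)\to\Xa$, with the $n$th component recovering $\alpha$. Your version is slightly more explicit about where each component is \emph{forced}, observing that the unit $\Aa\to P_n\Upsilon_n(\Aa)$ is the identity on the nose, which neatly packages the uniqueness argument.

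One small slip: for $k<n-1$ you write that both components of $\tld{\alpha}_k$ are ``the unique map $\Id_{\emptyset}$''. That is the correct description of the components of $\Upsilon_n(\sg)$ for $\sg$ a morphism in $\mum$ (where source and target at level $k$ are both $[\emptyset,\emptyset,!]$), but for the extension $\tld{\alpha}_k:\Upsilon_n(\Aa)_k\to\Xa_k$ the components are the unique maps $\emptyset\to\Xa_k^{0}$ and $\emptyset\to\Xa_k^{1}$, not identities. The underlying point (uniqueness by initiality of $\emptyset$) is what you intended and is correct; just the notation $\Id_{\emptyset}$ is off.
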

Before we give the proof, let's take a moment to see what $\Upsilon_n$ really does. If we are given an object $\Aa=[\Aa_0,\Aa_1, \Aa_0 \xrw{\qi} \Ub(\Aa_1)]$, by adjunction the map $\qi$ corresponds to a unique map $\Fb(\Aa_0) \xrw{ \varrho(\qi)} \Aa_1$. Then  $\Upsilon_n(\Aa)$ is the prespectrum given by the sequence:
$$\emptyset, \cdots, \emptyset, \underbrace{\Fb(\Aa_0)}_{n \tx{th}},\underbrace{\Aa_1}_{(n+1)\tx{th}}, \underbrace{\Fb(\Aa_1)}_{(n+2)\tx{th}}, \Fb^2(\Aa_1),\cdots, \underbrace{\Fb^{k-(n+1)}(\Aa_1)}_{k \geq (n+3)} \cdots,$$
with the connecting morphisms:
 $$\cdots, \emptyset \xrw{\Id_{\emptyset}} \emptyset, \Fb(\Aa_0) \xrw{ \varrho(\qi)} \Aa_1, \Fb(\Aa_1) \xrw{\Id} \Fb(\Aa_1), \Fb^2(\Aa_1) \xrw{\Id} \Fb^2(\Aa_1),\cdots  \Id_{\Fb^{k-(n+1)}(\Aa_1)}, \cdots .$$ 
 
In our formalism, we consider the adjoint definition of the spectrum given by the following sequence and its connecting morphisms.
$$\emptyset, \cdots, \emptyset, \underbrace{\Aa_0}_{n \tx{th}},\underbrace{\Aa_1}_{(n+1)\tx{th}}, \underbrace{\Fb(\Aa_1)}_{(n+2)\tx{th}}, \Fb^2(\Aa_1),\cdots, \underbrace{\Fb^{k-(n+1)}(\Aa_1)}_{k \geq (n+3)} \cdots.$$
$$\eta_{\emptyset}, \cdots, \emptyset \xrw{!} \Ub\F(\Aa_0), \Aa_0 \xrw{ \qi} \Ub(\Aa_1), \Aa_1 \xrw{\eta_{\Aa_1}} \Ub\Fb(\Aa_1), \Fb(\Aa_1) \xrw{\eta_{\Fb(\Aa_1)}} \Ub\Fb^2(\Aa_1),\cdots  \eta_{\Fb^{k-(n+1)}(\Aa_1)} , \cdots .$$

\begin{proof}[Proof of Lemma \ref{adj-pn}]
It's clear that $\Upsilon_n(\Aa)$ is a linked sequence and that $\Upsilon_n(\sg)$ is a linked morphism. The constructions are clearly functorial so the previous data define a functor. \ \\
The lemma will follow as soon as we show that for every $\Aa \in \mum$ and for every $\Xa \in \spu$, there is a functorial isomorphism of hom-sets:
$$\varphi: \Hom_{\mum}(\Aa,\Xa_n) \xrw{\cong} \Hom_{\spu}(\Upsilon_n(\Aa), \Xa).$$
 This is straightforward but we give the proof for the reader's convenience. 
If $\sg \in \Hom_{\mum}(\Aa,\Xa_n)$ we define  $\varphi(\sg)= (\tld{\sg}_k) \in \Hom_{\spu}(\Upsilon_n(\Aa), \Xa)$ as the linked morphism given by the formulas:
\begin{itemize}
\item $\tld{\sg}_k: L_1(\emptyset) \to \Xa_k$ is adjoint map to the unique morphism $\emptyset \xrw{!} \Xa_k^1$, in the adjunction $L_1 \dashv \Piun$ of Theorem \ref{adjunction-thm}, if $k<n-1$.  The components of $\tld{\sg}_k$ are $\tld{\sg}_k^0=\emptyset \xrw{!} \Xa_k^0$  and $\tld{\sg}_k^1= \emptyset \xrw{!} \Xa_k^1$ the unique morphisms from the initial object. 
\item $\tld{\sg}_k=[\Id_{\emptyset},\sg_0]: L_1(\Aa_0) \to \Xa_{n-1}$ is the adjoint map to $\sg_0: \Aa_0 \to \Xa_{n}^0=\Xa_{n-1}^1$  if $k=n-1$. Thus this morphism is uniquely determined by $\sg_0$. 
\item $\tld{\sg}_k=\sg: \Aa \to \Xa_n$ if $k=n$
\item Inductively for $k \geq n+1$, $\tld{\sg}_k: \tld{\Aa}_k= \Fb^+(\tld{\Aa}_{k-1}^1) \to \Xa_k $ is the adjoint map to $\tld{\sg}_{k-1}^1:  \tld{\Aa}_{k-1}^1 \to \Xa_{k-1}^1=\Xa_{k}^0$.
\end{itemize}

The  function $\varphi$ is clearly  $1$-$1$ because the $n$th component is $\sg$. And we have an inverse function $\varphi^{-1}:\Hom_{\spu}(\Upsilon_n(\Aa), \Xa) \to \Hom_{\mum}(\Aa,\Xa_n)  $ that projects the $n$th component. And the lemma follows. 
\end{proof}

\begin{pdef}
 For $n \in \Z$, define the $n$th \emph{Dirac mass} functor 
 $$\delta_n: \mum \to \Hom(\zd,\mum) $$
as the left adjoint to the $n$th evaluation $\Ev_n: \Hom(\zd,\mum) \to \mum$ defined by $\Ev_n(\Xa)=\Xa_n$.\ \\
For an object $\Aa \in \mum$, $\delta(\Aa)$ is given by:
\begin{itemize}
\item $\delta_n(\Aa)_k= \Aa $ if $k = n$.
\item $\delta_n(\Aa)_k= \emptyset $ if $k \neq n$.
\end{itemize} 
\begin{enumerate}
\item We have an functorial isomorphism of hom-sets: 
$$\Hom_{\mum}(\A, \Xa_n) \cong \Hom_{(\mum)^{\zd}}(\delta(\Aa), \Xa).$$
\item For every $\Xa=(\Xa_n) \in (\mum)^{\zd}$ we have $\Xa \cong \sum_n \delta_n(\Xa_n) =  \coprod_n \delta_n(\Xa_n).$   
\end{enumerate}
\end{pdef}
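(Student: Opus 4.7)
The plan is to verify both parts by direct computation, exploiting the fact that $\zd$ is a discrete category so that everything in $\Hom(\zd,\mum)$ is computed componentwise.

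First I would establish the adjunction, i.e.\ the isomorphism in part (1). A morphism $\phi:\delta_n(\Aa)\to \Xa$ in $(\mum)^{\zd}$ is simply a $\Z$-indexed family $\phi_k:\delta_n(\Aa)_k\to \Xa_k$ in $\mum$, with no naturality condition because $\zd$ has only identity morphisms. By the stated formula, $\delta_n(\Aa)_n=\Aa$ and $\delta_n(\Aa)_k=\emptyset$ for $k\neq n$, where $\emptyset$ denotes the initial object of $\mum$ (which exists by cocompleteness of $\mum$, cf.\ Proposition \ref{prop-limit-colimit} and Corollary \ref{cor-presentable}). For every $k\neq n$ there is then a unique map $\emptyset\to \Xa_k$, so the entire family $(\phi_k)$ is determined by $\phi_n\in\Hom_{\mum}(\Aa,\Xa_n)$; conversely any $\phi_n$ extends uniquely by taking the unique maps at the other components. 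This gives a bijection that is natural in $\Aa$ and in $\Xa$, hence defines $\delta_n$ as the left adjoint of $\Ev_n$ (the action of $\delta_n$ on morphisms being then forced by the adjunction).

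Second, I would establish the coproduct decomposition in part (2). Since $\zd$ is discrete, colimits in $\Hom(\zd,\mum)$ are computed componentwise. Thus the $k$-th component of $\coprod_n \delta_n(\Xa_n)$ is $\coprod_n \delta_n(\Xa_n)_k$ computed in $\mum$. By definition, $\delta_n(\Xa_n)_k=\Xa_k$ when $n=k$ and $\delta_n(\Xa_n)_k=\emptyset$ otherwise. Because the initial object is a neutral element for the coproduct, this yields $\Xa_k\sqcup\coprod_{n\neq k}\emptyset\cong \Xa_k$ canonically. Assembling these level-wise isomorphisms produces the natural isomorphism $\Xa\cong \coprod_n \delta_n(\Xa_n)$.

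There is no genuine obstacle: both statements are essentially formal consequences of $\zd$ being discrete, together with the existence of the initial object and of arbitrary coproducts in $\mum$. The only piece of care required is to note that the explicit formula for $\delta_n$ on objects extends consistently to morphisms and actually realises a left adjoint to $\Ev_n$, rather than merely naming one; this is precisely what the natural bijection above establishes.
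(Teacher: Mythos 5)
Your proof is correct and fills in exactly the routine verification the paper leaves implicit (the paper states this proposition-definition without proof). Both the adjunction and the coproduct decomposition follow, as you say, from the discreteness of $\zd$ (so morphisms in $(\mum)^{\zd}$ are unconstrained families, and colimits are computed componentwise) together with the existence of an initial object and coproducts in $\mum$; this is the standard argument and there is nothing more to add.
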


\paragraph{Proof of Proposition \ref{prop-adj-P}}
We can give the proof of Proposition \ref{prop-adj-P} as follows. 
\begin{proof}
That $P:  Sp_{\Ub}(\M) \to (\mum)^{\zd} $ creates limits and colimits is obvious and we leave it as an exercise for the reader. So Assertion $(1)$ is clear. \ \\

Since $\spu$ is complete, coproducts exist and by a classical argument we can find a left adjoint to $P$ as follows. For $\Aa=(\Aa_n) \in \prod_{n\in \Z} \mum$, let $\Upsilon: \prod_{n\in \N} \mum \to \spu $ be the functor defined by:
$$\Upsilon(\Aa)= \sum_{n} \Upsilon_n(\Aa_n)= \coprod_n \Upsilon_n(\Aa_n).$$ 
For every $\Xa \in \spu$ we have the following isomorphism of hom-sets:

\begin{align*}
\Hom_{\spu}(\Upsilon(\Aa), \Xa) & = \Hom_{\spu}(\coprod_n \Upsilon_n(\Aa_n), \Xa) \cong   \prod_n \Hom_{\mum}(\Aa_n, P_n(\Xa)) \\
& \cong \prod_n \Hom_{\mum}(\Aa_n, \Xa_n) =  \prod_n \Hom_{\mum}(\Aa_n, \Ev_n(\Xa))  \\
& \cong \prod_n \Hom_{(\mum)^{\Z}}(\delta(\Aa_n), \Xa)\\
& \cong  \Hom_{(\mum)^{\zd}}(\coprod_n\delta(\Aa_n), \Xa)\\
& \cong  \Hom_{(\mum)^{\zd}}(\Aa, \Xa)\\
\end{align*}
Finally this adjunction is monadic by Beck monadicity theorem (see \cite{Bar_Well}). Indeed, $P$ reflects isomorphisms because if $\sg=(\sg_k)$ is a  linked morphism such that each $\sg_k$ has an inverse $\sg_k^{-1}$ in $\mum$, then the sequence $(\sg_k^{-1})$ is a linked morphism which is the inverse of $\sg$. Moreover as mentioned before the functor $P$ creates colimits in particular it creates coequalizers of $P$-split pairs. And with the left adjoint $\Upsilon$ we are in the hypotheses of Beck's theorem. 
This ends the proof of the Proposition.
\end{proof}

\subsection{Homotopy of $\Z$-sequences}
The category of $\Z$-sequences is the diagram category $\zmum$. By Theorem \ref{inj-thm} 	and Theorem \ref{proj-thm}, there is an injective and projective model structure on $\mum$. We will then consider the homotopy theory of the diagram category $\Hom(\zd, \mum_{inj})$ and $\Hom(\Z, \mum_{proj})$.  Model structures on diagram categories have been studied for decades and they are well known in the literature (see for example \cite{DHKS}, \cite{Hirsch-model-loc}, \cite{Simpson_HTHC}). If $\M$ is simply cofibrantly generated we will consider the projective model structure on $\zmum$. And if $\M$ is combinatorial then we will also consider the injective model structure on the diagram category $\zmum$.
\begin{note}
As before, given a subset $\od \subseteq \Z$ we have a restriction $\zmum \to \Hom(\ods,\mum)$ and the constructions that will follow hold for the diagram category  $\Hom(\ods,\mum)$
\end{note}

\begin{df}
Let $\sg: \Xa \to \Ya$ be a morphism in $\zmum$ and consider a model structure on $\mum$.
\begin{itemize}
\item Say that $\sg$ is a level weak equivalence if for every $n$, $\sg_n: \Xa_n \to \Ya_n$ is a level-weak equivalence.
\item  Say that $\sg$ is a level (trivial) cofibration  if for every $n$, $\sg_n: \Xa_n \to \Ya_n$ is a level (trivial) cofibration.
\item  Say that $\sg$ is a level (trivial) fibration  if for every $n$, $\sg_n: \Xa_n \to \Ya_n$ is a level (trivial) cofibration.
\end{itemize} 
\end{df}

\begin{thm}
Let $\M$ be a combinatorial model category and consider the category $\mum$ with the injective model structure or the projective model structure.
\begin{enumerate}
\item There is an injective model structure on the category $\zmum$ where the cofibrations and the weak equivalences are level-wise.
\item There is a projective model structure on the category $\zmum$ where the cofibrations and the weak equivalences are level-wise.
\end{enumerate}
Both model categories are combinatorial and left proper if $\M$ is in addition left proper.
\end{thm}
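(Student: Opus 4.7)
The key observation is that $\zd$ is a discrete category, so $\zmum = \Hom(\zd, \mum)$ is simply the set-indexed product $\prod_{n \in \Z} \mum$. When the indexing category is discrete, the projective and injective diagram model structures collapse to the ordinary product model structure, in which cofibrations, fibrations and weak equivalences are all defined component-wise. The plan is therefore to build the product model structure on $\prod_{n \in \Z} \mum$ twice: once using the injective model structure $\mum_{inj}$ of Theorem \ref{inj-thm}, and once using the projective model structure $\mum_{proj}$ of Theorem \ref{proj-thm}. The labels ``injective'' and ``projective'' on $\zmum$ then refer to which structure on $\mum$ was used, not to any Reedy-type distinction along $\zd$.

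To verify the axioms, I would argue level-wise: two-out-of-three and closure under retracts pass from $\mum$ to the product without change. A morphism $(\sigma_n)$ has the LLP with respect to $(\tau_n)$ in $\prod_n \mum$ iff each $\sigma_n$ has the LLP with respect to $\tau_n$, which handles both lifting axioms. Functorial factorizations are assembled component by component from $\mum$. To get the combinatorial structure I would take as generating sets $\{\delta_n(i) : n \in \Z, \ i \in \I_{\mum}\}$ and $\{\delta_n(j) : n \in \Z, \ j \in \Ja_{\mum}\}$, where $\I_{\mum}$ and $\Ja_{\mum}$ are the generating sets produced in the cofibrantly generated part of Theorem \ref{inj-thm} or Theorem \ref{proj-thm}, and $\delta_n$ is the Dirac-mass functor left adjoint to $\Ev_n$. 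By adjunction a map in $\zmum$ has the RLP against all such $\delta_n(i)$ iff each of its components has the RLP against $i$ in $\mum$. Local presentability of $\zmum$ is immediate from Corollary \ref{cor-presentable}, since a set-indexed product of locally presentable categories is locally presentable.

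Finally, left properness transfers: limits and colimits in $\zmum$ are computed level-wise, so a pushout of a level weak equivalence along a level cofibration is, component-wise, a pushout in $\mum$ of a weak equivalence along a cofibration of $\mum_{inj}$ (respectively $\mum_{proj}$). Each of these latter model structures on $\mum$ is left proper when $\M$ is, because pushouts in $\mum$ are themselves computed at the two component levels by Proposition \ref{prop-limit-colimit}, and projective cofibrations in $\mum$ are in particular level-wise cofibrations (the map on the $\F_1$-level is the composite of the pushout of $\Fb(\sg_0)$ with the second defining cofibration). I do not anticipate a real obstacle: the statement is essentially a standard fact about diagram categories whose indexing category is discrete, and the arguments are bookkeeping once the identification $\zmum \cong \prod_{n \in \Z} \mum$ is made.
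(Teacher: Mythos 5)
Your proof is correct, and it is both more detailed and more informative than the paper's. The paper disposes of the theorem by citing Simpson's book for existence, invoking level-wise pushouts for left properness, and noting local presentability follows from Corollary \ref{cor-presentable}; you actually build the model structure by hand.

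The route is genuinely different in one respect, and the difference is worth noting. Because $\zd$ is \emph{discrete}, the latching and matching objects over any $n \in \Z$ are trivial, so the injective, projective, and Reedy diagram model structures on $\Hom(\zd,\mum)$ all coincide with the product model structure in which cofibrations, fibrations, and weak equivalences are component-wise. The paper does not make this observation; it phrases items (1) and (2) as if two distinct diagram structures are being produced (and later in the introduction even asserts that the construction yields ``4 model structures'' on $\spu$, two from $\Hom(\zd,\mum_{inj})$ and two from $\Hom(\zd,\mum_{proj})$). Your reading --- that the labels ``injective'' and ``projective'' on $\zmum$ should be understood as referring to the choice of model structure on the \emph{base} $\mum$, not to a Reedy-type distinction along $\zd$ --- resolves this tension and is the correct way to make sense of the theorem. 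In effect you have not only proved the statement but also caught a redundancy that the paper glosses over. Your generating sets $\{\delta_n(i)\}$ with $\delta_n \dashv \Ev_n$ are the standard ones and the adjunction argument for detecting the RLP is right; smallness of the domains follows because colimits in the product are computed component-wise. The left-properness argument is also correct, and you are right that it requires first knowing that cofibrations in $\mum_{proj}$ are component-wise cofibrations (which follows from the pushout-product description in Definition \ref{injective-data} as you indicate). No gaps.
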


\begin{proof}
The existence of these model structures can be found for example in \cite{Simpson_HTHC}. The left properness follows from the fact that $\mum_{inj}$ and $\mum_{proj}$ are left proper since  pushouts are computed object wise. We've seen that $\mum$ is locally presentable, therefore $\zmum$ is also locally presentable.
\end{proof}

\subsubsection{Strict model structures for prespectra}
We wish to transfer the model structure from $\zmum$ to $\spu$. The ingredient that is needed is the following lemma.
\begin{lem}\label{lem-pushout}
Let $\sg: \Aa \xrw{[\sg_0,\sg_1]} \Ba$ be a map in $\mum$. Then the following hold. 
\begin{enumerate}
\item For every $n$, $\Upsilon_n(\sg) : \Upsilon_n(\Aa) \to  \Upsilon_n(\Ba)$ is a level (trivial) cofibration if $\sg$ is a level (trivial) cofibration
\item For every $n$ and for any pushout square as follows 
 \[
 \xy
(0,18)*+{\Upsilon_n(\Aa)}="W";
(0,0)*+{\Upsilon_n(\Ba)}="X";
(30,0)*+{\Upsilon_n(\Ba) \cup^{\Upsilon_n(\Aa)} \Xa}="Y";
(30,18)*+{\Xa}="E";
{\ar@{->}^-{}"X";"Y"};
{\ar@{->}^-{\Upsilon_n(\sg)}"W";"X"};
{\ar@{->}^-{}"W";"E"};
{\ar@{->}^-{u}"E";"Y"};
\endxy
\]
the map $u:\Xa \to \Upsilon_n(\Ba) \cup^{\Upsilon_n(\Aa)} \Xa$ is a level trivial cofibration if $\sg$ is.  In particular  it's a level weak equivalence. 
\end{enumerate}
\end{lem}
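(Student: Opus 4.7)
My plan is to handle both parts by a level-by-level analysis in the target $(\mum)^{\zd}$, using the explicit formulas for $\Upsilon_n$ given in Lemma \ref{adj-pn}. For Part $(1)$ I will check each level of $\Upsilon_n(\sg)$ separately: for $k<n-1$ the map $\tld{\sg}_k$ is $[\Id_{\emptyset},\Id_{\emptyset}]$, trivially a (trivial) level cofibration; for $k=n-1$ it is $L_1(\sg_0)$, where $L_1$ is the left adjoint to $\Piun$ introduced in Theorem \ref{adjunction-thm}; for $k=n$ it is $\sg$ itself; and for $k\geq n+1$ the inductive description $\tld{\sg}_k=[\tld{\sg}_{k-1}^1,\Fb(\tld{\sg}_{k-1}^1)]$ will feed an induction.

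The verifications will reduce to a single general fact: each of $L_1$ and $\Fb$ preserves (trivial) cofibrations. For $L_1$ this is the Quillen adjunction $L_1\dashv \Piun$ from the corollary following Theorem \ref{inj-thm}; for $\Fb$ it is the hypothesis that $\Ub$ is right Quillen. By induction on $k\geq n+1$ both components of $\tld{\sg}_k$ are (trivial) cofibrations in $\M$, so $\tld{\sg}_k$ is a level (trivial) cofibration in $\mum$, establishing Part $(1)$. For Part $(2)$ I will invoke Proposition \ref{prop-adj-P}: the forgetful functor $P\colon \spu\to (\mum)^{\zd}$ creates colimits, which are themselves computed pointwise in the diagram category. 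Hence at each level $k$ the map $u_k$ is the pushout in $\mum$ of $\Upsilon_n(\sg)_k$ along the restriction $\Upsilon_n(\Aa)_k\to \Xa_k$ of the attaching map, and since $\Upsilon_n(\sg)_k$ is a trivial cofibration by Part $(1)$ and trivial cofibrations are stable under cobase change in any model category, $u_k$ is a trivial cofibration for every $k$, which is exactly what is asked.

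The only point requiring care will be the bookkeeping in the inductive tail $k\geq n+1$: one must remember that the component being iterated is the second component $\tld{\sg}_{k-1}^1$, and at $k=n-1$ that the object fed into $L_1$ is $\Aa_0$, so that $\tld{\sg}_{n-1}=[\Id_{\emptyset},\sg_0]$ has both components (trivial) cofibrations. Beyond this indexing, no step is harder than recognizing that left Quillen functors preserve (trivial) cofibrations and that $P$ creates colimits levelwise.
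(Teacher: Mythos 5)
Your argument matches the paper's proof: Part $(1)$ is verified level by level using the formulas from Lemma \ref{adj-pn} (the paper reads off both components directly from the closed formula $\tld{\sg}_k=[\Fb^{k-(n+1)}(\sg_1),\Fb^{k-n}(\sg_1)]$ for $k\geq n+1$, which is equivalent to your induction; and it observes directly that $\Id_{\emptyset}$ and $\sg_0$ are (trivial) cofibrations where you invoke $L_1$ being left Quillen), and Part $(2)$ follows, just as you say, from $P$ creating colimits levelwise together with stability of trivial cofibrations under cobase change. This is essentially the same approach.
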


\begin{proof}
By definition given  $\sg: \Aa \xrw{[\sg_0,\sg_1]} \Ba$, 
$\Upsilon_n(\sg)=(\tld{\sg}_k)_{k \in \N}$ is given by the formulas:
\begin{itemize}
\item $\tld{\sg}_k= [\Id_{\emptyset}, \Id_{\emptyset}]$ if $k<n-1$
\item $\tld{\sg}_k= [\Id_{\emptyset},\sg_0]$  if $k=n-1$
\item $\tld{\sg}_k=\sg= [\sg_0,\sg_1]$ if $k=n$
\item $\tld{\sg}_k= \Fb^{+}(\Fb^{k-(n+1)}(\sg_1))= [\Fb^{k-(n+1)}(\sg_1), \Fb( \Fb^{k-(n+1)}(\sg_1)) ]$ if $k \geq n+1$.
\end{itemize}
Since $\Fb$ is a left Quillen functor, it preserves the cofibrations and the trivial cofibrations. Therefore for every $k\geq n+1$, $\Fb^{k-(n+1)}(\sg_1)$ and $\Fb( \Fb^{k-(n+1)}(\sg_1))$ are (trivial) cofibrations if $\sg_1$ is a (trivial) cofibration. This means that $\tld{\sg}_k$ is a level (trivial) cofibration if $\sg_1$ is for $k \geq n+1$. 

For $k \leq n$, the components of $\tld{\sg}_k$ are one of the maps $\Id_{\emptyset}$, $\sg_0$ or $\sg_1$. So clearly $\tld{\sg}_k$ is a level (trivial) cofibration if $\sg_0$ and $\sg_1$ are (trivial) cofibrations. This gives Assertion $(1)$. \ \\

Assertion $(2)$ is clear because colimits and in particular pushouts in the category  $\spu$ are computed in $\zmum$. Finally since $\mum_{inj}$ is a model category, the cobase change of any trivial cofibration is again a trivial cofibration.
\end{proof}
\paragraph{Projective model structure} 
In what follows, we endow $\mum$ with either the injective or the projective model structure. 

\begin{thm}\label{proj-spec}
Let $\M$ be a cofibrantly generated model category and consider the projective model structure on $\zmum$. Then there is a right induced model structure on the category $\spu$ which is cofibrantly generated which may be described as follows. 
\begin{enumerate}
\item A map $\sg$ is a weak equivalence if $P(\sg)$ is a weak equivalence in $\zmum_{proj}$
\item A map $\sg$ is a fibration if $P(\sg)$ is a fibration in $\zmum_{proj}$
\item A map $\sg$ is a cofibration if  it posses the LLP with respect to any map that is simultaneously a fibration and a weak equivalence.
\end{enumerate}
The adjunction $\Upsilon: \zmum \rightleftarrows \spu : P$ is a Quillen adjunction and for every $n$, $\T_n= \Piar \circ \Ev_n \circ P_n: \spu \to \arm$ is right Quillen.
\end{thm}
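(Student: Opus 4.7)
The plan is to apply the standard Kan--Crans transfer theorem for cofibrantly generated model structures along the right adjoint $P : \spu \to \zmum_{proj}$, using the adjunction $\Upsilon \dashv P$ from Proposition \ref{prop-adj-P}. I will take as generating sets $\Upsilon(\I_{\zmum_{proj}})$ and $\Upsilon(\Ja_{\zmum_{proj}})$, where the latter are the standard generating (trivial) cofibrations of the projective model structure on $\zmum$, namely maps of the form $\delta_n(i)$ with $i$ a generator in $\mum_{proj}$ and $\delta_n$ the Dirac mass left adjoint to $\Ev_n$. Three hypotheses must be checked: $\spu$ is bicomplete (Proposition \ref{prop-adj-P}); the domains of the generating sets satisfy the requisite smallness; and every relative $\Upsilon(\Ja_{\zmum_{proj}})$-cell complex is sent to a weak equivalence in $\zmum_{proj}$ by $P$.

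For smallness, since $P$ creates colimits (Proposition \ref{prop-adj-P}), filtered colimits in $\spu$ are computed levelwise in $\mum$; an inspection of the explicit formula for $\Upsilon_n$ in Lemma \ref{adj-pn} shows its output at level $k$ is one of $\emptyset$, $\Aa$, $L_1(\Aa_1)$, or $\Fb^{+}(\Fb^{k-(n+1)}(\Aa_1))$, so if the input generator in $\mum_{proj}$ has small domain (which it does, $\mum$ being cofibrantly generated by Theorem \ref{proj-thm}), then so does the image under $\Upsilon_n$ relative to the class of $\Upsilon(\I)$-cell complexes. The small object argument thus applies.

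The core step is the acyclicity condition. Chasing the adjunctions $\Upsilon \dashv P$, $\delta_n \dashv \Ev_n$, $\Upsilon_n \dashv P_n$ together with the equality $P_n = \Ev_n \circ P$, one obtains a natural isomorphism $\Upsilon \circ \delta_n \cong \Upsilon_n$. Consequently every element of $\Upsilon(\Ja_{\zmum_{proj}})$ is of the form $\Upsilon_n(j)$ for some generating trivial cofibration $j$ of $\mum_{proj}$. Lemma \ref{lem-pushout}(2) applied to such $j$ shows that any pushout of $\Upsilon_n(j)$ in $\spu$ is a level trivial cofibration, hence a trivial cofibration in $\zmum_{proj}$. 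Since trivial cofibrations in $\zmum_{proj}$ are closed under transfinite composition and retracts, every relative $\Upsilon(\Ja_{\zmum_{proj}})$-cell complex becomes a trivial cofibration under $P$, and a fortiori a weak equivalence.

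With the transferred structure in place, the Quillen adjunction $\Upsilon \dashv P$ is automatic. For the last assertion, $\T_n = \Piar \circ \Ev_n \circ P$ is a composition of right Quillen functors: $P$ by construction, $\Ev_n : \zmum_{proj} \to \mum_{proj}$ by the definition of the projective structure on diagrams indexed by the discrete category $\zd$, and $\Piar : \mum_{proj} \to \armpj$ by the corollary following Theorem \ref{proj-thm}. The principal technical obstacle is the clean identification of the generators of $\Upsilon(\Ja_{\zmum_{proj}})$ with the maps $\Upsilon_n(j)$, since only after this identification does Lemma \ref{lem-pushout}(2) directly deliver acyclicity; the remaining verifications are bookkeeping.
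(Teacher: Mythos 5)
Your proposal is correct and follows essentially the same route as the paper, which simply cites the classical transfer argument of Quillen/Goerss--Jardine/Beke along the adjunction $\Upsilon \dashv P$ and names Lemma \ref{lem-pushout} as the key acyclicity ingredient; you have merely filled in the standard details, in particular the identification $\Upsilon \circ \delta_n \cong \Upsilon_n$ that reduces acyclicity to Lemma \ref{lem-pushout}(2). One small remark: the smallness verification is most cleanly done by noting that $P$ creates (hence preserves) filtered colimits, so $\Upsilon$ preserves smallness by adjunction, rather than by direct inspection of the formulas in Lemma \ref{adj-pn}; but this does not affect the correctness of the argument.
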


\begin{proof}
This is a classical argument of Quillen \cite{Quillen_HA}. See also Goerss-Jardine \cite{Jardine-Goerss} or Beke \cite{Beke_1} \cite{Beke_2}. The main ingredient is precisely Lemma \ref{lem-pushout} above.
\end{proof}

\begin{cor}
If in addition $\M$ is left proper (resp. combinatorial) then  $\spu$ is left proper (resp. combinatorial).
\end{cor}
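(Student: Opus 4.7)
The plan is to transfer both properties along the monadic adjunction $\Upsilon : \zmum \rightleftarrows \spu : P$ of Proposition~\ref{prop-adj-P}. For combinatoriality when $\M$ is combinatorial, Theorem~\ref{proj-spec} already provides cofibrant generation of $\spu$, so only local presentability needs to be checked. Corollary~\ref{cor-presentable} gives that $\mum$ is locally presentable, whence the diagram category $\zmum = \Hom(\zd,\mum)$ is locally presentable as well. Since $P$ creates all colimits it in particular preserves filtered ones, so the monad $T = P\Upsilon$ on $\zmum$ is accessible; by the Ad\'amek--Rosick\'y theorem on categories of algebras over accessible monads on a locally presentable base, $\spu$ is itself locally presentable, which combined with cofibrant generation yields combinatoriality.

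For left properness, I would first establish that $\mum_{inj}$ is left proper whenever $\M$ is. Indeed, by Definition~\ref{injective-data} an injective cofibration in $\mum$ is a pair $[\sigma_0,\sigma_1]$ of cofibrations in $\M$ (since $\ag=\M$ here), while both weak equivalences and pushouts in $\mum$ are coordinate-wise by Proposition~\ref{prop-limit-colimit}; so left properness of $\M$ applied separately in each coordinate gives what is needed. The same pointwise argument shows that $\Hom(\zd, \mum_{inj})$ is left proper.

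The central step is then to verify that $P$ carries every cofibration of $\spu$ to a level-wise injective cofibration of $\zmum$. By Theorem~\ref{proj-spec}, the cofibrations of $\spu$ are generated by the maps $\Upsilon_n(i)$ for $n\in\Z$ and $i$ a generating cofibration of $\mum_{proj}$, and each such generator $i$ is either of the form $L_1(\theta)=[\Id_\emptyset, \theta]$ or $\Fb^+(\theta)=[\theta, \Fb(\theta)]$ with $\theta$ a generating cofibration of $\M$. In both cases the two components are cofibrations in $\M$ because $\Fb$ is left Quillen, i.e., $i$ is an injective cofibration in $\mum$. Lemma~\ref{lem-pushout}(1) then gives that $P(\Upsilon_n(i))$ is a level-wise injective cofibration in $\zmum$. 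Since $P$ creates colimits it preserves the transfinite compositions, pushouts, and retracts that build arbitrary cofibrations from the generators, and level-wise injective cofibrations are closed under these operations, so $P$ sends every cofibration of $\spu$ to a cofibration of $\Hom(\zd,\mum_{inj})$.

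The conclusion is then immediate: given a pushout in $\spu$ of a weak equivalence $g$ along a cofibration $f$, applying $P$ produces a pushout in $\Hom(\zd,\mum_{inj})$ of a level weak equivalence $P(g)$ along an injective cofibration $P(f)$; left properness of $\Hom(\zd,\mum_{inj})$ forces the resulting pushout of $P(g)$ to be a level weak equivalence, which by definition of weak equivalences in $\spu$ means the pushout of $g$ is a weak equivalence in $\spu$. The main obstacle I anticipate is the careful book-keeping of ``level'' across the two nested diagram categories $\mum$ and $\zmum$, and the verification that the generators of $\mum_{proj}$ are already injective cofibrations in $\mum$, for which the explicit descriptions of $L_1$ and $\Fb^+$ together with the fact that $\Fb$ is left Quillen are essential.
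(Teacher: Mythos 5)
Your proof is correct and takes essentially the same route as the paper: for local presentability you invoke monadicity of $\Upsilon\dashv P$ and accessibility of the monad, and for left properness you use that pushouts and weak equivalences in $\spu$ are computed level-wise and that $\M$ (hence $\mum$) is left proper. The only cosmetic difference is that you pass through $\mum_{inj}$ and verify explicitly that the generators $\Upsilon_n(L_1\theta)$, $\Upsilon_n(\Fb^+\theta)$ have level-wise cofibration components, whereas the paper leaves the level-wise cofibration claim implicit (relying on Theorem~\ref{strict-proj-natural} for the description of cofibrations in $\mum_{proj}$); both routes work since projective cofibrations are in particular injective.
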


\begin{proof}
Left properness follows from the fact that pushouts in $\spu$ are computed object wise in $\mum$, and clearly $\mum_{inj}$ and $\mum_{proj}$ is left proper if $\M$ is. To prove that $\spu$ is combinatorial boils down to show that it's locally presentable. This can be proved directly but we can take a shortcut using the fact that the adjunction $\Upsilon \dashv P$ is monadic and the monad is clearly finitary i.e, it preserves filtered colimits. 
\end{proof}

\begin{rmk}
If $\M$ is combinatorial, we have a similar theorem for the injective model structure on $\zmum$.
\end{rmk}
\paragraph{Bousfield-Friedlander strict model structure}
We can be more specific when we consider the ``projective-projective'' model category $\Hom(\nd,\mum_{proj})_{proj}$. We do not require $\M$ to be cofibrantly generated. And the argument holds for a bounded below ordinal $\od \subset \Z$. 
% More precisely we have the following well known theorem. 

\begin{thm}\label{strict-proj-natural}
Let $\Ub: \M \to \M$ be a right Quillen functor and let $\mum_{proj}$ be  the projective model structure  $\mum$ as in Theorem \ref{proj-thm}.
Then there is a model structure on $\spu$ called the \textbf{projective strict model structure} which may be described as follows. 

\begin{itemize}
\item A map $\sg : \Xa \to \Ya$ is a weak equivalence if for every $n$, $\sg_n: \Xa_n \to \Ya_n$ is a weak equivalence in $\mum_{proj}$, that is a level weak equivalence. 
\item  A map $\sg : \Xa \to \Ya$ is a fibration if for every $n$, $\sg_n: \Xa_n \to \Ya_n$ is a fibration in $\mum_{proj}$, that is a level fibration. 
\item  A map $\sg : \Xa \to \Ya$ is a cofibration if for every $n$, $\sg_n: \Xa_n \xrw{[\sg_n^0,\sg_n^1]} \Ya_n$ $n$ is a cofibration in $\mum_{proj}$, that is for every $n$ , $\sg_n^0$ is a cofibration and the canonical map $\delta:\Xa_{n+1}^1 \cup^{\Fb(\Xa_n^0)} \Fb(\Ya_n^0) \to \Ya_{n+1}^1 $ is a cofibration in $\M$.
\item If $\M$ is cofibrantly generated (resp. left proper, resp. combinatorial) then $\spu$ is cofibrantly generated (resp. left proper, resp. combinatorial). We will denote it by $\spu_{proj}$.
\end{itemize}
\end{thm}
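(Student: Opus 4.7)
The plan is to verify the model-category axioms on $\spu$ directly, without appealing to cofibrant generation at first. After the equivalence $\spu \simeq Sp_{\Ub}^+(\M)$ and the identification of a strictly linked sequence with a sequence of levels $(\Xa_n)_n \in \prod_n \mum$ satisfying $\Xa_n^1 = \Xa_{n+1}^0$, each morphism in $\spu$ is a compatible sequence of morphisms in $\mum_{proj}$. The key point will be to build factorizations and solutions to lifting problems level-wise, respecting the strict linking.

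First, the class of level weak equivalences satisfies 2-out-of-3, and all three classes are closed under composition and retracts, by the corresponding properties in $\mum_{proj}$ from Theorem \ref{proj-thm}. Proposition \ref{prop-adj-P} supplies limits and colimits, computed level-wise. For the factorization axiom, given $\sg : \Xa \to \Ya$ I would proceed inductively on $n$: at $n=0$, factor $\sg_0$ in $\mum_{proj}$ by Proposition \ref{proj-fact-reedy} to obtain $\Ea_0 = [\Ea_0^0, \Ea_0^1, \pi_{\Ea_0}]$ with the desired cofibration--trivial-fibration (or dual) factorization. At the inductive step the strict linking forces $\Ea_{n+1}^0 := \Ea_n^1$, so only the $1$-slot must still be constructed; I would form the canonical map $\Ea_n^1 \cup^{\Fb(\Xa_{n+1}^0)} \Fb(\Ya_{n+1}^0) \to \Ya_{n+1}^1$ and factor it in $\M$, taking the intermediate object as $\Ea_{n+1}^1$ and defining $\pi_{\Ea_{n+1}}$ by the universal property of the pushout. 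This produces the projective factorization of $\sg_{n+1}$ in $\mum$ compatible with the link. Lifting properties are handled by the same inductive scheme using Proposition \ref{proj-lifting}, with the already-constructed levels serving as boundary data.

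For the addendum, when $\M$ is cofibrantly generated the generating (trivial) cofibrations of $\spu$ are transferred from those of $\mum_{proj}$ along the left adjoints $\Upsilon_n$ of Lemma \ref{adj-pn}, with Lemma \ref{lem-pushout} providing the essential small-object-argument input. Local presentability of $\spu$, hence combinatoriality when $\M$ is combinatorial, follows because the monad $P \circ \Upsilon$ on $\zmum$ is finitary. Left properness is immediate from the level-wise computation of pushouts together with left properness of $\mum_{proj}$. The main obstacle is precisely the preservation of strict linking throughout the inductive factorization: one cannot simply factor each $\sg_n$ independently, because the $1$-slot at level $n$ must remain equal to the $0$-slot at level $n+1$. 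The workaround sketched above --- fixing $\Ea_{n+1}^0$ as $\Ea_n^1$ at each step and factoring only the ``new'' map whose codomain is $\Ya_{n+1}^1$ --- is a Reedy-style factorization on the direct category $\N$ and goes through for any model category $\M$, explaining why no cofibrant generation hypothesis is required for the core statement.
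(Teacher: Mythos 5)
Your plan matches the paper's proof: a Reedy-style induction over the bounded-below direct category $\N$, carrying the factorization of the $1$-slot at level $n$ forward as the factorization of the $0$-slot at level $n+1$ so that the strict linking is preserved, with Proposition \ref{proj-fact-reedy} supplying the factorization at each step; the addendum is then handled by identifying the resulting structure with the one transferred from $\Hom(\nd,\mum_{proj})_{proj}$ via Theorem \ref{proj-spec}. The scheme is sound, and your closing observation about why no cofibrant generation is needed for the core statement is correct.

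However, the pushout in your inductive step is wrong. You propose forming and factoring the map $\Ea_n^1 \cup^{\Fb(\Xa_{n+1}^0)} \Fb(\Ya_{n+1}^0) \to \Ya_{n+1}^1$. This pushout is not even well-formed: there is no map $\Fb(\Xa_{n+1}^0) \to \Ea_n^1$ available at this stage, since $\Ea_n^1$ is at that moment just an object of $\M$, not yet equipped with a structure map adjoint to anything out of $\Fb$. You have conflated the pushout $\Xa_{n+1}^1 \cup^{\Fb(\Xa_{n+1}^0)} \Fb(\Ya_{n+1}^0)$ from the \emph{definition} of a projective cofibration (Definition \ref{injective-data}) --- which tests whether a given map is a cofibration, and so uses the target's $0$-slot --- with the pushout that the factorization algorithm actually builds, which uses the \emph{intermediate} $0$-slot. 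Following the proof of Proposition \ref{proj-fact-reedy}, after setting $\Ea_{n+1}^0 := \Ea_n^1$ one forms $\Xa_{n+1}^1 \cup^{\Fb(\Xa_{n+1}^0)} \Fb(\Ea_{n+1}^0)$ along the cospan given by the adjoint transpose of the structure map $\Xa_{n+1}^0 \to \Ub(\Xa_{n+1}^1)$ and by $\Fb$ applied to the inherited map $\Xa_{n+1}^0 \to \Ea_n^1$, and then factors the induced map to $\Ya_{n+1}^1$. With this repair the induction does close, because the inherited factorization of $\sg_n^1 = \sg_{n+1}^0$ is, as the proof of Proposition \ref{proj-fact-reedy} shows, already a (trivial) cofibration followed by a (trivial) fibration in $\M$, which is exactly the input the algorithm needs at level $n+1$.
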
 

\begin{proof}
One get the factorization axioms and the lifting properties inductively because $\N$ is bounded below and is a direct Reedy category.  The main reason is that given a map $\sg_n: \Xa_n \xrw{[\sg_n^0,\sg_n^1]} \Ya_n$ in $\mum$, when we factor it in the projective  model structure we always start with a factorization of $\sg_n^0$. We also do the same thing when we construct a solution for lifting properties. Proceeding this way we can factor any linked morphism $\sg$ being careful to take the factorization of $\sg_n^1$ and apply them to $\sg_{n+1}^0=\sg_n^1$ to build in $\mum$ a factorization of $\sg_{n+1}=[\sg_{n+1}^0,\sg_{n+1}^1]$. One gets inductively a the required factorization (resp lifting) at each step by linked morphisms.The same  argument is used by Rosick\'y \cite[Section 3]{Rosicky_spec}.\ \\

If $\M$ is cofibrantly generated (resp. combinatorial, resp left proper) then this model structure coincides with the right induced model structure transferred from $\Hom(\nd,\mum_{proj})_{proj}$ with Theorem \ref{proj-spec}. 
\end{proof}

\begin{cor}
If $\M=\sset_{\ast}$ with the standard Kan-Quillen model structure, then the ``projective-projective'' model structure on $\spu$ of Theorem \ref{proj-spec}  coincides with the strict model structure of Bousfield-Friedlander that is also obtained by Hovey \cite{Hov_stable}, Schwede \cite{Schwede_stable}.
\end{cor}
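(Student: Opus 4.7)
The plan is to verify directly that the three characterizing classes of maps coincide, invoking Theorem~\ref{strict-proj-natural} for the explicit description of the projective strict structure on $\spu$. In our setting $\M = \sset_{\ast}$, $\ag = \M$, $\Ub = \Omega$ and $\Fb = \Sigma = S^1 \wedge -$, and a linked morphism $\sigma = (\sigma_n)$ of strictly linked sequences has components $\sigma_n = [\sigma_n^0, \sigma_n^1]$ subject to $\sigma_n^1 = \sigma_{n+1}^0$. The proof is a matter of translating the projective fibration/cofibration criteria of Theorem~\ref{proj-thm} through this linkage.

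First, I would unfold the weak equivalence condition. By Theorem~\ref{strict-proj-natural}, $\sigma$ is a weak equivalence iff every $\sigma_n$ is a level-wise weak equivalence in $\mum_{proj}$, i.e.\ both $\sigma_n^0$ and $\sigma_n^1$ are Kan--Quillen weak equivalences; under the strict linkage $\sigma_n^1 = \sigma_{n+1}^0$ this collapses to the single condition that each map $\sigma_n^0 \colon X_n \to Y_n$ is a weak equivalence of pointed simplicial sets, which is the Bousfield--Friedlander level weak equivalence. The identical reduction applied to fibrations (where each $\sigma_n^0$ and $\sigma_n^1$ must be a Kan fibration) yields exactly the BF strict fibrations.

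Second, I would unfold the cofibration condition from Theorem~\ref{strict-proj-natural}: at each level $n$, $\sigma_n^0$ is a cofibration in $\sset_{\ast}$, and the canonical map
\[
\Xa_n^1 \cup^{\Sigma \Xa_n^0} \Sigma \Ya_n^0 \to \Ya_n^1
\]
is a cofibration. The strict linkage identity $\Xa_n^1 = \Xa_{n+1}^0$ (and similarly for $\Ya$) rewrites this map as $X_{n+1} \cup^{\Sigma X_n} \Sigma Y_n \to Y_{n+1}$, which is precisely the classical Bousfield--Friedlander strict cofibration condition. Since the three classes agree on the nose (and any two already determine the model structure), the projective strict model structure on $\spu$ coincides with Bousfield--Friedlander's; the identification with the strict structures of Hovey~\cite{Hov_stable} and Schwede~\cite{Schwede_stable} is then immediate from their explicit constructions, which yield the same weak equivalences and fibrations.

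The only technical care needed is the bookkeeping for the strict linkage: one must consistently substitute $\Xa_n^1 = \Xa_{n+1}^0$ in the projective-model characterizations of $\mum$ to recognize the shifted Leibniz-type attaching map that appears in the BF definition. Beyond this indexing translation the argument is routine, and there is no substantive obstacle, since Theorem~\ref{strict-proj-natural} has already done the heavy lifting of establishing that the projective strict structure exists and has the stated description.
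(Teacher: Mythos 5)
Your proposal is correct and is essentially the argument the paper has in mind; the paper dismisses the proof as ``Clear,'' trusting the reader to perform exactly the unfolding you carry out. Translating the level-wise weak equivalences and level-wise fibrations of $\mum_{proj}$ through the strict linkage $\Xa_n^1 = \Xa_{n+1}^0$ (so $\sigma_n^1 = \sigma_{n+1}^0$), and translating the projective cofibration criterion of Definition~\ref{injective-data} into the pushout-product map $X_{n+1} \cup^{\Sigma X_n} \Sigma Y_n \to Y_{n+1}$, is precisely how the identification with the Bousfield--Friedlander strict structure is seen. One small remark: the cofibration description you extract demands that every $\sigma_n^0$ be a cofibration, whereas the classical BF formulation only asks this at $n=0$ together with the pushout-product conditions; these are equivalent, since the pushout-product cofibration at level $n$ (together with $\sigma_n^0$ being a cofibration and $\Sigma$ being left Quillen) forces $\sigma_n^1 = \sigma_{n+1}^0$ to be a cofibration, so the base case propagates inductively. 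Noting this makes the match with the literature exact rather than merely plausible.
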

\begin{proof}
Clear. 
\end{proof}

\subsubsection{Stable homotopy category}
We have a projective Quillen-Segal theory $\T$ on $\spu_{proj}$ with $\T_n: \spu_{proj} \to \arm_{proj}$, defined by $\T_n(\Xa)=[\varepsilon: \Xa_n^0 \to \Ub(\Xa_{n+1}^1]$. Each $\T_n$ is right Quillen and the left adjoint is $\Upsilon_n \circ \Gamma$. We assume that  $\M$ is a tractable and left proper model category such as $\sset_{\ast}$. In virtue of Theorem \ref{local-proj-theory} we can localize the theory and get the following. 
\begin{thm}\label{stable-ho}
Let $\Ub: \M \to \M$ be a right Quillen functor for a tractable model category which is left proper. Then the following hold. 
\begin{enumerate}
\item There is a left proper model structure on the category $\spu$ such that every fibrant object $\Xa$ is a $\Ub$-spectrum.
\item If $\M=\sset_{\ast}$ the the homotopy category is equivalent to the stable homotopy category of Bousfield and Friedlander. 
\end{enumerate}
\end{thm}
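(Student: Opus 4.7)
The plan is to deduce the theorem from Theorem \ref{local-proj-theory} applied to the projective strict model structure $\spu_{proj}$ of Theorem \ref{strict-proj-natural}, equipped with the projective Quillen-Segal theory $\T=\{\T_n\}_{n\in\N}$ defined by $\T_n(\Xa)=[\varepsilon_n:\Xa_n^0\to \Ub(\Xa_{n+1}^1)]$, i.e., $\T_n=\Piar\circ P_n$.

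First I would check the hypotheses of Theorem \ref{local-proj-theory}. Since $\M$ is tractable (in particular combinatorial) and left proper, the projective model structure $\mum_{proj}$ given by Theorem \ref{proj-thm} is combinatorial and left proper (pushouts and the relevant factorizations are built levelwise). The corollary to Theorem \ref{proj-spec}, together with Theorem \ref{strict-proj-natural}, then tells us that $\spu_{proj}$ itself is combinatorial and left proper. Each functor $\T_n$ factors as $\Piar\circ P_n$, both of which are right Quillen (the right adjoint $P_n$ by Theorem \ref{proj-spec}, and $\Piar:\mum_{proj}\to \arm_{proj}$ is right Quillen because $\Ub$ is), so $\T$ is a projective Quillen-Segal theory on $\spu_{proj}$ with left adjoints $\Upsilon_n\circ\Gamma$.

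Theorem \ref{local-proj-theory} then produces a left Bousfield localization $\spu_{proj,B}=\C_B(\T)$, which is automatically combinatorial and left proper (Bousfield localizations of left proper combinatorial model structures are left proper and combinatorial). Its fibrant objects are in particular strict-fibrant, hence level fibrant, and they satisfy the generalized Segal conditions; that is, each structure map $\varepsilon_n:\Xa_n^0\to \Ub(\Xa_{n+1}^1)$ is a weak equivalence in $\M$. This is exactly the condition defining a $\Ub$-spectrum, proving part $(1)$.

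For part $(2)$, specialize to $\M=\sset_\ast$ with the Kan-Quillen structure and $\Ub=\Omega$. Both our localization $\spu_{proj,B}$ and the Bousfield-Friedlander stable model structure (as treated by Hovey \cite{Hov_stable} and Schwede \cite{Schwede_stable}) are left Bousfield localizations of the same strict projective model structure on $\spu$, and a Bousfield localization of a left proper combinatorial model category is determined by its class of fibrant objects. In both cases the fibrant objects are precisely the level Kan $\Omega$-spectra: in our model structure by part $(1)$, and in Bousfield-Friedlander by construction. I would conclude by invoking the uniqueness of Bousfield localizations with a prescribed class of fibrant objects (see Hirschhorn \cite{Hirsch-model-loc}), which gives a Quillen equivalence between the two localizations and hence an equivalence of homotopy categories with the Bousfield-Friedlander stable homotopy category.

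The main obstacle is the comparison step in part $(2)$: strictly speaking, one must match our localizing set $\kbi_{proj}(\T)=\coprod_n (\Upsilon_n\circ\Gamma)(\zeta_{\Iam})$, coming from the Segal machinery, with the localizing set used in Hovey-Schwede (typically built from the adjoint maps $F_{n+1}\Sigma X\to F_n X$). Rather than comparing the sets directly, the cleanest route is the one sketched above: show both localizations have the same fibrant objects (the level fibrant $\Omega$-spectra), and use the fact that a left Bousfield localization is characterized up to Quillen equivalence by its fibrant objects. Verifying that the class of stable equivalences between level Kan $\Omega$-spectra coincides with the class of level equivalences between them, which is standard, closes the argument.
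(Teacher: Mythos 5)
Your part (1) argument is sound and matches the paper's proof: apply Theorem~\ref{local-proj-theory} to the projective strict model structure $\spu_{proj}$ with the theory $\T_n = \Piar\circ P_n$, checking tractability, combinatoriality, and left properness exactly as you do.

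For part (2), your strategy (both structures are left Bousfield localizations of the same strict model structure; compare via fibrant objects and invoke uniqueness of localizations) is the same as the paper's. But there is a gap in the execution. You assert that in our new model structure the fibrant objects are \emph{precisely} the level-Kan $\Omega$-spectra ``by part (1)'', whereas part (1) only gives one inclusion: every new-fibrant object is a level-fibrant $\Omega$-spectrum. The converse inclusion — that every level-Kan $\Omega$-spectrum (i.e.\ every Bousfield–Friedlander/Hovey fibrant object) is fibrant in the new structure — is exactly what has to be proved, and it does not follow from the fact you invoke at the end (that stable equivalences between level-Kan $\Omega$-spectra are level equivalences); that is a statement about maps between spectra, not about the localizing set. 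What is actually needed, and what the paper supplies via Lemma~\ref{lem-stab-alpha-zeta}, is that each localizing map $\Upsilon_n\Gamma(\zeta_i)$ is a \emph{stable equivalence} (indeed a trivial stable cofibration); then any Hovey-fibrant object is tautologically local with respect to them, hence fibrant in the new structure. Your own remark about ``the main obstacle'' correctly identifies the issue, but the resolution you propose does not close it; you need the explicit computation of $\Upsilon_n\Gamma(\zeta_i)$ (its components stabilize to isomorphisms / level trivial cofibrations for large index) to see that these maps are stable equivalences.
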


To prove the last assertion of the theorem we need to outline a result.  Let $\I$ be the set of generating cofibration of $\M$. For every $i \in \M$ we've introduced in Notation \ref{nota-alpha} and Notation \ref{nota-zeta} two maps $\alpha_i$ and $\zeta_i$ in $\arm$. The map $\zeta_i$ is a projective cofibration whereas $\alpha_i$ is an injective cofibration.
\begin{lem}\label{lem-stab-alpha-zeta}
For every $n$, consider the functor $\Upsilon_n \circ \Gamma \in \Hom(\armpj,\spu)$. Then for every $i \in \I$ the following hold. 
\begin{enumerate}
\item The map $\Upsilon \Gamma(\alpha_i) =(\sg_k)$ is such that for every $k\geq n+1$, $\sg_k$ is an isomorphism in $\mum$
\item The map $\Upsilon \Gamma(\zeta_i) =(\sg_k)$ is such that for every $k\geq n+1$, $\sg_k$ is a level trivial cofibration $\mum$. 
\item Taking $\M= \sset_{\ast}$ we find that for every $n$ and for every $i \in \I$ the maps $\Upsilon \Gamma(\alpha_i)$, $\Upsilon \Gamma(\zeta_i)$ are stable equivalence. In particular since $\Upsilon \Gamma(\zeta_i)$ is already a cofibration, then it's a trivial stable cofibration. 
\end{enumerate}
\end{lem}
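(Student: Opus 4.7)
The plan is to unwind both functors explicitly, level by level, and then compare the resulting prespectra stably.

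First, for assertion (1), I would compute $\Gamma(\alpha_i)$ using Proposition \ref{prop-gamma}: since $\alpha_i:i\to\Id_B$ has components $(i,\Id_B)$ in $\arm$, we get
\[
\Gamma(\alpha_i)\;=\;[\,i,\;\Fb(\Id_B)\,]\;=\;[\,i,\;\Id_{\Fb(B)}\,]
\]
as a map $[A,\Fb(B),\eta_B\circ i]\to[B,\Fb(B),\eta_B]$ in $\mum$. Applying Lemma \ref{adj-pn}, the level $k$ component of $\Upsilon_n\Gamma(\alpha_i)$ for $k=n$ is this $[i,\Id_{\Fb(B)}]$, and for $k\ge n+1$ the inductive formula $\tld{\sg}_k=\Fb^+(\tld{\sg}_{k-1}^1)$ gives $\tld{\sg}_k=[\Id_{\Fb(B)},\Id_{\Fb^2(B)}]$, and by induction every component is an identity morphism, hence an isomorphism in $\mum$.

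For assertion (2), the same computation with $\zeta_i:i\to j_1$, whose components are $(i,j_0)$, gives $\Gamma(\zeta_i)=[\,i,\Fb(j_0)\,]$. Then for $k\ge n+1$, the inductive formula yields
\[
\tld{\sg}_k\;=\;\bigl[\,\Fb^{k-n}(j_0),\;\Fb^{k-n+1}(j_0)\,\bigr].
\]
Since $j_0$ is a trivial cofibration in $\M$ by Notation \ref{nota-zeta} and $\Fb$ is left Quillen, both components are trivial cofibrations in $\M$. By Definition \ref{injective-data} this means $\tld{\sg}_k$ is a level trivial cofibration in $\mum$, as claimed.

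For assertion (3), I would first translate the above to the adjoint Bousfield--Friedlander description, where the underlying spectrum of $\Upsilon_n(\Gamma(i))$ is $(\emptyset,\ldots,\emptyset,A,\Sigma B,\Sigma^2 B,\ldots)$ sitting at positions $\ge n$ with structure maps $\Sigma i:\Sigma A\to \Sigma B$ at level $n$ and identities above, whereas $\Upsilon_n(\Gamma(\Id_B))$ is $(\emptyset,\ldots,\emptyset,B,\Sigma B,\Sigma^2 B,\ldots)$ with identity structure maps. The map $\Upsilon_n\Gamma(\alpha_i)$ is $i$ at level $n$ and identity at all higher levels. Stable homotopy groups can be computed as $\pi_k^s(X)=\colim_{m\ge n+1}\pi_{k+m}(X_m)$, and since both domain and codomain have the same objects and the same structure maps above level $n$, and the map between them is the identity in that range, the induced map on $\pi_k^s$ is the identity. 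Hence $\Upsilon_n\Gamma(\alpha_i)$ is a stable equivalence. The same cofinality argument applies to $\Upsilon_n\Gamma(\zeta_i)$, since above level $n$ it is a level trivial cofibration, and a map whose restriction to a cofinal subsequence of levels is a weak equivalence induces an isomorphism on $\pi_*^s$. Combining with Lemma \ref{lem-pushout}, which ensures $\Upsilon_n\Gamma(\zeta_i)$ is a cofibration in $\spu$, yields the final claim.

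The main subtlety is assertion (3): one must be careful that a map of prespectra which is a level equivalence above some index is indeed a stable equivalence. For $\sset_*$ with the Bousfield--Friedlander description this is standard (it follows from the colimit definition of $\pi_*^s$ together with the fact that the structure maps above level $n$ are identities on both sides, so both the objects and the bonding maps in the colimit system eventually agree), but it does need to be invoked explicitly rather than treated as a formal consequence of assertions (1) and (2).
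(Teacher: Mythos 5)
Your proof is correct and takes exactly the approach the paper intends, merely spelling out the terse statement ``follows directly by construction of $\Gamma$ and $\Upsilon_n$'' and ``follows by definition of the stable homotopy groups as a direct colimit'' with the explicit level-by-level formulas. The one small inaccuracy is the citation of Lemma~\ref{lem-pushout} for the cofibration claim at the end: the cleaner justification is that $\zeta_i$ is a projective cofibration in $\armpj$ by Proposition~\ref{alph-zeta}, and $\Upsilon_n\circ\Gamma$ is left Quillen (being the left adjoint of the right Quillen $\T_n$), so it sends $\zeta_i$ to a cofibration in $\spu$.
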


\begin{proof}
The first assertions follow directly by construction of $\Gamma$ (Proposition \ref{adj-prop-gamma}) and $\Upsilon_n$ (Lemma \ref{adj-pn}). \ \\

The third assertion follows by definition of the stable homotopy groups (see \cite{Adams_stable}, \cite{Schwede_stable}). These groups are defined as direct colimit so what matters is the stabilization for greater $k$. The last assertion is clear
\end{proof}
We are now able to prove Theorem \ref{stable-ho}. 

\begin{proof}[Proof of Theorem \ref{stable-ho}]
The existence of the model structure and the fact that every fibrant object is an $\Ub$-spectrum is simply given by Theorem \ref{local-proj-theory}. \ \\

To prove that that the homotopy category is the usual stable homotopy category we will show that we have the same fibrant objects in the model structure in Hovey \cite[Theorem 3.4]{Hov_stable} which is also a left Bousfield localization of the same strict model structure.\ \\

If $\Xa \in \spu$ is fibrant in our new model category, then $\Xa$ is level fibrant and satisfies the Segal conditions so it's a $\Ub$-spectrum, whence it's a fibrant object in the model structure of Hovey. Conversely assume that $\Xa\in \spu$ is fibrant in Hovey's model structure. Then in a tautological way $\Xa$ is $\sg$-local for any stable equivalence $\sg$. By Lemma \ref{lem-stab-alpha-zeta}, we know that for every $i$ and for every $n$, $\Upsilon \Gamma(\zeta_i)$ is a stable weak equivalence and indeed a trivial cofibration. This means $\Xa$ is $\Upsilon \Gamma(\zeta_i)$-local for every $i$ and for every $n$, therefore $\Xa$ is fibrant in the model structure of our Theorem \ref{local-proj-theory}.
\end{proof}

\begin{cor}
If $\M$ is the category of pointed simplicial sets, our model structure on $\spu$ coincides with the stable model structure of Bousfield-Friedlander \cite{Bous_Fried}, Hovey \cite{Hov_stable}, Schwede \cite{Schwede_stable}. 
\end{cor}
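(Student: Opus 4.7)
The plan is to deduce this corollary directly from Theorem \ref{stable-ho} by invoking the uniqueness of left Bousfield localizations. Both model structures under comparison are left Bousfield localizations of the same underlying model category, namely the strict projective model structure on $\spu$ from Theorem \ref{strict-proj-natural}, which by the corollary preceding it agrees with the Bousfield-Friedlander strict model structure used by Hovey and Schwede. Hence the two localizations share the same class of cofibrations (the strict projective cofibrations) and live inside a common left proper combinatorial model category, so it suffices to show they have the same class of local (i.e.\ fibrant) objects.

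First, I would recall that in a left proper combinatorial model category, a left Bousfield localization is determined by its class of fibrant objects: the new weak equivalences are exactly the maps $f$ such that $\Map(f, Z)$ is a weak equivalence of simplicial sets for every fibrant $Z$, and the new cofibrations coincide with the original cofibrations. Thus two left Bousfield localizations of the same model category agree if and only if their fibrant objects coincide. This is standard and is in Hirschhorn, so I would cite \cite{Hirsch-model-loc} rather than re-prove it.

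Next, I would verify the equality of fibrant objects, which is exactly what was established in the proof of Theorem \ref{stable-ho}. An object $\Xa$ is fibrant in our model structure (the left Bousfield localization along the projective localizing set $\kbi_{proj}$ produced by Theorem \ref{local-proj-theory}) if and only if $\Xa$ is level fibrant in $\spu_{proj}$ and satisfies the generalized Segal conditions, which means precisely that each structure map $\Xa_n^0 \to \Ub(\Xa_{n+1}^1)$ is a weak equivalence; that is, $\Xa$ is a level fibrant $\Omega$-spectrum. By the characterization of fibrant objects in Hovey \cite[Theorem 3.4]{Hov_stable} and Bousfield-Friedlander \cite{Bous_Fried}, this is exactly the class of fibrant objects in the stable model structure on $\spu$.

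The main (minor) obstacle is to ensure that the comparison of fibrant objects really is bi-directional; one direction, that Hovey-Bousfield-Friedlander fibrant objects are fibrant in our localization, uses Lemma \ref{lem-stab-alpha-zeta}, which tells us that each generating trivial cofibration $\Upsilon_n\Gamma(\zeta_i)$ of our localization is a stable trivial cofibration in Hovey's sense, so that any $\Xa$ which is Hovey-fibrant is automatically $\Upsilon_n\Gamma(\zeta_i)$-injective. The other direction is immediate since our fibrant objects are level fibrant $\Omega$-spectra. Once both inclusions of fibrant classes are established, uniqueness of the localization closes the argument, and the two model structures coincide.
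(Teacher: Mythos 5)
Your proposal is correct and follows essentially the same approach as the paper: both arguments observe that the two model structures are left Bousfield localizations of the same strict (projective) model structure with the same class of fibrant objects, and conclude that the localized weak equivalences and hence the model structures must coincide. You make explicit, by citing Hirschhorn, the standard fact that a left Bousfield localization is determined by its fibrant objects (via function complexes), whereas the paper states this inline ("same original model structure, thus the same function complexes"), but the underlying reasoning is identical.
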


\begin{proof}
Indeed we're localizing the same original model structure and we have the same fibrant objects  as Hovey. It turns out that the new weak equivalences are also the same since we have the same original model structure, thus the same function complexes.
\end{proof}

\begin{rmk}
It remains to compare the previous theorems when we consider the various model structure on $\zmum$. We also need to extend these results when $\M$ is cellular and left proper. This will be done later. There is much that is left to be done on the subject.
\end{rmk}

\subsubsection{$\Ub$-chain complexes}
In this section we consider that $\M$ has a zero object and let $\Ub: \M \to \M$ be right adjoint. 

\begin{df}
Let $\spu$ be the category of linked $\Z$-sequence as before. Say that $\Xa \in \spu$ is an $\Ub$-chain complex if for every $n$ the following composite is the zero map:
$$\Xa_n^0 \xrw{\varepsilon} \Ub(\Xa_{n+1}^0) \xrw{\Ub(\varepsilon)}  \Ub^2(\Xa_{n+2}^0).$$
Denote by $Ch_{\Ub}(\M)$ the full subcategory of $\spu$ spanned by $\Ub$-chain complexes. We have a forgetful functor $P: Ch_{\Ub}(\M) \to \zmum$
\end{df}
Obviously if $\M$ is an abelian category and $\Ub=\Id$ we find here the classical definition of chain complexes. We will develop the homotopy theory of them following  the classical case. And these constructions hold for an ordinal $\od \subseteq \Z$. 
\bibliographystyle{plain}
\bibliography{Bibliography_Ho_QS}
\end{document}